\documentclass[11pt]{amsart}

\usepackage[utf8]{inputenc}
\usepackage[T1]{fontenc}
\usepackage[english]{babel}
\usepackage{amsmath,amsthm,amsfonts,amssymb,mathtools}
\usepackage{enumitem}
\usepackage{geometry}
\usepackage{hyperref}
\usepackage{microtype}

\hypersetup{
  colorlinks=true,
  linkcolor=blue,
  citecolor=blue,
  urlcolor=blue,
  pdftitle={Splitting fields of complex hyperbolic groups: matrix extraction, Hermitian descent, and spectral fields},
  pdfauthor={Angel Cano, Hector Castro, Nikolay Gusevskii},
  pdfkeywords={complex hyperbolic geometry, trace fields, fields of definition, unitary groups, Hermitian forms, Vandermonde systems, splitting fields, Schottky groups}
}

\newtheorem{theorem}{Theorem}[section]
\newtheorem{proposition}[theorem]{Proposition}
\newtheorem{lemma}[theorem]{Lemma}
\newtheorem{corollary}[theorem]{Corollary}
\newtheorem{conjecture}[theorem]{Conjecture}
\newtheorem{problem}[theorem]{Problem}
\theoremstyle{definition}
\newtheorem{definition}[theorem]{Definition}
\newtheorem{example}[theorem]{Example}
\theoremstyle{remark}
\newtheorem{remark}[theorem]{Remark}

\newcommand{\C}{\mathbb C}
\newcommand{\R}{\mathbb R}
\newcommand{\Q}{\mathbb Q}

\newcommand{\M}{\mathrm M}
\newcommand{\GL}{\mathrm{GL}}
\newcommand{\SL}{\mathrm{SL}}
\newcommand{\SU}{\mathrm{SU}}
\newcommand{\SO}{\mathrm{SO}}
\newcommand{\PU}{\mathrm{PU}}

\newcommand{\tr}{\operatorname{tr}}

\newcommand{\Span}{\operatorname{Span}}

\newcommand{\Gal}{\operatorname{Gal}}

\newcommand{\diag}{\operatorname{diag}}
\newcommand{\Ad}{\operatorname{Ad}}
\newcommand{\disc}{\operatorname{disc}}
\newcommand{\Tr}{\operatorname{Tr}}

\begin{document}

\title[Splitting fields of complex hyperbolic groups]{Splitting fields of complex hyperbolic groups:\\ matrix extraction, Hermitian descent, and spectral fields}

\author{\'Angel Cano}
\address{Instituto de Matem\'aticas, Universidad Nacional Aut\'onoma de M\'exico, Unidad Cuernavaca, Av. Universidad s/n, Col. Lomas de Chamilpa, C.P. 62210, Cuernavaca, Morelos, Mexico}
\email{angelcano@im.unam.mx}

\author{Hector Castro}
\address{Instituto de Matem\'aticas, UNAM, Unidad Cuernavaca, Av. Universidad s/n, Col. Lomas de Chamilpa, C.P. 62210, Cuernavaca, Morelos, Mexico}
\email{abraham.n.castro@gmail.com}

\author{Nikolay Gusevskii}
\address{Departamento de Matem\'atica, Instituto de Ci\^encias Exatas, Universidade Federal de Minas Gerais, Av. Ant\^onio Carlos 6627, Pampulha, Belo Horizonte, MG 31270-901, Brazil}

\subjclass[2020]{Primary 20G20, 22E40; Secondary 32M15, 51M10, 15B57}
\keywords{complex hyperbolic geometry, trace fields, fields of definition, unitary groups, Hermitian forms, Vandermonde systems, splitting fields, Schottky groups}

\begin{abstract}
Let $\Gamma<\SU(n,1)$, $n\ge2$, be Zariski-dense and contain a regular loxodromic element $A$.  The splitting-field problem asks how far traces and spectra determine a field of definition for $\Gamma$.  We show first that traces together with the spectrum of $A$ always give a linear realization over $K=\Q(\tr\Gamma,E(A))$.  We also prove that every complex-irreducible subgroup of $\SU(n,1)$ with real trace field is conjugate into $\SO(n,1)$; in particular, no discreteness hypothesis is required for this real-trace conclusion.  In higher dimension, however, linear descent does not by itself give the standard unitary form: a Hermitian similarity obstruction remains, described over imaginary quadratic fields by Landherr's local invariants.  In $\SU(3,1)$ we give a Zariski-dense example for which the one-spectrum field is $\Q(i)$ but does not yield the standard unitary form.  This leads naturally to the total spectral field.  We prove that the obstruction disappears for arithmetic groups and under suitable local openness hypotheses, and we give an even-degree local criterion.  In dimension four we analyze the discrete branch through explicit Schottky groups and rank-one local geometry.  There the full Bruhat cocycle recovers anisotropic Levi holonomy and forces a non-split rotational word.  The remaining quasi-split rank-two case is not reached by this method, so the general $\SU(3,1)$ problem remains open.
\end{abstract}

\maketitle

\section{The splitting-field problem}

A recurring arithmetic question in hyperbolic geometry is how much of a group can be recovered over a field generated by its traces and eigenvalues.  In real hyperbolic geometry the total eigenvalue field already plays this role in the work of Hamilton--Reid \cite{HamiltonReid2004}.  For complex hyperbolic groups, Cunha--Gusevskii proved a field-of-definition theorem in $\SU(2,1)$, and the higher-dimensional splitting-field problem was pursued in \cite{Batista2018,Genzmer2014,Gusevskii2018,CunhaGusevskii2014}.  The present paper continues that line of questions.

The difficulty that appears in higher dimension is simple to state.  Recovering matrix coefficients over a field $K$ and recovering the standard unitary form over the same field are different problems.  Thus there are two descent questions:
\begin{enumerate}[label=(\roman*)]
\item \emph{linear descent}: find a basis in which the matrices of $\Gamma$ have coefficients in $K$;
\item \emph{unitary descent}: realize the same group over $K$ while putting its invariant Hermitian form in the standard complex-hyperbolic similarity class.
\end{enumerate}
The first is controlled by traces and the spectrum of a regular loxodromic element.  The second is a Hermitian similarity problem.  In dimension four Theorem~\ref{thm:one-A-obstruction} shows that the field attached to one regular spectrum need not remove this additional obstruction.  Once this distinction is made, adjoining further spectra becomes a natural way to remove the remaining arithmetic obstruction.  Throughout the general $\SU(n,1)$ discussion we assume $n\ge2$, or equivalently $d=n+1\ge3$.

We formulate the main program in the Zariski-dense regime, which is the hypothesis used in the descent and local arguments below.  Earlier formulations in terms of total irreducibility remain part of the motivation, but we keep total irreducibility and Zariski density separate.

This brings us to the question that guides the paper.

\begin{problem}\label{prob:CG}
Let $\Gamma<\SU(n,1)$ be Zariski-dense and contain a regular loxodromic element $A$.  Set
\[
 K_A=\Q\bigl(\tr\Gamma,E(A)\bigr).
\]
To what extent is $K_A$ already a unitary field of definition?  If an additional extension is needed, can it be generated intrinsically from traces or splitting fields of further elements of $\Gamma$?  In particular, can the dependence on the chosen element $A$ be eliminated at the level of the final field of definition?
\end{problem}

Our first result gives a complete answer to the linear part.

\begin{theorem}\label{thm:intro-linear}
Let $\Gamma<\SU(J)$ be Zariski-dense and contain a regular loxodromic element
\[
 A=\diag(\lambda_1,\ldots,\lambda_d)
\]
with pairwise distinct eigenvalues.  Put
\[
 K=\Q(\tr\Gamma,\lambda_1,\ldots,\lambda_d).
\]
Then there exists $D\in\GL_d(\C)$, diagonal and commuting with $A$, such that
\[
 D^{-1}\Gamma D\subset\SL_d(K).
\]
\end{theorem}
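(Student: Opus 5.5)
The plan is to recover matrix entries from traces by a Vandermonde argument, and then fix the coordinates with a diagonal conjugation. For $g\in\Gamma$ and $k\ge0$ we have
\[
 \tr(A^k g)=\sum_{i=1}^d \lambda_i^k g_{ii}.
\]
Since the $\lambda_i$ are pairwise distinct, the Vandermonde matrix $(\lambda_i^k)_{0\le k<d,\,1\le i\le d}$ is invertible over $K$. The left-hand sides lie in $\tr\Gamma\subset K$, so solving this system shows $g_{ii}\in K$ for every $g\in\Gamma$.

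Next we extend this to products of the form $g A^k h$. Since $A^k g\in\Gamma$, the same argument gives
\[
 (gA^kh)_{ii}=\sum_j g_{ij}\lambda_j^k h_{ji}\in K \qquad (0\le k<d).
\]
Inverting the Vandermonde matrix once more yields $g_{ij}h_{ji}\in K$ for all $i,j$ and all $g,h\in\Gamma$.

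To build $D$ we need a rooted spanning structure. Zariski density implies that $\Gamma$ is irreducible, so the $\C$-span of $\Gamma$ is all of $\M_d(\C)$ by Burnside. Hence for each pair $(i,j)$ some $g\in\Gamma$ has $g_{ij}\neq0$; otherwise every element of the span would have vanishing $(i,j)$ entry. For each $j\ne1$ we fix $h_j\in\Gamma$ with $(h_j)_{j1}\neq0$, and set
\[
 D=\diag(d_1,\ldots,d_d),\qquad d_1=1,\quad d_j=(h_j)_{j1}.
\]
This $D$ commutes with $A$, and $(D^{-1}gD)_{ij}=g_{ij}d_j/d_i$. We must show $g_{ij}d_j/d_i\in K$.

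Write $g_{ij}d_j/d_i=(g_{ij}/d_i)\,d_j$. The key step is to show $g_{i1}/d_i\in K$ and $g_{1j}d_j\in K$ for all $g$.

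The second claim is immediate: $g_{1j}(h_j)_{j1}\in K$ by the pairwise relation with $h=h_j$.

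For the first, $g_{i1}/d_i=g_{i1}(h_i)_{i1}/d_i^2$. Rather than working with this ratio directly, we use the product $h_i^{-1}g\in\Gamma$. The entry $(h_i^{-1}g)_{11}$ lies in $K$, but it mixes several coordinates, so this alone is not decisive. Instead we argue with cycles:

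1. The pairwise relation extends by the same Vandermonde induction to the cyclic products $g_{ij}h_{jk}f_{ki}\in K$. These are the diagonal entries of $gA^a hA^b f$, and a two-variable Vandermonde system in $(a,b)$ isolates each one.
2. Apply this to $g_{ij}\,(h_j)_{j1}\,x_{1i}$, where $x\in\Gamma$ is chosen with $x_{1i}\ne0$. This gives $g_{ij}d_j x_{1i}\in K$.
3. From the pairwise relation, $x_{1i}(h_i)_{i1}\in K^\times$, so $x_{1i}\in K^\times d_i^{-1}$.
4. Therefore $g_{ij}d_j/d_i\in K$.

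The result is $D^{-1}\Gamma D\subset\M_d(K)$. The determinant is unchanged by conjugation, so the image lies in $\SL_d(K)$.

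The main obstacle is the existence of nonzero entries, namely $g_{ij}\ne0$ for every pair and, in particular, $x_{1i}\ne0$. This is where irreducibility and Burnside's theorem are used, together with the passage from Zariski density to irreducibility. The Vandermonde inversions themselves are routine once $\Q(\lambda_1,\dots,\lambda_d)\subset K$.
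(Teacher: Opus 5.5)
Your proof is correct and is essentially the paper's argument: you normalize the first column by a diagonal conjugation commuting with $A$, use the fact that cyclic products of entries lie in $K$, and divide by a nonzero element of $K$. Your two-variable Vandermonde systems are exactly the paper's projector traces of Lemma~\ref{lem:cycles}. The only real difference is in how the nonzero entries are obtained. The paper picks a single $B\in\Gamma$ with $b_{1i}b_{i1}\neq0$ for all $i$ via Zariski density. You instead use Burnside to choose separate elements $h_j$ and $x$, which shows that absolute irreducibility alone suffices, as in the paper's two-generator proposition. To cover the case $j=1$ you should set $h_1=I$.
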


The unitary question has an additional invariant.

\begin{theorem}\label{thm:intro-hermitian}
Assume $n\ge2$ and, in addition, that complex conjugation acts nontrivially on $K$, and put $K_0=K^{\bar{\ }}$; equivalently, $K/K_0$ is quadratic.  Then the $K$-rational representation of Theorem~\ref{thm:intro-linear} preserves a non-degenerate Hermitian form $H_K$ defined over $K$.  The group is realizable over the standard unitary form $J$ over $K$ if and only if $H_K$ is similar over $K$ to $J$.
\end{theorem}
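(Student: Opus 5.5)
We work with the conjugated group $\Gamma_K:=D^{-1}\Gamma D\subset\SL_d(K)$ supplied by Theorem~\ref{thm:intro-linear}. It preserves the Hermitian matrix $H:=D^{*}JD$, since $g^{*}Jg=J$ for $g\in\Gamma$ gives $(D^{-1}gD)^{*}(D^{*}JD)(D^{-1}gD)=D^{*}JD$. Because $D$ is diagonal and $J$ is the standard form in the basis diagonalizing $A$, $H$ is explicit, but its entries are a priori only complex. The plan has three steps. First, show that the invariant Hermitian forms of $\Gamma_K$ form a one-dimensional real space. Second, descend that space to $K$ by an absolute-irreducibility argument, working with the conjugation $\sigma=\bar{\ }$ restricted to $K$. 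Third, read off the similarity criterion.

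For the first step, Zariski density of $\Gamma$ in $\SU(J)$ makes $\Gamma_K$ absolutely irreducible on $\C^d$, since $\SU(n,1)$ acts irreducibly on $\C^{d}$. Any two invariant sesquilinear forms $H_1,H_2$ give an intertwiner $H_1^{-1}H_2$, so by Schur they differ by a scalar. The space of invariant sesquilinear forms is therefore $\C\cdot H$, and its Hermitian elements are $\R\cdot H$.

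For the descent, consider the linear system $g^{*}Xg=X$ for $g\in\Gamma_K$, where $g^{*}={}^t\bar g$ and $\bar g$ has entries in $\sigma(K)=K$. This system has coefficients in $K$. Its solution space over $\C$ is one-dimensional, so it is spanned by a solution $X_0\in \M_d(K)$, because the rank of a linear system does not change under field extension. Hence $X_0=cH$ for some $c\in\C^{\times}$. The matrix $X_0^{*}$ is also a solution with entries in $K$, so $X_0^{*}=\mu X_0$ with $\mu\in K$. From $X_0^{*}=\bar c H$ we get $\mu=\bar c/c$, and therefore $\mu\bar\mu=1$.

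Hilbert 90 for the quadratic extension $K/K_0$ now applies, and this is where the hypothesis that conjugation is nontrivial on $K$ is used. It gives $\nu\in K$ with $\mu=\nu/\bar\nu$. Then
\[
(\bar\nu X_0)^{*}=\nu\,\mu X_0=\bar\nu X_0 ,
\]
so $H_K:=\bar\nu X_0$ is a nondegenerate Hermitian form over $K$ preserved by $\Gamma_K$, and it is a real multiple of $H$. Alternatively, one could write $c=re^{i\theta}$ and use $\mu=e^{-2i\theta}$ directly, but Hilbert 90 keeps everything inside $K$.

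For the equivalence, suppose first that $P^{*}H_KP=\alpha J$ with $P\in\GL_d(K)$ and $\alpha\in K_0^{\times}$. Then $P^{-1}\Gamma_KP\subset\SL_d(K)$ preserves $\alpha J$, hence preserves $J$, which is the required realization. Conversely, suppose $Q^{-1}\Gamma_KQ\subset \SL_d(K)\cap\SU(J)$ for some $Q\in\GL_d(K)$. Then $(Q^{-1})^{*}JQ^{-1}$ is a $K$-rational invariant Hermitian form of $\Gamma_K$. By the first step it equals $rH_K$ with $r\in\R^{\times}$, and $r$ lies in $K$ since both forms are $K$-rational. As $r$ is real, $r\in K_0$, so $Q^{*}H_KQ=r^{-1}J$ and $H_K$ is $K$-similar to $J$.

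The main obstacle is making ``realizable over the standard unitary form over $K$'' precise. It should mean conjugation by $K$-rational matrices, or more generally any $\GL_d(\C)$-conjugate landing in $\SL_d(K)\cap \SU(J)$. In the general version, the two $K$-structures are related by a $\C$-linear map $Q$ that carries $\Gamma_K$ to a $K$-rational group. Both groups are absolutely irreducible, so their $K$-spans are central simple algebras, equal to $\M_d(K)$. Skolem--Noether then shows that $Q$ agrees, up to a scalar, with an element of $\GL_d(K)$, and the scalar is absorbed into the similarity factor. This reduces the general version to the rational case above.
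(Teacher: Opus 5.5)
Your argument is essentially the paper's: Schur's lemma gives a one-dimensional invariant line, the invariance equations are $K$-linear so that line has a $K$-rational generator $X_0$, Hilbert~90 turns $X_0$ into a Hermitian $H_K$, and the similarity criterion follows because every $K$-rational invariant Hermitian form is a $K_0^\times$-multiple of $H_K$. Your Skolem--Noether reduction for complex conjugators mirrors the intertwiner argument the paper uses in Theorem~\ref{thm:one-A-obstruction}. There is one slip in the normalization step. With $\mu=\nu/\bar\nu$ the Hermitian multiple is $\nu X_0$, since $(\nu X_0)^*=\bar\nu\mu X_0=\nu X_0$. Your choice $\bar\nu X_0$ does not work in general, because $(\bar\nu X_0)^*=\nu\mu X_0=(\nu^2/\bar\nu)X_0$.
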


The two theorems isolate the arithmetic issue.  Over imaginary quadratic fields, Landherr's classification turns the difference between $H_K$ and $J$ into finitely many local norm obstructions; the Gaussian example in Section~\ref{sec:hermitian} shows that the spectrum of a fixed loxodromic element need not remove them.  This leads to the total spectral field
\[
 F_{\rm eig}(\Gamma)=K\bigl(E(\gamma):\gamma\in\Gamma\bigr).
\]
For arithmetic groups, and more generally under suitable local openness hypotheses, this larger field removes the obstruction; in dimension four it is enough to force even local splitting degree at each obstructive place.

The remaining difficulty is local and discrete.  In the non-quasi-split rank-one case a distinguished transverse Hermitian plane gives a quadratic rotational factor.  The full Bruhat cocycle sees its anisotropic Levi holonomy, and an asymptotic composition argument produces a loxodromic word with non-split transverse spectrum.  The quasi-split rank-two case has no such distinguished quadratic factor and is not reached by this mechanism.

\begin{conjecture}\label{conj:rank2}
Let $F/F_0$ be a quadratic extension of non-archimedean local fields of characteristic zero and let $V$ be a four-dimensional \emph{quasi-split} Hermitian space over $F/F_0$.  Let
\[
 \Lambda<\SU(V)(F_0)
\]
be discrete, torsion-free, and Zariski-dense.  Then $\Lambda$ contains a regular semisimple element $g$ whose splitting field over $F$ has even degree.
\end{conjecture}

The corresponding global statement is therefore still open.

\begin{conjecture}\label{conj:total-main}
Let $K/\Q$ be imaginary quadratic and let $H_K$ be a Hermitian form of signature $(3,1)$.  If
\[
 \Gamma<\SU(H_K,K)
\]
is finitely generated and Zariski-dense, then the total eigenvalue field
\[
 F_{\rm eig}(\Gamma)=K\bigl(E(\gamma):\gamma\in\Gamma\bigr)
\]
is a unitary field of definition for $\Gamma$.
\end{conjecture}

Theorem~\ref{thm:partial-total} proves this conjecture whenever every obstructive place is either non-discrete or, in the discrete case, non-quasi-split.  Thus the arithmetic and rank-one branches are accessible by the methods developed here, while the quasi-split rank-two branch requires a different local realization principle.

Earlier formulations in Batista's thesis \cite{Batista2018} and in Gusevskii's 2018 abstract \cite{Gusevskii2018} motivated the higher-dimensional splitting-field problem.  Here we separate coefficient recovery from Hermitian similarity and use the total spectral field as the natural target of the program.  Theorem~\ref{thm:one-A-obstruction} shows that, in dimension four, the Hermitian-similarity step can be nontrivial.

The organization follows the mathematical reduction.  Section~\ref{sec:preliminaries} fixes the geometric terminology used throughout.  Sections~\ref{sec:original} and~\ref{sec:linear} pass from explicit coefficient recovery to invariant linear descent.  Section~\ref{sec:hermitian} identifies the Hermitian obstruction, and Sections~\ref{sec:total}--\ref{sec:S4} develop arithmetic and local ways of removing it.  Sections~\ref{sec:rank1}--\ref{sec:reset} analyze the discrete rank-one case, and Section~\ref{sec:assembly} assembles the local information globally.  The last sections compare lattice examples, return to the relation between traces and spectra, and isolate the remaining rank-two problem.

\section{Complex hyperbolic preliminaries}\label{sec:preliminaries}

We recall only the geometric facts that will be used later.  Our conventions follow the
standard projective model of complex hyperbolic space; compare, for example,
\cite{KimKim2016}.  We deliberately keep this section short and restrict it to
the notation needed in the sequel.

\subsection{The projective model}

Let $V=\C^{n+1}$ be equipped with a non-degenerate Hermitian form $h$ of
signature $(n,1)$.  After choosing an $h$-orthogonal basis one may write
\[
 h(z,w)=z_1\overline{w_1}+\cdots+z_n\overline{w_n}
          -z_{n+1}\overline{w_{n+1}}.
\]
Set
\[
 V_- = \{z\in V\setminus\{0\}:h(z,z)<0\},\qquad
 V_0 = \{z\in V\setminus\{0\}:h(z,z)=0\}.
\]
If $\pi:V\setminus\{0\}\to\mathbb P(V)$ denotes projectivization, then
\[
 \mathbb H^n_{\C}=\pi(V_-),\qquad
 \partial\mathbb H^n_{\C}=\pi(V_0).
\]
For $x=\pi(z)$ and $y=\pi(w)$ in $\mathbb H^n_{\C}$, the Bergman metric may
be normalized by
\[
 \cosh^2\!\left(\frac{d(x,y)}{2}\right)
 =\frac{|h(z,w)|^2}{h(z,z)h(w,w)}.
\]
This expression is independent of the chosen lifts $z$ and $w$.

The group $\mathrm U(h)$ preserves $V_-$ and therefore acts by holomorphic
isometries on $\mathbb H^n_{\C}$.  Its projectivization is the full group of
holomorphic isometries, while $\SU(h)$ is the determinant-one subgroup.  We
freely pass between the standard diagonal form above and the Siegel form $J$
used in the coefficient calculations below, since they are equivalent over
$\C$.

\subsection{Isometries and loxodromic elements}

An isometry of $\mathbb H^n_{\C}$ is called \emph{elliptic}, \emph{parabolic}, or
\emph{loxodromic} according as it has a fixed point in $\mathbb H^n_{\C}$,
exactly one fixed point on $\partial\mathbb H^n_{\C}$, or exactly two fixed
points on $\partial\mathbb H^n_{\C}$, respectively.  A loxodromic element of
$\SU(n,1)$ has two eigenvalues off the unit circle, of reciprocal moduli, while
all remaining eigenvalues lie on the unit circle.  We call a loxodromic element
\emph{regular} when its eigenvalues in the standard representation are pairwise
distinct.  This is the class of elements used throughout the spectral and
Vandermonde arguments.

For a subgroup $G<\SU(n,1)$, its limit set is the accumulation set on
$\partial\mathbb H^n_{\C}$ of an orbit in $\mathbb H^n_{\C}$.  A subgroup is
\emph{elementary} when its limit set is finite and \emph{nonelementary}
otherwise.  Discreteness will be imposed only where it is genuinely needed;
in particular, the real-trace theorem proved below does not require it.

\subsection{Totally geodesic subspaces and irreducibility}

The totally geodesic subspaces relevant here are of two types.  Complex
geodesic subspaces arise by projectivizing complex subspaces of $V$ on which
$h$ has signature $(k,1)$ and are isometric to $\mathbb H^k_{\C}$.  Totally
real geodesic subspaces arise from real subspaces on which $h$ takes real
values and are isometric to $\mathbb H^k_{\R}$.  In particular, after
conjugation the standard totally real copy $\mathbb H^n_{\R}\subset
\mathbb H^n_{\C}$ has stabilizer containing
\[
 \SO(n,1)<\SU(n,1).
\]
This elementary observation explains why complex irreducibility and Zariski
density are not equivalent: a complex-irreducible subgroup may be contained
in the proper real algebraic subgroup $\SO(n,1)$.

We say that $G<\SU(n,1)$ is \emph{complex irreducible} if it preserves no
proper nonzero complex linear subspace of $V$.  We use \emph{totally
irreducible} in the sense of the higher-dimensional splitting-field
literature: the group is complex irreducible and does not preserve a proper
totally real geodesic subspace.  These notions will be kept distinct from
Zariski density throughout the paper.

\section{Matrix coefficients from traces and spectra}\label{sec:original}

Fix a regular loxodromic element $A$.  The traces of words involving $A$ encode more matrix data than is visible at first sight.  Successive Vandermonde systems recover diagonal entries, pair products, mixed unitary products, and eventually individual coefficients.  This explicit calculation shows concretely where the field $\Q(\tr\Gamma,E(A))$ enters and points to the invariant trace-algebra argument of Section~\ref{sec:linear}.  The Hermitian similarity problem appears only after this linear step and is treated in Section~\ref{sec:hermitian}.

Let
\[
 d=n+1,
 \qquad
 J=
 \begin{pmatrix}
 0&0&1\\
 0&I_{d-2}&0\\
 1&0&0
 \end{pmatrix},
\]
and
\[
 \SU(J)=\{g\in\SL_d(\C):g^*Jg=J\}.
\]

\subsection{Zariski density and regular loxodromic elements}

The notions of complex irreducibility, total irreducibility, and Zariski density are recalled in Section~\ref{sec:preliminaries}.  The coefficient extraction below uses only Zariski density and one regular loxodromic element with simple spectrum.  We therefore keep the geometric irreducibility questions separate from the algebraic descent argument.  The relevant geometric background includes the Karpelevich--Mostow theorem; see \cite{DiScalaOlmos2009}.

\begin{proposition}\label{prop:two-generator}
Let $\Gamma<\SU(n,1)$ be Zariski-dense.  Then $\Gamma$ contains a non-abelian free subgroup generated by two elements whose Zariski closure is the ambient algebraic group.  In the coefficient-recovery arguments below we additionally choose, independently, a regular loxodromic element $A\in\Gamma$ with simple spectrum.
\end{proposition}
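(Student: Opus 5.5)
The natural route is through the Tits alternative in its Zariski-dense form; Breuillard--Gelander refine this to dense free subgroups. Let $G=\SU(n,1)$, regarded as a real algebraic group; it is connected, simple, and non-compact. Since $\Gamma$ is Zariski-dense in $G$, $\Gamma$ is non-elementary. Otherwise its limit set would be finite, and $\Gamma$ would fix a point of $\mathbb H^n_\C\cup\partial\mathbb H^n_\C$ or preserve a pair of boundary points. Each of these conditions is Zariski-closed and proper. It follows that $\Gamma$ contains loxodromic elements.

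We first show that we may take these loxodromics to be regular. The set of non-regular elements, those with a repeated eigenvalue, is the zero locus of the discriminant of the characteristic polynomial. This is a proper Zariski-closed subset of $G$, because $G$ contains regular elements. The loxodromic elements of $\Gamma$ are then not all contained in this proper subvariety. One way to see this is that the loxodromic elements of a non-elementary group form a Zariski-dense subset of its Zariski closure, by a standard ping-pong/proximality argument (Benoist, Prasad). Alternatively, for loxodromic $g,h$ and large $k$ the products $g^kh$ are loxodromic, and the Zariski closure of $\{g^kh\}$ contains the coset $\overline{\langle g\rangle}^{Z}h$; using enough such pairs, the loxodromic elements escape any proper subvariety. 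This gives the independently chosen regular loxodromic $A$.

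For the free subgroup, fix a proper subvariety $W\subset G\times G$ whose complement consists of pairs generating a Zariski-dense subgroup. Such a $W$ exists because a pair generates a non-dense subgroup exactly when it lies in some maximal proper algebraic subgroup. Up to conjugacy there are finitely many maximal connected proper subgroups, together with a finiteness reduction to handle components. Since $\Gamma\times\Gamma$ is Zariski-dense in $G\times G$, we can choose loxodromics $a,b\in\Gamma$ with $(a,b)\notin W$ and with pairwise distinct fixed points on $\partial\mathbb H^n_\C$, the latter again being a Zariski-open condition. Ping-pong on the boundary then shows that $\langle a^N,b^N\rangle$ is free for large $N$. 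Moreover $(a^N,b^N)\notin W$ for suitable $N$: the Zariski closure of $\langle a^N\rangle$ has finite index in that of $\langle a\rangle$, so the identity components agree. Hence the Zariski closure of $\langle a^N,b^N\rangle$ contains the identity component of the closure of $\langle a,b\rangle$, which is $G$, since $G$ is connected.

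The main obstacle is the genericity step: producing the Zariski-open set $W^c$ and showing that $\Gamma$ meets it together with the ping-pong condition. To handle it, I would cite the Tits alternative in the strong form that a Zariski-dense subgroup of a semisimple group contains a Zariski-dense free subgroup, rather than rebuild the argument. Regularity of $A$ is then the separate, easier Zariski-density claim described above.
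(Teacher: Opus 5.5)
Your proposal matches the paper's proof, which consists of citing Tits's strengthened free-subgroup theorem for a Zariski-dense two-generator free subgroup and treating the regular loxodromic $A$ as an independent choice. The paper obtains $A$ from Prasad--Rapinchuk in Proposition~\ref{prop:regular-lox-zariski}; your route (Zariski density of loxodromics avoiding the discriminant locus) is also fine.

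There is one slip in your optional self-contained sketch. Equality of the identity components of $\overline{\langle a^N\rangle}$ and $\overline{\langle a\rangle}$ does not imply that $\overline{\langle a^N,b^N\rangle}$ contains $\overline{\langle a,b\rangle}^{\circ}$, and this can fail for $N=2$. You should take $N$ large and coprime to the orders of the component groups of $\overline{\langle a\rangle}$ and $\overline{\langle b\rangle}$. That choice puts $a$ and $b$ themselves in $\overline{\langle a^N,b^N\rangle}$.
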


\begin{proof}
The first assertion is the strengthened free-subgroup theorem of Tits for Zariski-dense linear groups \cite{Tits1972}.  The regular loxodromic element $A$ is chosen independently.
\end{proof}

The two-generator subgroup is used only for the explicit coefficient calculations.  All later descent statements are made directly under Zariski density.  Once the trace algebra is split, the linear descent applies to all of $\Gamma$.

\begin{definition}
If $\Gamma<\SU(n,1)$ contains a loxodromic element $A$, we write
\[
 \Q(\Gamma,E(A))
 =\Q(\tr\Gamma,E(A))
\]
for the field generated by the trace field of $\Gamma$ and all eigenvalues of $A$.
\end{definition}

\subsection{The smaller trace--dilation field \texorpdfstring{$\Q(\tr\Gamma,L)$}{Q(tr Gamma,L)}}\label{subsec:trace-dilation-field}

The full spectrum of $A$ is not always needed to recover some of its arithmetic data.  Write
\[
 k(\Gamma)=\Q(\tr\Gamma).
\]
If the two eigenvalues of a loxodromic $A$ off the unit circle are
\[
 \alpha=Lu,\qquad \beta=L^{-1}u,
 \qquad L>1,\quad |u|=1,
\]
set
\[
 K_L=k(\Gamma,L).
\]
In $\SU(2,1)$ this field already contains the entire spectrum, which is the mechanism behind the field-of-definition theorem of Cunha--Gusevskii.  In higher dimension the analogous spectral statement fails.

\begin{proposition}[The trace--dilation field in low and higher dimension]\label{prop:trace-dilation-failure}
Let $G=\langle A\rangle$.  If $A\in\SU(2,1)$ is loxodromic with spectrum
\[
 Le^{i\varphi},\qquad e^{-2i\varphi},\qquad L^{-1}e^{i\varphi},
 \qquad L>1,
\]
then every eigenvalue of $A$ belongs to
\[
 \Q(G,L)=\Q(\tr G,L).
\]
For $n\ge3$, the corresponding assertion for a regular loxodromic element of $\SU(n,1)$ is false in general.
\end{proposition}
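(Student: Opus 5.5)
We split the statement into the $\SU(2,1)$ claim and the failure claim for $n\ge3$.

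For $\SU(2,1)$, the field $\Q(\tr G,L)$ contains $\tr A$ and $\tr A^{-1}$. Since $A\in\SU(2,1)$ preserves the Hermitian form, its spectrum is invariant under $\lambda\mapsto\bar\lambda^{-1}$, and $\tr A^{-1}=\overline{\tr A}$. Write $\tau=\tr A = (L+L^{-1})e^{i\varphi}+e^{-2i\varphi}$. The plan is to isolate $e^{i\varphi}$ from $\tau$ and $L$, using only field operations. Put $c=L+L^{-1}\in\Q(\tr G,L)$. Then
\[
 \tau=c\,w+\bar w^{2},\qquad \bar\tau=c\,\bar w+w^{2},\qquad w=e^{i\varphi}.
\]
Subtracting these gives $\tau-\bar\tau=(w-\bar w)(c-(w+\bar w))$, which is not yet linear. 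A cleaner route is via the characteristic polynomial: it is $x^3-\tau x^2+\bar\tau x-1$, and $Lw$ is a root. The plan is to use the real and imaginary information in the pair $(\tau,\bar\tau)$ together with $L$, and to show that $w$ is a rational function of $\tau,\bar\tau,L$.

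Concretely, $Lw$ and $L^{-1}w$ are the two roots other than $\bar w^{2}$. Dividing the characteristic polynomial by the factor $x-\bar w^2$ gives the relations
\[
 w^2=\frac{1}{\bar w^{2}}\ \ (\text{product of the three roots}=1),\qquad
 c\,w=\tau-\bar w^{2}.
\]
The first relation is automatic. The idea is instead to use $A^{2}$: $\tr A^2=(L^2+L^{-2})w^2+\bar w^{4}$ also lies in the field. From $\tau=cw+\bar w^2$ we get $\bar w^2=\tau-cw$, hence $\bar w^4=(\tau-cw)^2$. Substituting,
\[
 \tr A^2=(c^2-2)w^2+\tau^2-2c\tau w+c^2w^2,
\]
which is a quadratic in $w$ over $\Q(\tr G,L)$. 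Combining it with the conjugate identity $w\bar w=1$, i.e.\ $\bar\tau=c\bar w+w^2$ with $\bar w=1/w$, gives a second polynomial relation. Taking the resultant or gcd of the two polynomials should leave a linear relation $w=\text{rational}(\tau,\bar\tau,\tr A^2,L)$, provided a certain coefficient, generically $\ne0$, does not vanish. The degenerate cases are handled by regularity: distinct eigenvalues prevent the coincidences that make this coefficient vanish. Once $w$ is in the field, all three eigenvalues $Lw$, $L^{-1}w$ and $\bar w^{2}=w^{-2}$ lie in $\Q(\tr G,L)$. This elimination is the main step where care is needed, and I expect it to be the obstacle. If the resultant approach is messy, the fallback is Galois theory: any automorphism of $\C$ fixing $\Q(\tr G,L)$ permutes the roots, fixes $L$, and preserves $\tau$ and $\bar\tau$. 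Hence it can only swap $Lw$ with $L^{-1}w$, or move $Lw$ to $\bar w^2$, and each such move forces a nontrivial equality like $L=1$ or a coincidence of eigenvalues. So the stabilizer is trivial and all roots are rational over the field.

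For $n\ge3$, it suffices to give a counterexample. Take $A\in\SU(3,1)$ regular loxodromic with spectrum $L,L^{-1},e^{i\theta},e^{-i\theta}$, with $L$ rational (for instance $L=2$) and $\theta$ chosen so that $e^{i\theta}$ is a root of a quadratic over $\Q$ with real trace $2\cos\theta\in\Q$. An example is $e^{i\theta}=(3+4i)/5$, which is not a root of unity. Then $\tr A^k=L^k+L^{-k}+2\cos k\theta$ is rational for all $k$, so $\Q(\tr G,L)=\Q$. The eigenvalue $(3+4i)/5$ does not lie in $\Q$, so the assertion fails. Regularity holds because the four eigenvalues are distinct. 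The diagonal $A$ can be realized in $\SU(3,1)$ by placing $L,L^{-1}$ on the hyperbolic plane and the rotation on the positive definite complement. Taking block sums with the identity extends the example to all $n\ge3$ (with further distinct unit-circle eigenvalues chosen so that regularity persists, e.g.\ more rational-cosine rotations).
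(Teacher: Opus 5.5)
Your argument is correct, provided the $\SU(2,1)$ part rests on your Galois-theoretic fallback rather than on the elimination sketch. The resultant/gcd route is never completed, and the claim that regularity keeps the relevant coefficient from vanishing is unsupported, so drop that paragraph.

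The fallback needs one precise line. Set $\alpha=Lw$. The roots of $\chi_A(X)=X^3-\tau X^2+\bar\tau X-1$ are $\alpha$, $L^{-2}\alpha$ and $L^{2}\alpha^{-2}$. Write the last one as $w^{-2}$, not $\bar w^{2}$, since $\sigma$ need not commute with complex conjugation. Suppose $\sigma$ fixes $\Q(\tau,\bar\tau,L)$ and $\sigma(\alpha)=\beta$. Then $\sigma(L^{-2}\alpha)=L^{-2}\beta$ must also be a root. The roots have moduli $L,1,L^{-1}$ with $L>1$, so this forces $\beta=\alpha$. The contradiction therefore comes from comparing moduli: only the pair $(Lw,L^{-1}w)$ has ratio $L^{2}$. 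It does not come from a coincidence of eigenvalues, and regularity is never used.

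Comparison with the paper: the paper does not argue the $\SU(2,1)$ case itself. It cites Lemma~2.1 and Corollary~2.1 of Cunha--Gusevskii, which extract $e^{i\varphi}$ by an explicit computation from $\tr A$, $\overline{\tr A}$ and $L$.
\begin{itemize}
\item Your argument is self-contained and gives the same sharper field $\Q(\tr A,\overline{\tr A},L)$. It trades the explicit formula for a short structural argument.
\item It is the Galois-theoretic form of the mechanism in Lemma~\ref{lem:extreme-eigenvalues}. That lemma together with $\det A=1$ would also give the $\SU(2,1)$ claim at once.
\item Your counterexample for $n\ge3$ is the paper's, with $(3\pm4i)/5$ in place of $\pm i$; the paper takes $\diag(2,i,-i,1/2)$.
\item You pad with further rational-cosine rotations and at most one extra eigenvalue $1$, which extends the counterexample to every $n\ge3$. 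The paper exhibits only $n=3$.
\end{itemize}
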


\begin{proof}
The $\SU(2,1)$ statement is Lemma~2.1 and Corollary~2.1 of Cunha--Gusevskii \cite{CunhaGusevskii2014}: their calculation gives $e^{i\varphi}\in\Q(G,L)$ from $\tr(A)$, $\overline{\tr(A)}$, and $L$, and hence all three eigenvalues lie in the same field.

For the higher-dimensional failure, consider in the Siegel model
\[
 A=\diag(2,i,-i,1/2)\in\SU(3,1).
\]
It is regular loxodromic and, for every $m\in\mathbb Z$,
\[
 \tr(A^m)=2^m+2^{-m}+i^m+(-i)^m\in\Q.
\]
Consequently
\[
 \Q(\tr\langle A\rangle,2)=\Q,
\]
whereas $i,-i\notin\Q$.  Thus adjoining only the real dilation parameter does not, in general, split the full characteristic polynomial in higher dimension.
\end{proof}

\begin{lemma}[Recovery of the two off-circle eigenvalues]\label{lem:extreme-eigenvalues}
Let $A\in\SU(n,1)$ be loxodromic with simple spectrum and let
\[
 \alpha=Lu,\qquad \beta=L^{-1}u,
 \qquad L>1,
\]
be its two eigenvalues off the unit circle.  If $A\in\Gamma$, then
\[
 \alpha,\beta\in K_L=\Q(\tr\Gamma,L).
\]
\end{lemma}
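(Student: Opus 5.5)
The plan is to realize $\alpha$ as the unique common root of two polynomials with coefficients in $K_L$. Then $\alpha$ is the root of a linear gcd computed inside $K_L[x]$, which forces $\alpha\in K_L$. Three observations are needed.

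\emph{The characteristic polynomial is defined over the trace field.} Let $\chi_A(x)=\det(xI-A)$. Its coefficients are, up to sign, the elementary symmetric functions of the eigenvalues. By Newton's identities over $\Q$ (the denominators $1,\dots,d$ are invertible), these are polynomials with rational coefficients in the power sums $\tr(A^m)$, $1\le m\le d$. Since $A^m\in\Gamma$, we get $\chi_A\in\Q(\tr\Gamma)[x]\subset K_L[x]$.

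\emph{The off-circle eigenvalues differ by the factor $L^2$.} From the parametrization, $\beta=L^{-1}u=\alpha/L^2$. (Equivalently, this is the symmetry $\lambda\mapsto 1/\bar\lambda$ of the spectrum of an element of $\SU(n,1)$.) Now set
\[
 P(x)=\chi_A(x),\qquad Q(x)=\chi_A\bigl(x/L^2\bigr).
\]
Since $L\in K_L$, both $P$ and $Q$ lie in $K_L[x]$. Moreover $P(\alpha)=0$, and $Q(\alpha)=\chi_A(\beta)=0$.

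\emph{Uniqueness of the common root.} A common root $\lambda$ of $P$ and $Q$ is an eigenvalue of $A$ such that $\lambda/L^2$ is also an eigenvalue. The eigenvalue moduli lie in $\{L,1,L^{-1}\}$, and $L>1$. Dividing by $L^2$ must land back in this set, which forces $|\lambda|=L$ and $|\lambda/L^2|=L^{-1}$. So $\lambda=\alpha$, because $\alpha$ is the only eigenvalue of modulus $L$ (simple spectrum, and exactly two eigenvalues off the circle). By the same simplicity, $\alpha$ is a simple root of $P$. Hence $\gcd(P,Q)$, computed in $K_L[x]$ by the Euclidean algorithm, is the monic linear polynomial $x-\alpha$. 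Since the gcd has coefficients in $K_L$, we get $\alpha\in K_L$, and then $\beta=\alpha L^{-2}\in K_L$.

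The main point requiring care is the uniqueness step: one must check that no unit-circle eigenvalue, and not $\beta$ itself, can be a common root. That is exactly the modulus bookkeeping above, and it uses $L>1$. It is also worth noting why only the off-circle pair is recovered: no such rigidity holds on the unit circle, consistent with the counterexample in Proposition~\ref{prop:trace-dilation-failure}.
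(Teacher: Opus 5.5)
Your proposal is correct and is essentially the paper's proof. The only difference is orientation: you recover $\alpha$ as the unique common root of $\chi_A(x)$ and $\chi_A(x/L^2)$. The paper instead recovers $\beta$ as the unique common root of $P_A(X)$ and $P_A(L^2X)$ and then sets $\alpha=L^2\beta$, with the same Newton-identity, modulus-bookkeeping and gcd steps.
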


\begin{proof}
Let
\[
 P_A(X)=\det(XI-A).
\]
By Newton's identities, every coefficient of $P_A$ is a polynomial with rational coefficients in
\[
 \tr(A),\tr(A^2),\ldots,\tr(A^d),
 \qquad d=n+1.
\]
Since $A^m\in\Gamma$, these traces lie in $k(\Gamma)$, and therefore
\[
 P_A(X)\in k(\Gamma)[X]\subset K_L[X].
\]
Now $\alpha=L^2\beta$.  Hence $\beta$ is a common zero of $P_A(X)$ and $P_A(L^2X)$.  It is the only common zero: the unit-modulus eigenvalues cannot occur in a pair $(z,L^2z)$, while $\alpha$ cannot be the smaller member of such a pair.  Since the spectrum is simple,
\[
 \gcd\bigl(P_A(X),P_A(L^2X)\bigr)=X-\beta
 \quad\text{in }K_L[X].
\]
Thus $\beta\in K_L$, and then $\alpha=L^2\beta\in K_L$.
\end{proof}

\begin{remark}\label{rem:trace-dilation-limit}
The preceding results identify precisely what survives from the $\SU(2,1)$ trace--dilation mechanism.  In arbitrary dimension the two eigenvalues off the unit circle lie in $\Q(\tr\Gamma,L)$, but the remaining unit-circle eigenvalues need not.  Therefore the linear descent theorem below is formulated over the full one-spectrum field $\Q(\tr\Gamma,E(A))$.  Even if a smaller linear field is available in a particular case, unitary descent still requires the independent Hermitian-similarity condition studied in Section~\ref{sec:hermitian}.
\end{remark}

\begin{remark}[Complex irreducibility versus Zariski density]\label{rem:irr-zar}
Complex irreducibility and Zariski density will be kept separate.  Zariski density implies absolute irreducibility, but the converse is false in general: a complex-irreducible subgroup may lie in a proper real algebraic subgroup of $\SU(n,1)$, for example in the real form $\SO(n,1)$.  This is exactly the distinction used in the next two propositions.
\end{remark}

\begin{proposition}[Proposition 1 --- regular loxodromics in the Zariski-dense case]\label{prop:regular-lox-zariski}
Let $G<\SU(n,1)$, with $n\ge2$, be Zariski-dense.  Then $G$ contains a
loxodromic element whose eigenvalues in the standard representation are
pairwise distinct.
\end{proposition}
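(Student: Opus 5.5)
The plan is to show that regular loxodromic elements form a nonempty Zariski-open condition intersected with a Zariski-dense set, using two ingredients: the existence of some loxodromic element in $G$, and the Zariski-openness of regular semisimplicity.

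\textbf{Step 1: $G$ contains a loxodromic element.} Since $G$ is Zariski-dense, it is not contained in a compact subgroup or in the stabilizer of a boundary point; both are proper algebraic subgroups. In particular $G$ is nonelementary.

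The elements of $G$ cannot all be elliptic or parabolic. A group consisting only of elliptic and parabolic elements either fixes a point of $\overline{\mathbb H^n_{\C}}$ or preserves a proper subspace. For a Zariski-dense group this can be seen concretely. Consider the regular function $g\mapsto |\tr g|$ and the eigenvalue-modulus criterion. An element is loxodromic precisely when its spectral radius exceeds $1$. If every element of $G$ had spectral radius $1$, the same would hold on the Zariski closure, because the characteristic-polynomial coefficients satisfy the corresponding algebraic constraints. Concretely, the condition that every eigenvalue has modulus one forces each coefficient $c_k$ of the characteristic polynomial to satisfy $c_{d-k}=\overline{c_k}$ together with further real-algebraic inequalities. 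I would argue more simply: the set of elements with all eigenvalues of modulus $1$ is a proper real-algebraic-like closed condition failing on $\SU(n,1)$.

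The cleanest route is to quote the standard fact that a nonelementary subgroup of $\SU(n,1)$ contains a loxodromic element (ping-pong or the Chen--Greenberg classification). I would use this rather than a trace argument.

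\textbf{Step 2: regular semisimple elements form a dense open set, and the loxodromics can be moved into it.} Let $\Delta(g)$ be the discriminant of the characteristic polynomial of $g$. It is a polynomial in the matrix entries and is not identically zero on $\SU(n,1)$, since $\diag(2,i,-i,1/2)$-type elements exist in every dimension. Hence $U=\{\Delta\neq0\}$ is a nonempty Zariski-open subset of the irreducible group $\SU(n,1)$, and by Zariski density $G\cap U\neq\emptyset$.

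Being regular does not by itself give loxodromicity, so the two properties must be combined. Take a loxodromic $B\in G$ from Step 1 and some $g\in G$. Consider the words $B^m g$, or alternatively $B^m C B^{-m}$ products. I would use the following approach: the set $\{h\in \SU(n,1): \Delta(B^m h)\ne 0\}$ is Zariski-open and nonempty, so some $h\in G$ makes $B^mh$ regular. The difficulty is that $h$ may depend on $m$.

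Instead, I would fix $h\in G$ with $h$ in general position relative to the fixed points of $B$; such an $h$ exists because the bad positions form a proper Zariski-closed condition. Then $B^m h$ is loxodromic for all large $m$, by the standard north--south dynamics argument. Also, $\Delta(B^m h)$, viewed via eigenvalue asymptotics, is nonzero for large $m$. The expected obstacle is exactly this last point. The unit-circle eigenvalues of $B^m h$ behave like those of the compression of $h$ to the elliptic block, rotated by $B^m$, and could collide.

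To handle it, I would choose $h$ in the Zariski-open (nonempty) set where this $(d-2)$-block compression composed with the rotational part has simple spectrum. Alternatively, I would replace $h$ by $hg$ with $g\in G\cap U'$, where $U'$ is a suitable open set, and take $m$ large.

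If this asymptotic control proves awkward, a fallback is available. Loxodromics form a nonempty open subset $\Omega$ of $\SU(n,1)$ in the analytic topology. A Zariski-dense subgroup of a connected simple Lie group is either discrete or dense; in the dense case we simply pick an element of $\Omega\cap U$. In the discrete case we would rely on the Benoist--Prasad result that Zariski-dense subgroups contain $\R$-regular elements, generic in the sense of having regular semisimple spectrum. That result directly gives a loxodromic element with pairwise distinct eigenvalues.
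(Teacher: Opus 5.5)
\paragraph{Main route.} There is a genuine gap, and it is exactly the one you flag in Step~2; your proposed repair does not close it. Decompose $V=\langle e_1\rangle\oplus W\oplus\langle e_d\rangle$ along the attracting and repelling eigenlines of $B$, so that $B=\diag(\lambda,R,\bar\lambda^{-1})$ with $|\lambda|>1$ and $R$ unitary on $W$. Write the first two block rows of $h$ as $(a,b,c)$ and $(e,f,g)$. A Schur-complement expansion of $\det(zI-B^mh)$ shows the following: one eigenvalue is $\sim\lambda^ma$, one tends to $0$, and the remaining $d-2$ are within $o(1)$ of the eigenvalues of $R^mS$, where
\[
 S=f-e\,a^{-1}b
\]
is the transverse (Levi) part of $h$, not its plain compression $f$.

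Because $R^m$ keeps rotating, ``simple spectrum of the compression composed with the rotational part'' is not a single Zariski-open condition on $h$. Nothing in your argument rules out collisions among the middle eigenvalues for infinitely many $m$, so the claim $\Delta(B^mh)\ne0$ for large $m$ is unsupported. Replacing $h$ by $hg$ runs into the same problem.

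The missing idea is compactness of $U(W)$. Take return times $m_j\to\infty$ with $R^{m_j}\to I$; this is the archimedean counterpart of the reset trick in Lemma~\ref{lem:reset}. Then impose on $h$ only the $m$-independent conditions $a\ne0$ and $\disc(\chi_S)\ne0$. These define a nonempty Zariski-open set (it contains $\diag(\mu,R',\bar\mu^{-1})$ with $R'$ regular), so $G$ meets it. Along $m_j$, one eigenvalue of $B^{m_j}h$ is $\sim\lambda^{m_j}a$, so the element is loxodromic. Hurwitz's theorem applied to the Schur-complement determinant shows that the other eigenvalues converge to $0$ and to the distinct eigenvalues of $S$, so the spectrum is simple for large $j$.

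Step~1 also needs a justification that does not assume discreteness. Gromov's classification provides it: a group with no loxodromic element fixes a point of $\mathbb H^n_{\C}$ or of its boundary, and both are excluded by Zariski density. With these repairs your main line becomes a valid and more elementary alternative to the paper's proof.

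\paragraph{Fallback.} The fallback does not fill the gap either. The discrete/dense dichotomy and the dense case are fine. However, the $\R$-regularity theorems of Benoist--Labourie and Prasad only produce loxodromic elements here. In real rank one every loxodromic element is $\R$-regular, including $\diag(2,1,1,\tfrac12)\in\SU(3,1)$, which has a repeated eigenvalue.

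What is needed is Theorem~1 of \cite{PrasadRapinchuk2003}. It gives $x\in G$ that is simultaneously regular and $\R$-regular, with $\langle x\rangle$ Zariski-dense in the torus $Z(x)^\circ$. That is precisely the paper's proof, and since it applies to every Zariski-dense $G$, the dichotomy is superfluous. The paper then only checks two things: the split component of $x$ is non-torsion, so $x$ is loxodromic; and regular semisimple elements of $\SL_{n+1}(\C)$ have distinct eigenvalues.

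\paragraph{Comparison.} The paper's route is one deep citation plus a short torus argument. Your repaired route avoids that theorem at the cost of the eigenvalue asymptotics. In exchange it shows that regular loxodromics of the form $B^{m_j}h$ are abundant in $G$.
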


\begin{proof}
Let $\mathbf H=\SU(n,1)$, viewed as a connected semisimple real algebraic
group.  By Prasad--Rapinchuk \cite[Theorem~1]{PrasadRapinchuk2003}, the
Zariski-dense subgroup $G<\mathbf H(\R)$ contains an element $x$ which is
regular and $\R$-regular and for which
\[
 T=Z_{\mathbf H}(x)^\circ
\]
is a maximal real torus such that the cyclic subgroup $\langle x\rangle$ is
Zariski-dense in $T$.  In particular, $x$ is semisimple.

Let $T_s$ and $T_a$ denote respectively the maximal $\R$-split and
$\R$-anisotropic subtori of $T$.  By
\cite[Remark~1]{PrasadRapinchuk2003}, $T_s$ is a maximal $\R$-split torus of
$\mathbf H$.  Since $\mathbf H=\SU(n,1)$ has real rank one,
\[
 \dim T_s=1.
\]
The same remark gives an integer $d\ge1$ and a decomposition
\[
 x^d=yz,
 \qquad y\in T_s(\R),\quad z\in T_a(\R).
\]
The cyclic subgroup generated by $x^d$ is still Zariski-dense in $T$, since
the power map $t\mapsto t^d$ is a surjective isogeny of the torus $T$.
Consequently $y$ cannot be torsion.  Indeed, if $y$ had finite order, then
a further positive power of $x^d$ would lie in $T_a(\R)$, so its cyclic
subgroup would have Zariski closure contained in the proper subtorus $T_a<T$,
contrary to the Zariski density of $\langle x^d\rangle$ in $T$.

Thus $x$ has nontrivial hyperbolic component.  Equivalently, $x$ is not
relatively compact.  Since $x$ is semisimple and $\SU(n,1)$ has real rank
one, a semisimple non-relatively-compact element is loxodromic.  Hence $x$ is
loxodromic.

It remains to verify simplicity of the spectrum in the standard
$(n+1)$-dimensional representation.  Since $x$ is regular semisimple,
$T=Z_{\mathbf H}(x)^\circ$ is a maximal real torus.  After base change to
$\C$,
\[
 \mathbf H_{\C}\simeq \SL_{n+1,\C}
\]
and $T_{\C}$ is a maximal complex torus equal to the identity component of
the centralizer of $x$ in $\mathbf H_{\C}$.  A semisimple element of
$\SL_{n+1}(\C)$ has maximal-torus centralizer if and only if its eigenvalues
in the standard representation are pairwise distinct.  Therefore the
standard spectrum of $x$ is simple.

Thus $x\in G$ is a loxodromic element with pairwise distinct eigenvalues.
\end{proof}

\begin{proposition}[Proposition 2 --- regular loxodromics under complex irreducibility]
\label{prop:regular-lox-irreducible}
Let $G<\SU(n,1)$, with $n\ge2$, be complex irreducible. Then $G$ contains a
loxodromic element whose eigenvalues in the standard representation are pairwise distinct.
\end{proposition}

\begin{proof}
Let $\mathbf Z$ be the real Zariski closure of $G$ in $\mathbf{SU}(n,1)$.
By Ben Ahmed--Zeghib \cite[Corollary~4.3]{BenAhmedZeghib2016}, after conjugation
in $\SU(n,1)$ the identity component of $\mathbf Z(\mathbb R)$ is either
\[
 \SU(n,1)
 \qquad\text{or}\qquad
 \SO^0(n,1).
\]
Let $\mathbf H=\mathbf Z^{\circ}$ be the connected real algebraic identity
component.  Thus $\mathbf H$ is the ambient $\mathbf{SU}(n,1)$ in the first
case and the standard real form $\mathbf{SO}(n,1)$ in the second (with
$\mathbf H(\mathbb R)^\circ=\SO^0(n,1)$).  Put
\[
 G_0=G\cap \mathbf H(\mathbb R).
\]
Because $\mathbf Z/\mathbf H$ is finite, $G_0$ has finite index in $G$.
Moreover a finite-index subgroup has the same identity component of its
Zariski closure; since $G_0\subset\mathbf H(\mathbb R)$ and $\mathbf H$ is
connected, the real Zariski closure of $G_0$ is exactly $\mathbf H$.

The group $\mathbf H$ is connected and semisimple of real rank one.  Apply
Prasad--Rapinchuk \cite[Theorem~1]{PrasadRapinchuk2003} to the Zariski-dense
subgroup $G_0<\mathbf H(\mathbb R)$.  There exists
\[
 x\in G_0
\]
which is regular and $\mathbb R$-regular, and such that the cyclic subgroup
$\langle x\rangle$ is Zariski-dense in the maximal torus
\[
 T=Z_{\mathbf H}(x)^\circ.
\]

We first show that $x$ is loxodromic.  Write $T_s$ for the maximal
$\mathbb R$-split subtorus of $T$.  Since $x$ is $\mathbb R$-regular,
$T_s$ is a maximal $\mathbb R$-split torus of $\mathbf H$; hence
\[
 \dim T_s=1.
\]
Moreover the Zariski density of $\langle x\rangle$ in $T$ implies that its
image in the split quotient of $T$ is Zariski-dense and in particular
nontrivial.  Thus the split component of $x$ is nontrivial.  A semisimple
element of a real-rank-one group $\SU(n,1)$ or $\SO(n,1)$ with nontrivial
split component is loxodromic.

It remains to prove that the eigenvalues of $x$ in the standard complex
representation are pairwise distinct.  After complexification, $T_{\mathbb C}$ is a maximal torus of
$\mathbf H_{\mathbb C}$, and the restriction of the standard representation
to $T_{\mathbb C}$ has one-dimensional weight spaces with distinct weights,
both for
\[
 \mathbf H_{\mathbb C}\simeq \SL_{n+1}(\mathbb C)
\]
in the unitary case and for
\[
 \mathbf H_{\mathbb C}\simeq \SO_{n+1}(\mathbb C)
\]
in the real case.  Suppose two standard eigenvalues of $x$ were equal.  If
$\mu\ne\nu$ are the corresponding torus weights, then
\[
 (\mu\nu^{-1})(x)=1.
\]
Hence the cyclic group $\langle x\rangle$ would be contained in the kernel of
the nontrivial character $\mu\nu^{-1}$, a proper algebraic subgroup of
$T_{\mathbb C}$.  But Zariski density of $\langle x\rangle$ in the real torus
$T$ remains Zariski density after base change to $\mathbb C$: the real and
imaginary parts of any complex regular function vanishing on $\langle x\rangle$
would both vanish on $T$.  This is a contradiction.  Therefore all standard
eigenvalues of $x$ are distinct.

Thus $x\in G$ is a regular loxodromic element, as required.
\end{proof}

Thus the coefficient calculations below are made after choosing such a regular loxodromic element $A$.

\subsection{Inversion and conjugation}

Let $\iota$ be the involution of indices determined by the Siegel form: $\iota(1)=d$, $\iota(d)=1$, and $\iota(j)=j$ for $2\le j\le d-1$.

\begin{lemma}\label{lem:inverse}
If $B=(b_{ij})\in\SU(J)$, then
\[
 (B^{-1})_{ij}=\overline{b_{\iota(j),\iota(i)}}.
\]
In particular,
\[
 \tr(B^{-1})=\overline{\tr(B)}.
\]
\end{lemma}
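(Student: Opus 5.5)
From $B^*JB=J$ we get the closed formula $B^{-1}=J^{-1}B^*J=JB^*J$, because $J$ is a real symmetric involution, $J^2=I$. The whole statement then follows by reading off entries.

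First, observe that $J$ is the permutation matrix of $\iota$: $J_{ij}=1$ exactly when $j=\iota(i)$, and $J_{ij}=0$ otherwise. This holds because $J$ swaps the first and last basis vectors and fixes the middle ones. Consequently, for any matrix $M$, conjugating by $J$ permutes both indices by $\iota$:
\[
 (JMJ)_{ij}=\sum_{k,l}J_{ik}M_{kl}J_{lj}=M_{\iota(i),\iota(j)}.
\]
Here we use that $\iota$ is an involution, so $J_{lj}=1$ iff $l=\iota(j)$.

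Next, apply this with $M=B^*$, whose entries are $(B^*)_{kl}=\overline{b_{lk}}$. This gives
\[
 (B^{-1})_{ij}=(JB^*J)_{ij}=(B^*)_{\iota(i),\iota(j)}=\overline{b_{\iota(j),\iota(i)}},
\]
which is the asserted formula.

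For the trace, set $i=j$ and sum. The map $i\mapsto\iota(i)$ is a bijection of $\{1,\dots,d\}$, so
\[
 \tr(B^{-1})=\sum_i\overline{b_{\iota(i),\iota(i)}}=\overline{\tr B}.
\]
Alternatively, $B^{-1}=JB^*J$ is conjugate to $B^*$, whose trace is $\overline{\tr B}$.

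There is no genuine obstacle here. The only point needing care is the index bookkeeping in the permutation-matrix identity, in particular the order of $\iota(j),\iota(i)$ produced by the transpose. Note also that the determinant-one condition is not used; the formula holds on all of $\mathrm U(J)$.
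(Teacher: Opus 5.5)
Your proof is correct and is the same as the paper's: you derive $B^{-1}=JB^*J$ from $B^*JB=J$ and $J=J^{-1}=J^*$, then use that conjugation by $J$ permutes both indices by $\iota$. Your explicit permutation-matrix index check and your remark that only $B\in\mathrm U(J)$ is needed are accurate additions to the paper's one-line argument.
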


\begin{proof}
Since $J=J^{-1}=J^*$ and $B^*JB=J$, one has
\[
 B^{-1}=JB^*J.
\]
Left and right multiplication by $J$ apply the index involution $\iota$, so the $(i,j)$ entry is
\[
 (JB^*J)_{ij}=\overline{b_{\iota(j),\iota(i)}}.
\]
Taking the trace gives the final identity.
\end{proof}

\begin{lemma}\label{lem:Kbar}
For a regular loxodromic $A$ with distinct eigenvalues,
\[
 K=\Q(\tr\Gamma,E(A))
\]
is stable under complex conjugation.
\end{lemma}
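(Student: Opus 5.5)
We show separately that the trace field and the spectrum of $A$ are each mapped into $K$ by complex conjugation. Since conjugation is a field automorphism of $\C$, it then carries the field they generate, namely $K$, into itself. Applying conjugation a second time gives $\overline K=K$.

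\emph{The trace field.} By Lemma~\ref{lem:inverse}, every $\gamma\in\Gamma$ satisfies $\overline{\tr(\gamma)}=\tr(\gamma^{-1})$. Because $\gamma^{-1}\in\Gamma$, this value lies in $\tr\Gamma$. Hence conjugation maps the generating set $\tr\Gamma$ into itself, and therefore maps $\Q(\tr\Gamma)$ into itself.

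\emph{The spectrum of $A$.} Since $A\in\SU(J)$, we have $A^{-1}=JA^*J$. Thus $A^{-1}$ is conjugate to $A^*$, and the spectrum of $A^*$ is $\{\overline{\lambda}:\lambda\in E(A)\}$. On the other hand, the spectrum of $A^{-1}$ is $\{\lambda^{-1}:\lambda\in E(A)\}$. It follows that $\overline{E(A)}=E(A)^{-1}$, so for each eigenvalue $\lambda_j$ the conjugate $\overline{\lambda_j}$ equals $\lambda_k^{-1}$ for some $k$. Every eigenvalue is nonzero because $\det A=1$, so $\lambda_k^{-1}$ lies in $\Q(E(A))\subset K$. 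We can also check this directly in the loxodromic normal form: the unit-circle eigenvalues satisfy $\bar\lambda=\lambda^{-1}$, and the pair $\alpha=Lu$, $\beta=L^{-1}u$ satisfies $\bar\alpha=Lu^{-1}=\beta^{-1}$ and $\bar\beta=\alpha^{-1}$.

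Combining the two steps, complex conjugation maps every generator of $K$ into $K$, so $\overline K\subset K$, and applying it twice gives equality. The only step needing care is the spectral symmetry $\overline{E(A)}=E(A)^{-1}$, which follows immediately from $A^{-1}=JA^*J$. The hypothesis that the eigenvalues are distinct is not actually needed for stability; it matters only for later uses of $K$.
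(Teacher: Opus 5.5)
Your proof is correct and follows essentially the same route as the paper. The trace field is handled via $\overline{\tr(\gamma)}=\tr(\gamma^{-1})$, and the spectrum via the unitary pairing $\bar\lambda_j=\lambda_k^{-1}$, which you derive explicitly from $A^{-1}=JA^*J$; you are also right that distinctness of the eigenvalues plays no role here.
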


\begin{proof}
Since $\overline{\tr(\gamma)}=\tr(\gamma^{-1})$ for $\gamma\in\Gamma$, the trace field is conjugation-stable.  The eigenvalues of $A^{-1}$ are the inverses of those of $A$, while unitarity pairs eigenvalues by $\lambda\mapsto\bar\lambda^{-1}$.  Hence conjugates of the eigenvalues of $A$ also lie in $K$.
\end{proof}

\subsection{Vandermonde recovery of diagonal coefficients}

Assume from now on that
\[
 A=\diag(\lambda_1,\ldots,\lambda_d),
\]
with the $\lambda_i$ pairwise distinct.

\begin{lemma}\label{lem:diagonal}
For every $B=(b_{ij})\in\Gamma$ one has
\[
 b_{11},\ldots,b_{dd}\in K.
\]
\end{lemma}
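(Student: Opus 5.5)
The plan is to recover the diagonal of $B$ from a Vandermonde system built out of the traces of the words $A^mB$. For every integer $m\ge0$ the element $A^mB$ lies in $\Gamma$, so $\tr(A^mB)\in\Q(\tr\Gamma)\subset K$. Since $A=\diag(\lambda_1,\ldots,\lambda_d)$ is diagonal, we have $(A^mB)_{ii}=\lambda_i^m b_{ii}$, and therefore
\[
 \tr(A^mB)=\sum_{i=1}^d \lambda_i^m\, b_{ii},\qquad m=0,1,\ldots,d-1 .
\]
This is a square linear system in the unknowns $b_{11},\ldots,b_{dd}$. Its coefficient matrix is the Vandermonde matrix $V=(\lambda_i^{m})_{0\le m\le d-1,\;1\le i\le d}$, and its right-hand side $(\tr(B),\tr(AB),\ldots,\tr(A^{d-1}B))$ has entries in $K$.

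Next, I will use that the $\lambda_i$ are pairwise distinct. This gives
\[
 \det V=\prod_{i<j}(\lambda_j-\lambda_i)\neq0,
\]
so $V$ is invertible. Its entries are powers of the eigenvalues of $A$, and these lie in $K$ by the definition $K=\Q(\tr\Gamma,E(A))$. By Cramer's rule, or equivalently because $V^{-1}$ has entries in $K$, the unique solution $(b_{11},\ldots,b_{dd})$ has all its coordinates in $K$.

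No step here is a genuine obstacle. The one point to watch is that the conjugation putting $A$ in diagonal form has already been carried out, so that $\Gamma$ is regarded in this basis and its traces are unchanged. The nonvanishing of the Vandermonde determinant is exactly where the regularity hypothesis on $A$, supplied by Proposition~\ref{prop:regular-lox-zariski}, is used. Only the nonnegative powers $A^m$ with $0\le m\le d-1$ are needed, so no appeal to Lemma~\ref{lem:inverse} or Lemma~\ref{lem:Kbar} is required at this stage.
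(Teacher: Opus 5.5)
Your proof is correct and is essentially the paper's argument: the traces $\tr(A^mB)$ for $0\le m\le d-1$ give a Vandermonde system in $b_{11},\ldots,b_{dd}$ with right-hand side in $K$. The simple spectrum of $A$ makes the Vandermonde determinant nonzero, so Cramer's rule over $K$ puts each $b_{ii}$ in $K$.
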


\begin{proof}
The traces
\begin{align*}
 \tr(B)&=b_{11}+\cdots+b_{dd}=t_0,\\
 \tr(AB)&=\lambda_1b_{11}+\cdots+\lambda_db_{dd}=t_1,\\
 &\vdots\\
 \tr(A^{d-1}B)&=\lambda_1^{d-1}b_{11}+\cdots+\lambda_d^{d-1}b_{dd}=t_{d-1}
\end{align*}
lie in $K$.  The coefficient matrix is the Vandermonde matrix
\[
 V=(\lambda_j^m)_{0\le m\le d-1,\ 1\le j\le d}
\]
with
\[
 \det V=\prod_{1\le p<q\le d}(\lambda_q-\lambda_p)\ne0.
\]
Cramer's rule gives $b_{ii}\in K$.
\end{proof}

\subsection{Pair products and unitary mixed products}

The next calculation makes explicit which coefficient combinations are recovered before passing to the invariant formulation.

\begin{lemma}\label{lem:pair-products}
For every $B=(b_{ij})\in\Gamma$ and all $i,j$,
\[
 b_{ij}b_{ji}\in K.
\]
\end{lemma}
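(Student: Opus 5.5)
The approach is to run the Vandermonde argument of Lemma~\ref{lem:diagonal} a second time, now using traces of words with two copies of $B$. For exponents $0\le p,q\le d-1$ the word $A^pBA^qB$ lies in $\Gamma$, so
\[
 t_{pq}:=\tr(A^pBA^qB)=\sum_{i,j}\lambda_i^p\,b_{ij}\,\lambda_j^q\,b_{ji}\in K .
\]
Put $c_{ij}=b_{ij}b_{ji}$, so that $c_{ij}=c_{ji}$. The identity then reads
\[
 t_{pq}=\sum_{i,j}\lambda_i^p\lambda_j^q\,c_{ij}.
\]

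We view this as the linear system $T=VCV^{T}$. Here $T=(t_{pq})\in M_d(K)$, $C=(c_{ij})$, and $V=(\lambda_j^m)_{0\le m\le d-1,\,1\le j\le d}$ is the Vandermonde matrix already used in Lemma~\ref{lem:diagonal}. In that proof $V$ has entries in $K$, and its determinant is nonzero because the $\lambda_i$ are pairwise distinct. So $V$ is invertible over $K$, and
\[
 C=V^{-1}T\,(V^{T})^{-1}\in M_d(K).
\]
This gives $b_{ij}b_{ji}\in K$ for all $i,j$. Equivalently, the system is the tensor product $V\otimes V$ of two Vandermonde systems, and it has nonzero determinant $\det(V)^{2d}$.

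The point that needs care is the bookkeeping of the trace expansion. We have $(A^pB)_{ij}=\lambda_i^p b_{ij}$ and $(A^qB)_{ji}=\lambda_j^q b_{ji}$, so the trace of the product is $\sum_{i,j}\lambda_i^p b_{ij}\lambda_j^q b_{ji}$, which is exactly the expression above. We also need the words to lie in $\Gamma$ and the exponents to be nonnegative. Both hold automatically, since $A,B\in\Gamma$ and only $p,q\ge0$ are used. The diagonal case $i=j$ gives $b_{ii}^2$, which is consistent with Lemma~\ref{lem:diagonal}, so no separate treatment is needed. I do not expect a genuine obstacle: the only potential subtlety is invertibility over $K$ rather than over $\C$, and that is guaranteed because all $\lambda_i\in K$ by the definition of $K$.
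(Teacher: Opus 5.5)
Your proposal is correct and is essentially the paper's argument. The paper applies Lemma~\ref{lem:diagonal} to $BA^kB$, which uses exactly your traces $\tr(A^mBA^kB)$, and then inverts a reduced Vandermonde system in $k$ after subtracting the known term $\lambda_i^k b_{ii}^2$. You instead invert both Vandermonde systems at once via $V\otimes V$, which is the same interpolation the paper itself later packages tensorially.
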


\begin{proof}
Consider
\[
 C_k=BA^kB,\qquad 1\le k\le d-1.
\]
For instance,
\[
 (C_k)_{11}=\lambda_1^k b_{11}^2+\sum_{j=2}^d\lambda_j^k b_{1j}b_{j1}.
\]
The diagonal-entry lemma applied to $C_k\in\Gamma$ shows that the left-hand side belongs to $K$.  Therefore the unknowns
\[
 x_j=b_{1j}b_{j1},\qquad 2\le j\le d,
\]
satisfy a reduced Vandermonde system with coefficient matrix
\[
 V_1=(\lambda_j^m)_{1\le m\le d-1,\ 2\le j\le d}.
\]
Its determinant is nonzero because the $\lambda_j$ are nonzero and pairwise distinct.  The same argument with another row index gives the result for all $i,j$.
\end{proof}

\begin{lemma}\label{lem:mixed-products}
For every $B=(b_{ij})\in\Gamma$ and all $1\le i,j\le d$ one has
\[
 b_{ij}\,\overline{b_{\iota(i),\iota(j)}}\in K.
\]
In particular,
\[
 b_{1j}\overline{b_{dj}},\quad
 b_{j1}\overline{b_{jd}}\in K
 \qquad(2\le j\le d-1),
\]
and
\[
 b_{ij}\overline{b_{ij}}\in K
 \qquad(2\le i,j\le d-1).
\]
\end{lemma}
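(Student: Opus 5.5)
The plan is to mimic the pair-product argument of Lemma~\ref{lem:pair-products}, with the second factor $B$ replaced by $B^{-1}$, and then to use Lemma~\ref{lem:inverse} to turn entries of $B^{-1}$ into complex conjugates of entries of $B$. By Lemma~\ref{lem:inverse},
\[
 (B^{-1})_{ji}=\overline{b_{\iota(i),\iota(j)}},
\]
so the quantity to be shown rational over $K$ is exactly $b_{ij}(B^{-1})_{ji}$.

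Fix a row index $i$ and consider the words
\[
 C_k=BA^kB^{-1}\in\Gamma,\qquad 0\le k\le d-1.
\]
Since $A$ is diagonal, the $(i,i)$ entry of $C_k$ is
\[
 (C_k)_{ii}=\sum_{j=1}^d \lambda_j^k\, b_{ij}(B^{-1})_{ji}
 =\sum_{j=1}^d \lambda_j^k\, b_{ij}\overline{b_{\iota(i),\iota(j)}}.
\]
Each $C_k$ lies in $\Gamma$, so Lemma~\ref{lem:diagonal} puts every left-hand side in $K$. Hence the unknowns
\[
 x_j=b_{ij}\overline{b_{\iota(i),\iota(j)}},\qquad 1\le j\le d,
\]
satisfy a $d\times d$ linear system over $K$ whose coefficient matrix is the full Vandermonde matrix $(\lambda_j^k)$. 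The eigenvalues $\lambda_j$ are pairwise distinct, so this matrix is invertible, and since its entries lie in $K$, Cramer's rule gives $x_j\in K$ for every $j$. Letting $i$ range over all indices proves the general claim.

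The particular cases are specializations of the index involution $\iota$. Taking $i=1$ (so $\iota(1)=d$) and $2\le j\le d-1$ (so $\iota(j)=j$) gives $b_{1j}\overline{b_{dj}}\in K$. Taking row $j$ with $2\le j\le d-1$ and column $1$ gives $b_{j1}\overline{b_{jd}}\in K$. Taking $2\le i,j\le d-1$, where $\iota$ acts trivially, gives $|b_{ij}|^2=b_{ij}\overline{b_{ij}}\in K$.

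There is no real obstacle here. The only point needing care is checking that the index bookkeeping in Lemma~\ref{lem:inverse} produces $\overline{b_{\iota(i),\iota(j)}}$ rather than its transpose. Unlike Lemma~\ref{lem:pair-products}, the term $j=i$ does not need to be removed beforehand: it is simply one of the $d$ unknowns in the full Vandermonde system.
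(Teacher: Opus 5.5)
Your proposal is correct and is essentially the paper's proof. It uses the same words $BA^kB^{-1}$ for $0\le k\le d-1$, rewrites $(B^{-1})_{ji}=\overline{b_{\iota(i),\iota(j)}}$ via Lemma~\ref{lem:inverse}, puts the diagonal entries in $K$ by Lemma~\ref{lem:diagonal}, and inverts the full Vandermonde matrix. Your index bookkeeping for the special cases is also right.
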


\begin{proof}
For fixed $i$ use
\[
 D_k=BA^kB^{-1},\qquad 0\le k\le d-1.
\]
Lemma~\ref{lem:inverse} gives
\[
 (D_k)_{ii}
 =\sum_{j=1}^d \lambda_j^k b_{ij}
   \overline{b_{\iota(i),\iota(j)}}.
\]
Since $D_k\in\Gamma$, Lemma~\ref{lem:diagonal} gives $(D_k)_{ii}\in K$.  The full Vandermonde matrix $(\lambda_j^k)_{0\le k\le d-1,\,1\le j\le d}$ is invertible, so each coefficient
\[
 b_{ij}\overline{b_{\iota(i),\iota(j)}}
\]
lies in $K$.  The displayed special cases follow because $\iota(j)=j$ for $2\le j\le d-1$.
\end{proof}

\subsection{The word \texorpdfstring{$E(m,k,r)$}{E(m,k,r)}}

To recover individual coefficients explicitly, the same interpolation is applied to a longer word.  Suppose that $\Gamma_0=\langle A,B\rangle$ with $A$ as above.  For the fixed row under consideration, assume that the entries used as divisors below are nonzero and that a diagonal change of basis commuting with $A$ has normalized the indicated inverse entry to $1$.  This is a genuine normalization hypothesis: diagonal conjugation rescales a nonzero entry but cannot make a zero entry nonzero.  The unconditional descent theorem in Section~\ref{sec:linear} will not depend on it.

Write $B^{-1}=(d_{ij})$ and set
\[
 E(m,k,r)=BA^mBA^kBA^rB^{-1}.
\]
Then
\begin{align}
 E(m,k,r)_{ii}
 =\sum_{\ell=1}^d\lambda_\ell^r
 \left[\sum_{w=1}^d\lambda_w^k
 \left(\sum_{s=1}^d\lambda_s^m b_{is}b_{sw}\right)b_{w\ell}\right]d_{\ell i}.
 \label{eq:E-mkr}
\end{align}
Since $E(m,k,r)\in\Gamma_0$, its diagonal entries lie in $K$.  The successive inversions in $r$, $k$, and $m$ are recorded next; separating them from the expansion above avoids repeating the same interpolation argument twice.

\subsection{Three successive Vandermonde inversions}\label{subsec:three-cramer}

We spell out the three inversions because they show directly how individual coefficients arise from traces of longer words.  Write $d=n+1$ and, for fixed $m,k$ and $i$, set
\[
 X_I(m,k)=
 \left[\sum_{w=1}^{d}\lambda_w^k
 \left(\sum_{s=1}^{d}\lambda_s^m b_{is}b_{sw}\right)b_{wI}\right]d_{Ii}.
\]
For $r=1,\ldots,d$ the diagonal coefficient in \eqref{eq:E-mkr} gives
\[
 \begin{pmatrix}
 \lambda_1&\lambda_2&\cdots&\lambda_d\\
 \lambda_1^2&\lambda_2^2&\cdots&\lambda_d^2\\
 \vdots&\vdots&\ddots&\vdots\\
 \lambda_1^d&\lambda_2^d&\cdots&\lambda_d^d
 \end{pmatrix}
 \begin{pmatrix}
 X_1(m,k)\\X_2(m,k)\\\vdots\\X_d(m,k)
 \end{pmatrix}
 =
 \begin{pmatrix}
 E(m,k,1)_{ii}\\E(m,k,2)_{ii}\\\vdots\\E(m,k,d)_{ii}
 \end{pmatrix}.
\]
The determinant of the matrix on the left is
\[
 \left(\prod_{j=1}^{d}\lambda_j\right)
 \prod_{1\le p<q\le d}(\lambda_q-\lambda_p),
\]
which is nonzero.  Thus every $X_I(m,k)$ lies in $K$.

The normalization chooses an index $I_0$ with $d_{I_0i}=1$ and assumes that the entries used in the final division are nonzero.  A diagonal conjugation alone does not impose these conditions simultaneously for every row.  Under the normalized configuration one may take $I_0=d$, and then
\[
 X_d(m,k)=
 \sum_{w=1}^{d}\lambda_w^k
 \left(\sum_{s=1}^{d}\lambda_s^m b_{is}b_{sw}\right)b_{wd}
 \in K.
\]
Thus the divisions in this subsection require the stated non-vanishing assumptions.  Theorem~\ref{thm:linear-descent} is proved below by cyclic projector products and does not depend on this normalization.

Keeping $m$ fixed and varying $k=1,\ldots,d$ gives a second Vandermonde system.  Its unknowns are
\[
 Y_I(m)=\sum_{s=1}^{d}\lambda_s^m b_{is}b_{sI}b_{Id},
\]
and therefore
\[
 Y_I(m)\in K\qquad(1\le I\le d).
\]
In particular,
\[
 Y_1(m)=\sum_{s=1}^{d}\lambda_s^m b_{is}b_{s1}b_{1d}\in K.
\]
Finally, varying $m$ gives the third system
\[
 \begin{pmatrix}
 \lambda_1&\cdots&\lambda_d\\
 \vdots&\ddots&\vdots\\
 \lambda_1^d&\cdots&\lambda_d^d
 \end{pmatrix}
 \begin{pmatrix}z_1\\\vdots\\z_d\end{pmatrix}
 =
 \begin{pmatrix}Y_1(1)\\\vdots\\Y_1(d)\end{pmatrix},
 \qquad
 z_j=b_{ij}b_{j1}b_{1d}.
\]
Thus each $z_j$ belongs to $K$.  Under this normalized configuration, if the factors $b_{j1}b_{1d}$ used in the final division are nonzero and belong to $K^\times$, then the corresponding coefficients $b_{ij}$ are recovered over $K$.  The cyclic-projector argument of Theorem~\ref{thm:linear-descent} removes these normalization requirements from the linear-descent statement.

Although the projector proof below is shorter, the explicit calculation is useful because it shows exactly where the non-vanishing normalization enters and makes clear that the three steps are repeated inversions of the same evaluation map at the spectrum of $A$.

The tensor formulation packages all three inversions at once.  If
\[
 T_{mkr}=(Y_0A^mY_1A^kY_2A^rY_3)_{ab},
\]
then
\[
 T_{mkr}=\sum_{s,u,\ell=1}^{d}
 \lambda_s^m\lambda_u^k\lambda_\ell^r
 (Y_0)_{as}(Y_1)_{su}(Y_2)_{u\ell}(Y_3)_{\ell b}.
\]
The coefficient transform is $V^{\otimes3}$, and
\[
 \det(V^{\otimes3})=(\det V)^{3d^2}\ne0.
\]
More generally,
\[
 \det(V^{\otimes r})=(\det V)^{r d^{r-1}}.
\]
Thus the long Cramer calculation and the tensorial statement are literally the same interpolation mechanism viewed at different resolutions.

\subsection{Spectral projectors and cyclic traces}

For each eigenvalue $\lambda_i$ of $A$ define the spectral projector
\[
 P_i=\prod_{j\ne i}\frac{A-\lambda_jI}{\lambda_i-\lambda_j}\in\M_d(K).
\]
In the eigenbasis of $A$, $P_i$ is the matrix unit $E_{ii}$.

\begin{lemma}\label{lem:cycles}
Let $X_1,\ldots,X_r\in\Gamma$ and let $i_0,\ldots,i_{r-1}$ be indices.  Then
\[
 \tr(P_{i_0}X_1P_{i_1}X_2\cdots P_{i_{r-1}}X_r)
 \in K,
\]
and in the eigenbasis this trace is the cyclic product
\[
 (X_1)_{i_0i_1}(X_2)_{i_1i_2}\cdots(X_r)_{i_{r-1}i_0}.
\]
\end{lemma}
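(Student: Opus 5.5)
The lemma has two parts. The first is that the trace lies in $K$; the second is the cyclic-product formula in the eigenbasis of $A$. I will prove the formula first and then deduce rationality by writing each projector as a polynomial in $A$ over $K$.

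For the formula, work in the eigenbasis where $A=\diag(\lambda_1,\ldots,\lambda_d)$. Each factor $(A-\lambda_jI)/(\lambda_i-\lambda_j)$ is diagonal, with entry $(\lambda_k-\lambda_j)/(\lambda_i-\lambda_j)$ in position $k$. The product over $j\ne i$ therefore has entry $1$ at $k=i$. At $k\ne i$ the entry is $0$, because the factor with $j=k$ vanishes. So $P_i=E_{ii}$.

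Using $E_{ab}XE_{cd}=X_{bc}E_{ad}$ repeatedly, the product $E_{i_0i_0}X_1E_{i_1i_1}X_2\cdots E_{i_{r-1}i_{r-1}}X_r$ collapses. It equals
\[
 (X_1)_{i_0i_1}\cdots(X_{r-1})_{i_{r-2}i_{r-1}}\,E_{i_0i_{r-1}}X_r.
\]
Its trace picks out $(X_r)_{i_{r-1}i_0}$, which gives the stated cyclic product. The trace is conjugation-invariant, so this value does not depend on the basis chosen.

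For rationality, expand each $P_i$ as a polynomial in $A$. This gives
\[
 P_i=\sum_{m=0}^{d-1}c_{i,m}A^m,\qquad c_{i,m}\in K,
\]
because the denominators $\lambda_i-\lambda_j$ are nonzero elements of $K$ and the coefficients of $\prod_{j\ne i}(X-\lambda_j)$ also lie in $K$. Substituting into the trace and expanding by multilinearity yields
\[
 \tr(P_{i_0}X_1\cdots P_{i_{r-1}}X_r)=\sum_{m_0,\ldots,m_{r-1}} c_{i_0,m_0}\cdots c_{i_{r-1},m_{r-1}}\,\tr\bigl(A^{m_0}X_1A^{m_1}X_2\cdots A^{m_{r-1}}X_r\bigr).
\]
Each word $A^{m_0}X_1\cdots A^{m_{r-1}}X_r$ is an element of $\Gamma$, since $A\in\Gamma$ and every $X_k\in\Gamma$. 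Its trace therefore lies in $\Q(\tr\Gamma)\subset K$, and hence the whole sum lies in $K$.

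The argument is routine and has no real obstacle. The only points to check are that the powers $A^{m}$ with $m=0$ give the identity, which is harmless, and that the $K$-rationality of $P_i$ uses only that the eigenvalues $\lambda_j$ lie in $K$ and are pairwise distinct.
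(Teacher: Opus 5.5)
Your proof is correct and follows the paper's argument. You write each $P_i$ as a polynomial in $A$ with coefficients in $K$, expand the trace multilinearly, and note that every resulting word lies in $\Gamma$, so its trace is in $\Q(\tr\Gamma)\subset K$. Your matrix-unit verification that $P_i=E_{ii}$ and that the trace collapses to the cyclic product supplies the part the paper only asserts.
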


\begin{proof}
Each $P_i$ is a polynomial in $A$ with coefficients in $K$.  Expanding the product reduces the trace to a $K$-linear combination of traces of words in $A$ and the $X_j$.  Since those words belong to the $K$-trace algebra of $\Gamma$, the trace lies in $K$.
\end{proof}

This projector identity is the invariant core of the repeated Vandermonde systems: the explicit systems reconstruct the same cyclic coefficients by interpolation.

\section{Linear descent over the spectral field}\label{sec:linear}

The result of this section is deliberately stated over the full spectral field.  It is a linear descent statement and should not be read as a unitary descent statement.

\begin{theorem}\label{thm:linear-descent}
Let $\Gamma<\SU(J)$ be Zariski-dense and contain a regular loxodromic element
\[
 A=\diag(\lambda_1,\ldots,\lambda_d)
\]
with distinct eigenvalues.  Put
\[
 K=\Q(\tr\Gamma,E(A)).
\]
Then there exists a diagonal $D\in\GL_d(\C)$ commuting with $A$ such that
\[
 D^{-1}\Gamma D\subset\SL_d(K).
\]
\end{theorem}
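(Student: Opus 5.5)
We will use the cyclic-trace identity of Lemma~\ref{lem:cycles}. In the eigenbasis of $A$ it shows that every cyclic product $(X_1)_{i_0i_1}(X_2)_{i_1i_2}\cdots(X_r)_{i_{r-1}i_0}$ of entries of elements of $\Gamma$ lies in $K$. The diagonal matrix $D=\diag(\mu_1,\dots,\mu_d)$ changes entries by $(D^{-1}XD)_{ij}=\mu_i^{-1}X_{ij}\mu_j$, and cyclic products are invariant under this rescaling. The task is therefore to pick the scalars $\mu_j$ so that every individual entry becomes a cyclic product. We do this with a spanning-tree normalization of the graph of nonzero entries.

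\emph{Step 1: connectivity.} Let $\mathcal G$ be the directed graph on $\{1,\dots,d\}$ with an edge $i\to j$ whenever $X_{ij}\ne0$ for some $X\in\Gamma$. We claim $\mathcal G$ is strongly connected. If not, there is a proper nonempty set $S$ of indices, closed under outgoing edges, so that $X_{ij}=0$ for all $i\in S$, $j\notin S$ and all $X\in\Gamma$. Then $\Gamma$ preserves $\Span\{e_j:j\notin S\}$ in the column action, a proper nonzero subspace. This contradicts Zariski density, which implies absolute irreducibility (Remark~\ref{rem:irr-zar}). Strong connectivity gives, for each $j$, an element $X^{(j)}\in\Gamma$ and a directed path from $1$ to $j$ along edges $i\to i'$ with $X^{(j)}$-witnesses. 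To simplify, for each $j\ne1$ we fix a path $1=i_0\to i_1\to\cdots\to i_s=j$ with witnesses $Y_1,\dots,Y_s\in\Gamma$, where $(Y_t)_{i_{t-1}i_t}\ne0$. We also fix a return path from $j$ to $1$ with witnesses $Z_1,\dots,Z_u$.

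\emph{Step 2: the normalization.} Set $\mu_1=1$ and $\mu_j=(Y_1)_{i_0i_1}\cdots(Y_s)_{i_{s-1}i_s}$, so that in the new basis the path product from $1$ to $j$ equals $1$. For arbitrary $X\in\Gamma$ and indices $i,j$, the conjugated entry $X'_{ij}=\mu_i^{-1}X_{ij}\mu_j$ satisfies
\[
 X'_{ij}=\frac{\pi_{1\to i}\,X_{ij}\,\pi_{j\to1}}{\pi_{1\to i}\,\pi_{i\to1}}\cdot\frac{\pi_{1\to j}\,\pi_{j\to 1}}{\pi_{j\to1}\,\pi_{1\to j}}.
\]
More simply, $X'_{ij}\cdot c_j=(\text{cycle }1\to i\xrightarrow{X} j\to1)$ with $c_j=\pi_{1\to j}\pi_{j\to1}$. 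Here $\pi$ denotes the path products in the original basis, and both $c_j$ and the numerator are cyclic products. By Lemma~\ref{lem:cycles} both lie in $K$, and $c_j\ne0$ by the choice of witnesses. Hence $X'_{ij}=(\text{cycle})/c_j\in K$.

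\emph{Step 3: conclusion.} The matrix $D$ is diagonal, so it commutes with $A$, and $\det(D^{-1}XD)=1$. This gives $D^{-1}\Gamma D\subset\SL_d(K)$. The main obstacle is Step 1: we must make sure the cycle expressions really correspond to traces of products of elements of $\Gamma$ interleaved with the projectors $P_i$, and that strong connectivity follows from irreducibility. The first point is a direct expansion using the formula of Lemma~\ref{lem:cycles}. For the second, it suffices that an invariant coordinate subspace is a genuine invariant subspace. Since $\Gamma$ is a group, invariance in one direction is the same as invariance in the other, so no separate transpose argument is needed.
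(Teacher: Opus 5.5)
Your strategy is sound, but Step 2 fails as written because the normalization is inverted. With $\mu_1=1$, $\mu_j=\pi_{1\to j}$ (the product along the chain $(1,i_1),(i_1,i_2),\dots,(i_{s-1},j)$) and $X'=D^{-1}XD$, the path product telescopes. So the new path product from $1$ to $j$ is $\mu_1^{-1}\mu_j\,\pi_{1\to j}=\pi_{1\to j}^2$, not $1$. Likewise
\[
 X'_{ij}\,c_j=\pi_{1\to i}^{-1}\,X_{ij}\,\pi_{1\to j}^{2}\,\pi_{j\to1},
\]
which agrees with the cycle $\pi_{1\to i}X_{ij}\pi_{j\to1}$ only when $\pi_{1\to i}^2=\pi_{1\to j}^2$.

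This is not a harmless slip. Suppose $B$ witnesses the edge $1\to2$; then your $D$ gives $B'_{12}=b_{12}^2$. Now conjugate $\Gamma$ by the diagonal unitary $\diag(1,e^{i\theta},1,\dots,1)\in\mathrm U(J)$, which is allowed since $d\ge3$. This preserves every hypothesis, $A$, and $K$, but multiplies $b_{12}^2$ by $e^{-2i\theta}$. So $B'_{12}\in K$ cannot hold for both groups once $e^{2i\theta}\notin K$.

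Your first display in Step 2 is also inconsistent with your definition. After cancellation it reads $X'_{ij}=\pi_{i\to1}^{-1}X_{ij}\pi_{j\to1}$, which is the formula for a different choice of scalars.

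The repair is immediate. Take $\mu_j=\pi_{1\to j}^{-1}$; then the new path product from $1$ to $j$ really is $1$, and
\[
 X'_{ij}=\pi_{1\to i}\,X_{ij}\,\pi_{1\to j}^{-1}=\frac{\pi_{1\to i}\,X_{ij}\,\pi_{j\to1}}{c_j},
\]
a ratio of two cyclic products with nonzero denominator. Alternatively take $\mu_j=\pi_{j\to1}$, which is what your first display actually computes; this gives $X'_{ij}=\pi_{1\to i}X_{ij}\pi_{j\to1}/c_i$.

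With either choice Steps 1 and 3 go through. The argument then becomes essentially the paper's proof of Proposition~\ref{prop:two-generator-equations}, run on the union graph of all of $\Gamma$. It uses only that no coordinate subspace in the eigenbasis of $A$ is $\Gamma$-invariant, i.e.\ complex irreducibility.

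The paper's proof of Theorem~\ref{thm:linear-descent} instead uses Zariski density to choose a single $B\in\Gamma$ with $b_{1i}b_{i1}\ne0$ for all $i\ge2$. Then every path has length one, $D=\diag(1,b_{21},\dots,b_{d1})$ (this is your $\mu_i=\pi_{i\to1}$), and only the cycles $(X,B)$, $(B,X)$, $(B,X,B)$ are needed. That route is shorter. Yours is more general, but it requires exactly the path bookkeeping where the inversion crept in.
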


\begin{proof}
Because $\Gamma$ is Zariski-dense, it is absolutely irreducible.  Choose $B\in\Gamma$ with
\[
 b_{1i}b_{i1}\ne0\qquad(2\le i\le d);
\]
such a choice is possible because these are finitely many non-vanishing regular conditions on a nonempty Zariski-open subset of the ambient group, and $\Gamma$ is Zariski-dense.  Let
\[
 D=\diag(1,b_{21},\ldots,b_{d1}),
 \qquad \widetilde X=D^{-1}XD.
\]
Then $D$ commutes with $A$, and for $\widetilde B=D^{-1}BD$ one has
\[
 \widetilde b_{i1}=1\quad(2\le i\le d),
 \qquad
 \widetilde b_{1i}=b_{1i}b_{i1}\in K^\times
 \quad(2\le i\le d),
\]
where the second assertion is Lemma~\ref{lem:pair-products}.  Cyclic products are invariant under diagonal conjugation, so Lemma~\ref{lem:cycles} may be read directly in the tilded coordinates.  The diagonal entries of every $\widetilde X$ are already in $K$ by Lemma~\ref{lem:diagonal}, since diagonal conjugation does not change them.

For $j\ge2$, Lemma~\ref{lem:cycles}, applied to the two-cycle $(X,B)$ with indices $(1,j)$, gives
\[
 \widetilde x_{1j}\widetilde b_{j1}=\widetilde x_{1j}\in K.
\]
For $i\ge2$, the two-cycle $(B,X)$ with indices $(1,i)$ similarly gives
\[
 \widetilde b_{1i}\widetilde x_{i1}\in K,
\]
and division by $\widetilde b_{1i}\in K^\times$ yields $\widetilde x_{i1}\in K$.  Finally, for $i,j\ge2$, the three-cycle $(B,X,B)$ with indices $(1,i,j)$ gives
\[
 \widetilde b_{1i}\widetilde x_{ij}\widetilde b_{j1}
 =\widetilde b_{1i}\widetilde x_{ij}\in K,
\]
so again $\widetilde x_{ij}\in K$.  Hence every entry of $D^{-1}XD$ lies in $K$ for every $X\in\Gamma$.  Therefore $D^{-1}\Gamma D\subset\M_d(K)$, and determinant one gives the stated inclusion in $\SL_d(K)$.
\end{proof}

\subsection{The two-generator argument written only in equations}\label{subsec:two-generator-equations}

We now isolate the calculation for the case
$\Gamma_0=\langle A,B\rangle$.  This makes explicit which part of the
argument follows directly from the equations above, without invoking a
separate coefficient-recovery argument from
\cite[p.~364]{CunhaGusevskii2014}.

\begin{proposition}[Two-generator descent from the equations]\label{prop:two-generator-equations}
Let
\[
 \Gamma_0=\langle A,B\rangle<\SU(J),\qquad
 A=\diag(\lambda_1,\ldots,\lambda_d),
\]
where the $\lambda_i$ are pairwise distinct, and assume that $\Gamma_0$ is
complex irreducible.  Put
\[
 K=\Q(\tr\Gamma_0,E(A)).
\]
Then there exists a diagonal matrix $D\in\GL_d(\C)$ commuting with $A$ such
that
\[
 D^{-1}\Gamma_0D\subset\SL_d(K).
\]
\end{proposition}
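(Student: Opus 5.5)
The plan is to rerun the cyclic-projector argument from the proof of Theorem~\ref{thm:linear-descent} for $\Gamma_0$ in place of $\Gamma$. Lemmas~\ref{lem:diagonal}, \ref{lem:pair-products} and~\ref{lem:cycles} hold verbatim for $\Gamma_0$ with $K=\Q(\tr\Gamma_0,E(A))$. Their proofs only use three facts: words in $A$ and elements of $\Gamma_0$ lie in $\Gamma_0$; their traces lie in $K$; and the spectral projectors $P_i$ are polynomials in $A$ with coefficients in $K$.

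The one point where Zariski density was used is the choice of a single $B$ with $b_{1i}b_{i1}\ne0$ for all $i\ge2$. I replace it by Burnside's theorem. Since $\Gamma_0$ is complex irreducible, the $\C$-linear span of $\Gamma_0$ is all of $\M_d(\C)$. Consequently, for each pair $(i,j)$ some element of $\Gamma_0$ has nonzero $(i,j)$ entry; otherwise the linear functional $X\mapsto x_{ij}$ would vanish on a spanning set. I will not need one element doing everything at once.

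The construction proceeds in four steps.
\begin{enumerate}[label=(\arabic*)]
\item For each $i\ge2$, choose $W_i\in\Gamma_0$ with $(W_i)_{i1}\ne0$. Put
\[
D=\diag\bigl(1,(W_2)_{21},\ldots,(W_d)_{d1}\bigr),
\]
which commutes with $A$, and write $\widetilde X=D^{-1}XD$. Then $(\widetilde W_j)_{j1}=1$ for each $j\ge2$.
\item Diagonal entries of every $\widetilde X$ lie in $K$ by Lemma~\ref{lem:diagonal}, since diagonal conjugation does not change them.
\item Cyclic products are invariant under diagonal conjugation. Applying Lemma~\ref{lem:cycles} to the two-cycle $(X,W_j)$ with indices $(1,j)$ gives $\widetilde x_{1j}\cdot 1\in K$. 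So the whole first row of every $\widetilde X$ lies in $K$.
\item For each $i\ge2$, choose $U_i\in\Gamma_0$ with $(U_i)_{1i}\ne0$. By step (3), $(\widetilde U_i)_{1i}\in K^\times$.
\begin{itemize}
\item The two-cycle $(U_i,X)$ with indices $(1,i)$ gives $(\widetilde U_i)_{1i}\,\widetilde x_{i1}\in K$, hence $\widetilde x_{i1}\in K$.
\item The three-cycle $(U_i,X,W_j)$ with indices $(1,i,j)$ gives $(\widetilde U_i)_{1i}\,\widetilde x_{ij}\,(\widetilde W_j)_{j1}\in K$, hence $\widetilde x_{ij}\in K$ for $i,j\ge2$.
\end{itemize}
\end{enumerate}
Together these steps give $D^{-1}\Gamma_0D\subset\M_d(K)$, and $\det=1$ then gives inclusion in $\SL_d(K)$.

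The main obstacle is the non-vanishing step. Zariski density is not available, and complex irreducibility does not obviously produce one element whose first row and first column are entirely nonzero. The plan avoids this by using different words $W_j$ and $U_i$ for different indices. The argument must therefore be organized so that $D$ is fixed first, using only the $W_j$. The reverse-direction divisors $(\widetilde U_i)_{1i}$ are then shown to lie in $K^\times$ by the first-row step, before they are used.

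I would double-check that Burnside's theorem is applied over $\C$ to the group algebra spanned by $\Gamma_0$, which is where complex irreducibility enters. I would also check that Lemma~\ref{lem:cycles} tolerates repeated or distinct elements in the cycle, which it does since it is stated for arbitrary $X_1,\dots,X_r\in\Gamma_0$.
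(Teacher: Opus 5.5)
Your argument is correct, but it takes a different route from the paper.

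\textbf{The paper's route.} The paper works only with the second generator $B$. It forms the directed graph on $\{1,\dots,d\}$ with an arrow $j\to i$ whenever $b_{ij}\ne0$. This graph is strongly connected: a sink component would span a coordinate subspace invariant under $B$ and, automatically, under the diagonal $A$, hence under $\Gamma_0=\langle A,B\rangle$. The paper then builds $D$ along a spanning tree rooted at $1$, so that tree edges of $D^{-1}BD$ become $1$. Finally it divides the mixed cycle product $d_jx_{ij}c_i$ by the pure $B$-cycle $d_ic_i\in K^\times$, formed from the tree path and a chosen return path.

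\textbf{Your route.} You let different group elements supply different entries. For each position, some word has a nonzero entry there. So every normalizing path has length one, and only cycles of length at most three occur, exactly as in the proof of Theorem~\ref{thm:linear-descent}. What makes this work is your ordering: fix $D$ from the $W_j$ first, then certify the divisors $(\widetilde U_i)_{1i}\in K^\times$ through the first-row step.

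\textbf{Comparison.} Your version never uses that $\Gamma_0$ is generated by $A$ and $B$. It therefore applies verbatim to any complex-irreducible subgroup of $\SL_d(\C)$ containing $A$. The paper's version keeps the whole calculation inside the entries of the single element $B$, which is the point of deriving the result ``from the equations,'' at the cost of longer cycles.

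\textbf{A simplification.} Burnside's theorem is more than you need for the non-vanishing step. Suppose every element of $\Gamma_0$ had vanishing $(i,j)$ entry. Then $\Span_\C(\Gamma_0e_j)$ would be a nonzero $\Gamma_0$-invariant subspace contained in the coordinate hyperplane $\{x_i=0\}$. That contradicts complex irreducibility directly.
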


\begin{proof}
Consider the directed graph $\mathcal G(B)$ with vertex set
$\{1,\ldots,d\}$ and an arrow
\[
 j\longrightarrow i
 \quad\Longleftrightarrow\quad
 b_{ij}\ne0.
\]
Because $A$ has simple spectrum, every $A$-invariant subspace is the span of
a subset of its eigenvectors.  If $\mathcal G(B)$ were not strongly
connected, a sink strongly connected component would span a nonzero proper
subspace invariant under both $A$ and $B$, contradicting complex
irreducibility.  Hence $\mathcal G(B)$ is strongly connected.

Fix the vertex $1$.  Choose a directed spanning tree rooted at $1$.  For each
vertex $i$, let
\[
 1=v_0\to v_1\to\cdots\to v_m=i
\]
be the unique tree path and define
\[
 d_1=1,
 \qquad
 d_i=\prod_{r=0}^{m-1} b_{v_{r+1},v_r}\ne0.
\]
Set
\[
 D=\diag(d_1,\ldots,d_d).
\]
For every tree edge $j\to i$ one then has
\[
 (D^{-1}BD)_{ij}=d_i^{-1}b_{ij}d_j=1.
\]

Strong connectivity also gives, for each $i$, a directed path from $i$ back
to $1$.  Let $c_i$ be the nonzero product of the corresponding entries of
$B$ along such a path; take $c_1=1$.  The tree path from $1$ to $i$ followed
by this return path is a directed cycle.  Lemma~\ref{lem:cycles} therefore
gives
\[
 d_i c_i\in K^\times.
\]

Now let $X=(x_{ij})\in\Gamma_0$.  Follow the tree path from $1$ to $j$, then
use the single $X$-edge $j\to i$, and finally follow the chosen $B$-path from
$i$ back to $1$.  The corresponding mixed cyclic product is
\[
 d_j\,x_{ij}\,c_i.
\]
By Lemma~\ref{lem:cycles}, this product belongs to $K$.  Dividing by the
nonzero cycle product $d_i c_i\in K^\times$ yields
\[
 (D^{-1}XD)_{ij}
 =d_i^{-1}x_{ij}d_j
 =\frac{d_jx_{ij}c_i}{d_ic_i}
 \in K.
\]
This holds for every $i,j$ and every $X\in\Gamma_0$.  Thus
\[
 D^{-1}\Gamma_0D\subset\M_d(K).
\]
Since all elements have determinant one, the image is contained in
$\SL_d(K)$.
\end{proof}

\begin{remark}\label{rem:two-generator-gap}
The proof above removes the case-by-case non-vanishing analysis used in the
three-dimensional argument of Cunha--Gusevskii \cite{CunhaGusevskii2014}.  Complex irreducibility is
used only once: it makes the directed nonzero-entry graph of $B$ strongly
connected.  The rest of the proof consists entirely of the cyclic trace
identities already established above.
\end{remark}

\begin{remark}[What the equations prove, and what remains for $K_L$]\label{rem:KL-descent}
The preceding proposition is an unconditional coefficient calculation over
\[
 K_{\mathrm{spec}}=\Q(\tr\Gamma_0,E(A)).
\]
It proves linear descent
\[
 D^{-1}\Gamma_0D\subset\SL_d(K_{\mathrm{spec}})
\]
under the stated hypotheses.  Replacing $K_{\mathrm{spec}}$ by
\[
 K_L=\Q(\tr\Gamma_0,L)
\]
requires an additional spectral step.  Lemma~\ref{lem:extreme-eigenvalues}
puts the two off-circle eigenvalues in $K_L$, but in arbitrary dimension the
remaining unit-circle eigenvalues have not yet been shown to lie in $K_L$.
Consequently the full projectors
\[
 P_i=\prod_{j\ne i}\frac{A-\lambda_jI}{\lambda_i-\lambda_j}
\]
are not yet known to be defined over $K_L$.

There is a second, logically independent step.  Even after a linear model over
$K_L$ has been obtained, concluding
\[
 \Gamma_0\subset\SU(J,K_L)
\]
requires the preserved Hermitian form to be $K_L$-similar to the standard form
$J$.  Sections~\ref{sec:hermitian} and the obstruction in
Theorem~\ref{thm:one-A-obstruction} show why this unitary descent cannot be
silently identified with linear descent.  Thus the proposed theorem over
$\Q(\Gamma,L)$ naturally splits into a spectral/linear problem and a separate
Hermitian-similarity problem.
\end{remark}

\subsection{Real trace fields and real forms}\label{subsec:real-trace}

The real-trace case behaves differently from the general unitary descent problem: complex irreducibility already forces a real form, with no discreteness assumption.

\begin{theorem}[Real trace field without discreteness]\label{thm:real-trace-no-discrete}
Let $G<\SU(n,1)$, with $n\ge2$, be non-elementary and complex irreducible.  If
\[
 \Q(\tr G)\subset\R,
\]
then $G$ is conjugate in $\SU(n,1)$ to a subgroup of $\SO(n,1)$.  Conversely, every subgroup of $\SO(n,1)$ has real trace field.
\end{theorem}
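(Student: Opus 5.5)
The plan is to pass from real traces to a real structure on $V=\C^{n+1}$ through the complex-conjugate representation, and then to read off an $\SO(n,1)$-conjugation from the interaction of that real structure with the invariant Hermitian form $h$. The converse needs no work: an element of $\SO(n,1)$ has real entries in a suitable basis, so its trace is real.

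\textbf{Step 1: a conjugate-linear intertwiner.} Let $\rho$ be the inclusion $G\hookrightarrow\SL_{n+1}(\C)$ and $\bar\rho(g)=\bar g$. Since $\tr\bar\rho(g)=\overline{\tr g}=\tr g$, and $\rho$ is absolutely irreducible (complex irreducibility over the algebraically closed field $\C$), Burnside's theorem and the trace criterion for semisimple representations show that $\rho$ and $\bar\rho$ are equivalent. Hence there is $S\in\GL_{n+1}(\C)$ with $S\bar gS^{-1}=g$ for all $g\in G$. Equivalently, the conjugate-linear map $\sigma(v)=S\bar v$ commutes with $G$. Then $\sigma^2$ is a $\C$-linear map commuting with $G$, so by Schur $\sigma^2=c\,I$ for some scalar $c$. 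Writing $S\bar S=cI$ shows $c$ is real: taking conjugates gives $\bar S S=\bar c I$, and $\bar S S$ is similar to $S\bar S$.

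\textbf{Step 2: compatibility with $h$.} The sesquilinear form $h'(v,w)=\overline{h(\sigma v,\sigma w)}$ is Hermitian and $G$-invariant. By Schur applied to the irreducible action, the invariant Hermitian form is unique up to a real scalar, so $h'=t\,h$ with $t\in\R^\times$. Rescaling $\sigma$ by a positive real number lets us assume $|t|=1$ and $c=\pm1$.

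\textbf{Step 3: excluding the bad sign cases.} This is the main obstacle. The key point is that $h'$ has the same signature as $h$ when $t=1$, while it has signature $(1,n)$ when $t=-1$. For $n\ge2$ we have $(1,n)\ne(n,1)$, so $t=1$.

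If $c=-1$, then $\sigma$ is a quaternionic structure, which forces $n+1$ to be even. To rule it out, use that $\sigma$ maps the negative line of a loxodromic element to another eigenline. Concretely, take a loxodromic $g\in G$, which exists by non-elementarity. Its attracting eigenvector $v_+$ has eigenvalue $\alpha$ with $|\alpha|>1$. Then $\sigma v_+$ is an eigenvector of $g$ with eigenvalue $\bar\alpha$, and it is also isotropic. Since the eigenvalues off the unit circle are $\alpha$ and $\beta=\bar\alpha^{-1}$, the value $\bar\alpha$ has modulus greater than $1$ and must equal $\alpha$. As $\alpha$ is a simple eigenvalue, $\sigma v_+=\mu v_+$. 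Then $\sigma^2 v_+=|\mu|^2v_+$, which forces $c>0$, so $c=1$.

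\textbf{Step 4: the real form.} With $c=1$ and $t=1$, the fixed set $V_\R=\{v:\sigma v=v\}$ is a real form of $V$ that is preserved by $G$. On $V_\R$, $h$ takes real values, and $h|_{V_\R}$ is a real symmetric form of signature $(n,1)$, because its complexification is $h$. Choose an $h$-orthonormal real basis of $V_\R$. This is a basis of $V$, and in it $G$ consists of real matrices preserving the diagonal form, so $G\subset\SO(n,1)$ because the determinant is $1$. The change-of-basis matrix carries one orthonormal basis for $h$ to another, so it lies in $\mathrm U(n,1)$. Multiplying by a scalar of modulus one puts it in $\SU(n,1)$, and this gives conjugacy within $\SU(n,1)$.

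I expect Step 3 to be where care is needed. In particular, the loxodromic eigenline argument that excludes $c=-1$ should be checked carefully, including the claim that $\sigma v_+$ is an eigenvector for $\bar\alpha$.
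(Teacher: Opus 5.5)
Your proposal is correct. Steps~1, 2 and~4 coincide with the paper's proof: Brauer--Nesbitt gives a conjugate-linear intertwiner $\sigma$ with $\sigma^2=cI$ and $c\in\R^\times$; uniqueness of the invariant form gives $h'=th$; signature gives $t>0$; and the fixed real form of signature $(n,1)$, together with a unimodular scalar, lands you in $\SU(n,1)$.

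The real difference is how you exclude the quaternionic case $c<0$. The paper first makes $\sigma$ antiunitary, $h(\sigma v,\sigma w)=\overline{h(v,w)}$, and takes an arbitrary negative vector $v$. Together with $\sigma^2=-I$ this gives $h(v,\sigma v)=0$, so $v$ and $\sigma v$ span a negative definite plane, which is impossible in signature $(n,1)$. You instead let $\sigma$ act on the spectrum of a loxodromic $g\in G$. Since $g\sigma v_+=\sigma g v_+=\bar\alpha\,\sigma v_+$, the map $\sigma$ sends the $\alpha$-eigenline to the $\bar\alpha$-eigenline. The eigenvalue of modulus $>1$ is unique and simple, so the attracting line is $\sigma$-stable and $c=|\mu|^2>0$. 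This check is right, and isotropy of $\sigma v_+$ is not even needed.

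The trade-off is this. The paper's argument uses only the negative index of $h$, so the non-elementary hypothesis plays no role there. Yours does not need the antiunitary normalization. It does need a loxodromic element of $G$, and for a non-discrete group that is a genuine input you should cite rather than assert: Gromov's classification of isometric actions on hyperbolic spaces, or Proposition~\ref{prop:regular-lox-irreducible}, which gets one from complex irreducibility.

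Two small repairs are needed.
\begin{enumerate}
\item In Step~3, the operative fact is that $h'$ \emph{always} has signature $(n,1)$. This holds because $h'(v,v)=h(\sigma v,\sigma v)$ and $\sigma$ is a conjugate-linear bijection carrying complex subspaces to complex subspaces of the same dimension. Hence $h'=th$ forces $t>0$. Your sentence reads as if the signature of $h'$ depended on $t$.
\item Normalizing $|t|=1$ and $c=\pm1$ simultaneously by a single positive rescaling requires $c^2=t^2$. This follows from $h(\sigma^2v,\sigma^2w)=t^2h(v,w)$ together with $\sigma^2=cI$; the paper uses the same fact implicitly.
\end{enumerate}
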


\begin{proof}
Let $V=\C^{n+1}$ with its standard Hermitian form $h$ of signature $(n,1)$, and let $\rho:G\to\SU(h)$ be the defining representation.  Since every trace is real,
\[
 \chi_{\rho}(g)=\overline{\chi_{\rho}(g)}=\chi_{\bar\rho}(g)
 \qquad(g\in G).
\]
The representations $\rho$ and $\bar\rho$ are irreducible and have the same character.  By the Brauer--Nesbitt theorem they are equivalent.  Hence there is an invertible antilinear map
\[
 \mathcal J:V\longrightarrow V
\]
commuting with $G$.

By Schur's lemma, $\mathcal J^2=cI$ for some $c\in\C^\times$.  Since $\mathcal J$ is antilinear, comparison of $\mathcal J^2\mathcal J$ and $\mathcal J\mathcal J^2$ gives $c=\bar c$, so $c\in\R^\times$.  After rescaling $\mathcal J$ we may assume
\[
 \mathcal J^2=I
 \qquad\text{or}\qquad
 \mathcal J^2=-I.
\]
The form
\[
 h_{\mathcal J}(v,w)=\overline{h(\mathcal Jv,\mathcal Jw)}
\]
is again $G$-invariant.  Complex irreducibility implies that $h_{\mathcal J}=a h$ for some $a\in\R^\times$.  Since the two forms have the same signature $(n,1)$ and $n\ge2$, one has $a>0$.  Rescaling $\mathcal J$ by $a^{-1/2}$ therefore gives
\[
 h(\mathcal Jv,\mathcal Jw)=\overline{h(v,w)}.
\]

The alternative $\mathcal J^2=-I$ is impossible.  If $v$ is negative, then $\mathcal Jv$ is also negative and the antiunitary identity, applied to $v$ and $\mathcal Jv$, gives
\[
 h(v,\mathcal Jv)=0.
\]
Thus $v$ and $\mathcal Jv$ span a two-dimensional negative complex subspace, contradicting the fact that the negative index of $h$ is one.  Consequently $\mathcal J^2=I$.

The fixed-point space
\[
 V_{\R}=\{v\in V:\mathcal Jv=v\}
\]
is a real form of $V$, is preserved by $G$, and carries the real symmetric bilinear form obtained by restricting $h$.  This form has signature $(n,1)$.  Choosing a real orthonormal basis identifies the action of $G$ with a subgroup of $O(n,1)$; determinant one places the image in $\SO(n,1)$.  The conjugating change of basis may be chosen in $\SU(n,1)$ after multiplying it by a scalar of modulus one, which does not change the conjugation.

The converse is immediate because matrices in $\SO(n,1)$ have real entries and therefore real traces.
\end{proof}

\begin{remark}
Kim and Kim proved the analogous conclusion for irreducible \emph{discrete} subgroups of $\SU(n,1)$ as part of their real-trace-field rigidity theorem \cite[Theorem~1.6]{KimKim2016}.  The argument above follows a representation-theoretic route and shows that, for the complex statement, discreteness is unnecessary.
\end{remark}

There is a complementary trace-algebra proof.  Let
\[
 \mathcal A_K=\Span_K(\Gamma)\subset\M_d(\C).
\]
The argument is useful because it removes the dependence on a chosen first column.

\begin{proposition}\label{prop:burnside-procesi-en}
Assume $\Gamma<\SL_d(\C)$ is absolutely irreducible and $K$ contains $\tr(\gamma)$ for every $\gamma\in\Gamma$.  Then there exist elements
\[
 S_1,\ldots,S_{d^2}\in\Gamma
\]
forming a complex basis of $\M_d(\C)$ such that every $\gamma\in\Gamma$ can be written uniquely as
\[
 \gamma=\sum_{\alpha=1}^{d^2}c_\alpha(\gamma)S_\alpha,
\]
with all coefficients $c_\alpha(\gamma)$ in the field generated by the traces of words in $\Gamma$.
\end{proposition}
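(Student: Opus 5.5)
The plan is to use Burnside's theorem together with the non-degeneracy of the trace form on $\M_d(\C)$. Since $\Gamma$ is absolutely irreducible, Burnside's theorem gives that the complex span of $\Gamma$ is all of $\M_d(\C)$. A spanning set contains a basis, so we may choose $S_1,\ldots,S_{d^2}\in\Gamma$ forming a complex basis of $\M_d(\C)$. Uniqueness of the coefficients $c_\alpha(\gamma)$ is then automatic, and only their rationality remains to be shown.

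Let $F$ denote the field generated by the traces of words in $\Gamma$. Since $\Gamma$ is a group, words in $\Gamma$ are elements of $\Gamma$, so $F=\Q(\tr\Gamma)\subset K$. Form the Gram matrix
\[
 G=\bigl(\tr(S_\alpha S_\beta)\bigr)_{\alpha,\beta}.
\]
Its entries lie in $F$ because $S_\alpha S_\beta\in\Gamma$. The bilinear form $(X,Y)\mapsto\tr(XY)$ is non-degenerate on $\M_d(\C)$: the matrix units satisfy $\tr(E_{ij}E_{kl})=\delta_{jk}\delta_{il}$, which is a permutation pattern. Hence the Gram matrix of any basis is invertible, so $\det G\ne0$.

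For $\gamma\in\Gamma$, write $\gamma=\sum_\alpha c_\alpha S_\alpha$ and multiply on the right by $S_\beta$, then take traces. This gives the linear system
\[
 \sum_\alpha c_\alpha\,\tr(S_\alpha S_\beta)=\tr(\gamma S_\beta),\qquad 1\le\beta\le d^2.
\]
The right-hand sides lie in $F$ because $\gamma S_\beta\in\Gamma$. Cramer's rule with the invertible matrix $G$ over $F$ then gives $c_\alpha(\gamma)\in F\subset K$, which proves the statement.

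No step is a real obstacle. The only point requiring care is the invocation of Burnside's theorem, which needs absolute irreducibility, i.e.\ irreducibility over $\C$; this holds by hypothesis. A minor note: the hypothesis that $K$ contains the traces is not used beyond $F\subset K$, and no extension of scalars is needed, since the argument works inside $F$.
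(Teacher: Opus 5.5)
Your proposal is correct and is essentially the paper's own proof: Burnside's theorem gives a basis $S_1,\ldots,S_{d^2}$ taken from $\Gamma$, the trace Gram matrix $\bigl(\tr(S_\alpha S_\beta)\bigr)$ is invertible, and Cramer's rule applied to $\tr(\gamma S_\beta)=\sum_\alpha c_\alpha\tr(S_\alpha S_\beta)$ puts the coefficients in the trace field. Your matrix-unit check of non-degeneracy spells out a detail that the paper only asserts.
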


\begin{proof}
Burnside's theorem gives $\Span_\C(\Gamma)=\M_d(\C)$, so we may select $d^2$ group elements forming a complex basis.  Let
\[
 G_{\alpha\beta}=\tr(S_\alpha S_\beta).
\]
The trace pairing on $\M_d(\C)$ is non-degenerate, hence the Gram matrix $G$ is invertible.  If
\[
 \gamma=\sum_\alpha c_\alpha S_\alpha,
\]
then for every $\beta$
\[
 \tr(\gamma S_\beta)=\sum_\alpha c_\alpha G_{\alpha\beta}.
\]
Both the entries of $G$ and the right-hand trace vector are traces of group elements, hence belong to the trace field.  Cramer's rule gives the claim.
\end{proof}

It follows that $\mathcal A_K$ is a central simple $K$-algebra of degree $d$.  The regular loxodromic element provides more information.

\begin{proposition}\label{prop:projectors-split-en}
Under the hypotheses of Theorem~\ref{thm:linear-descent}, the algebra $\mathcal A_K$ is split:
\[
 \mathcal A_K\simeq\M_d(K).
\]
\end{proposition}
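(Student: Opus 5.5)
The plan is to make $\mathcal A_K=\Span_K(\Gamma)$ act on a $K$-vector space of dimension $d$; an action of a central simple algebra of degree $d$ on such a space is the same thing as a splitting $\mathcal A_K\simeq\M_d(K)$. The natural choice of space comes from a rank-one idempotent. The spectral projectors
\[
 P_i=\prod_{j\ne i}\frac{A-\lambda_jI}{\lambda_i-\lambda_j}
\]
are polynomials in $A\in\Gamma$ with coefficients in $K$, so $P_i\in\mathcal A_K$. In the eigenbasis each $P_i$ is the matrix unit $E_{ii}$, so it is an idempotent of rank one.

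First I check that $\mathcal A_K$ is a central simple $K$-algebra with $\mathcal A_K\otimes_K\C\simeq\M_d(\C)$. Proposition~\ref{prop:burnside-procesi-en} supplies a $K$-basis $S_1,\dots,S_{d^2}\in\Gamma$ with $\gamma=\sum c_\alpha(\gamma)S_\alpha$ and $c_\alpha(\gamma)\in K$. Hence $\mathcal A_K=\Span_K(S_\alpha)$ is closed under multiplication and has $K$-dimension $d^2$. Since the $S_\alpha$ are $\C$-linearly independent, the natural map $\mathcal A_K\otimes_K\C\to\M_d(\C)$ is an isomorphism. Simplicity and centrality then descend from $\M_d(\C)$.

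Next I produce the splitting directly. Set $W=\mathcal A_KP_1$, a left ideal of $\mathcal A_K$. After tensoring, $W\otimes_K\C=\M_d(\C)E_{11}$, which has $\C$-dimension $d$, so $\dim_KW=d$. Left multiplication gives a $K$-algebra homomorphism $\mathcal A_K\to\operatorname{End}_K(W)\simeq\M_d(K)$. It is injective because $\mathcal A_K$ is simple and the map is nonzero. Both sides have $K$-dimension $d^2$, so it is an isomorphism.

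Equivalently, in Wedderburn terms: $\mathcal A_K\simeq\M_r(D)$ for a division algebra $D$, and a rank-one idempotent $P_1$ forces $P_1\mathcal A_KP_1\simeq D$. In coordinates $P_1\mathcal A_KP_1$ is $K\cdot E_{11}$, because $P_1XP_1=\tr(P_1X)P_1$ and $\tr(P_1X)\in K$ by Lemma~\ref{lem:cycles}. Thus $D=K$. This identity $P_1XP_1=\tr(P_1X)P_1$ is the step I would check most carefully, since it is the point where the field $K$ actually enters.

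I expect the main obstacle to be the dimension bookkeeping: showing that $\Span_K$ really has $K$-dimension $d^2$ and not more, so that $\mathcal A_K$ is genuinely a $K$-form of $\M_d(\C)$. That is exactly what the unique $K$-coefficients of Proposition~\ref{prop:burnside-procesi-en} provide. Finally, I would compare with Theorem~\ref{thm:linear-descent}. There $D^{-1}\Gamma D\subset\M_d(K)$ already exhibits an explicit isomorphism $\mathcal A_K\to\M_d(K)$, namely $X\mapsto D^{-1}XD$, which is injective and lands in a space of the same dimension. I would record this as an alternative one-line proof.
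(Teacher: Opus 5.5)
Your proof is correct and follows the same route as the paper. The paper likewise uses the spectral projectors $P_i\in K[A]\subset\mathcal A_K$, which are rank-one idempotents, to obtain a one-sided ideal of $K$-dimension $d$ and hence Schur index one; your version makes this explicit by checking central simplicity via Proposition~\ref{prop:burnside-procesi-en} and building the isomorphism from left multiplication on $W=\mathcal A_KP_1$.
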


\begin{proof}
The spectral projectors
\[
 P_i=\prod_{j\ne i}\frac{A-\lambda_jI}{\lambda_i-\lambda_j}
\]
belong to $K[A]\subset\mathcal A_K$.  They are pairwise orthogonal primitive idempotents with sum $1$, and after extension to $\C$ each has matrix rank one.  A central simple algebra of degree $d$ containing a complete set of $d$ mutually orthogonal primitive idempotents has a minimal right ideal of $K$-dimension $d$ and therefore has Schur index one.  Hence it is isomorphic to $\M_d(K)$.
\end{proof}

This gives the same linear descent without choosing a coefficient normalization.  The Cramer argument retains the explicit formulas, while the trace-algebra argument identifies their invariant algebraic content.

\begin{remark}
Theorem~\ref{thm:linear-descent} gives a $K$-rational \emph{linear} model.  Passage to the standard unitary form is equivalent to a separate Hermitian similarity problem.
\end{remark}

\section{Hermitian descent and the norm obstruction}\label{sec:hermitian}

Linear descent gives a $K$-model of the representation, but it leaves open which Hermitian form that model preserves.  The remaining problem is therefore one of Hermitian similarity rather than matrix coefficients.

Let
\[
 \rho(\Gamma)=D^{-1}\Gamma D\subset\SL_d(K)
\]
be the representation of Theorem~\ref{thm:linear-descent}.  Since the original representation is unitary, $\rho(\Gamma)$ preserves the Hermitian form
\[
 H=D^*JD.
\]
Assume henceforth that complex conjugation acts nontrivially on $K$ and put
\[
 K_0=K^{\bar{\ }}.
\]
Thus $K/K_0$ is quadratic.  This is the regime used in all arithmetic applications below.  If $K=K_0$, one may pass to the conjugation-stable quadratic extension $K(i)/K$ and apply the argument there, but no descent of the Hermitian form to the original real field $K$ is asserted.  Under the quadratic hypothesis, absolute irreducibility implies that the space of invariant Hermitian forms is one-dimensional over $K_0$, and trace duality descends the form itself.

\begin{theorem}\label{thm:hermitian-descent}
Assume $d=n+1\ge3$ and that complex conjugation acts nontrivially on $K$, with fixed field $K_0$.  Then there exists a non-degenerate Hermitian matrix
\[
 H_K\in\M_d(K),\qquad H_K^*=H_K,
\]
such that
\[
 \rho(\gamma)^*H_K\rho(\gamma)=H_K
 \qquad(\gamma\in\Gamma).
\]
After multiplying by an element of $K_0^\times$ and making a diagonal $K$-change of basis, one may write
\[
 H_K\simeq
 \mathbb H\oplus\langle1,q_3,\ldots,q_n\rangle,
 \qquad q_j\in K_0^\times,
\]
where
\[
 \mathbb H=\begin{pmatrix}0&1\\1&0\end{pmatrix}.
\]
Moreover, $\Gamma$ admits a realization in $\SU(J,K)$ if and only if $H_K$ is similar over $K$ to $J$.
\end{theorem}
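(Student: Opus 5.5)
The plan has three parts: descend the form by Galois descent, reduce it to the normal form by Witt theory over $K/K_0$, and check the last equivalence by a direct change of basis.

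\textbf{Descent of the form.} Start from $H=D^*JD$, which is invariant under $\rho(\Gamma)\subset\SL_d(K)$. Let $\mathcal S$ be the set of Hermitian matrices $S$ with $\rho(\gamma)^*S\rho(\gamma)=S$ for all $\gamma$.

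By Zariski density (so absolute irreducibility) and Schur's lemma, $\mathcal S\otimes\C$ is at most one-dimensional, so $\mathcal S=\R H$ over $\R$. Invariance can be rewritten as $S\rho(\gamma)S^{-1}=(\rho(\gamma)^*)^{-1}=\rho(\gamma^{-1})^*$. This is a linear condition with coefficients in $K$ on the unknown $S$, because the entries of $\rho(\gamma^{-1})^*$ are conjugates of entries in $K$, and $K$ is conjugation-stable by Lemma~\ref{lem:Kbar}.

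More precisely, the equations $X\rho(\gamma)=\rho(\gamma^{-1})^*X$, for all $\gamma$, cut out a $K$-subspace $W\subset\M_d(K)$. Its complexification is $\C H$, so $\dim_K W=1$; pick a generator $X_0\in W\setminus\{0\}$.

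The map $X\mapsto X^*$ preserves $W$, so $X_0^*=cX_0$ with $c\bar c=1$. By Hilbert 90 for $K/K_0$ we can write $c=\bar\mu/\mu$. Then $H_K=\mu X_0$ is Hermitian, lies in $M_d(K)$, is invariant, and is non-degenerate because it is a nonzero multiple of $H$.

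\textbf{Normal form.} The form $H_K$ is a $K_0^\times$-multiple of $H$; after possibly replacing it by $-H_K$, it is complex-congruent to $J$, with signature $(n,1)$ at the given archimedean embedding. Since it is indefinite, it is isotropic over $\R$, but we need an isotropic vector over $K$.

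I would take $v$ to be the eigenvector $e_1$ of $\rho(A)$ (the diagonal $D$ commutes with $A$). Invariance gives $H_K(Av,Av)=|\lambda_1|^2H_K(v,v)$, hence $(|\lambda_1|^2-1)H_K(v,v)=0$. Since $|\lambda_1|\ne1$ for the off-circle eigenvalue, $v$ is $K$-rational and isotropic.

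A Hermitian space over $K/K_0$ with an isotropic vector splits off a hyperbolic plane $\mathbb H$ by the standard Witt argument. The orthogonal complement then diagonalizes over $K$ as $\langle q_2,\dots,q_n\rangle$ with $q_j\in K_0^\times$. Rescaling the whole form by $q_2^{-1}$ leaves $\mathbb H$ unchanged up to isometry, since $\mathbb H\simeq a\mathbb H$ for any $a\in K_0^\times$. This gives the normal form $\mathbb H\oplus\langle1,q_3,\ldots,q_n\rangle$.

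\textbf{Final equivalence.} Suppose $H_K$ is similar to $J$ over $K$, say $P^*H_KP=aJ$ with $P\in\GL_d(K)$ and $a\in K_0^\times$. Then $P^{-1}\rho(\Gamma)P\subset\GL_d(K)$ preserves $J$ and has determinant one, so it lies in $\SU(J,K)$.

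Conversely, suppose $g\Gamma g^{-1}\subset\SU(J,K)$ is any realization. Then $\rho$ and this realization are two $K$-forms of the same absolutely irreducible representation, so they are $K$-conjugate by Skolem--Noether, the algebra being $\M_d(K)$ by Proposition~\ref{prop:projectors-split-en}. Transporting $J$ back along this conjugation gives an invariant $K$-form. By the uniqueness of invariant forms from the first step, it equals $H_K$ up to $K_0^\times$, which gives the similarity.

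\textbf{Main obstacle.} The main obstacle is the descent step: the one-dimensionality over $K$ and the Hilbert 90 normalization. The isotropy step is where the regular loxodromic element is genuinely used.
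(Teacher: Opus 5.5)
Your argument is essentially correct, but the normal-form step proves less than the statement asserts. Your descent step is the paper's argument. The invariance equations form a $K$-linear system whose complex solution space is the line $\C H$, so the $K$-solutions form a line, and Hilbert~90 rescales a generator to a Hermitian one.

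There are two harmless slips here. With $c=\bar\mu/\mu$ the Hermitian representative is $\bar\mu X_0$, not $\mu X_0$. Also, $H_K$ is only an $\R^\times$-multiple of $H=D^*JD$, not a $K_0^\times$-multiple, because $D$ is not $K$-rational.

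Your converse via Skolem--Noether is sound, and slightly more general than what the paper writes. Proposition~\ref{prop:exact-unitary-en} assumes the conjugating matrix already lies in $\GL_d(K)$. Your argument instead reduces an arbitrary complex conjugation into $\SU(J,K)$ to a $K$-rational one up to a scalar. The paper does that reduction only inside the proof of Theorem~\ref{thm:one-A-obstruction}.

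The real difference is the normal form. You use one $K$-rational isotropic eigenvector of $\rho(A)$, then Witt's splitting of a hyperbolic plane and Gram--Schmidt on the complement. That gives the isometry class $\mathbb H\oplus\langle1,q_3,\ldots,q_n\rangle$, but through a general change of basis in $\GL_d(K)$. The statement claims that a \emph{diagonal} $K$-change of basis suffices, and your argument does not show this.

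The paper obtains it by applying $A^*H_KA=H_K$ to every entry, giving $(\bar\lambda_i\lambda_j-1)h_{ij}=0$. By simplicity of the spectrum, in the eigenbasis of $A$ the only nonzero entries of $H_K$ are $h_{1d}=a$, $h_{d1}=\bar a$, and the diagonal entries $q_j$ on the unit-circle eigenvectors. Multiplying by $q_2^{-1}$ and rescaling $e_d$ by $q_2/a$ then finishes the normal form.

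That route keeps $\rho(A)$ diagonal and identifies the $q_j$ as the $H_K$-lengths of the unit-circle eigenvectors of $A$. The later norm-torus and arithmetic arguments use this concrete description. Your computation for $e_1$ extends verbatim to every pair $(e_i,e_j)$, so the repair is one line, and the Witt argument then becomes unnecessary.
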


\begin{proof}
For each $\gamma\in\Gamma$ let
\[
 W_\gamma=
 \{X\in\M_d(\C):\rho(\gamma)^*X\rho(\gamma)=X\}.
\]
Because $\rho(\gamma)\in\M_d(K)$ and $K$ is stable under complex conjugation, each $W_\gamma$ is the complexification of a $K$-linear subspace of $\M_d(K)$.  The space of forms invariant under the whole group is
\[
 W=\bigcap_{\gamma\in\Gamma}W_\gamma.
\]
Absolute irreducibility and Schur's lemma give $\dim_\C W=1$: after fixing one nonzero invariant Hermitian form $H$, every invariant sesquilinear form is a scalar multiple of $H$.

Since $\M_d(\C)$ is finite-dimensional, an arbitrary intersection of linear subspaces is already the intersection of a finite subfamily.  Thus there exist
\[
 \gamma_1,\ldots,\gamma_m\in\Gamma
\]
such that
\[
 W=\bigcap_{j=1}^mW_{\gamma_j}.
\]
This finite homogeneous linear system is defined over $K$ and has complex solution dimension one.  Its $K$-solution space therefore also has dimension one and contains a nonzero generator
\[
 X_0\in\M_d(K).
\]

The line is stable under the involution $X\mapsto X^*$.  Thus
\[
 X_0^*=cX_0
\]
for some $c\in K^\times$, and applying $*$ twice gives $c\bar c=1$.  Hilbert's Theorem~90 provides $a\in K^\times$ with $c=a/\bar a$.  Therefore
\[
 H_K=aX_0
\]
satisfies $H_K^*=H_K$.  It is non-degenerate because, after extension of scalars to $\C$, it spans the same invariant line as the original non-degenerate Hermitian form.  A regular loxodromic element pairs its attracting and repelling eigenlines and leaves the remaining directions orthogonal; the explicit calculation in Subsection~\ref{subsec:normal-form-en} gives the displayed normal form over $K$.

If $H_K\sim_KJ$, a $K$-change of basis takes the $K$-rational representation into $\SU(J,K)$.  The converse is the content of Proposition~\ref{prop:exact-unitary-en}, proved below with the $K$-rational intertwiner made explicit.
\end{proof}

\subsection{Normal form associated with a regular loxodromic element}\label{subsec:normal-form-en}

The form $H_K$ can be described more explicitly before invoking Landherr.  Order the eigenvalues of $A$ so that
\[
 \lambda_d=\bar\lambda_1^{-1},
 \qquad |\lambda_j|=1\quad(2\le j\le d-1).
\]
The equation $A^*H_KA=H_K$ gives, entry by entry,
\[
 (\bar\lambda_i\lambda_j-1)h_{ij}=0.
\]
Because the spectrum is simple, the only nonzero entry in the first row away from the diagonal occurs in the $d$-th column, and the intermediate eigenspaces are mutually orthogonal.  Hence
\[
 H_K=
 \begin{pmatrix}
 0&0&\cdots&0&a\\
 0&q_2&&&0\\
 \vdots&&\ddots&&\vdots\\
 0&&&q_{d-1}&0\\
 \bar a&0&\cdots&0&0
 \end{pmatrix},
 \qquad a\in K^\times,
 \quad q_j\in K_0^\times.
\]
After multiplying the whole form by $q_2^{-1}$ and rescaling the last basis vector by $q_2/a$, one obtains
\[
 H_K\simeq \mathbb H\oplus\langle1,q_3,\ldots,q_{d-1}\rangle.
\]
This normal form is the precise bridge between the matrix extraction and the arithmetic classification of Hermitian forms.

\begin{proposition}\label{prop:exact-unitary-en}
Let $\rho(\Gamma)\subset\SL_d(K)$ be absolutely irreducible and preserve $H_K$.  The following are equivalent:
\begin{enumerate}[label=(\alph*)]
\item there is a $K$-change of basis $P\in\GL_d(K)$ such that
\[
 P^{-1}\rho(\Gamma)P\subset\SU(J,K);
\]
\item there exist $P\in\GL_d(K)$ and $c\in K_0^\times$ such that
\[
 P^*JP=cH_K;
\]
\item $H_K$ and $J$ represent the same Hermitian similarity class over $K/K_0$.
\end{enumerate}
\end{proposition}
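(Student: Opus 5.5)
The plan is to prove the cycle (a)$\Rightarrow$(b)$\Rightarrow$(c)$\Rightarrow$(a). The tools are the uniqueness-up-to-scalar of the invariant form, which comes from absolute irreducibility and Schur's lemma exactly as in the proof of Theorem~\ref{thm:hermitian-descent}, and the fact that similarity over $K/K_0$ means equivalence after rescaling by an element of $K_0^\times$.

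\emph{(a)$\Rightarrow$(b).} Suppose $P\in\GL_d(K)$ satisfies $P^{-1}\rho(\Gamma)P\subset\SU(J,K)$. Then for every $\gamma$ we have $(P^{-1}\rho(\gamma)P)^*J(P^{-1}\rho(\gamma)P)=J$. Hence the Hermitian matrix $X=(P^{-1})^*JP^{-1}\in\M_d(K)$ is $\rho(\Gamma)$-invariant. By the one-dimensionality of the space $W$ of invariant sesquilinear forms, $X=cH_K$ for some $c\in\C^\times$. Since $X$ and $H_K$ both have entries in $K$, we get $c\in K$. Since both are Hermitian and $H_K\ne0$, we get $c=\bar c$, so $c\in K_0^\times$. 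Replacing $P$ by $P^{-1}$ then gives $P^*JP=cH_K$ with $P\in\GL_d(K)$, which is (b). Note that (a) only yields $P$ with entries in $K$ conjugating into $\mathrm U(J)$ with determinant one; nothing beyond the invariance equation is needed.

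\emph{(b)$\Leftrightarrow$(c).} This is essentially the definition of Hermitian similarity over $K/K_0$. The forms $H_K$ and $J$ are similar if some $K_0^\times$-multiple of one is isometric over $K$ to the other, that is, $P^*JP=cH_K$. If one prefers to allow the scalar on either side, note that $c^{-1}J$ and $J$ differ by the same kind of rescaling, so the two formulations agree.

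\emph{(b)$\Rightarrow$(a).} Given $P^*JP=cH_K$, set $\sigma(\gamma)=P\rho(\gamma)P^{-1}\in\SL_d(K)$. Then
\[
\sigma(\gamma)^*J\sigma(\gamma)=(P^{-1})^*\rho(\gamma)^*(cH_K)\rho(\gamma)P^{-1}=(P^{-1})^*cH_KP^{-1}=J,
\]
so $\sigma(\Gamma)\subset\SU(J)\cap\SL_d(K)=\SU(J,K)$. Taking the conjugating matrix in (a) to be $P^{-1}$ gives (a).

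The only delicate point is the rationality of the scalar $c$ in (a)$\Rightarrow$(b). It has to be extracted from the uniqueness of the invariant form over $\C$, combined with both forms being $K$-rational and Hermitian. I expect no other obstacle, since determinant one is automatic from $\rho(\Gamma)\subset\SL_d(K)$.
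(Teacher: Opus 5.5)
Your proof is correct and follows the same route as the paper: (b)$\Rightarrow$(a) by direct verification, (a)$\Rightarrow$(b) by uniqueness of the invariant Hermitian line, with $c\in K_0^\times$ forced by $K$-rationality and self-adjointness, and (b)$\Leftrightarrow$(c) by the definition of similarity. Your handling of conjugation is in fact more careful than the paper's written proof. The paper conjugates by $P^{-1}\rho P$ and calls $P^*JP$ invariant, whereas, as you note, the invariant form is $(P^{-1})^*JP^{-1}$, so $P$ and $P^{-1}$ must be swapped when passing between (a) and (b).
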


\begin{proof}
If (b) holds, then for every $\gamma\in\Gamma$,
\[
 (P^{-1}\rho(\gamma)P)^*J(P^{-1}\rho(\gamma)P)=J,
\]
so (a) follows.  Conversely, if (a) holds, then $P^*JP$ is an invariant Hermitian form for $\rho(\Gamma)$.  Absolute irreducibility makes the invariant Hermitian line one-dimensional over $K_0$, hence
\[
 P^*JP=cH_K
\]
for some $c\in K_0^\times$.  Thus (a) implies (b).  Finally, (b) and (c) are exactly the definition of similarity of Hermitian forms.
\end{proof}

A useful consequence is that the field problem has been reduced to a classical arithmetic question.  No further manipulation of individual matrix entries can remove an obstruction once it has become a nontrivial similarity class of $H_K$.

\subsection{Landherr invariants and parity}

Assume now that $K/K_0$ is a quadratic extension of number fields.  Landherr's theorem classifies Hermitian forms by their local invariants \cite{Landherr1935,Jacobowitz1962,Scharlau1985}.  In odd dimension there is no finite discriminant obstruction once signatures agree.  In even dimension there is one class in
\[
 K_0^\times/N_{K/K_0}(K^\times).
\]

\begin{theorem}\label{thm:landherr}
Let $h$ and $h'$ be non-degenerate Hermitian forms of the same rank over $K/K_0$.  Suppose their signatures agree at every real place of $K_0$ that remains complex in $K$.  If the rank is odd, $h$ and $h'$ are similar.  If the rank is even, their similarity class differs by a single discriminant ratio
\[
 \delta(h,h')\in K_0^\times/N_{K/K_0}(K^\times),
\]
and $h\sim h'$ if and only if this class is trivial.
\end{theorem}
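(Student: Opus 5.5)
The plan is to deduce the statement from Landherr's \emph{isometry} classification \cite{Landherr1935,Jacobowitz1962,Scharlau1985}, and then track how the invariants change under scaling. Landherr's theorem says that two non-degenerate Hermitian forms of the same rank over a quadratic extension $K/K_0$ of number fields are isometric if and only if two conditions hold. First, their discriminants agree in $K_0^\times/N_{K/K_0}(K^\times)$. Second, their signatures agree at every real place of $K_0$ that becomes complex in $K$. Similarity $h\sim h'$ means that $h\cong c\,h'$ for some $c\in K_0^\times$. So the whole proof reduces to two questions: how $c$ acts on the discriminant, and how it acts on the signatures at the ramified real places.

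The first step is a bookkeeping computation. For a form of rank $r$ one has $\disc(c\,h')=c^{r}\disc(h')$. Every square $c^2$ is a norm, since $c^2=N_{K/K_0}(c)$ for $c\in K_0$. Hence modulo norms $c^r\equiv1$ if $r$ is even, and $c^r\equiv c$ if $r$ is odd. At a real place $v$ of $K_0$ that is complex in $K$, scaling by $c$ preserves the signature when $c_v>0$ and swaps $(p,q)\mapsto(q,p)$ when $c_v<0$. Write $\delta=\disc(h)/\disc(h')$. Equal signatures at such a place force $\delta_v>0$, because the sign of the discriminant is $(-1)^{q}$ (after the usual normalization).

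For odd rank, the plan is to take $c=\delta$. This choice does three things.
\begin{itemize}
\item It corrects the discriminant, since $\disc(\delta h')\equiv\delta\disc(h')=\disc(h)$ modulo norms.
\item It is positive at every ramified real place, so the signatures are unchanged.
\item Landherr's theorem then gives $h\cong\delta h'$, that is, $h\sim h'$.
\end{itemize}

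For even rank, the argument runs in both directions.
\begin{itemize}
\item If $\delta$ is a norm, then $c=1$ works directly by Landherr.
\item Conversely, suppose $h\cong c\,h'$. Then $\disc(h)=c^r\disc(h')\cdot N$ with $N$ a norm, and $c^r$ is a square, hence a norm. So $\delta$ is trivial.
\item The same computation shows that the class of $\delta$ does not depend on the representatives chosen within the similarity classes. This justifies calling it the single obstruction $\delta(h,h')$.
\end{itemize}

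The main obstacle is not the algebra above but applying Landherr's theorem in its precise form. One has to fix the discriminant normalization: whether a sign $(-1)^{r(r-1)/2}$ is included, which changes nothing modulo norms when comparing two forms of equal rank. One also has to check the sign claim relating $\delta_v$ to the signatures at the ramified real places. Beyond this, Landherr's global result rests on the local-global principle for Hermitian forms, which uses the Hasse norm theorem for the cyclic extension $K/K_0$. I would cite that principle rather than reprove it. The only thing left to verify carefully is that at finite places and split places the discriminant is the only local invariant, so that nothing beyond $\delta$ can appear.
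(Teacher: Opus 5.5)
Your proof is correct and follows the paper's approach. The paper gives no separate proof of this statement: it simply invokes Landherr's isometry classification. Your scaling bookkeeping is exactly what is needed to pass from that isometry statement to the similarity formulation given here. That bookkeeping is the identity $\disc(ch')=c^r\disc(h')$, the fact that squares of elements of $K_0$ are norms, and the observation that $\delta_v>0$ at every ramified real place when the signatures agree.
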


For imaginary quadratic $K/\Q$, the archimedean signature is fixed by the distinguished embedding.  Thus in matrix dimension four the entire obstruction is a rational norm class.

\begin{theorem}\label{thm:normcriterion}
Let $K/\Q$ be imaginary quadratic and
\[
 H(q)=\mathbb H\oplus\langle1,q\rangle,
 \qquad q\in\Q_{>0}.
\]
Then
\[
 H(q)\sim_K J
 \quad\Longleftrightarrow\quad
 q\in N_{K/\Q}(K^\times).
\]
\end{theorem}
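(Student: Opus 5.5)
The plan is to reduce the statement to Theorem~\ref{thm:landherr} in rank four, and to compare discriminants by computing determinants. Since $J=\mathbb H\oplus\langle1,1\rangle$ up to a permutation of the basis, both $J$ and $H(q)$ are hyperbolic planes plus a two-dimensional diagonal part. Their determinants are
\[
 \det J=-1,\qquad \det H(q)=-q .
\]
Both determinants lie in $\Q^\times$, as they must for Hermitian matrices.

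First I check the signature hypothesis of Theorem~\ref{thm:landherr}. The field $\Q$ has a single real place, and it becomes complex in $K$. At that place $\mathbb H$ has signature $(1,1)$ and $\langle1,q\rangle$ is positive definite because $q>0$. Hence $H(q)$ has signature $(3,1)$, exactly as $J$ does. So the signatures agree. Theorem~\ref{thm:landherr} then says the only possible obstruction is the discriminant ratio
\[
 \delta(H(q),J)=\det H(q)/\det J=q \quad\text{in } \Q^\times/N_{K/\Q}(K^\times).
\]
Granting this, the ``if'' direction is immediate: if $q\in N_{K/\Q}(K^\times)$, the ratio is trivial, and Landherr gives $H(q)\sim_K J$.

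For the ``only if'' direction I will not rely on the precise normalization of $\delta$ inside Landherr's theorem. I will argue directly with determinants, because similarity allows a scalar factor and I want to see explicitly that this factor does not matter. Suppose $P^*JP=cH(q)$ with $P\in\GL_4(K)$ and $c\in\Q^\times$. Taking determinants gives
\[
 -N_{K/\Q}(\det P)=-c^4q .
\]
Since $c^4=N_{K/\Q}(c^2)$ is a norm, it follows that $q=N_{K/\Q}(\det P\,c^{-2})$ is a norm. The same computation shows that rescaling by $c$ changes the determinant only by the norm $c^4$, because the rank is even. So the discriminant class modulo norms really is an invariant of the similarity class. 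This is what justifies identifying Landherr's $\delta$ with the determinant ratio in the sufficiency direction.

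I expect the main point needing care to be exactly this bookkeeping between isometry and similarity. Landherr classifies forms up to isometry. In rank four, scaling by $c\in\Q^\times$ multiplies the determinant by $c^4\in N_{K/\Q}(K^\times)$ and preserves the signature provided $c>0$. A negative $c$ would reverse the signature to $(1,3)$, so it cannot relate two forms of signature $(3,1)$. Therefore similarity with a positive factor, isometry, and equality of determinant classes all coincide here. Once this is checked, the theorem follows.
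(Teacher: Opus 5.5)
Your proposal is correct and follows the paper's proof: the signatures agree at the unique archimedean place, $\det H(q)/\det J=q$, and the even-rank case of Theorem~\ref{thm:landherr} finishes the argument. Your direct necessity argument, $N_{K/\Q}(\det P)=c^4q$, is the same determinant computation the paper uses in the proof of Theorem~\ref{thm:one-A-obstruction}; one caveat is that similarity-invariance of the determinant class does not by itself identify Landherr's $\delta$ with the determinant ratio, since sufficiency rests on Landherr's theorem making this class the complete finite invariant, which is exactly how the paper uses it.
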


\begin{proof}
The two forms have the same signature at the unique archimedean place.  Moreover,
\[
 \frac{\det H(q)}{\det J}=q
 \quad\text{in }\Q^\times/N_{K/\Q}(K^\times).
\]
The even-rank case of Theorem~\ref{thm:landherr} therefore says that the two forms are similar exactly when the class of $q$ is trivial in the norm quotient.
\end{proof}

\begin{example}\label{ex:H3}
Let
\[
 K=\Q(i),\qquad K_0=\Q,
 \qquad
 H_3=\mathbb H\oplus\langle1,3\rangle,
 \qquad
 J=\mathbb H\oplus\langle1,1\rangle.
\]
Put
\[
 S=\diag(1,1,\sqrt3,1).
\]
Then $S^*JS=H_3$.  The element
\[
 A=\diag\left(2,i,-i,\frac12\right)
\]
lies in $\SU(H_3,K)$ and is regular loxodromic.  Since $3$ is not a norm from $\Q(i)$, Theorem~\ref{thm:normcriterion} gives
\[
 H_3\not\sim_KJ,
 \qquad
 H_3\sim_{K(\sqrt3)}J.
\]
\end{example}

The form obstruction can be realized by a Zariski-dense group while keeping the one-element field equal to $K$.

\begin{theorem}\label{thm:one-A-obstruction}
Let
\[
 \Lambda=\SU(H_3,\mathbb Z[i]),
 \qquad
 \Gamma_0=\langle\Lambda,A\rangle<\SU(H_3,K),
 \qquad
 \Gamma=S\Gamma_0S^{-1}<\SU(J).
\]
Then $\Gamma_0$ and $\Gamma$ are finitely generated and Zariski-dense, $A\in\Gamma$, and
\[
 \Q(\tr\Gamma,E(A))=\Q(i).
\]
Nevertheless there is no $U\in\SU(J)$ such that
\[
 U\Gamma U^{-1}\subset\SU(J,\Q(i)).
\]
On the other hand,
\[
 \Gamma\subset\SU\bigl(J,\Q(i,\sqrt3)\bigr).
\]
Thus, for this group and this regular loxodromic element, the one-element field does not remove the Hermitian obstruction, whereas a quadratic spectral extension does.
\end{theorem}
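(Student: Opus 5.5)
The proof has four parts: the structural claims about $\Gamma_0$ (finite generation and Zariski density), the computation of the one-element field, the non-existence of a conjugation into $\SU(J,\Q(i))$, and the membership $\Gamma\subset\SU(J,\Q(i,\sqrt3))$.

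\emph{Structural claims and the field.} The form $H_3=\mathbb H\oplus\langle1,3\rangle$ has signature $(3,1)$ over $\C$. Hence $\Lambda=\SU(H_3,\mathbb Z[i])$ is an arithmetic lattice in $\SU(H_3)(\R)\simeq\SU(3,1)$, since $K=\Q(i)$ is imaginary quadratic.
\begin{itemize}
\item By Borel--Harish-Chandra, $\Lambda$ is finitely generated, so $\Gamma_0=\langle\Lambda,A\rangle$ is finitely generated.
\item By Borel density in the noncompact simple group $\SU(3,1)$, $\Lambda$ is Zariski-dense, hence so are $\Gamma_0$ and its conjugate $\Gamma$.
\item A direct check gives $A^*H_3A=H_3$ and $\det A=1$, so $A\in\Gamma_0$, and $SAS^{-1}=A$ because both matrices are diagonal. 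Thus $A\in\Gamma$.
\end{itemize}
All elements of $\Gamma_0$ lie in $\M_4(K)$, so $\tr\Gamma=\tr\Gamma_0\subset\Q(i)$. Also $E(A)=\{2,i,-i,1/2\}$, which generates $\Q(i)$. Therefore $\Q(\tr\Gamma,E(A))=\Q(i)$.

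\emph{Non-existence of $U$.} Suppose $U\in\SU(J)$ satisfies $U\Gamma U^{-1}\subset\SU(J,\Q(i))$, and set $M=US\in\GL_4(\C)$. Then $M\Gamma_0M^{-1}\subset\M_4(K)$ and
\[
M^*JM=S^*U^*JUS=S^*JS=H_3 .
\]
The key step is to show that $M$ is a complex scalar multiple of a $K$-rational matrix.
\begin{enumerate}
\item Let $\mathcal B=\Span_K(\Gamma_0)\subset\M_4(K)$. By Burnside (Zariski density gives absolute irreducibility), $\Span_\C(\Gamma_0)=\M_4(\C)$.
\item This forces $\dim_K\mathcal B\ge16$, so $\mathcal B=\M_4(K)$.
\item Since $\operatorname{Ad}M$ sends $\Gamma_0$ into $\M_4(K)$, it restricts to a $K$-algebra automorphism of $\M_4(K)$.
\item By Skolem--Noether there is $P\in\GL_4(K)$ with $\operatorname{Ad}M=\operatorname{Ad}P$ on $\M_4(K)$, hence on $\M_4(\C)$ by linearity.
\item So $P^{-1}M$ centralizes $\M_4(\C)$, giving $M=cP$ with $c\in\C^\times$.
\end{enumerate}
Then $|c|^2P^*JP=H_3$. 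Comparing a nonzero entry shows $|c|^2\in K\cap\R_{>0}=\Q_{>0}=K_0^{\times}\cap\R_{>0}$. Hence $P^*JP=|c|^{-2}H_3$, which is a $K$-similarity between $J$ and $H_3$ (condition (b) of Proposition~\ref{prop:exact-unitary-en}). This contradicts Example~\ref{ex:H3}, which rests on Theorem~\ref{thm:normcriterion} and the fact that $3\notin N_{\Q(i)/\Q}(\Q(i)^\times)$: $3$ is inert in $\Q(i)$, so the Hilbert symbol $(3,-1)_3=-1$.

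\emph{Membership over $\Q(i,\sqrt3)$.} Since $S=\diag(1,1,\sqrt3,1)$ has entries in $\Q(\sqrt3)$, every element of $\Gamma=S\Gamma_0S^{-1}$ has entries in $\Q(i,\sqrt3)$. Each such element preserves $J$ because $S^*JS=H_3$ is preserved by $\Gamma_0$. Hence $\Gamma\subset\SU(J,\Q(i,\sqrt3))$.

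\emph{Main obstacle.} The delicate point is the rationality of the conjugator: an a priori complex $U$ must be reduced to a $K$-rational $P$ up to a scalar whose squared modulus lies in $K_0$. I would handle this through the identity $\Span_K(\Gamma_0)=\M_4(K)$ combined with Skolem--Noether, as in steps 1–5 above. Everything else is a citation (Borel--Harish-Chandra, Borel density) or a direct computation.
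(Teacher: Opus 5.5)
Your proof is correct and follows the paper's argument. Both reduce a hypothetical conjugator to $US=cP$ with $P\in\GL_4(K)$, deduce $P^*JP=rH_3$ with $r\in\Q_{>0}$, and contradict $3\notin N_{\Q(i)/\Q}(\Q(i)^\times)$. The differences are only in the tools: the paper gets the $K$-rational intertwiner from Schur's lemma applied to the $K$-linear intertwining equations rather than from Burnside plus Skolem--Noether, and it finishes with the self-contained identity $N_{K/\Q}(\det P)=3r^4$ and the odd parity of $v_3$ instead of citing Theorem~\ref{thm:normcriterion}, whose needed direction is exactly this determinant computation.
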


\begin{proof}
The arithmetic group $\Lambda$ is finitely generated and Zariski-dense in the special unitary algebraic group associated with $H_3$ by Borel--Harish-Chandra and Borel density \cite{BorelHC1962,Borel1960}.  Hence $\Gamma_0$ and its conjugate $\Gamma$ are finitely generated and Zariski-dense.  Since $S$ commutes with $A$, we have $A\in\Gamma$.  Traces are unchanged by conjugation and every element of $\Gamma_0$ has coefficients in $K$, so
\[
 \Q(\tr\Gamma)\subset K.
\]
Because $i\in E(A)$, this gives
\[
 \Q(\tr\Gamma,E(A))=K.
\]
The inclusion over $K(\sqrt3)$ is immediate from $S\in\GL_4(K(\sqrt3))$.

Suppose now that there were $U\in\SU(J)$ with $U\Gamma U^{-1}\subset\SU(J,K)$.  The two $K$-rational representations of $\Gamma_0$
\[
 \rho_1(g)=g,
 \qquad
 \rho_2(g)=USgS^{-1}U^{-1}
\]
are equivalent over $\C$.  Since $\Gamma_0$ is Zariski-dense, $\rho_1$ is absolutely irreducible.  The intertwining equations are linear over $K$ and their complex solution space is a line, so there is an invertible $P\in\GL_4(K)$ and a scalar $c\in\C^\times$ with
\[
 US=cP.
\]
Using $U^*JU=J$ and $S^*JS=H_3$ gives
\[
 P^*JP=rH_3
\]
for some $r>0$.  Since both matrices have entries in $K$, a nonzero entry shows $r\in K\cap\R=\Q$.
Taking determinants and using $\det J=-1$ and $\det H_3=-3$ yields
\[
 N_{K/\Q}(\det P)=3r^4.
\]
Every rational norm from $\Q(i)$ has even valuation at every prime $p\equiv3\pmod4$, whereas
\[
 v_3(3r^4)=1+4v_3(r)
\]
is odd.  This contradiction proves the assertion.
\end{proof}

This example shows why linear descent to $\SL_d(K)$ and unitary descent to $\SU(J,K)$ must be treated separately.

\subsection{The Gaussian model}\label{subsec:gaussian-controls-en}

The form $H_3$ admits a convenient integral family of unitary matrices.  The following matrices lie in $\SU(H_3,\mathbb Z[i])$:
\[
 N_1=\begin{pmatrix}
 1&-1+i&0&-1\\0&1&0&1+i\\0&0&1&0\\0&0&0&1
 \end{pmatrix},
 \quad
 N_2=\begin{pmatrix}
 1&0&-3+3i&-3\\0&1&0&0\\0&0&1&1+i\\0&0&0&1
 \end{pmatrix},
\]
\[
 V=\begin{pmatrix}
 1&0&0&i\\0&1&0&0\\0&0&1&0\\0&0&0&1
 \end{pmatrix},
 \quad
 M=\diag(1,i,-i,1),
 \quad
 R=\begin{pmatrix}
 0&0&0&1\\0&-1&0&0\\0&0&1&0\\1&0&0&0
 \end{pmatrix}.
\]
Each identity $X^*H_3X=H_3$ and $\det X=1$ is immediate by direct multiplication.  The matrices $N_1,N_2,V$ are Heisenberg-type translations in the Siegel model, $M$ is a compact rotation, and $R$ exchanges the two isotropic directions.  Their role here is not to assert a new presentation theorem, but to provide an exact arithmetic laboratory in which the distinction between the coefficient field and the standard unitary form can be tested.

The same model also shows why the total spectral field can be strictly more useful than the field attached to one loxodromic element.  Consider
\[
 C=
 \begin{pmatrix}
 49&0&168&-96\\
 0&1&0&0\\
 28&0&97&-56\\
 -24&0&-84&49
 \end{pmatrix}.
\]
A direct calculation gives
\[
 C^*H_3C=H_3,
 \qquad \det C=1,
\]
and
\[
 \chi_C(X)=(X-1)^2(X^2-194X+1).
\]
The nontrivial eigenvalues are
\[
 97+56\sqrt3,
 \qquad
 97-56\sqrt3.
\]

\begin{proposition}\label{prop:total-H3-example}
For the group $\Gamma$ of Theorem~\ref{thm:one-A-obstruction},
\[
 \Q(i,\sqrt3)\subset F_{\rm eig}(\Gamma).
\]
In particular the total spectral field is a unitary field of definition for this example.
\end{proposition}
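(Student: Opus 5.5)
The plan is to exhibit an explicit element of $\Gamma$ whose spectrum contains $\sqrt3$, namely the conjugate $SCS^{-1}$ of the matrix $C$ displayed above. Then $\Q(i,\sqrt3)\subset F_{\rm eig}(\Gamma)$ follows at once, and the unitary-field-of-definition claim follows from the last assertion of Theorem~\ref{thm:one-A-obstruction}.

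The first step is to check that $C\in\Gamma_0$. The entries of $C$ are rational integers, so they lie in $\mathbb Z[i]$. The identities $C^*H_3C=H_3$ and $\det C=1$ were recorded above. Hence $C\in\SU(H_3,\mathbb Z[i])=\Lambda\subset\Gamma_0$, and therefore $SCS^{-1}\in\Gamma$. Conjugation preserves the characteristic polynomial, so
\[
 \chi_{SCS^{-1}}(X)=(X-1)^2(X^2-194X+1).
\]
The roots $97\pm56\sqrt3$ thus belong to $E(SCS^{-1})\subset F_{\rm eig}(\Gamma)$. Subtracting $97$ and dividing by $56$ gives $\sqrt3\in F_{\rm eig}(\Gamma)$.

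Next, $F_{\rm eig}(\Gamma)$ contains $K=\Q(\tr\Gamma,E(A))=\Q(i)$ by its definition and by Theorem~\ref{thm:one-A-obstruction}. Together with the previous step this gives $\Q(i,\sqrt3)\subset F_{\rm eig}(\Gamma)$.

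For the final sentence, Theorem~\ref{thm:one-A-obstruction} gives $\Gamma\subset\SU(J,\Q(i,\sqrt3))$, which is the identity conjugation of $\Gamma$ into the standard unitary group over $\Q(i,\sqrt3)$. Base change along $\Q(i,\sqrt3)\subset F_{\rm eig}(\Gamma)$ then yields $\Gamma\subset\SU(J,F_{\rm eig}(\Gamma))$, so the standard form $J$ is realized over the total spectral field.

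I expect the only genuine obstacle to be the verification that $C$ preserves $H_3$, since that is the step that places $C$ inside $\Gamma$. I will check it by hand. The second basis vector is fixed by $C$, so the problem reduces to the $3\times3$ block on coordinates $1,3,4$. On this block the form is $\begin{pmatrix}0&0&1\\0&3&0\\1&0&0\end{pmatrix}$ and the relevant matrix is $\begin{pmatrix}49&168&-96\\28&97&-56\\-24&-84&49\end{pmatrix}$. For instance, the $(1,1)$ entry of $C^*H_3C$ is $2\cdot49\cdot(-24)+3\cdot28^2=-2352+2352=0$, as it should be. The remaining entries are checked the same way, and the characteristic polynomial follows from trace $196$ and determinant one on the nontrivial block.
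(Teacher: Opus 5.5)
Your proposal is correct and follows the paper's own proof: $C\in\Lambda\subset\Gamma_0$ gives $SCS^{-1}\in\Gamma$ with eigenvalues $97\pm56\sqrt3$, the field $\Q(i)$ comes from the spectrum of $A$, and the unitary realization follows from $S\in\GL_4(\Q(i,\sqrt3))$, exactly as in Theorem~\ref{thm:one-A-obstruction}. Your hand check of $C^*H_3C=H_3$ is right; one small imprecision is that trace and determinant alone do not determine the characteristic polynomial. What actually pins it down is the inverse pairing of eigenvalues for a matrix preserving the real form, which forces the spectrum $1,1,\lambda,\lambda^{-1}$ with $\lambda+\lambda^{-1}=194$; alternatively, you may simply cite the displayed $\chi_C$.
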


\begin{proof}
We have $C\in\Lambda\subset\Gamma_0$, hence $SCS^{-1}\in\Gamma$ and has the same eigenvalues as $C$.  Thus $\sqrt3\in F_{\rm eig}(\Gamma)$, while $i$ already occurs in the spectrum of $A$.  Since $S$ has coefficients in $\Q(i,\sqrt3)$ and $\Gamma=S\Gamma_0S^{-1}$ with $\Gamma_0$ defined over $\Q(i)$, the standard unitary realization follows.
\end{proof}

The matrix $C$ reflects a general norm-torus mechanism.  Let
\[
 u=e_1-\frac12e_d.
\]
On the plane spanned by $u$ and a positive vector $e_j$ with Hermitian coefficient $q_j$, the form is $\diag(-1,q_j)$.  If
\[
 a,b\in K_0,
 \qquad a^2-q_jb^2=1,
\]
then
\[
 R_j(a,b)=
 \begin{pmatrix}
  a&q_jb\\
  b&a
 \end{pmatrix}
\]
preserves $\diag(-1,q_j)$ and has eigenvalues
\[
 a+b\sqrt{q_j},
 \qquad
 a-b\sqrt{q_j}.
\]
Thus a nontrivial element of this norm-one torus makes the square root that trivializes $q_j$ visible in the spectrum.

At this point the descent problem has been reduced to its arithmetic core.  The linear model over $K$ is available; what can still fail is the similarity class of its invariant Hermitian form, measured in dimension four by finitely many local norm classes.  Additional spectra matter precisely because they can enlarge the coefficient field enough to trivialize those classes.  The next sections study when the group forces such spectral extensions.

\section{From one spectrum to the total spectral field}\label{sec:total}

A single regular spectrum controls the linear realization, but it need not determine the unitary similarity class.  Passing to all spectra is the natural next step.  Define
\[
 F_{\rm eig}(\Gamma)=K\bigl(E(\gamma):\gamma\in\Gamma\bigr).
\]
The first point is that one maximal torus does not remember all Hermitian embedding data.

Let $E$ be an etale $K_0$-algebra of degree $d$ and put
\[
 L_E=K\otimes_{K_0}E.
\]
For $\lambda\in E^\times$ define
\[
 h_\lambda(x,y)=\operatorname{Tr}_{L_E/K}(\lambda x y^c),
\]
where $c$ extends the nontrivial automorphism of $K/K_0$.

\begin{proposition}\label{prop:blindness}
Fix $E$ and let $\lambda,\lambda'\in E^\times$.  For every
\[
 x\in L_E^1=\{x\in L_E^\times:xx^c=1\},
\]
the multiplication map $m_x:y\mapsto xy$ preserves both $h_\lambda$ and $h_{\lambda'}$.  Its characteristic polynomial and its eigenvalues are independent of $\lambda$.  On the other hand, the Hermitian similarity class may depend on $\lambda$; in particular
\[
 \det V_{E,\lambda}
 =N_{E/K_0}(\lambda)\det_{K_0}(E)
 \quad\text{in }K_0^\times/N_{K/K_0}(K^\times).
\]
\end{proposition}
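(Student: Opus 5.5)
The plan has three parts: first check the invariance and spectral claims directly, then compute the determinant by reducing to the $K_0$-bilinear trace form on $E$, and finally handle the determinant convention.

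\emph{Invariance.} Let $c$ act on $L_E=K\otimes_{K_0}E$ through the first factor, so $c$ is trivial on $E$. For $x\in L_E^1$,
\[
 h_\lambda(xy,xz)=\operatorname{Tr}_{L_E/K}(\lambda\, x y\, x^c z^c)=\operatorname{Tr}_{L_E/K}(\lambda\,(xx^c)\, y z^c)=h_\lambda(y,z),
\]
because $L_E$ is commutative and $xx^c=1$. The same computation applies to $\lambda'$. The characteristic polynomial of $m_x$ is the characteristic polynomial of $x$ in the regular representation of $L_E$ over $K$. This is determined by the algebra structure of $L_E$ and does not involve $\lambda$. So the polynomial and its eigenvalues are independent of $\lambda$.

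\emph{Determinant.} Choose a $K_0$-basis $e_1,\dots,e_d$ of $E$. It is also a $K$-basis of $L_E$. Since $c$ fixes $E$, the Gram matrix entries are
\[
 h_\lambda(e_i,e_j)=\operatorname{Tr}_{L_E/K}(\lambda e_ie_j)=\operatorname{Tr}_{E/K_0}(\lambda e_ie_j)\in K_0,
\]
where the trace restricts correctly by base change. The Gram matrix is therefore exactly that of the symmetric $K_0$-bilinear form $b_\lambda(u,v)=\operatorname{Tr}_{E/K_0}(\lambda uv)$ on $E$. Write $b_\lambda(u,v)=b_1(\lambda u,v)$. Then
\[
 \operatorname{Gram}(b_\lambda)=M_\lambda^{T}\,\operatorname{Gram}(b_1),
\]
where $M_\lambda$ is the matrix of multiplication by $\lambda$ on $E$. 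Taking determinants gives
\[
 \det V_{E,\lambda}=N_{E/K_0}(\lambda)\,\det \operatorname{Gram}(\operatorname{Tr}_{E/K_0}).
\]
The second factor is $\det_{K_0}(E)$, the discriminant of the étale algebra. This factor is nonzero precisely because $E$ is étale, which also shows that $h_\lambda$ is nondegenerate.

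\emph{Convention and dependence on $\lambda$.} Changing the basis by $P\in\GL_d(K)$ multiplies the determinant by $N_{K/K_0}(\det P)$. Hence the identity is well defined in $K_0^\times/N_{K/K_0}(K^\times)$. The map $\lambda\mapsto N_{E/K_0}(\lambda)$ can land outside $N_{K/K_0}(K^\times)$; for example, take $\lambda\in K_0^\times$ with $d$ odd. In even dimension, the case-split of Theorem~\ref{thm:landherr} then shows that the similarity class really does vary with $\lambda$. The only delicate point is fixing the determinant convention for $\det_{K_0}(E)$ and confirming that the restriction of traces is valid. No other obstacle is expected.
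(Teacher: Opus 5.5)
Your proof of the three substantive claims (invariance of $h_\lambda$ and $h_{\lambda'}$ under $m_x$, $\lambda$-independence of the characteristic polynomial, and the determinant identity) is correct and follows the paper's argument. In both, inserting $\lambda$ composes one argument with multiplication by $\lambda$, whose determinant is $N_{E/K_0}(\lambda)$. Your choice of a $K_0$-basis of $E$ sharpens this slightly. It makes the Gram matrix literally that of $\Tr_{E/K_0}(\lambda uv)$, so the identity holds exactly. It also identifies $\det_{K_0}(E)$ with the discriminant of the trace form, whereas the paper simply names the class of $\det V_{E,1}$.

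Your final paragraph, however, does not show what it claims. For $\lambda\in K_0^\times$ one has $h_\lambda=\lambda h_1$, which is similar to $h_1$ by definition. So this choice can change the isometry class (when $d$ is odd and $\lambda\notin N_{K/K_0}(K^\times)$), but never the similarity class. In even dimension, where you invoke Theorem~\ref{thm:landherr}, the factor $N_{E/K_0}(\lambda)=\lambda^d$ is a square, hence a norm, so the determinant class does not move at all.

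A correct illustration uses a non-scalar $\lambda$ in even dimension. There the class of the determinant in $K_0^\times/N_{K/K_0}(K^\times)$ is itself a similarity invariant, because rescaling by $a\in K_0^\times$ multiplies it by $a^d\in K_0^{\times2}\subset N_{K/K_0}(K^\times)$. Hence any $\lambda\in E^\times$ with $N_{E/K_0}(\lambda)\notin N_{K/K_0}(K^\times)$ gives a non-similar form, and Landherr is not even needed. Such $\lambda$ exist, for instance, when $E$ has a factor $K_0$ (so $N_{E/K_0}$ is surjective, as in Lemma~\ref{lem:biquad}) and the norm subgroup of $K/K_0$ is proper.

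Since the statement only asserts that the similarity class \emph{may} depend on $\lambda$, this is a flaw in your illustration, not in your proof of the displayed formula. You should still replace the scalar example, since as written it proves nothing about similarity.
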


\begin{proof}
For $x\in L_E^1$ one has $xx^c=1$, and commutativity of $L_E$ gives
\[
 h_\lambda(xu,xv)
 =\Tr_{L_E/K}(\lambda x u v^c x^c)
 =h_\lambda(u,v).
\]
Thus the same multiplication operator $m_x$ is unitary for every parameter $\lambda$, and its characteristic polynomial depends only on multiplication by $x$ on the fixed $K$-algebra $L_E$.

For the determinant, choose a $K$-basis of $L_E$.  Relative to the trace form with parameter $1$, inserting $\lambda$ amounts to composing one argument with multiplication by $\lambda$.  The determinant of that multiplication operator is $N_{E/K_0}(\lambda)$ after passage to the fixed field.  Hence, modulo $N_{K/K_0}(K^\times)$,
\[
 \det V_{E,\lambda}
 =N_{E/K_0}(\lambda)\det V_{E,1},
\]
and we denote the latter fixed class by $\det_{K_0}(E)$.
\end{proof}

\begin{remark}
Thus the splitting field of a generic element sees the abstract torus and the Galois action on its character group, but it does not formally determine the Hermitian embedding parameter.  This is the obstruction to proving Conjecture~\ref{conj:total-main} from generic-torus theory alone; compare \cite{PrasadRapinchuk2010,PrasadRapinchuk2017,PisolkarRajan2016,BoradeEtAl2025}.
\end{remark}

\begin{proposition}\label{prop:finite-landherr}
Let $K/K_0$ be quadratic number fields.  To decide whether $F_{\rm eig}(\Gamma)$ is a unitary field of definition, it suffices to kill:
\begin{enumerate}[label=(\alph*)]
\item the finitely many incompatible archimedean signatures, if any;
\item in even matrix dimension, the finitely supported local discriminant class of $H_K$ relative to $J$.
\end{enumerate}
Equivalently, it is enough to find a finite conjugation-stable extension
\[
 M\subset F_{\rm eig}(\Gamma)
\]
over which these local invariants vanish.
\end{proposition}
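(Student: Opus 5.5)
The plan is to reduce the question to Landherr's classification (Theorem~\ref{thm:landherr}) applied after base change. By Theorem~\ref{thm:hermitian-descent} and Proposition~\ref{prop:exact-unitary-en}, a field $M\supseteq K$ that is stable under complex conjugation is a unitary field of definition for $\Gamma$ exactly when $H_K\otimes_K M$ is similar to $J$ over $M/M_0$. The reason is that the $K$-rational model $\rho$ is still a rational model over $M$. Absolute irreducibility keeps the invariant Hermitian line one-dimensional over $M_0$. The equivalence (a)$\Leftrightarrow$(c) then holds verbatim with $K$ replaced by $M$. So the statement amounts to the assertion that the similarity class of $H_K$ relative to $J$ is governed by finitely many local invariants, and that these invariants die over a suitable finite subextension.

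First I would record that $H_K$ and $J$ have equal rank and equal archimedean signatures at the distinguished complex place. At the other real places of $K_0$ that become complex in $K$, the signatures are finite data, which gives item (a). Next, by Theorem~\ref{thm:landherr}, in odd rank these signatures are the only obstruction. In even rank the only remaining obstruction is the class
\[
 \delta(H_K,J)=\det H_K/\det J\in K_0^\times/N_{K/K_0}(K^\times).
\]
By the Hasse norm theorem for the cyclic extension $K/K_0$, this class is trivial iff it is a local norm at every place. An element of $K_0^\times$ is a unit at almost all places and is a local norm at every unramified place where it is a unit, so the support is finite, which gives item (b). The normal form of Subsection~\ref{subsec:normal-form-en} makes this explicit: $\delta$ is $-\prod q_j$ up to norms.

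For the "equivalently" clause, observe that the invariants are compatible with base change. The signature at a real place becomes vacuous once that place becomes complex in $M_0$. The local discriminant class at $v$ becomes trivial at $w\mid v$ of $M_0$ when $\delta$ becomes a local norm from $M_w/M_{0,w}$; for instance, this happens when $[M_{0,w}:K_{0,v}]$ is even at an inert or ramified place, by local class field theory. Since $F_{\rm eig}(\Gamma)$ is a union of finite conjugation-stable extensions of $K$ (by Lemma~\ref{lem:Kbar} each $E(\gamma)$ is conjugation-stable), any similarity $P^*JP=cH_K$ over $F_{\rm eig}(\Gamma)$ involves finitely many entries. It is therefore already defined over a finite conjugation-stable $M\subset F_{\rm eig}(\Gamma)$. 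Conversely, a similarity over such an $M$ persists in the larger field. This gives the equivalence.

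I expect the main obstacle to be checking that conjugation acts nontrivially on $M$ and correctly identifying $M_0=M^{\bar{\ }}$ with $M\otimes$ data so that Landherr applies over $M/M_0$. I would handle it by noting that $K\subset M$ already carries a nontrivial conjugation, so $[M:M_0]=2$ and $M=KM_0$. I would also make the base-change behaviour of the discriminant explicit through $\det(H_K)\in K_0^\times\subset M_0^\times$ modulo $N_{M/M_0}(M^\times)$.
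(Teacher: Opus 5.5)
Your proposal is correct and follows the paper's route: apply Landherr's local--global classification after a conjugation-stable base change, observe that only finitely many places carry an obstruction, and use that a similarity over a finite $M\subset F_{\rm eig}(\Gamma)$ persists to $F_{\rm eig}(\Gamma)$. You additionally make explicit two points the paper leaves implicit: the reduction to Hermitian similarity over $M$ via Proposition~\ref{prop:exact-unitary-en}, and the converse that any similarity over $F_{\rm eig}(\Gamma)$ already lives over a finite conjugation-stable subfield. There is one small slip in a side remark: since $\det\mathbb H=\det J=-1$, the normal form gives $\det H_K/\det J\equiv\prod q_j$ modulo norms, not $-\prod q_j$.
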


\begin{proof}
Landherr's theorem is local--global: after a conjugation-stable finite base change, similarity of $H_K$ and $J$ is determined by the archimedean signatures and, in even rank, by the local discriminant classes.  Only finitely many finite places support the latter obstruction.  Thus if a finite conjugation-stable extension $M\subset F_{\rm eig}(\Gamma)$ kills all of these invariants, then $H_K\sim_MJ$.  Similarity persists after further scalar extension, so $H_K\sim_{F_{\rm eig}(\Gamma)}J$.
\end{proof}

A stronger spectral formulation is the following.

\begin{conjecture}\label{conj:strongnorm}
Write
\[
 H_K\simeq\mathbb H\oplus\langle1,q_3,\ldots,q_n\rangle.
\]
For each $j$ there exists a regular semisimple $\gamma_j\in\Gamma$ such that, with
\[
 F_j=K(E(\gamma_j)),\qquad F_{j,0}=F_j^{\bar{\ }},
\]
one has
\[
 q_j\in N_{F_j/F_{j,0}}(F_j^\times).
\]
\end{conjecture}

\begin{proposition}
Conjecture~\ref{conj:strongnorm} implies that $F_{\rm eig}(\Gamma)$ is a unitary field of definition.
\end{proposition}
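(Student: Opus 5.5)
The plan is to show directly that, over a suitable finite subextension of $F_{\rm eig}(\Gamma)$, the diagonal form $H_K\simeq\mathbb H\oplus\langle1,q_3,\ldots,q_n\rangle$ becomes isometric to $J=\mathbb H\oplus\langle1,1,\ldots,1\rangle$, and then to invoke Proposition~\ref{prop:exact-unitary-en} over that larger field. The first step is to fix, for each $j$, the element $\gamma_j$ supplied by Conjecture~\ref{conj:strongnorm}. We then form the compositum
\[
 M=K\bigl(E(\gamma_3),\ldots,E(\gamma_n)\bigr)\subset F_{\rm eig}(\Gamma).
\]
This is a finite extension. It is conjugation-stable by the argument of Lemma~\ref{lem:Kbar}: $\overline{E(\gamma)}=E(\gamma^{-1})^{-1}$ and $\gamma^{-1}\in\Gamma$. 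Put $M_0=M^{\bar{\ }}$. Because complex conjugation acts nontrivially on $K\subset M$, the extension $M/M_0$ is quadratic.

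The second step transports the norm conditions. We have $F_j\subset M$ and $F_{j,0}\subset M_0$. If $q_j=N_{F_j/F_{j,0}}(z_j)=z_j\bar z_j$ with $z_j\in F_j^\times$, then $z_j\in M^\times$, and the same identity $q_j=z_j\bar z_j$ exhibits $q_j$ as an element of $N_{M/M_0}(M^\times)$. The only subtlety is the degenerate case $F_j=F_{j,0}$. This cannot happen, because $K\subset F_j$ and conjugation is nontrivial on $K$, so the norm is the honest map $z\mapsto z\bar z$ in every case.

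The third step is the explicit isometry. Over $M$, take $P=\diag(1,1,1,z_3^{-1},\ldots,z_n^{-1})$ in the basis realizing the normal form of Subsection~\ref{subsec:normal-form-en}. Then
\[
 P^*H_KP=\mathbb H\oplus\langle1,1,\ldots,1\rangle,
\]
since $\overline{z_j^{-1}}\,q_j\,z_j^{-1}=1$. The standard form $J$ is the Siegel form $\mathbb H\oplus\langle1,\ldots,1\rangle$ (the middle identity block together with the antidiagonal corner), up to a permutation of the basis. Therefore $H_K\sim_MJ$. One must remember that the normal form was reached from $H_K$ by a $K_0^\times$-scaling and a $K$-change of basis. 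Scaling is harmless for similarity, and the change of basis is defined over $K\subset M$.

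Finally, we apply Proposition~\ref{prop:exact-unitary-en} with $K$ replaced by $M$; its proof uses only absolute irreducibility and the quadratic extension $M/M_0$. This gives $P'\in\GL_d(M)$ with $P'^{-1}\rho(\Gamma)P'\subset\SU(J,M)\subset\SU(J,F_{\rm eig}(\Gamma))$. The step I expect to need the most care is the bookkeeping in the third step. Specifically, one must check that the scalar $c$ in the normal-form reduction, together with the sign and ordering conventions of $J$ versus $\mathbb H\oplus\langle1,\ldots\rangle$, really produce an isometry rather than only a similarity with a possibly non-norm factor. Similarity is all that Proposition~\ref{prop:exact-unitary-en}(b) requires, however, so any leftover factor in $M_0^\times$ is absorbed into $c$.
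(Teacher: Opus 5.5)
Your proposal is correct and follows essentially the paper's route: the elements $x_j$ with $q_j=x_j\overline{x_j}$ rescale each diagonal coefficient to $1$, and the exact-unitary criterion of Proposition~\ref{prop:exact-unitary-en}, direction (b)$\Rightarrow$(a), then gives the realization. Where the paper says only ``Landherr after base change,'' you write the diagonal isometry explicitly over a finite conjugation-stable compositum $M\subset F_{\rm eig}(\Gamma)$. There is one harmless slip: unitarity gives $\overline{E(\gamma)}=E(\gamma)^{-1}=E(\gamma^{-1})$, not $E(\gamma^{-1})^{-1}$, but this does not affect the conjugation-stability of $M$.
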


\begin{proof}
For each $j$, Conjecture~\ref{conj:strongnorm} gives $x_j\in F_j^\times\subset F_{\rm eig}(\Gamma)^\times$ with
\[
 q_j=x_j\overline{x_j}.
\]
Thus every diagonal coefficient $q_j$ becomes a norm for the quadratic involution on the total spectral field.  The hyperbolic plane is already split, so the exact unitary descent criterion, equivalently Landherr after base change, identifies $H_K$ with the standard form over $F_{\rm eig}(\Gamma)$.
\end{proof}

\subsection{Arithmetic groups}

For arithmetic groups, norm-one tori provide enough integral points to prove a stronger conclusion.

\begin{theorem}\label{thm:arith}
Let $K/K_0$ be a quadratic extension of number fields and let $\Gamma$ be an arithmetic subgroup of the special unitary group associated with
\[
 H_K\simeq\mathbb H\oplus\langle1,q_3,\ldots,q_n\rangle.
\]
Assume that the distinguished real embedding has signature $(n,1)$.  Then
\[
 \sqrt{q_3},\ldots,\sqrt{q_n}\in F_{\rm eig}(\Gamma).
\]
Consequently $F_{\rm eig}(\Gamma)$ is a unitary field of definition.
\end{theorem}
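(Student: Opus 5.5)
The plan is to imitate, for each $j$, the norm-torus mechanism exhibited by the matrix $C$ in Subsection~\ref{subsec:gaussian-controls-en}, and then to invoke the Hermitian similarity criterion. Work in the $K$-basis in which
\[
 H_K=\mathbb H\oplus\langle1,q_3,\ldots,q_n\rangle .
\]
Put $u=e_1-\tfrac12 e_d$, which satisfies $H_K(u,u)=-1$. For each $j\ge3$ consider the $K$-plane $P_j=\Span_K(u,e_j)$. On $P_j$ the form is $\diag(-1,q_j)$, so $P_j$ is non-degenerate and $H_K=P_j\oplus P_j^{\perp}$ over $K$.

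For $a,b\in K_0$ with $a^2-q_jb^2=1$, define $g_j(a,b)\in\mathrm{SU}(H_K)(K_0)$ to act on $P_j$ by $R_j(a,b)$ and trivially on $P_j^\perp$. This gives a $K_0$-embedding of the one-dimensional norm torus
\[
 T_j=\{a^2-q_jb^2=1\}=R^1_{K_0(\sqrt{q_j})/K_0}\mathbb G_m
\]
into the special unitary group. The determinant is one, since $\det R_j=a^2-q_jb^2=1$. The eigenvalues of $g_j(a,b)$ are $1$, with multiplicity $d-2$, together with $a\pm b\sqrt{q_j}$.

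If $q_j$ is already a square in $K_0$, then $\sqrt{q_j}\in K_0\subset F_{\rm eig}(\Gamma)$ and there is nothing to prove. Otherwise, $T_j$ is an anisotropic or split torus according to $K_0(\sqrt{q_j})$; it is a $K_0$-torus of dimension one, and its $K_0$-points are Zariski-dense in $T_j$ because tori are unirational. I then want a point of $T_j$ lying in $\Gamma$ with $b\neq0$.

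Here arithmeticity enters. Since $\Gamma$ is commensurable with $G(\mathcal O_{K_0})$ for a suitable integral structure, $\Gamma\cap T_j(K_0)$ is commensurable with the unit group $T_j(\mathcal O_{K_0})$. Concretely, consider the relative norm-one units of $\mathcal O_{K_0(\sqrt{q_j})}$ over $\mathcal O_{K_0}$, reduced modulo a congruence level so that they lie in $\Gamma$. So it suffices to show that $T_j(\mathcal O)$ is infinite, or at least contains an element with $b\ne0$. Any element with $b\neq0$ has $a+b\sqrt{q_j}\in E(\gamma)$, and therefore $\sqrt{q_j}=(\lambda-a)/b\in K(E(\gamma))$.

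I expect the main obstacle to be this infinitude. By Dirichlet's unit theorem, the rank of the norm-one units of $K_0(\sqrt{q_j})/K_0$ equals the number of archimedean places of $K_0(\sqrt{q_j})$ minus those of $K_0$. This rank is positive unless $K_0(\sqrt{q_j})$ is a CM extension of the totally real field $K_0$. In that exceptional case the torus is compact at every archimedean place, so $T_j(\mathcal O)$ is finite. However, it still consists of roots of unity $\zeta=a+b\sqrt{q_j}$, and a suitable root of unity with $b\ne0$, for instance $\sqrt{-1}$-type elements, may fail to exist.

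I would first try to avoid this case by replacing $u$ with a different negative or positive vector. The signature hypothesis at the distinguished real embedding lets one use the negative vector, and $q_j>0$ there. This makes $T_j$ isotropic over $\mathbb R$ at that place: the plane $P_j$ has signature $(1,1)$, so $K_0(\sqrt{q_j})$ has a real place above the distinguished one. Hence $K_0(\sqrt{q_j})$ is not CM, the unit rank is at least one, and $T_j(\mathcal O)$ is infinite. A finite-index subgroup of it then lies in $\Gamma$, and a non-torsion element necessarily has $b\ne0$.

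Having obtained $\sqrt{q_3},\ldots,\sqrt{q_n}\in F_{\rm eig}(\Gamma)$, I conclude as follows. Each $q_j=\sqrt{q_j}\cdot\overline{\sqrt{q_j}}$ is a norm for the involution on the conjugation-stable field $F_{\rm eig}(\Gamma)$; the square root is real at the distinguished place, so conjugation fixes it. Rescaling $e_j$ by $\sqrt{q_j}^{-1}$ then carries $H_K$ to $\mathbb H\oplus\langle1,\ldots,1\rangle\sim J$ over $F_{\rm eig}(\Gamma)$. Proposition~\ref{prop:exact-unitary-en}, applied over $F_{\rm eig}(\Gamma)$, then gives the unitary realization.
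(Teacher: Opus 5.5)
Your proposal is correct and follows essentially the same route as the paper. Both arguments use the norm-one torus $R_j(a,b)$ on $\langle u,e_j\rangle$ and arithmeticity of $\Gamma\cap\mathbf T_j(K_0)$, together with positivity of $q_j$ at the distinguished real place, which gives positive Dirichlet rank; an infinite-order element with $b\neq0$ then puts $\sqrt{q_j}$ in the spectrum, and the proof ends with the descent criterion, which your explicit rescaling $e_j\mapsto\sqrt{q_j}^{-1}e_j$ simply spells out (the detour about replacing $u$ is unnecessary, since the given $u$ already makes $\mathbf T_j$ split there, as you observe).
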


\begin{proof}
Fix $j$.  If $q_j$ is already a square in $K_0$, there is nothing to prove.  Otherwise set
\[
 L_j=K_0(\sqrt{q_j}).
\]
The matrices $R_j(a,b)$ above define a $K_0$-torus
\[
 \mathbf T_j\simeq R^1_{L_j/K_0}\mathbf G_m
\]
acting on $\langle u,e_j\rangle$ and trivially on its orthogonal complement.  Since $q_j$ is positive at the distinguished real place, $\mathbf T_j$ is split there.  The intersection of an arithmetic subgroup with a $K_0$-defined algebraic subgroup is arithmetic in that subgroup, and the Dirichlet theorem for algebraic tori gives positive rank for $\Gamma\cap\mathbf T_j(K_0)$; see \cite{PlatonovRapinchuk1994}.  In particular this intersection contains an element represented by $R_j(a,b)$ with $b\ne0$.

Its spectrum contains
\[
 a+b\sqrt{q_j},
 \qquad
 a-b\sqrt{q_j}.
\]
Because $a,b\in K_0\subset F_{\rm eig}(\Gamma)$ and $b\ne0$,
\[
 \sqrt{q_j}
 =\frac{(a+b\sqrt{q_j})-a}{b}
 \in F_{\rm eig}(\Gamma).
\]
Repeating this for all $j$ puts every diagonal coefficient $q_j$ in the relevant norm group after base change.  The exact unitary descent criterion then identifies $H_K$ with the standard form over $F_{\rm eig}(\Gamma)$.
\end{proof}

\begin{remark}
The torus element used in the proof need not be regular semisimple in the \emph{ambient} unitary group: it acts trivially on the orthogonal complement of the two-dimensional norm plane.  Ambient regularity is unnecessary for the arithmetic descent theorem; the only required datum is the appearance of $\sqrt{q_j}$ in the spectrum.
\end{remark}

\section{Local criteria for removing the Hermitian obstruction}\label{sec:open}

Once the obstruction is written as a norm class, it can be tested place by place.  Spectral extensions of even local degree are enough to kill the nontrivial local invariant, and openness provides a way to realize such extensions by elements of the group.

Assume now that $K/\Q$ is imaginary quadratic and the matrix dimension is even.  Write
\[
 H_K\simeq\mathbb H\oplus\langle1,q_3,\ldots,q_n\rangle,
 \qquad
 \Delta=\prod_{j=3}^{n}q_j\in\Q_{>0}.
\]
Let
\[
 S_\Delta=
 \{p<\infty:\Delta\notin N_{K_p/\Q_p}(K_p^\times)\}.
\]
This is finite.

\begin{lemma}\label{lem:closure-dichotomy}
Let $\mathbf G=\SU(H_K)$ over $\Q$ and let $\Lambda<\mathbf G(\Q)$ be Zariski-dense.  For every prime $p$, if the closure
\[
 \overline\Lambda^{\,p}<\mathbf G(\Q_p)
\]
is not discrete, then it is open.
\end{lemma}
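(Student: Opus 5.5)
The plan is to pass to the $p$-adic Lie algebra of the closure and use Zariski density to force it to be all of $\mathfrak g_p=\operatorname{Lie}\mathbf G(\Q_p)$. Put $H=\overline\Lambda^{\,p}$. Since $\mathbf G(\Q_p)$ is a $p$-adic Lie group, Cartan's closed-subgroup theorem makes $H$ a $p$-adic Lie subgroup with Lie algebra
\[
 \mathfrak h=\{X\in\mathfrak g_p:\exp(tX)\in H\text{ for all small }t\in\Q_p\}.
\]
The group $H$ is discrete exactly when $\mathfrak h=0$, and open exactly when $\mathfrak h=\mathfrak g_p$. So we assume $\mathfrak h\ne0$ and aim to prove $\mathfrak h=\mathfrak g_p$.

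First, $\mathfrak h$ is stable under $\Ad(\lambda)$ for every $\lambda\in\Lambda$, because $\Lambda$ normalizes $H$ and conjugation commutes with $\exp$. Stability under $\Ad(g)$ is a Zariski-closed condition on $g\in\mathbf G$: it says that $\mathfrak h\otimes\overline{\Q}_p$ is preserved. Since $\Lambda$ is Zariski-dense in $\mathbf G$ (density over $\Q$ persists over $\overline\Q_p$), the subspace $\mathfrak h\otimes\overline\Q_p$ is an $\Ad(\mathbf G)$-invariant subspace, that is, an ideal of $\mathfrak g\otimes\overline\Q_p=\mathfrak{sl}_d(\overline\Q_p)$. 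This algebra is simple because $d\ge3$, so the nonzero ideal $\mathfrak h\otimes\overline\Q_p$ is everything. Counting dimensions gives $\mathfrak h=\mathfrak g_p$, and hence $H$ contains an open neighbourhood of the identity, so $H$ is open.

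The main obstacle is the bookkeeping needed to identify $\Ad$-invariant $\Q_p$-subspaces with ideals and to pass from $\Q$-density to $\overline\Q_p$-density. The density step holds because Zariski closure commutes with field extension for $\Q$-defined varieties. The ideal identification uses connectedness of $\mathbf G$ to go from $\Ad(\mathbf G)$-invariance to $\operatorname{ad}(\mathfrak g)$-invariance. If simplicity over $\overline\Q_p$ seemed delicate, I would instead note that $\mathbf G_{\Q_p}$ is absolutely almost simple: it is a form of $\SL_d$, either $\SL_d(\Q_p)$ when $p$ splits in $K$, or a unitary group. Absolute almost simplicity already rules out proper nonzero $\mathbf G$-invariant subspaces of the adjoint representation.
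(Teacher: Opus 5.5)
Your proposal is correct and follows the same route as the paper's proof: the Lie algebra $\mathfrak h$ of the closure is $\Ad(\Lambda)$-stable, Zariski density upgrades this to $\Ad(\mathbf G)$-stability, and simplicity of the Lie algebra of $\mathbf G$ forces $\mathfrak h=0$ (discrete) or $\mathfrak h=\mathfrak g_p$ (open). The only nit is that $\mathfrak{sl}_d$ is already simple for $d\ge2$ in characteristic zero, so invoking $d\ge3$ is unnecessary, though harmless.
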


\begin{proof}
Let $H$ be the closure and $\mathfrak h$ its $p$-adic Lie algebra.  Since $\Lambda\subset H$, the subspace $\mathfrak h$ is $\Ad(\Lambda)$-stable.  Zariski density upgrades this to $\Ad(\mathbf G)$-stability.  The Lie algebra of the absolutely almost simple group $\mathbf G$ is simple, hence $\mathfrak h=0$ or $\mathfrak h=\mathfrak g$.  Non-discreteness excludes $\mathfrak h=0$, and full Lie algebra implies openness.
\end{proof}

\begin{lemma}\label{lem:biquad}
For every $p\in S_\Delta$ and every identity neighborhood $\Omega\subset\mathbf G(\Q_p)$ there exists a regular semisimple $t_p\in\Omega$ and an open neighborhood $U_p$ of $t_p$ such that for every $g\in U_p$ the local splitting field of $\chi_g$ over $K_p$ contains a quadratic extension of $K_p$ obtained from a biquadratic extension of $\Q_p$.
\end{lemma}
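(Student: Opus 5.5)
Since $p\in S_\Delta$, $\Delta$ is not a local norm from $K_p$, so $K_p$ is a field: the quadratic extension $K_p/\Q_p$ is non-split. The idea is to construct $t_p$ inside a norm-one torus, as in the norm-torus mechanism used for the matrix $C$ and in Theorem~\ref{thm:arith}. Then we use Krasner's lemma to spread the splitting behaviour over a neighbourhood.

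\textbf{Step 1 (choice of torus).} Over $\Q_p$ the form $H_K\otimes\Q_p$ decomposes as $\mathbb H\oplus\langle1,q_3,\dots,q_n\rangle$. Choose $u=e_1-\tfrac12e_d$, which is negative, and a positive coordinate $e_j$ with $q_j\notin N(K_p^\times)$. Such a $j$ exists because the product of the $q_j$ is $\Delta$ and the norm group is a subgroup. Pick $\epsilon\in\Q_p^\times$ not a square in $\Q_p$, generating a quadratic extension $L=\Q_p(\sqrt{\epsilon})$ with $L\ne K_p$. One could take $\epsilon=q_j$ when $q_j$ is a nonsquare distinct from the discriminant class of $K_p$; in general we use the rotational pair on suitable planes. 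The compositum $K_p L$ is then biquadratic over $\Q_p$, and $K_p(\sqrt{\epsilon})$ is a quadratic extension of $K_p$ of the required form.

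We build a maximal torus $T\subset\mathbf G_{\Q_p}$ as a product of norm-one tori on an orthogonal splitting of $V_p$ into Hermitian planes. The planes are chosen so that at least one factor is $R^1_{L'/\Q_p}$ with splitting field containing $\sqrt{\epsilon}$; these are the matrices $R_j(a,b)$ with $a^2-q_jb^2=1$, adapted to $\epsilon$. The remaining factors are chosen generically so that the eigenvalues of a generic element of $T(\Q_p)$ are distinct.

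\textbf{Step 2 (element near the identity).} Take $t_p=\prod R(a,b)$ with $b$ small, $a$ close to $1$, and $b\neq0$. We also impose a Zariski-open regularity condition, which is nonempty on the $p$-adically dense set $T(\Q_p)\cap\Omega$. Then $t_p\in\Omega$, $t_p$ is regular semisimple, and $\chi_{t_p}$ has simple roots, among them $a\pm b\sqrt{\epsilon}$. Hence the splitting field of $\chi_{t_p}$ over $K_p$ contains $K_p(\sqrt{\epsilon})$, which is obtained from the biquadratic extension $K_p(\sqrt\epsilon)/\Q_p$.

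\textbf{Step 3 (openness via Krasner).} The map $g\mapsto\chi_g$ is continuous into polynomials of degree $d$ over $K_p$. Since $\chi_{t_p}$ is separable, Krasner's lemma (continuity of roots) gives a neighbourhood $U_p$ of $t_p$ on which each root of $\chi_g$ is closer to a unique root of $\chi_{t_p}$ than that root is to all the others. It follows that $K_p(\text{root})$ equals $K_p(\text{corresponding root of }\chi_{t_p})$, so the splitting field of $\chi_g$ contains $K_p(\sqrt\epsilon)$ for every $g\in U_p$.

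\textbf{Main obstacle.} The hardest step is arranging, inside $\mathbf G(\Q_p)$, a maximal torus that is regular and has a factor whose splitting field introduces a quadratic extension of $K_p$ that is genuinely new over $K_p$, that is, with $\epsilon$ not in the class of the discriminant of $K_p$. I would first try to realize this directly with the anisotropic $2\times2$ blocks of $\langle1,q_j\rangle$ and, if necessary, replace $q_j$ by $q_j\epsilon'$ via a $\Q_p$-rational change of the orthogonal basis. The existence of such planes for any prescribed local class follows from the local classification of Hermitian forms in dimension at least $3$, which are universal.
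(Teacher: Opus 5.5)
Your argument is correct, but it places the torus inside $\mathbf G(\Q_p)$ differently from the paper. The paper builds the torus abstractly as $L_E^1\cap\SU(V_{E,\lambda})$, using the \'etale algebra $E=M\times\Q_p^{d-2}$, the field $L=K_pM$ and the trace form $h_\lambda(x,y)=\Tr_{L_E/K_p}(\lambda xy^c)$. It then uses surjectivity of $N_{E/\Q_p}$ and the local classification by rank and determinant to identify $V_{E,\lambda}$ with $H_K\otimes K_p$. Its element is $x_0(\varepsilon)=(1+\varepsilon z)/(1-\varepsilon z)$ with $z=\theta m$.

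You work directly inside $V_p$: you locate a plane isometric to $\langle -1,\epsilon\rangle$ and use the rational rotations $R(a,b)$ with $a^2-\epsilon b^2=1$. The elements are in fact the same. Since $z^2\in\Q_p$, $x_0(\varepsilon)$ is a norm-one element of the third quadratic subfield $\Q_p(z)$ of $L$, so it has the form $a+b\sqrt{\epsilon}$ with $\epsilon=z^2$. Writing $K_p=\Q_p(\sqrt{d_K})$, your condition $\epsilon\notin\Q_p^{\times2}\cup d_K\Q_p^{\times2}$ is exactly the paper's condition that $M=\Q_p(\sqrt{d_K\epsilon})$ be a field different from $K_p$. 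Your route is more elementary and continues the $R_j(a,b)$ norm-torus mechanism of Section~\ref{sec:hermitian}. The paper's route yields an explicit maximal torus and matches the trace-form formalism of Propositions~\ref{prop:blindness} and~\ref{prop:local-C3-en}.

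Two points need tightening.

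First, the condition $q_j\notin N(K_p^\times)$ plays no role. The step you call the main obstacle is genuinely needed, because the coordinate planes can all fail. Take $K=\Q(i)$, $p=2$ and $H_K=\mathbb H\oplus\langle1,7\rangle$. Here $7$ is not a norm from $\Q_2(i)$, since unit norms are $\equiv1\pmod 4$, so $2\in S_7$. Yet $V_2\simeq\langle1,-1,1,7\rangle$, and every coordinate pair $\langle\alpha,\beta\rangle$ has $-\alpha\beta\in\{1,-1\}=\{1,d_K\}$ modulo $\Q_2^{\times2}$. So every coordinate rotation, including your $\langle u,e_j\rangle$ and $\langle 1,q_j\rangle$ choices, splits over $K_2$.

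You should therefore write the universality step out rather than hedge it. Take $v_1=u$ with $h(v_1,v_1)=-1$. The complement $v_1^\perp$ has rank $d-1\ge2$, and binary Hermitian forms over $K_p/\Q_p$ are universal because their associated $4$-dimensional quadratic forms over $\Q_p$ are. So you can choose $v_2\perp v_1$ with $h(v_2,v_2)=\epsilon$.

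Second, Krasner's lemma gives only $K_p(\alpha)\subset K_p(\beta)$, not equality. That inclusion is all your argument uses, so you should claim only that.
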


\begin{proof}
Write $F=\Q_p$ and $K=K_p$.  Since $p\in S_\Delta$, the quadratic algebra $K/F$ is a field.  Choose another quadratic field $M/F$ with $M\ne K$ and set
\[
 L=KM.
\]
Then $L/F$ is biquadratic and $L/K$ is quadratic.  Consider the etale $F$-algebra
\[
 E=M\times F^{d-2},
 \qquad
 L_E=K\otimes_FE=L\times K^{d-2}.
\]
The involution $c$ induced by $K/F$ acts on $L$ and fixes $M$.

For $\lambda\in E^\times$ consider the Hermitian space
\[
 V_{E,\lambda}=(L_E,h_\lambda),
 \qquad
 h_\lambda(x,y)=\Tr_{L_E/K}(\lambda xy^c).
\]
The norm-one torus $L_E^1$ acts unitarily on this space.  Locally, the isometry class of a non-degenerate Hermitian form is determined by its dimension and determinant class, and
\[
 \det V_{E,\lambda}
 =N_{E/F}(\lambda)\det_F(E)
 \quad\text{in }F^\times/N_{K/F}(K^\times).
\]
Because $E$ has split $F$-factors, the norm $N_{E/F}:E^\times\to F^\times$ is surjective.  We may therefore choose $\lambda$ so that $V_{E,\lambda}$ has the same local determinant class as $H_K\otimes_KK_p$.  The local classification of Hermitian forms gives an isometry between them, and transports $L_E^1\cap\SU(V_{E,\lambda})$ to a maximal torus $T<\mathbf G(F)$.

We now choose an element of $T$ whose quadratic block generates $L/K$.  Let $\theta\in K$ satisfy $c(\theta)=-\theta$, choose $m\in M\setminus F$, and put $z=\theta m$.  Then $c(z)=-z$ and $z\notin K$.  For sufficiently small $\varepsilon\in F$ define
\[
 x_0(\varepsilon)=\frac{1+\varepsilon z}{1-\varepsilon z}.
\]
It satisfies
\[
 x_0(\varepsilon)c(x_0(\varepsilon))=1,
\]
tends to $1$, and for nonzero small $\varepsilon$ lies outside $K$.  Since $[L:K]=2$,
\[
 K(x_0(\varepsilon))=L.
\]
Choose the remaining coordinates in $K^1=\{u\in K^\times:u\bar u=1\}$ arbitrarily close to $1$ and adjust one coordinate so that the total determinant is one.  Avoiding finitely many equalities among eigenvalues gives a regular semisimple element $t_p\in T$, and all coordinates may be chosen close enough to $1$ that $t_p\in\Omega$.

The block on $L$ is multiplication by $x_0(\varepsilon)$, so its minimal polynomial over $K$ has degree two and splitting field $L$.  Let $\alpha=x_0(\varepsilon)$.  Since $t_p$ is regular, $\alpha$ is a simple root of its characteristic polynomial.  Krasner's lemma gives a radius $r>0$ such that every root $\beta$ with $|\beta-\alpha|<r$ satisfies $L\subset K(\beta)$.  Roots of separable polynomials depend continuously on the coefficients, and the map $g\mapsto\chi_g$ is continuous.  Shrinking to an open neighborhood $U_p$ of $t_p$, every $g\in U_p$ has a root $\beta$ in that Krasner ball.  Therefore its local splitting field contains $L=K_pM_p$, as required.
\end{proof}

\begin{theorem}\label{thm:open-annihilation}
Let $\Gamma<\mathbf G(\Q)$ be finitely generated and Zariski-dense.  Suppose that the diagonal closure of $\Gamma$ in
\[
 \prod_{p\in S_\Delta}\mathbf G(\Q_p)
\]
is open.  Then there exists a single regular semisimple $\gamma\in\Gamma$ such that
\[
 F=K(E(\gamma))
\]
is a unitary field of definition for $\Gamma$.
\end{theorem}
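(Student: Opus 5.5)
The plan is to combine Lemma~\ref{lem:biquad} with the openness hypothesis to obtain a single element of $\Gamma$ that lies simultaneously in the good local neighborhoods at every $p\in S_\Delta$. Then I will check that its spectral field kills every local discriminant obstruction in Landherr's criterion (Theorem~\ref{thm:landherr}, Proposition~\ref{prop:finite-landherr}).

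First, for each $p\in S_\Delta$, Lemma~\ref{lem:biquad} gives a regular semisimple $t_p\in\mathbf G(\Q_p)$ and an open set $U_p$. Every $g\in U_p$ then has a characteristic polynomial whose splitting field over $K_p$ contains a quadratic extension $L_p=K_pM_p$ of $K_p$. The lemma allows $t_p$ inside any identity neighborhood $\Omega$, so I choose $\Omega$ inside the open closure $\overline\Gamma$ projected to the $p$-th factor. I want the product $\prod_{p\in S_\Delta}U_p$ to meet $\Gamma$, which needs care. The closure $\overline\Gamma$ is open in $\prod_{p\in S_\Delta}\mathbf G(\Q_p)$, so it contains an open subgroup $\prod_p\Omega_p$ for small identity neighborhoods $\Omega_p$. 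Taking $t_p\in\Omega_p$ and shrinking $U_p\subset\Omega_p$, the point $(t_p)_p$ lies in $\overline\Gamma$. The open set $\prod_pU_p$ therefore meets $\Gamma$, and I pick $\gamma\in\Gamma$ there. Regular semisimplicity is an open condition at a single place, so $\gamma$ is regular semisimple after shrinking $U_p$ at one place.

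Next, set $F=K(E(\gamma))$, with $F_0$ its conjugation-fixed subfield; $F$ is conjugation-stable by the argument of Lemma~\ref{lem:Kbar}. The obstruction is the class of $\Delta$ in $\Q^\times/N(K^\times)$ (Theorem~\ref{thm:normcriterion} and its even-rank analogue). By Landherr over $F/F_0$, it suffices that $\Delta$ is a local norm at every place of $F_0$. At archimedean places the signature condition holds because $\Delta>0$, so $\Delta$ is a norm from $\C$ to $\R$. At places above $p\notin S_\Delta$, $\Delta$ is already a local norm from $K_p$, and norms persist up the tower via $N_{F_w/F_{0,v}}\supset$ the restriction of $N_{K_p/\Q_p}$ by the norm-compatibility/local class field theory. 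At places $w\mid p\in S_\Delta$, the completion $F_w$ contains $L_p$, and $[L_p:K_p]=2$. By local class field theory $N_{L_p/M_p}(L_p^\times)\supset\Q_p^\times\cap\ldots$. More cleanly, I will show that $\Delta$ becomes a norm from $L_p$ to $M_p$. Since $L_p=K_pM_p$ with $M_p/\Q_p$ quadratic, the restriction map $M_p^\times/N(L_p^\times)\to$ the norm residue symbol for $K_p/\Q_p$ composes with $N_{M_p/\Q_p}$. For $\Delta\in\Q_p$ one gets $(\Delta,L_p/M_p)=(N_{M_p/\Q_p}\Delta,K_p/\Q_p)=(\Delta^2,K_p/\Q_p)=1$. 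So $\Delta$ is a norm from $L_p$, hence from any $F_w\supset L_p$ down to $F_{0,v}$ by the same transfer property. This is exactly the "even local degree kills the invariant" mechanism.

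Finally, Proposition~\ref{prop:finite-landherr} and Proposition~\ref{prop:exact-unitary-en} (over $F$ in place of $K$) give $H_K\sim_F J$, so $\Gamma$ is realizable in $\SU(J,F)$. I expect the main obstacle to be the local norm bookkeeping at places above $S_\Delta$. One must ensure that every place $w$ of $F$ above $p$, not just one, has $F_w\supset L_p$ up to conjugation. This holds because the Krasner conclusion of Lemma~\ref{lem:biquad} applies to the splitting field itself, so all completions at $p$ are isomorphic. One must also ensure the correct compatibility between the conjugation on $F$ and the involution of $K_p/\Q_p$. A secondary subtlety is making sure the simultaneous approximation in the first step uses that the closure is a group containing an open subgroup, so that the $t_p$ can be placed inside it.
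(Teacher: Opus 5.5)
Your argument is essentially the paper's. The choice of $\gamma$ through an open box $\prod_p\Omega_p\subset\overline\Gamma$ and Lemma~\ref{lem:biquad} is the same. At obstructive places you use the same even-local-degree mechanism, phrased through norm-residue functoriality rather than restriction of the Brauer invariant $1/2$.

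\textbf{The gap.} The step from $(\Delta,L_p/M_p)=1$ to ``$\Delta$ is a norm from $F_w$ to $F_{0,v}$ by the same transfer property'' does not go through as written. That transfer requires $M_p\subset F_{0,v}$ together with $F_w=L_pF_{0,v}$. Nothing forces the extension of complex conjugation to $F_w$ to fix $M_p$: it may act nontrivially on both $K_p$ and $M_p$. This is exactly the compatibility you flagged as an obstacle and left unresolved.

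\textbf{The repair.} If $F/F_0$ splits at $v$, the local norm map is surjective and there is nothing to prove. Otherwise conjugation preserves the Galois extension $L_p/\Q_p$. Since it acts nontrivially on $K_p$, it fixes exactly one of the two other quadratic subfields of $L_p$; call it $M_p^\sharp$. Your computation applies verbatim and gives $(\Delta,L_p/M_p^\sharp)=(\Delta^2,K_p/\Q_p)=1$. Now $M_p^\sharp\subset F_{0,v}$ and $F_w=L_pF_{0,v}$, so
\[
(\Delta,F_w/F_{0,v})|_{L_p}=(\Delta^{[F_{0,v}:M_p^\sharp]},L_p/M_p^\sharp)=1.
\]

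\textbf{The paper's shortcut.} The paper avoids $M_p$ altogether. In the non-split case $F_w=K_pF_{0,v}$, so $[F_{0,v}:\Q_p]=[F_w:K_p]$, and this degree is even because $L_p\subset F_w$. Then $(\Delta,F_w/F_{0,v})|_{K_p}=(\Delta^{[F_{0,v}:\Q_p]},K_p/\Q_p)=1$. This is your own functoriality computation with $F_{0,v}$ in place of $M_p$.

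\textbf{A minor point.} Your choice of $\gamma$ (``shrinking $U_p$ at one place'') presupposes $S_\Delta\neq\varnothing$. When $S_\Delta=\varnothing$, take any regular semisimple $\gamma$, which exists by Zariski density, as the paper does.
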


\begin{proof}
If $S_\Delta=\varnothing$, the finite Landherr obstruction is already trivial.  Since the regular semisimple locus of $\mathbf G$ is a nonempty Zariski-open set and $\Gamma$ is Zariski-dense, choose any regular semisimple $\gamma\in\Gamma$; then $H_K\sim_KJ$, hence also $H_K\sim_{K(E(\gamma))}J$.  Assume from now on that $S_\Delta\ne\varnothing$.

Let
\[
 H=\overline{\Gamma}\subset\prod_{p\in S_\Delta}\mathbf G(\Q_p).
\]
Since $H$ is open and contains the identity, there are identity neighborhoods $\Omega_p\subset\mathbf G(\Q_p)$ such that
\[
 \prod_{p\in S_\Delta}\Omega_p\subset H.
\]
Apply Lemma~\ref{lem:biquad} inside each $\Omega_p$.  It gives a regular element $t_p\in\Omega_p$ and an open neighborhood $U'_p$ of $t_p$ having the stated spectral property.  Let $\mathbf G^{\rm rs}$ denote the regular semisimple locus and replace $U'_p$ by
\[
 U_p=U'_p\cap\Omega_p\cap\mathbf G^{\rm rs}(\Q_p).
\]
This is still a nonempty open neighborhood of $t_p$, and
\[
 \prod_{p\in S_\Delta}U_p\subset H.
\]
Because $\Gamma$ is dense in $H$, this product meets $\Gamma$; choose
\[
 \gamma\in\Gamma\cap\prod_{p\in S_\Delta}U_p.
\]
Since $S_\Delta\ne\varnothing$ and $\gamma$ is regular semisimple in every displayed local factor, its characteristic polynomial has nonzero discriminant over $K$, so $\gamma$ is regular semisimple globally.

Put $F=K(E(\gamma))$.  The full spectrum of a unitary element is stable under $\lambda\mapsto\bar\lambda^{-1}$, hence the splitting field $F$ is stable under complex conjugation; let $F_0=F^{\bar{\ }}$.  Fix $p\in S_\Delta$ and a place $w$ of $F$ above $p$, and let $v$ be the place of $F_0$ below it.  Lemma~\ref{lem:biquad} supplies a quadratic extension of $K_p$ inside $F_w$, namely
\[
 K_pM_p\subset F_w,
 \qquad [K_pM_p:K_p]=2.
\]
Consequently $[F_w:K_p]$ is even.  If $F/F_0$ splits at $v$, the local norm map is surjective and there is no obstruction.  Otherwise $w$ is fixed by conjugation and
\[
 [F_w:F_{0,v}]=2.
\]
Since $p\in S_\Delta$, the quadratic algebra $K_p/\Q_p$ is a field.  Conjugation acts nontrivially on $K_p$ and trivially on $F_{0,v}$, so
\[
 K_p\cap F_{0,v}=\Q_p.
\]
Moreover $F=KF_0$, and in the present non-split local case this gives
\[
 F_w=K_pF_{0,v}.
\]
Comparing degrees in this compositum yields
\[
 [F_{0,v}:\Q_p]=[F_w:K_p],
\]
which is even.  Restriction of the local Brauer invariant $1/2$ therefore multiplies it by an even degree and kills the Landherr obstruction.  Outside $S_\Delta$ the obstruction was already trivial, and the archimedean signatures agree.  Landherr's theorem gives $H_K\sim_FJ$.
\end{proof}

Pink's openness results provide criteria for the hypothesis of Theorem~\ref{thm:open-annihilation} \cite{Pink1998,Pink2000}.  After simultaneous openness is controlled, only the discrete local case remains.

\subsection{Simultaneous local openness}\label{subsec:pink-en}

We record a criterion for simultaneous openness.  Let $\mathbf G_{\rm ad}$ be the adjoint quotient of $\mathbf G$, and let $\overline\Gamma$ be the image of $\Gamma$ in $\mathbf G_{\rm ad}(\Q)$.  Pink formulates the weak-approximation theorem after passage to a minimal quasi-model.  In the present characteristic-zero $\Q$-setting this causes no extra hypothesis: $\Q$ has no proper global subfield, and there are no non-standard positive-characteristic isogenies, so the adjoint $\Q$-form with the Zariski-dense group $\overline\Gamma$ is already in the required minimal situation.  Apply Pink's finite-place weak-approximation decomposition \cite[Theorems~6.12 and~6.15]{Pink2000} to the finite set
\[
 V=S_\Delta.
\]
Pink produces a partition $V=V_o\sqcup V_d$ such that the closure of the projection of $\overline\Gamma$ to the $V_d$-factors is discrete, while the closure of its generalized commutator group in the simply connected cover is open on the $V_o$-factors.

If $V_d$ were nonempty, the projection of $\Gamma$ to
\[
 \prod_{p\in V_d}\mathbf G(\Q_p)
\]
would also be discrete: the central covering $\mathbf G\to\mathbf G_{\rm ad}$ has finite fibers, so discreteness is preserved under pullback to the image of $\Gamma$.  This contradicts the hypothesis below applied to the nonempty subset $V_d$.  Hence
\[
 V_d=\varnothing,
 \qquad V_o=S_\Delta.
\]
Pink's Theorem~6.12(c) therefore gives an open closure of the generalized commutator group in
\[
 \prod_{p\in S_\Delta}\mathbf G(\Q_p),
\]
because $\mathbf G$ is the simply connected cover of $\mathbf G_{\rm ad}$.  If $\bar g,\bar h\in\overline\Gamma$ are lifted by $g,h\in\Gamma$, their generalized commutator is the ordinary commutator $[g,h]$; central changes of the lifts do not alter it.  Thus the generalized commutator group is contained in $[\Gamma,\Gamma]\subset\Gamma$.  Its open closure is consequently contained in the diagonal closure of $\Gamma$, so that diagonal closure is itself open.  This is exactly the hypothesis needed in Theorem~\ref{thm:open-annihilation}.

\begin{proposition}\label{prop:pink-obstructive-en}
Assume that for every nonempty $T\subset S_\Delta$ the closure of the diagonal projection of $\Gamma$ to $\prod_{p\in T}\mathbf G(\Q_p)$ is non-discrete.  Then a single regular semisimple element $\gamma\in\Gamma$ has a spectral field that kills the full finite Landherr obstruction.
\end{proposition}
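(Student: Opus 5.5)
The plan is to reduce Proposition~\ref{prop:pink-obstructive-en} to Theorem~\ref{thm:open-annihilation}. By that theorem, it suffices to show that the diagonal closure of $\Gamma$ in $\prod_{p\in S_\Delta}\mathbf G(\Q_p)$ is open. Once this is known, Theorem~\ref{thm:open-annihilation} supplies a regular semisimple $\gamma\in\Gamma$. For this element, $F=K(E(\gamma))$ has even local degree $[F_w:K_p]$ at every place $w$ above every $p\in S_\Delta$, so the finite Landherr obstruction dies. If $S_\Delta=\varnothing$, the first paragraph of that proof already treats the degenerate case: any regular semisimple element works, since Zariski density of $\Gamma$ makes the regular semisimple locus meet $\Gamma$.

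The openness itself I would obtain as in Subsection~\ref{subsec:pink-en}. The steps, in order, are as follows.

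\begin{enumerate}[label=(\arabic*)]
\item Pass to the adjoint image $\overline\Gamma<\mathbf G_{\rm ad}(\Q)$. It is still Zariski-dense and finitely generated. Since $\Q$ has no proper subfields and we are in characteristic zero, the pair is already in Pink's minimal quasi-model situation.
\item Apply Pink's weak-approximation decomposition \cite[Theorems~6.12 and~6.15]{Pink2000} to the finite set $V=S_\Delta$. This gives a partition $S_\Delta=V_o\sqcup V_d$: the projection to the $V_d$-factors has discrete closure, and on the $V_o$-factors the closure of the generalized commutator group in the simply connected cover $\mathbf G$ is open.
\item Show $V_d=\varnothing$. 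If $V_d\neq\varnothing$, discreteness of the adjoint projection pulls back along the isogeny $\mathbf G\to\mathbf G_{\rm ad}$, which has finite central kernel, to discreteness of the projection of $\Gamma$ to $\prod_{p\in V_d}\mathbf G(\Q_p)$. This contradicts the hypothesis applied to $T=V_d$.
\item Show the generalized commutator group lies in $\Gamma$. Generalized commutators of lifts are ordinary commutators $[g,h]$ with $g,h\in\Gamma$, and they are independent of the central ambiguity in the choice of lifts. Hence the generalized commutator group is contained in $[\Gamma,\Gamma]\subset\Gamma$.
\item Conclude openness. The closure of the generalized commutator group is open in $\prod_{p\in S_\Delta}\mathbf G(\Q_p)$ and contained in the diagonal closure of $\Gamma$. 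A closed subgroup containing an open subgroup is open.
\end{enumerate}

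I expect the main obstacle to be the faithful invocation of Pink's theorem in steps (2) and (3). One must check that the discrete/open dichotomy is exhaustive on $S_\Delta$ in this characteristic-zero setting. One must also check that discreteness on the $V_d$-factors is stated for the image of $\overline\Gamma$ itself, or at least for something that transfers back to $\Gamma$ through the finite central isogeny. The hypothesis is deliberately stated for \emph{every} nonempty subset $T$ rather than just for single places, precisely because Pink's discrete set $V_d$ can be any subset. The single-place statement of Lemma~\ref{lem:closure-dichotomy} would not suffice, since non-discreteness at each place separately does not give joint openness of the diagonal closure.
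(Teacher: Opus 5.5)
Your proposal is correct and follows the paper's own argument. The paper proves this proposition by citing the openness of the full diagonal closure from Subsection~\ref{subsec:pink-en} (adjoint passage, Pink's partition $S_\Delta=V_o\sqcup V_d$, $V_d=\varnothing$ by pulling discreteness back through the finite central isogeny and applying the hypothesis to $T=V_d$, and containment of the generalized commutator group in $[\Gamma,\Gamma]\subset\Gamma$), and then applying Theorem~\ref{thm:open-annihilation}, exactly as you do.
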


\begin{proof}
The preceding paragraph gives openness of the full diagonal closure.  Theorem~\ref{thm:open-annihilation} then applies.
\end{proof}

There is an especially useful automatic source of non-discreteness.

\begin{lemma}\label{lem:unipotent-small-en}
Let $u\in\GL_d(K)$ be a nontrivial unipotent element and let $p_1,\ldots,p_s$ be rational primes.  Put
\[
 m_r=(p_1\cdots p_s)^r.
\]
Then
\[
 u^{m_r}\longrightarrow I
\]
simultaneously in all the $p_j$-adic topologies.
\end{lemma}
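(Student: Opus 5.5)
Write $u=I+N$ with $N\in\M_d(K)$ nilpotent and nonzero, so $N^d=0$.  For any positive integer $m$ the binomial theorem gives a finite expansion
\[
 u^{m}-I=\sum_{k=1}^{d-1}\binom{m}{k}N^k,
\]
since $u=I+N$ with $I$ and $N$ commuting.  The plan is to show that each binomial coefficient $\binom{m_r}{k}$, $1\le k\le d-1$, tends to $0$ in every $p_j$-adic absolute value as $r\to\infty$.  The matrices $N^k$ are fixed, and after embedding $K\hookrightarrow K_v\supset\Q_{p_j}$ for any place $v\mid p_j$ they have bounded entries.  Then $u^{m_r}\to I$ entrywise in each of these topologies, hence simultaneously.

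For the coefficient estimate I would use
\[
 \binom{m}{k}=\frac{m}{k}\binom{m-1}{k-1}.
\]
The factor $\binom{m-1}{k-1}$ is an integer, so its $p$-adic absolute value is at most $1$.  Hence
\[
 v_{p_j}\!\Bigl(\binom{m_r}{k}\Bigr)\ \ge\ v_{p_j}(m_r)-v_{p_j}(k)=r-v_{p_j}(k).
\]
Since $k\le d-1$ ranges over a finite set, $v_{p_j}(k)\le\log_{p_j}(d-1)$ is bounded independently of $r$.  Therefore $|\binom{m_r}{k}|_{p_j}\le p_j^{-r+C}$ for a constant $C$ depending only on $d$ and $p_j$, and this tends to $0$.

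The only point needing care is the interpretation of the $p_j$-adic topology on $\GL_d(K)$.  I will fix, for each $j$, an embedding of $K$ into a finite extension of $\Q_{p_j}$, or equivalently use every place of $K$ above $p_j$.  The absolute value on that completion restricts to $|\cdot|_{p_j}$ on $\Q$, so the integer estimate above transfers directly, and the entries of $N^k$ are fixed elements of bounded size.  There is no real obstacle beyond this bookkeeping.  Simultaneity is automatic because the same sequence $m_r$ is divisible by $p_j^r$ for every $j$ at once.
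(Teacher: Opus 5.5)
Your proposal is correct and follows essentially the same route as the paper: the finite binomial expansion of $(I+N)^{m_r}$, followed by $p_j$-adic smallness of $\binom{m_r}{k}$ for each fixed $k<d$. Your identity $\binom{m}{k}=\frac{m}{k}\binom{m-1}{k-1}$, which gives $v_{p_j}\bigl(\binom{m_r}{k}\bigr)\ge r-v_{p_j}(k)$, supplies the valuation estimate that the paper only asserts.
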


\begin{proof}
Write $u=I+N$ with $N$ nilpotent.  The binomial expansion is finite:
\[
 u^m=I+mN+\binom m2N^2+\cdots.
\]
For each fixed exponent $k$, the $p_j$-adic valuation of $\binom{m_r}{k}$ tends to $+\infty$ as $r\to\infty$.  Hence every nonconstant coefficient tends to zero simultaneously.
\end{proof}

Consequently, the presence of a nontrivial unipotent rules out discreteness in every obstructive completion at once.  The difficult residual regime is therefore naturally sought among totally loxodromic or Schottky-type groups.

\section{A discrete Schottky model}\label{sec:schottky}

Local openness does not address discrete closures.  The following family shows that this branch is genuine, while at the same time remaining explicit enough for the spectral obstruction to be computed directly.  Consider the basic form
\[
 H_3=\mathbb H\oplus\langle1,3\rangle.
\]
Set
\[
 A_s=\diag\left(3,i,-i,\frac13\right)
\]
and
\[
 C_s=
 \begin{pmatrix}
 -4&5-5i&-3+3i&7\\
 1+i&-3&0&-2-2i\\
 1+i&-2&1&-1-i\\
 1&-1+i&0&-1
 \end{pmatrix},
 \qquad
 B_s=C_sA_sC_s^{-1}.
\]
One checks exactly that
\[
 C_s^*H_3C_s=H_3,\qquad \det C_s=1.
\]

\begin{proposition}\label{prop:schottky}
There exists $N_0$ such that for every sufficiently large odd $N$,
\[
 \Gamma_N=\langle A_s^N,B_s^N\rangle
\]
is free of rank two and discrete both in the complex topology and in $\SU(H_3,\Q_3(i))$.  Every nontrivial element is biproximal in the corresponding projective actions, and $\Gamma_N$ is Zariski-dense in the ambient special unitary group.
\end{proposition}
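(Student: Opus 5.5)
We prove the statement by a ping-pong argument carried out simultaneously in two projective settings: $\mathbb P(\C^4)$ with the archimedean metric and $\mathbb P(\Q_3(i)^4)$ with the $3$-adic metric.

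\textbf{Fixed points of the generators.} In both fields $A_s=\diag(3,i,-i,1/3)$ has an attracting eigenline $e_1$ and a repelling hyperplane $\langle e_2,e_3,e_4\rangle$. For $A_s^{-1}$ the attracting line is $e_4$ and the repelling hyperplane is $\langle e_1,e_2,e_3\rangle$.

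In the complex topology the ratios of eigenvalue moduli are $3:1:1:1/3$. In the $3$-adic topology $|3|_3=1/3$ and $|1/3|_3=3$. The roles of $e_1$ and $e_4$ therefore swap, but the pattern stays proximal, because $|i|_3=1$ and the middle two eigenvalues have equal absolute value.

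For $B_s=C_sA_sC_s^{-1}$ the attracting lines are $C_se_1$ and $C_se_4$, with repelling hyperplanes $C_s\langle e_2,e_3,e_4\rangle$ and $C_s\langle e_1,e_2,e_3\rangle$. These are the first and fourth columns $(-4,1+i,1+i,1)^T$ and $(7,-2-2i,-1-i,-1)^T$, together with the corresponding hyperplanes.

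\textbf{Transversality check.} This is a finite exact computation in $\Q(i)$. Each of the four attracting points, $e_1,e_4,C_se_1,C_se_4$, must avoid the repelling hyperplanes of the other generator. For example, $C_se_1\notin\langle e_2,e_3,e_4\rangle$ since its first coordinate is $-4\ne0$, and $C_se_4$ has first coordinate $7\ne0$. We also need $e_1\notin C_s\langle e_2,e_3,e_4\rangle$, which amounts to the $(1,1)$-entry of $C_s^{-1}$ being nonzero, and similarly for $e_4$. Here $C_s^{-1}=H_3^{-1}C_s^*H_3$ is easy to write down.

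All these quantities are Gaussian integers or rationals. The $3$-adic transversality needs them to be nonzero, which suffices since the quantities are constants. A cleaner route is to require them to be $3$-adic units, which gives uniform neighborhoods.

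\textbf{Ping-pong.} Take disjoint small neighborhoods $U_A^\pm$ of the attracting points of $A_s^{\pm1}$ and $U_B^\pm$ of those of $B_s^{\pm1}$, kept away from the relevant repelling hyperplanes. By proximality there is an $N_0$ such that for all $N\ge N_0$, $A_s^{\pm N}$ maps the complement of a neighborhood of its repelling hyperplane into $U_A^\pm$, and likewise for $B_s$. This holds in both topologies, since the contraction rate is $3^{-2N}$ relative to the middle eigenvalues in each case.

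The standard ping-pong lemma then makes $\Gamma_N$ free of rank two. Discreteness follows because any reduced word maps a fixed open set into a disjoint one, so the identity is isolated.

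Biproximality of every nontrivial element comes from the same tableau. A cyclically reduced word maps its ping-pong domain strictly into itself with contraction, so it has an attracting fixed point. The same holds for its inverse, and in the dual action, which gives the projective action on hyperplanes. Conjugation reduces the general case to cyclically reduced words.

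Parity of $N$ is needed only to keep signs such as $i^N$ consistent. Restricting to odd $N$ keeps $A_s^N$ non-scalar on the middle block ($i^N=\pm i$), which matters for regularity below.

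\textbf{Zariski density (main obstacle).} We show that the Zariski closure $\mathbf Z$ is the full group $\SU(H_3)$. Write $\mathbf Z^\circ$ for its identity component.

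The group is non-elementary and each generator is regular loxodromic (eigenvalues $3^N,\pm i,\mp i,3^{-N}$ are distinct for odd $N$). Hence $\mathbf Z^\circ$ contains the Zariski closure of $\langle A_s^N\rangle$. That closure is a two-dimensional torus (with $i$ of finite order, only finite index is lost), and so $\mathbf Z^\circ$ contains a torus of dimension at least one.

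Next we rule out complex reducibility. An invariant subspace is a sum of eigenlines of $A_s$ and also a sum of eigenlines of $B_s$. We check directly that no proper sum of $e_j$'s is $C_s$-stable in the required sense, for instance via the nonvanishing of suitable minors of $C_s$. This gives complex irreducibility.

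By the Ben Ahmed--Zeghib dichotomy used in Proposition~\ref{prop:regular-lox-irreducible}, after conjugation $\mathbf Z^\circ(\R)$ is then either $\SU(3,1)$ or $\SO^0(3,1)$. To exclude $\SO(3,1)$ it suffices, by Theorem~\ref{thm:real-trace-no-discrete}, to exhibit one element of $\Gamma_N$ with non-real trace. The eigenvalues $\pm i$ of $A_s$ are not the obstacle, since conjugation-stable spectra can still have real trace. So I would compute $\tr(A_s^NB_s^N)$ modulo a suitable prime, or its leading asymptotic in $N$, and show its imaginary part is nonzero for large odd $N$. That asymptotic is controlled by the $(1,1)$-entry of $C_s$ and $C_s^{-1}$ products, which is explicit.

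Finally, since $\Gamma_N$ has finite index intersection with $\mathbf Z^\circ$, density in $\SU(H_3)$ follows.
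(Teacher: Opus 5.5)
Your ping-pong half matches the paper's argument. The paper also reduces simultaneous complex and $3$-adic ping-pong to the non-vanishing of the corner entries $(C_{11},C_{41},C_{14},C_{44})=(-4,1,7,-1)$ of $C_s$ and the corresponding entries $(-1,1,7,-4)$ of $C_s^{-1}$. It then takes a large common power to get freeness, discreteness and biproximality.

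For Zariski density you take a genuinely different route. The paper uses neither irreducibility nor the Ben Ahmed--Zeghib dichotomy. It puts $X=\diag(1,0,0,-1)$ and $Y=C_sXC_s^{-1}$ in the Lie algebra of the closure and shows that fifteen explicit brackets and adjoints span $\mathfrak{sl}_4$ (a $15\times15$ determinant $\equiv1\pmod 5$). It then notes that for odd $N$ the closure of $\langle A_s^N\rangle$ equals that of $\langle A_s\rangle$.

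Your irreducibility check is fine. What is needed is that $C_s$ has no zero block $C_s[R,T]$ with $|R|+|T|=4$, and the only zeros of $C_s$ sit at $(2,3)$ and $(4,3)$. Two small corrections: the closure of $\langle A_s^N\rangle$ is a one-dimensional torus times $\mu_4$, not a two-dimensional torus, and only its identity component is forced into $\mathbf Z^\circ$.

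The genuine gap is the claim that it suffices to exhibit one element of $\Gamma_N$ with non-real trace. The dichotomy constrains only $\mathbf Z^\circ$, while $\Gamma_N$ lies merely in the normalizer of $\mathbf Z^\circ(\R)$. If $\mathbf Z^\circ(\R)$ were conjugate to $\SO^0(3,1)$, that normalizer in $\SU(3,1)$ would be, after conjugation, contained in $\mu_8\cdot O(3,1)$. This is because every automorphism of $\mathfrak{so}(3,1)$ is induced by $O(3,1)$, absolute irreducibility makes the centralizer scalar, and $\det=1$ forces the scalar into $\mu_8$.

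Such elements can have non-real trace. Since $iI\in\SU(3,1)$, a group $\langle ia,b\rangle$ with generic $a,b\in\SO(3,1)$ is complex irreducible, has identity component $\SO(3,1)$, and has $\tr(ia)=i\tr(a)\notin\R$. Theorem~\ref{thm:real-trace-no-discrete} concerns the whole group, so it cannot be invoked as you do.

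The step can be repaired by testing an element against the normalizer rather than against $\SO(3,1)$. If $g=c\,o$ with $c\in\mu_8$ and $o$ real up to conjugacy, then $\tr(g)\in c\R$ and $\sigma_2(g)\in c^2\R$. Take $g=A_s^NB_s^N$ and use the formulas in the proof of Proposition~\ref{prop:schottky-kill}:
\begin{itemize}
\item $\sigma_2(g)=S_N$ is real and nonzero, which forces $c^2=\pm1$;
\item hence $T_N=\tr(g)$ would have to lie in $\R\cup i\R$;
\item but $T_N$ has nonzero real part $4(u^4+4u^2+1)/u^2$ and nonzero imaginary part $\mp8(u^2+1)/u$.
\end{itemize}
Alternatively, show $\tr(g^8)\notin\R$, since in that case $g^8=o^8\in\SO^0(3,1)$.

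With this addition, and granting the Ben Ahmed--Zeghib classification as the paper does in Proposition~\ref{prop:regular-lox-irreducible}, your route works. It trades the paper's certified Lie-algebra determinant for a deep classification theorem plus a short trace computation. In your route odd $N$ is used to make $A_s^N$ and $B_s^N$ regular, whereas the paper uses it to preserve the component group of the closure.
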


\begin{proof}
At the complex place, $A_s$ is biproximal with attracting and repelling lines $[e_1]$ and $[e_4]$.  At the $3$-adic place the same two lines occur with their dynamical roles reversed because
\[
 |3|_3<1<|1/3|_3.
\]
The corresponding data for $B_s$ are obtained by applying $C_s$.  Simultaneous ping--pong reduces to the non-vanishing of the four relevant corner entries of $C_s$ and $C_s^{-1}$.  Here
\[
 (C_{11},C_{41},C_{14},C_{44})=(-4,1,7,-1)
\]
and
\[
 ((C_s^{-1})_{11},(C_s^{-1})_{41},(C_s^{-1})_{14},(C_s^{-1})_{44})=(-1,1,7,-4),
\]
so all required transversality conditions hold.  A common sufficiently large power therefore gives proximal ping--pong at both places.  This proves freeness, discreteness, and biproximality of every nontrivial word.

For Zariski density, let $\mathbf H$ be the Zariski closure of $\langle A_s,B_s\rangle$.  The identity component of the closure of $\langle A_s\rangle$ contains
\[
 t\longmapsto\diag(t,1,1,t^{-1}),
\]
so its Lie algebra contains
\[
 X=\diag(1,0,0,-1).
\]
Similarly it contains $Y=C_sXC_s^{-1}$.  Since the Lie algebra of $\mathbf H$ is stable under brackets and under the adjoint actions of $A_s$ and $B_s$, it contains the following fifteen elements:
\[
\begin{gathered}
 Z_1=X,\quad Z_2=Y,\quad Z_3=\Ad(B_s)X,\quad Z_4=\Ad(A_s)Y,\\
 Z_5=[X,Y],\quad Z_6=[X,Z_3],\quad Z_7=[X,Z_4],\\
 Z_8=[Y,Z_3],\quad Z_9=[Y,Z_4],\quad Z_{10}=[Y,Z_5],\\
 Z_{11}=[Y,Z_6],\quad Z_{12}=[Y,Z_7],\quad Z_{13}=[Z_3,Z_4],\\
 Z_{14}=[Z_3,Z_5],\quad Z_{15}=[Z_3,Z_6].
\end{gathered}
\]
Reduce these matrices modulo $5$ using $i\mapsto2\in\mathbb F_5$; the denominators occurring in $A_s$ are invertible modulo $5$.  In the coordinate system consisting of the twelve off-diagonal entries followed by the first three diagonal coordinates of $\mathfrak{sl}_4$, the $15\times15$ matrix with columns $Z_1,\ldots,Z_{15}$ has determinant
\[
 1\pmod5.
\]
Hence the corresponding determinant in characteristic zero is nonzero.  The $Z_j$ are therefore linearly independent over $\Q(i)$ and span $\mathfrak{sl}_4$.  Thus $\mathbf H$ has complexified Lie algebra $\mathfrak{sl}_4$ and equals the ambient special unitary group in the relevant Zariski sense.  The exact finite-field computation is recorded in Appendix~\ref{app:schottky-certificate}.

Finally, for odd $N$ the power map preserves the components of the algebraic closure of $\langle A_s\rangle$, whose finite part has order four.  Therefore $\langle A_s^N,B_s^N\rangle$ has the same Zariski closure as $\langle A_s,B_s\rangle$.
\end{proof}

The family shows that local discreteness cannot be eliminated by a general openness argument.  Nevertheless, the Hermitian obstruction can still be killed by a single spectral field if the local splitting degrees are even.

\begin{proposition}\label{prop:even-degree}
Let $K/\Q$ be imaginary quadratic, let $H_K$ have even matrix dimension, and let $\Delta$ represent its finite Landherr obstruction relative to $J$.  Let $\gamma\in\SU(H_K,K)$ be regular semisimple and put
\[
 F=K(E(\gamma)).
\]
Then $F/K$ is the splitting field of $\chi_\gamma$, hence is Galois, and unitarity makes $F$ stable under complex conjugation.  Let $F_0=F^{\bar{\ }}$.  Suppose that for every $p\in S_\Delta$ and one (equivalently every) completion $F_w$ above the unique place $K_p$ of $K$ over $p$,
\[
 [F_w:K_p]
\]
is even.  Then
\[
 H_K\sim_FJ.
\]
\end{proposition}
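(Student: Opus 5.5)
The plan is to reproduce the local half of the proof of Theorem~\ref{thm:open-annihilation}, with the evenness of $[F_w:K_p]$ now taken as a hypothesis rather than produced by Lemma~\ref{lem:biquad}. We then close with Landherr's local--global principle, Theorem~\ref{thm:landherr}, applied over $F/F_0$.

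\textbf{Preliminary facts.} Since $\chi_\gamma\in K[X]$ and $\gamma$ is regular semisimple, $\chi_\gamma$ is separable. Its splitting field over $K$ is $F=K(E(\gamma))$, so $F/K$ is Galois, and all completions of $F$ above $K_p$ are conjugate. This justifies the phrase ``one (equivalently every)''. The spectrum of $\gamma$ is stable under $\lambda\mapsto\bar\lambda^{-1}$, exactly as in the proof of Lemma~\ref{lem:Kbar}, so $\bar F=F$. Because $K\subset F$ and conjugation is nontrivial on $K$, the extension $F/F_0$ is quadratic and $F=KF_0$.

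\textbf{Archimedean places.} At every real place of $F_0$ that becomes complex in $F$, the signatures of $H_K$ and $J$ must agree. Every such place lies over the unique archimedean place of $K_0=\Q$. Moreover, the forms $H_K\otimes F$ and $J\otimes F$ at that place are obtained by extending the complex place of $K$, where the signatures agree. I expect this step to need only a remark.

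\textbf{Finite places.} After base change, the discriminant ratio $\delta(H_K,J)\in F_0^\times/N_{F/F_0}(F^\times)$ is the image of $\Delta$. At a place $v$ of $F_0$ over $p\notin S_\Delta$, $\Delta$ is already a local norm from $K_p$. Hence it is a local norm from $F_w$, by transitivity of norms from the compositum $F_w\supset K_p$ down to $F_{0,v}$: an element $N_{K_p/\Q_p}(y)$ equals $N_{F_w/F_{0,v}}(y)$ when $F_w=K_pF_{0,v}$ is a field with $[F_w:F_{0,v}]=2$. If instead $F/F_0$ splits at $v$, the local norm map is surjective.

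For $p\in S_\Delta$ and $v$ non-split, copy the argument in the proof of Theorem~\ref{thm:open-annihilation}. First, $K_p\cap F_{0,v}=\Q_p$ and $F_w=K_pF_{0,v}$. Hence $[F_{0,v}:\Q_p]=[F_w:K_p]$, which is even by hypothesis. The quadratic norm-residue symbol $(\Delta,K_p/\Q_p)$ corresponds to the Brauer class with invariant $1/2$. Under restriction to $F_{0,v}$ this invariant is multiplied by $[F_{0,v}:\Q_p]$, and it also equals the symbol $(\Delta,F_w/F_{0,v})$. An even multiplier gives $0$, so $\Delta\in N(F_w^\times)$.

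\textbf{Conclusion and main obstacle.} All local invariants now vanish, and Theorem~\ref{thm:landherr} in even rank gives $H_K\sim_FJ$. I expect the main obstacle to be the compositum identity $F_w=K_pF_{0,v}$ and the compatibility of the norm-residue symbol with restriction. I would prove the first by a degree count: $F_w\supseteq K_pF_{0,v}$, and $[F_w:F_{0,v}]=2=[K_pF_{0,v}:F_{0,v}]$ because $K_p\not\subset F_{0,v}$. For the second I would use the standard local class field theory identity $\operatorname{inv}_{L}(\mathrm{res}\,\alpha)=[L:\Q_p]\operatorname{inv}(\alpha)$.
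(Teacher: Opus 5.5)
Your proposal is correct and follows essentially the same route as the paper. Both arguments use the split/non-split dichotomy at places $v$ of $F_0$ over $p\in S_\Delta$, the identities $K_p\cap F_{0,v}=\Q_p$ and $F_w=K_pF_{0,v}$ giving $[F_{0,v}:\Q_p]=[F_w:K_p]$, restriction of the local invariant $1/2$ by an even degree, and then Landherr's local--global theorem; your degree-count proof of the compositum identity, the norm-transitivity check outside $S_\Delta$, and the archimedean signature remark simply spell out steps the paper states without detail.
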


\begin{proof}
Fix $p\in S_\Delta$ and let $v$ be the place of $F_0$ below $w$.  The relevant local Landherr obstruction is represented by a Brauer invariant $1/2$ over $\Q_p$.  If $F/F_0$ splits at $v$, the local norm map is surjective and the obstruction vanishes automatically.  Otherwise
\[
 [F_w:F_{0,v}]=2.
\]
Since $p\in S_\Delta$, the quadratic algebra $K_p/\Q_p$ is a field, so also $[K_p:\Q_p]=2$.  Because conjugation acts nontrivially on $K_p$ and trivially on $F_{0,v}$,
\[
 K_p\cap F_{0,v}=\Q_p.
\]
Also $F=KF_0$ globally, and in the present non-split local case this gives
\[
 F_w=K_pF_{0,v}.
\]
Thus the two degree formulas inside $F_w$ give
\[
 [F_{0,v}:\Q_p]=[F_w:K_p].
\]
By hypothesis this degree is even.  Restriction of the Brauer class from $\Q_p$ to $F_{0,v}$ multiplies its invariant by this degree, so $1/2$ becomes zero.  Hence the finite obstruction vanishes at every place over $S_\Delta$.  Outside $S_\Delta$ it was already trivial, while the archimedean signatures agree.  Landherr's local--global theorem gives $H_K\sim_FJ$.
\end{proof}

\begin{proposition}\label{prop:schottky-kill}
For odd $N$, let
\[
 \gamma_N=A_s^NB_s^N,
 \qquad F_N=\Q(i)(E(\gamma_N)).
\]
Then
\[
 H_3\sim_{F_N}J.
\]
Thus, for $N\ge N_0$, the Zariski-dense group $\Gamma_N$ is not a counterexample to the total spectral-field conjecture; a word of length two already kills the norm class $[3]$.
\end{proposition}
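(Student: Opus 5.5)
The plan is to reduce the statement to Proposition~\ref{prop:even-degree} with $K=\Q(i)$, $H_K=H_3$ and $\Delta=3$. First I will identify $S_\Delta$. The class of $3$ is a local norm from $\Q_p(i)$ except at $p=3$, since $3$ is inert in $\Q(i)$ and has odd valuation. At $p=2$ the Hilbert symbol $(3,-1)_2=-1$, so $3$ is not a norm from $\Q_2(i)$ either. By the product formula the symbol $(3,-1)$ is nontrivial at exactly an even number of places. At the real place it is trivial because $3>0$, so both $2$ and $3$ are obstructive and $S_\Delta=\{2,3\}$. This needs to be checked carefully, since Theorem~\ref{thm:normcriterion} only records the global norm condition. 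The main task is then to show that for $\gamma_N=A_s^NB_s^N$ and odd $N$, the local splitting degrees $[F_w:\Q_p(i)]$ are even for $p=2,3$.

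The concrete route is to compute $\chi_{\gamma_N}$. It lies in $\Q(i)[X]$, and by unitarity it is self-reciprocal up to conjugation: $\chi(X)=X^4\overline{\chi}(1/X)$. Because $\det\gamma_N=1$ and $\gamma_N$ is biproximal, its spectrum is $\{\mu,\zeta,\bar\zeta^{-1}\dots\}$. More precisely, it consists of two off-circle eigenvalues $\mu,\bar\mu^{-1}$ and two unit-circle eigenvalues. The off-circle pair is determined over $K_L$ by Lemma~\ref{lem:extreme-eigenvalues}. I would first try to exhibit a quadratic subextension of $F_N/\Q(i)$ that is nontrivial at $2$ and at $3$. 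The natural candidate is $\Q(i,\sqrt{3})$ or $\Q(i,\sqrt{\mathrm{disc}})$ for the quadratic factor governing the off-circle pair. Adjoining $\sqrt3$ is a ramified quadratic extension of $\Q_3(i)$, and its completion at $2$ is $\Q_2(i,\sqrt3)$, which is also quadratic over $\Q_2(i)$. So it suffices to show $\sqrt3\in F_N$, or more generally that some element of $F_N$ generates a local quadratic extension at both primes.

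To get at this I would use the structure $\gamma_N=A_s^N\cdot C_sA_s^NC_s^{-1}$ together with the $3$-adic dynamics from Proposition~\ref{prop:schottky}. At $p=3$ the eigenvalue $3^N$ is $3$-adically tiny, so the leading eigenvalue of $\gamma_N$ has valuation $\approx -2N$ up to a corner-entry correction. Its $3$-adic valuation is read off from $C_{11}(C_s^{-1})_{11}$-type products, which are units by the listed corner data, so the valuation is $\pm 2N$ and even. Even valuation alone gives no odd ramification. I therefore expect to have to compute the resolvent: $\chi_{\gamma_N}\bmod$ a power of $3$, via a Newton polygon argument, to see whether the unit-circle eigenvalues generate a quadratic extension of $\Q_3(i)$. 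For instance, the middle factor would reduce to something like $X^2-tX+1$ with nonsquare discriminant. The $2$-adic place needs a similar Newton polygon or Hensel analysis, using the reduction of $C_s$ modulo $(1+i)$. If $N$ odd enters, it will be through $i^N=\pm i$ and through $3^N$ having odd $3$-adic valuation, which prevents the splitting field from collapsing.

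The main obstacle is this local degree computation, where the Newton polygon slopes must produce an even degree uniformly in odd $N$. At $3$ I would first try the Newton polygon of $\chi_{\gamma_N}$ over $\Q_3(i)$, checking whether an edge of odd length-over-slope denominator appears. That would give ramification index $2$ and hence even degree. If the general-$N$ computation resists, the fallback is an exact computation of $\chi_{\gamma_N}$ as a polynomial in $3^N$ and $i^N$. The discriminant of its quadratic factors would then be $3\cdot(\text{square})\cdot(\text{unit})$, putting $\sqrt3$ or a locally equivalent square root into $F_N$. Once evenness at $p=2,3$ is established, Proposition~\ref{prop:even-degree} yields $H_3\sim_{F_N}J$. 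Combining this with Proposition~\ref{prop:schottky} for $N\ge N_0$ gives the final sentence of the statement.
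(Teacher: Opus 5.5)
\textbf{There is a genuine gap: the local parity at $2$ and $3$ is never proved.} Your reduction is correct. It rests on Proposition~\ref{prop:even-degree} with $K=\Q(i)$ and $\Delta=3$, and your Hilbert-symbol check that $S_\Delta=\{2,3\}$ is right (the paper only asserts it). But the whole content of the proposition is that $\chi_N=\chi_{\gamma_N}$ has even local splitting degree over $\Q_2(i)$ and over $\Q_3(i)$. Your proposal lists strategies for this rather than proving it, and at $p=2$ the strategies you name cannot succeed.

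\textbf{At $p=2$.} Put $\pi=1+i$. Since $3\equiv\pm i\equiv1\pmod\pi$, one has $A_s^N\equiv I$, hence $B_s^N\equiv I$ and $\gamma_N\equiv I\pmod\pi$. So reducing $C_s$ modulo $\pi$ sees nothing. Correspondingly, $\pi^2$ divides $T_N$, $\overline{T_N}$ and $S_N$. The Newton polygon of $\chi_N$ over $\Q_2(i)$ is therefore flat, and $\chi_N\equiv(X+1)^4\pmod\pi$, so Hensel's lemma gives no factorization.

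The discriminant route also fails at $2$. Write $\operatorname{Disc}(\chi_N)=-2^{12}(u^2+1)^2u^{-12}R(u)$ with $u=3^N$. Every factor except $R(u)$ is a square in $\Q_2(i)$. Moreover $R(u)\equiv4\pmod{32}$ for odd $u$, so $R(u)/4$ is a $2$-adic square. Hence the discriminant is itself a square in $\Q_2(i)$, and no ``locally equivalent $\sqrt3$'' comes from it.

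The missing idea is the rescaled shift $Q_N(Z)=\pi^{-8}\chi_N(1+\pi^2Z)$. It is monic and integral, with reduction $(Z^2+Z+1)^2$ in $\mathbb F_2[Z]$. Every root therefore has residue in $\mathbb F_4$, so the splitting field has even residue degree over $\Q_2(i)$.

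\textbf{At $p=3$.} Your Newton-polygon test also comes out negative. The slopes are $-2N$, $0$, $2N$, all integral, so they only give a factorization of type $1+2+1$: two outer $\Q_3(i)$-rational roots and a middle unit-root quadratic. What you must show is that this middle quadratic is irreducible over $\Q_3(i)$. For that, the exact characteristic polynomial you keep as a fallback is the actual proof. From the closed forms of $T_N$ and $S_N$ in $u=3^N$ and $\varepsilon=i^N$ (using $\varepsilon^2=-1$ for odd $N$), one obtains the discriminant above. Then $R(u)\equiv3\pmod9$ gives $v_3(\operatorname{Disc}\chi_N)=1-12N$, which is odd, so the discriminant is a non-square in the unramified field $\Q_3(i)$.

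\textbf{On the global target.} The statement $\sqrt3\in F_N$ is neither needed nor supported by anything in your argument; only local quadratic subextensions at $2$ and $3$ matter. These arise by different mechanisms: at $3$ the extension is ramified and comes from the discriminant, while at $2$ it is unramified and comes from the residue field.
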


\begin{proof}
Put $u=3^N$ and $\varepsilon=i^N$.  Since $N$ is odd,
\[
 \varepsilon^2=-1,
 \qquad \bar\varepsilon=-\varepsilon.
\]
Direct multiplication gives
\[
 \chi_N(X)=X^4-T_NX^3+S_NX^2-\overline{T_N}X+1,
\]
with
\[
 T_N=\frac4{u^2}(u^4+4u^2+1-2\varepsilon u^3-2\varepsilon u),
 \qquad
 S_N=\frac2{u^2}(u^2-2)(2u^2-1).
\]
At $p=3$ one obtains
\[
 \operatorname{Disc}(\chi_N)
 =-\frac{2^{12}(u^2+1)^2}{u^{12}}R(u),
\]
where
\begin{align*}
 R(u)={}&3u^{20}+112u^{18}+1608u^{16}+10976u^{14}+32760u^{12}\\
 &+49374u^{10}+32760u^8+10976u^6+1608u^4+112u^2+3.
\end{align*}
Because $u=3^N$, one has $R(u)\equiv3\pmod9$, and therefore
\[
 v_3(\operatorname{Disc}(\chi_N))=1-12N,
\]
which is odd.  The discriminant is not a square in $K_3=\Q_3(i)$, so the local splitting field contains a quadratic subextension and has even degree.

At $p=2$, let $\pi=1+i$ and set
\[
 Q_N(Z)=\pi^{-8}\chi_N(1+\pi^2Z).
\]
The explicit formulas for $T_N,S_N$, together with $u^2\equiv1\pmod8$ and $\varepsilon\equiv1\pmod\pi$, show that $Q_N(Z)$ is monic integral and
\[
 \overline{Q_N}(Z)=Z^4+Z^2+1=(Z^2+Z+1)^2
\]
in $\mathbb F_2[Z]$.  Let $M/K_2$ be a splitting field and let $z\in M$ be any root of $Q_N$.  Since $Q_N$ is monic integral, $z$ is integral.  Reducing its equation in the residue field $k_M$ gives
\[
 (\bar z^2+\bar z+1)^2=0.
\]
Because $k_M$ is a field, $\bar z^2+\bar z+1=0$.  The polynomial $Z^2+Z+1$ is irreducible over $\mathbb F_2$, so $\mathbb F_4\subset k_M$.  Hence the residue degree, and therefore $[M:K_2]$, is even.  Consequently
\[
 [K_2(E(\gamma_N)):K_2]\equiv0\pmod2.
\]
The norm class $[3]$ is obstructive only at $2$ and $3$.  Proposition~\ref{prop:even-degree} therefore yields $H_3\sim_{F_N}J$.
\end{proof}

\section{Generic spectral fields and local parity}\label{sec:S4}

The Schottky calculation is explicit and special.  Generic elements provide a complementary view: after linear descent the Zariski closure is $\SL_4$, so the Weyl group controls the typical splitting behavior of characteristic polynomials.

\begin{proposition}\label{prop:genericS4}
Let $K$ be a number field and let $\Gamma<\SL_4(K)$ be finitely generated and Zariski-dense.  Then $\Gamma$ contains a regular semisimple element $\gamma$ of infinite order such that
\[
 \Gal(K(E(\gamma))/K)\simeq S_4.
\]
Moreover, generic elements form a nonempty open subset in the profinite topology of $\Gamma$.
\end{proposition}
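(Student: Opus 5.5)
The plan is to use the standard Jordan-type argument for Zariski-dense subgroups of $\SL_4(K)$, as in Prasad--Rapinchuk's work on generic elements \cite{PrasadRapinchuk2003}.

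\emph{Step 1: reduction to finitely many places.} Since $\Gamma$ is finitely generated, it lies in $\SL_4(\mathcal O_{K,S})$ for a finite set $S$ of places. By strong approximation for Zariski-dense subgroups (Nori--Weisfeiler--Pink, in the form of \cite{Pink2000}), for almost all primes $\mathfrak p$ of $K$ the reduction $\Gamma\to\SL_4(\mathbb F_{\mathfrak p})$ is surjective. I would only need the weaker statement that the closure of $\Gamma$ in $\SL_4(K_{\mathfrak p})$ is open for a positive-density set of $\mathfrak p$ unramified in $K/\Q$. That weaker statement follows from the non-discreteness/openness dichotomy used in Lemma~\ref{lem:closure-dichotomy}, applied with $\SL_4$ in place of $\SU(H_K)$, together with the fact that $\Gamma\subset\SL_4(\mathcal O_{K,\mathfrak p})$ is relatively compact, hence non-discrete, for $\mathfrak p\notin S$.

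\emph{Step 2: local generators of the Galois group.} The conjugacy classes of $S_4$ are detected by cycle types, and $S_4$ is generated by any subgroup meeting the classes of a $4$-cycle, a transposition and a $3$-cycle; in fact a $4$-cycle and a $3$-cycle already force $S_4$ by Jordan's lemma. I would therefore choose three distinct primes $\mathfrak p_1,\mathfrak p_2,\mathfrak p_3\notin S$ with open local closure. At $\mathfrak p_1$ I take an open set $U_1\subset\SL_4(\mathcal O_{\mathfrak p_1})$ of elements whose characteristic polynomial reduces to an irreducible separable quartic; it is nonempty because $\SL_4(\mathbb F_{\mathfrak p_1})$ contains elements with irreducible characteristic polynomial, namely the norm-one elements of $\mathbb F_{q^4}$. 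At $\mathfrak p_2$ I take $U_2$ where the reduction is (linear)$\cdot$(irreducible cubic), and at $\mathfrak p_3$ I take $U_3$ where it is (quadratic)$\cdot$(linear)$\cdot$(linear), all separable. By Hensel's lemma these factorization types are locally constant, so the $U_j$ are open. Intersecting further with the regular semisimple, infinite-order locus, which is Zariski-open and hence dense in $\Gamma$, costs nothing.

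\emph{Step 3: the main obstacle.} The difficulty is simultaneity: I need one $\gamma\in\Gamma$ lying in $U_1\times U_2\times U_3$. Openness at each single place is not enough; I need the closure of $\Gamma$ in $\prod_j\SL_4(K_{\mathfrak p_j})$ to be open. I would get this from the simply connected strong-approximation form of Pink's theorem (Theorems~6.12 and~6.15 of \cite{Pink2000}, as in Subsection~\ref{subsec:pink-en}): $\SL_4$ is simply connected, and because $\Gamma$ lies in compact local groups at these places, no factor is discrete. Hence the diagonal closure is open and meets the product of the $U_j$. Any $\gamma$ in the intersection has Frobenius elements of cycle types $4$, $3$, and $2$ in $\Gal(K(E(\gamma))/K)\subset S_4$. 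The group is then transitive and contains a $3$-cycle and a transposition, so it equals $S_4$. Finally, the set of such $\gamma$ is $\Gamma\cap\bigcap_j U_j$. Since $\Gamma\cap\SL_4(\mathcal O_{\mathfrak p_j})$ maps to the profinite completion and the conditions are defined modulo $\mathfrak p_j$, the set is a union of cosets of a finite-index congruence subgroup. This gives the nonempty profinite-open set of generic elements claimed in the second sentence of Proposition~\ref{prop:genericS4}.
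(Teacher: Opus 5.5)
Your route differs from the paper's. The paper cites the Prasad--Rapinchuk generic-element theorem as a black box: regular elements with generic centralizer torus form a nonempty profinite-open set. It then uses $W(A_3)\simeq S_4$, and shows $\langle\gamma\rangle$ is Zariski-dense in $T=Z(\gamma)^\circ$ because a generic $A_3$-torus has no proper $K$-subtorus, so that $K(E(\gamma))$ is the splitting field of $T$. You instead reprove the needed case of that theorem and read $\Gal(K(E(\gamma))/K)$ off Frobenius cycle types at three primes. That step is sound: Dedekind's criterion applies, and cycle types $4,3,2$ do force $S_4$. It also identifies the Galois group of $\chi_\gamma$ directly, with no torus comparison. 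The price is that you must do the strong-approximation bookkeeping yourself, and as written that part fails in three places.

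\emph{First,} surjectivity of $\Gamma\to\SL_4(\mathbb F_{\mathfrak p})$ for almost all $\mathfrak p$ of $K$ is false when $\Gamma$ is defined over a proper subfield. For example, $\SL_4(\mathbb Z)\subset\SL_4(\Q(i))$ is finitely generated and Zariski-dense, yet at inert $p$ it reduces into $\SL_4(\mathbb F_p)\subsetneq\SL_4(\mathbb F_{p^2})$. This is exactly the situation of the paper's groups $\Gamma<\SU(H_K,K)\subset\SL_4(K)$, for which $\tr\Ad\gamma=|\tr\gamma|^2-1\in\Q$. The same example shows that ``unramified'' is the wrong condition for your weaker openness claim. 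The closure of $\SL_4(\mathbb Z)$ in $\SL_4(\Q_3(i))$ is $\SL_4(\mathbb Z_3)$, which is non-discrete but not open. The argument of Lemma~\ref{lem:closure-dichotomy} only yields an $\Ad(\Gamma)$-stable $\Q_p$-subspace of $\mathfrak{sl}_4(K_{\mathfrak p})$, which need not be an ideal; you need primes of absolute degree one.

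\emph{Second,} openness of the local closure does not feed Step~2, because your $U_j$ are defined by reduction mod $\mathfrak p_j$. An open subgroup such as the principal congruence subgroup has all characteristic polynomials $\equiv(X-1)^4$ and misses every $U_j$. You need genuine surjectivity mod $\mathfrak p_j$. Alternatively, replace the $U_j$ by conjugation-invariant open sets around regular elements of suitable local tori near the identity, as in Lemma~\ref{lem:biquad}.

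\emph{Third,} simultaneity fails if two of your primes lie over the same rational prime. For $\SL_4(\mathbb Z)\subset\SL_4(\Q(i))$ with $\mathfrak p=(2+i)$ and $\bar{\mathfrak p}=(2-i)$, the closure in $\SL_4(\Q_5)^2$ is the diagonal $\SL_4(\mathbb Z_5)$. It is non-discrete in every factor but not open, and $\chi_\gamma$ has the same factorization type at both primes. Pink's theorem is stated for places of the minimal field $\Q(\tr\Ad\Gamma)$ via the minimal quasi-model, not for places of $K$. The argument of Subsection~\ref{subsec:pink-en} relies on the base field being $\Q$ and does not transfer.

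The repair is to take $\mathfrak p_1,\mathfrak p_2,\mathfrak p_3$ of degree one over three distinct large rational primes. At almost all such primes, strong approximation for the minimal quasi-model gives surjectivity mod $\mathfrak p_j$. Goursat then gives surjectivity mod $\mathfrak p_1\mathfrak p_2\mathfrak p_3$, since the perfect groups $\SL_4(\mathbb F_{p_j})$ share no nontrivial quotient.

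A minor point: the infinite-order locus is not Zariski-open, but you do not need it. A $4$-cycle Frobenius makes $\chi_\gamma$ separable, and a finite-order $\gamma$ would have abelian, cyclotomic Galois group.
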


\begin{proof}
After replacing $\Gamma$ by a finite-index torsion-free subgroup, which remains Zariski-dense because $\SL_4$ is connected, the generic-element theorem of Prasad--Rapinchuk supplies regular elements whose connected centralizer is a generic maximal torus and shows that such elements form a nonempty profinite-open subset \cite{PrasadRapinchuk2017}.  For type $A_3$, the Weyl group is
\[
 W(A_3)\simeq S_4.
\]
A generic maximal torus therefore has Galois action containing the full $S_4$ on its four weights.  Equivalently, the associated etale algebra of degree four is a field whose normal closure has Galois group $S_4$.

Choose such a generic element $\gamma$ of infinite order and let $T=Z_{\SL_4}(\gamma)^\circ$.  The identity component of the Zariski closure of $\langle\gamma\rangle$ is a $K$-subtorus of $T$.  For a generic torus of type $A_3$, the Galois action on
\[
 X^*(T)\otimes_\mathbb Z\Q\simeq\{(x_1,x_2,x_3,x_4)\in\Q^4:\ x_1+x_2+x_3+x_4=0\}
\]
contains the standard $S_4$-representation, which is irreducible over $\Q$.  Hence $T$ has no nontrivial proper $K$-subtorus.  Since $\gamma$ has infinite order, its Zariski closure has positive-dimensional identity component, so that component is all of $T$; thus $\langle\gamma\rangle$ is Zariski-dense in $T$.

The four weights of the standard representation generate $X^*(T)$.  If an automorphism of the splitting field of $T$ fixes all four eigenvalues of $\gamma$, then for every standard weight $\chi_i$ one has
\[
 \sigma(\chi_i)(\gamma)=\chi_i(\gamma).
\]
Zariski density of $\langle\gamma\rangle$ in $T$ forces $\sigma(\chi_i)=\chi_i$ for all $i$, hence $\sigma$ acts trivially on $X^*(T)$.  Therefore $K(E(\gamma))$ is the splitting field of $T$ itself, and
\[
 \Gal(K(E(\gamma))/K)\simeq S_4.
\]
The relation with splitting fields of generic elements is also discussed in \cite{PisolkarRajan2016}.
\end{proof}

\begin{proposition}\label{prop:S4-sieve}
Let $\mathfrak p$ be an obstructive finite place of $K$ and let $\gamma$ be as in Proposition~\ref{prop:genericS4}.  If the local Landherr class remains nontrivial over $K(E(\gamma))$, then the decomposition group at $\mathfrak p$ has odd order.  Hence
\[
 D_{\mathfrak p}=1
 \quad\text{or}\quad
 D_{\mathfrak p}\simeq C_3.
\]
Equivalently, locally the characteristic polynomial has factorization type
\[
 1+1+1+1
 \quad\text{or}\quad
 1+3.
\]
\end{proposition}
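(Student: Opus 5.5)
The plan is to argue by contraposition using Proposition~\ref{prop:even-degree}, and then classify the odd-order subgroups of $S_4$.

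Put $F=K(E(\gamma))$. By Proposition~\ref{prop:genericS4}, $F/K$ is Galois with group $S_4$. For a place $w$ of $F$ above $\mathfrak p$, the local extension $F_w/K_{\mathfrak p}$ is Galois with group the decomposition group $D_{\mathfrak p}\le S_4$, so $[F_w:K_{\mathfrak p}]=|D_{\mathfrak p}|$. The argument in the proof of Proposition~\ref{prop:even-degree} is purely local at one obstructive place. If $F/F_0$ splits at the place below $w$, the local norm map is surjective. Otherwise $[F_{0,v}:\Q_p]=[F_w:K_{\mathfrak p}]$, and restricting the local Brauer invariant $1/2$ multiplies it by this degree. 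So if $|D_{\mathfrak p}|$ is even, the local Landherr class at $\mathfrak p$ dies over $F$. Contrapositively, if the class survives, $|D_{\mathfrak p}|$ is odd. I would state this explicitly at a single place rather than over all of $S_\Delta$. One point needs checking: in the imaginary quadratic setting, "obstructive place" means $K_{\mathfrak p}/\Q_p$ is a field, so that degree comparison applies verbatim.

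The group theory is routine. Every subgroup of $S_4$ has order dividing $24$, so an odd-order subgroup has order $1$ or $3$. Hence $D_{\mathfrak p}$ is trivial or generated by a $3$-cycle, i.e.\ $D_{\mathfrak p}\simeq C_3$.

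For the factorization statement, $\gamma$ is regular, so $\chi_\gamma$ is separable. Its irreducible factors over $K_{\mathfrak p}$ correspond to the $D_{\mathfrak p}$-orbits on the four roots, since $S_4$ acts on the roots through the standard permutation action identified in Proposition~\ref{prop:genericS4}. The trivial group gives orbit type $1+1+1+1$. A $3$-cycle fixes one root and permutes the other three cyclically, giving $1+3$. Conversely, factorization type $1+1+1+1$ or $1+3$ forces the decomposition group to be $1$ or $C_3$, because the only subgroups of $S_4$ with those orbit structures are $1$, $C_3$, or $S_3$. The case $S_3$ has even order, so it is excluded by the first step, and the two formulations are equivalent under the standing hypothesis.

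The main obstacle is making the local Brauer-restriction step rigorous at a single place without the global $S_\Delta$ bookkeeping. One must check that the unramified or split alternatives for $F/F_0$ at $v$ are handled, and that the identification $F_w=K_{\mathfrak p}F_{0,v}$ holds in the non-split case. I would first try to copy the proof of Proposition~\ref{prop:even-degree} verbatim, restricted to the one place $\mathfrak p$.
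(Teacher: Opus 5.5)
Your proposal is correct and follows the paper's proof. Persistence of the local Brauer invariant $1/2$ forces the local degree $[F_w:K_{\mathfrak p}]=|D_{\mathfrak p}|$ to be odd, the only odd-order subgroups of $S_4$ are $1$ and $C_3$, and these give orbit types $1+1+1+1$ and $1+3$. Your re-derivation of the single-place Brauer restriction from Proposition~\ref{prop:even-degree}, and your remark that type $1+3$ could also come from $S_3$ (excluded by odd order), spell out what the paper's brief argument leaves implicit.
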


\begin{proof}
Let $L=K(E(\gamma))$.  The local Landherr obstruction has order two, represented by Brauer invariant $1/2$.  Restriction to a local extension of even degree would kill this invariant.  Hence persistence of the obstruction over a completion $L_w/K_{\mathfrak p}$ forces
\[
 [L_w:K_{\mathfrak p}]
\]
to be odd.  Since $L/K$ is Galois with group $S_4$, this local degree is the order of the decomposition group $D_{\mathfrak p}$.  The only odd-order subgroups of $S_4$ are the trivial group and a cyclic group generated by a $3$-cycle.  Their orbit decompositions on the four roots are respectively $1+1+1+1$ and $1+3$.
\end{proof}

\begin{corollary}\label{cor:frontier}
If the total spectral field fails to kill the Hermitian obstruction, then there is an obstructive place $\mathfrak p$ such that every local spectral field $K_{\mathfrak p}(E(\gamma))$, $\gamma\in\Gamma$, has odd degree.  In particular, every generic $S_4$ element has local decomposition group $1$ or $C_3$ at that same place.
\end{corollary}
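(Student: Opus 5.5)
The plan is to argue by contraposition, place by place, using Proposition~\ref{prop:finite-landherr} to reduce the global statement to a statement about finitely many obstructive places, and Proposition~\ref{prop:even-degree} (in its local form) to convert even local degree into vanishing of the local obstruction.

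\emph{Step 1: reduction to a finite local obstruction.} Suppose the total spectral field $F_{\rm eig}(\Gamma)$ does not kill the Hermitian obstruction. The archimedean signatures agree, and only finitely many finite places $\mathfrak p$ of $K$ carry a nontrivial local Landherr class. Hence at least one such obstructive $\mathfrak p$ survives over $F_{\rm eig}(\Gamma)$.

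\emph{Step 2: from surviving places to odd local degree.} I would first show that some single obstructive place survives over every finite spectral subfield. The natural argument is by contradiction. If for each obstructive $\mathfrak p$ there were some $\gamma_{\mathfrak p}\in\Gamma$ such that $K_{\mathfrak p}(E(\gamma_{\mathfrak p}))$ has even degree, then I would take the compositum
\[
 M=K\bigl(E(\gamma_{\mathfrak p}):\mathfrak p\text{ obstructive}\bigr)\subset F_{\rm eig}(\Gamma).
\]
This field $M$ is a finite conjugation-stable extension. Every completion of $M$ above $\mathfrak p$ contains a completion of $K(E(\gamma_{\mathfrak p}))$, so its local degree over $K_{\mathfrak p}$ is even. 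Here I use that $K(E(\gamma_{\mathfrak p}))/K$ is Galois, so all completions above $\mathfrak p$ have the same degree. The degree comparison $[M_{0,v}:\Q_p]=[M_w:K_{\mathfrak p}]$ from the proof of Proposition~\ref{prop:even-degree} then applies in the non-split case. The restriction of the Brauer invariant $1/2$ is therefore killed at every obstructive place. By Proposition~\ref{prop:finite-landherr}, $H_K\sim_M J$, and hence $H_K\sim_{F_{\rm eig}(\Gamma)}J$, which contradicts the assumption. So there is a fixed obstructive $\mathfrak p$ at which every $K_{\mathfrak p}(E(\gamma))$ has odd degree. I expect this step to be the main obstacle. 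One must be careful that the quantifiers really give a single place working for all $\gamma$, and not a place depending on $\gamma$. One must also check the split case, where $M/M_0$ splits at $v$, so that no obstruction exists at all.

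\emph{Step 3: the generic $S_4$ elements.} For the final assertion, let $\gamma$ be a generic element as in Proposition~\ref{prop:genericS4}. Then $L=K(E(\gamma))$ is Galois with group $S_4$, and its local degree at the place $\mathfrak p$ found in Step~2 is odd. This local degree equals $|D_{\mathfrak p}|$. The odd-order subgroups of $S_4$ are only $1$ and $C_3$, exactly as in the proof of Proposition~\ref{prop:S4-sieve}. This gives the stated decomposition groups, and the factorization types $1+1+1+1$ or $1+3$ follow from the orbit structure of these subgroups on the four roots.
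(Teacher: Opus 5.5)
Your proposal is correct and follows essentially the same route as the paper. You argue by contradiction using the finite conjugation-stable compositum $M=K(E(\gamma_{\mathfrak p}):\mathfrak p\in S_\Delta)$, use Galoisity to get even local degree at every completion over each obstructive place, and kill the Brauer invariant $1/2$ via the degree comparison from Proposition~\ref{prop:even-degree} (you invoke Proposition~\ref{prop:finite-landherr} for the local--global step, the paper points to Lemma~\ref{lem:compositum}). You then finish with the odd-order subgroups of $S_4$ as in Proposition~\ref{prop:S4-sieve}, with your explicit checks of the quantifier order and of the split case being welcome but not a different argument.
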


\begin{proof}
Suppose instead that for every obstructive place $\mathfrak p$ there were some $\gamma_{\mathfrak p}\in\Gamma$ whose local splitting field had even degree.  There are only finitely many obstructive places, so the compositum
\[
 M=K\bigl(E(\gamma_{\mathfrak p}):\mathfrak p\in S_\Delta\bigr)
\subset F_{\rm eig}(\Gamma)
\]
is a finite Galois extension stable under complex conjugation.  At each obstructive place a completion of $M$ contains the chosen even-degree local splitting field, hence has even degree over the corresponding completion of $K$; by Galoisity the same holds for every completion above that place.  The local degree argument of Proposition~\ref{prop:even-degree}, equivalently the finite-compositum argument used later in Lemma~\ref{lem:compositum}, then kills every finite Landherr obstruction over $M$.  This contradicts the assumed failure over the larger field $F_{\rm eig}(\Gamma)$.  Thus one obstructive place has odd local spectral degree for every $\gamma\in\Gamma$, and Proposition~\ref{prop:S4-sieve} gives the final assertion for generic $S_4$ elements.
\end{proof}

Thus a residual counterexample must display a striking tension: globally, generic elements have full $S_4$ Galois group, but at one fixed obstructive place every one of those elements lies in one of two odd local strata.

\subsection{Quartic parity}\label{subsec:quartic-parity-en}

Let $f\in F[X]$ be a separable quartic over a local field and let $L/F$ be its splitting field.  The following elementary reformulation will be useful.

\begin{proposition}\label{prop:quartic-odd-en}
The degree $[L:F]$ is odd if and only if the Galois group of $f$ is trivial or cyclic of order three.  Equivalently, the factorization type of $f$ over $F$ is either
\[
 1+1+1+1
\]
or
\[
 1+3
\]
with the cubic factor cyclic.  Every other factorization or Galois type forces an even splitting degree.
\end{proposition}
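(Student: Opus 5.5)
The plan is pure finite group theory combined with one standard fact about local Galois extensions. Let $G=\Gal(L/F)$. Since $f$ is separable, $L/F$ is Galois, so $[L:F]=|G|$. The roots of $f$ are distinct, so $G$ acts faithfully on them and embeds in $S_4$. The first step is therefore to classify the odd-order subgroups of $S_4$. Since $|S_4|=24=2^3\cdot3$, an odd-order subgroup has order $1$ or $3$. A subgroup of order $3$ is cyclic, generated by a $3$-cycle, because the $3$-cycles are the only elements of order three in $S_4$. This proves the first equivalence: $[L:F]$ is odd if and only if $G$ is trivial or cyclic of order three.

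The second step translates these two group types into factorization types. The irreducible factors of $f$ over $F$ correspond to the $G$-orbits on the roots, and the degree of each factor is the size of its orbit. The trivial group has four singleton orbits, which gives type $1+1+1+1$. A group generated by a $3$-cycle has one fixed root and one orbit of size three, which gives type $1+3$. In that case the Galois group of the cubic factor is the restriction of $G$, namely $C_3$, so the cubic factor is cyclic.

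Conversely, I would check the following. If $f$ has type $1+1+1+1$, then $G=1$. If $f$ has type $1+3$ with cyclic cubic factor, then $L$ is the splitting field of that cubic, so $G\simeq C_3$ and the degree is $3$. Every other configuration forces an even degree, for one of two reasons:
\begin{itemize}
\item A factor of degree $2$ or $4$ produces an orbit of even size. By the orbit-stabilizer theorem, $|G|$ is then even.
\item In type $1+3$ with non-cyclic cubic factor, the cubic's Galois group is $S_3$, and $|S_3|$ divides $|G|$.
\end{itemize}

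I expect no genuine obstacle. The only point needing care is that ``factorization type'' alone does not determine the parity: type $1+3$ occurs with both $C_3$ and $S_3$. This is why the statement specifies that the cubic factor is cyclic, and I would point this out explicitly rather than argue from orbit sizes alone. The local-field hypothesis is not needed beyond $L/F$ being Galois, so the argument works over any field in which $f$ is separable.
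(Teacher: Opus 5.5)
Your proof is correct and follows the same route as the paper: $\Gal(L/F)$ embeds in $S_4$ with order $[L:F]$, its only odd-order subgroups are the trivial group and the subgroups generated by a $3$-cycle, and the orbits of these two groups on the roots give exactly the two displayed factorization types. Your explicit converse (a factor of degree $2$ or $4$ gives an even orbit, and type $1+3$ with an $S_3$ cubic has even degree) spells out what the paper's shorter proof leaves implicit.
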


\begin{proof}
The Galois group embeds in $S_4$ and its order equals $[L:F]$.  The only subgroups of odd order in $S_4$ are the trivial group and cyclic groups generated by a $3$-cycle.  Their orbit decompositions on four letters are exactly the two types displayed above.
\end{proof}

\subsection{Trace and resolvent criteria}

For $g\in\SU(3,1)$ write
\[
 \chi_g(X)=X^4-t_gX^3+\sigma_gX^2-\overline{t_g}X+1,
\]
where
\[
 t_g=\tr(g),
 \qquad
 \sigma_g=\frac{t_g^2-\tr(g^2)}2.
\]
Thus two traces determine the characteristic polynomial.  A convenient cubic resolvent is
\[
 \mathcal R_g(Y)
 =Y^3-\sigma_gY^2+(t_g\bar t_g-4)Y
 +(4\sigma_g-t_g^2-\bar t_g^2).
\]
The quartic has odd splitting degree precisely when it is completely split or when it has factorization type $1+3$ with an irreducible cyclic cubic factor.  The cubic resolvent is useful only after this factorization information is retained.  If the quartic is completely split, then $\mathcal R_g$ is completely split.  If the quartic has Galois group $C_3$ and factorization type $1+3$, the induced action on the three partitions of four roots into two pairs is a $3$-cycle; hence $\mathcal R_g$ is an irreducible cyclic cubic.

Conversely, \emph{irreducibility of the cubic resolvent together with square quartic discriminant is not by itself a parity criterion}: it is also compatible with Galois group $A_4$ when the quartic is irreducible.  On the separable locus, the nontrivial odd case is therefore detected by the conjunction
\[
 \chi_g\text{ has an }F\text{-rational root},\qquad
 \chi_g/(X-r)\text{ is irreducible cubic},\qquad
 \disc(\chi_g)\in F^{\times2}.
\]
Equivalently, after the $1+3$ factorization has been verified, the irreducible cubic factor is cyclic.  Since $\chi_g$ and $\mathcal R_g$ are determined by $\tr(g)$ and $\tr(g^2)$, the pair
\[
 \bigl(\chi_g,\mathcal R_g\bigr)
\]
still provides an effective local parity test, but the factorization type of $\chi_g$ is an essential part of the test.

The parity condition is also stable on the regular locus.

\begin{proposition}\label{prop:krasner-en}
Let $\mathcal R$ be an algebraic family of representations over a non-archimedean local field $F$, let $w$ be a fixed word, and restrict to the open locus on which $\chi_{\rho(w)}$ is separable.  Then the parity
\[
 [F(E(\rho(w))):F]\pmod2
\]
is locally constant on a sufficiently small analytic neighborhood of every point.
\end{proposition}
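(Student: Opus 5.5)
The plan is to show that the whole local Galois type of the splitting field, and not just its parity, is locally constant near a separable point. The argument uses Krasner's lemma and continuity of roots, in the same way as in the proof of Lemma~\ref{lem:biquad}.

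Fix a point $\rho_0$ of the separable locus and put $f_0=\chi_{\rho_0(w)}$. First, the coefficients of $\chi_{\rho(w)}$ are polynomial functions of the matrix entries of $\rho(w)$. Those entries are regular, hence analytic, functions on the family $\mathcal R$. So $\rho\mapsto\chi_{\rho(w)}$ is continuous into the space of monic degree-$d$ polynomials over $F$. Separability is the condition $\disc\neq0$, which is open.

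Second, let $\alpha_1,\dots,\alpha_d$ be the distinct roots of $f_0$ in a fixed algebraic closure. Put
\[
 r=\min_{i\ne j}|\alpha_i-\alpha_j|>0 .
\]
By continuity of roots of separable polynomials, there is a neighborhood $U$ of $\rho_0$ on which the roots $\beta_1,\dots,\beta_d$ of $f=\chi_{\rho(w)}$ can be labelled so that $|\beta_i-\alpha_i|<r$ for each $i$. The usual route is through the Newton polygon of $f(X+\alpha_i)$, or the estimate $\prod_j|\beta-\alpha_j|=|f_0(\beta)|$ for each root $\beta$ of $f$, which is small. Krasner's lemma then gives $F(\alpha_i)\subset F(\beta_i)$.

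Symmetrically, $|\alpha_i-\beta_i|$ is smaller than the root separation of $f$, because that separation is close to $r$. Shrinking $U$ if necessary, Krasner applies in the other direction too, so $F(\beta_i)\subset F(\alpha_i)$. Hence $F(\beta_i)=F(\alpha_i)$ for every $i$. Taking composita, the splitting fields coincide:
\[
 F(E(\rho(w)))=F(E(\rho_0(w))) .
\]
In particular the degree $[F(E(\rho(w))):F]$ is constant on $U$, and so is its parity.

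The main obstacle is making the Krasner radius uniform in both directions, that is, controlling the root separation of the varying polynomial $f$ on $U$. I would handle it by noting that $|\disc(f)|$ is locally constant near $\rho_0$, since it is a non-archimedean absolute value of a nonzero continuous function. Together with the root-matching estimate, this bounds the pairwise distances $|\beta_i-\beta_j|$, which then equal $|\alpha_i-\alpha_j|$ once $|\beta_i-\alpha_i|<r$ by the ultrametric inequality. So a single radius $r$ works on all of $U$.
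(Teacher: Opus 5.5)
Your argument is correct and is essentially the paper's. Both fix a separable base point, use continuity of roots to match the roots $\beta_i$ of the nearby polynomial with the $\alpha_i$, and apply Krasner's lemma to get $F(\alpha_i)\subset F(\beta_i)$, so the splitting field itself (not just its degree) is locally constant. The only difference is the reverse inclusion: the paper obtains it from Hensel's lemma inside the splitting field of $f_0$, while you apply Krasner a second time with the roles swapped. That works without any further shrinking, because $|\beta_i-\alpha_i|<r\le|\alpha_i-\alpha_j|$ already forces $|\beta_i-\beta_j|=|\alpha_i-\alpha_j|$ by the ultrametric inequality, so your discriminant step is unnecessary.
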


\begin{proof}
Fix a base point and write its separable characteristic polynomial as
\[
 f_0(X)=\prod_{i=1}^d(X-\alpha_i)
\]
in a fixed algebraic closure, with splitting field $L/F$.  Choose pairwise disjoint sufficiently small balls $B_i$ around the $\alpha_i$ so that Krasner's lemma applies: whenever $\beta_i\in B_i$, one has $F(\alpha_i)\subset F(\beta_i)$.  After shrinking the analytic parameter neighborhood, every nearby characteristic polynomial has exactly one root $\beta_i$ in each $B_i$.

Because the coefficients of the perturbed polynomial are sufficiently close to those of $f_0$, Hensel's lemma applied inside the complete field $L$ produces the unique root in each $B_i$ already in $L$.  Hence the nearby splitting field $L'$ satisfies $L'\subset L$.  Conversely, Krasner gives $F(\alpha_i)\subset F(\beta_i)\subset L'$ for every $i$, so $L\subset L'$.  Thus $L'=L$ throughout a sufficiently small neighborhood.  In particular the splitting degree, and hence its parity, is locally constant.
\end{proof}

A second point is equally important: the odd stratum is not excluded merely by the Hermitian isometry class.  Maximal unitary tori of type $1+3$ can be embedded in either local Hermitian class in dimension four by adjusting the admissible embedding parameter.  Thus both odd and even spectral strata occur inside each ambient local unitary group.  The residual problem is therefore a distribution problem for the subgroup, not a formal restriction imposed by the Hermitian form itself.

\begin{proposition}\label{prop:local-C3-en}
Let $F/F_0$ be a quadratic extension of non-archimedean local fields and let $V$ be any four-dimensional non-degenerate Hermitian space over $F/F_0$.  Then $\SU(V)(F_0)$ contains a regular semisimple element whose characteristic polynomial over $F$ has factorization type $1+3$ and whose cubic factor has cyclic splitting field.
\end{proposition}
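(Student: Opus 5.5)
The plan is to adapt the torus construction already used in the proof of Lemma~\ref{lem:biquad}, replacing the biquadratic block by a cyclic cubic block. First choose a cyclic cubic field extension $E_3/F_0$; the unramified cubic extension of the local field $F_0$ will do. Put
\[
 E=F_0\times E_3,\qquad L_E=F\otimes_{F_0}E=F\times FE_3 .
\]
Since $[F:F_0]=2$ and $[E_3:F_0]=3$ are coprime, $F\otimes_{F_0}E_3=FE_3$ is a field of degree six. Moreover $FE_3/F$ is cyclic of degree three, with Galois group restricting isomorphically onto $\Gal(E_3/F_0)$. The involution $c$ acts on $FE_3$ through its action on $F$ and fixes $E_3$.

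Next I realize every local Hermitian class. For $\lambda\in E^\times$, form the space $V_{E,\lambda}=(L_E,h_\lambda)$ of Proposition~\ref{prop:blindness}; its determinant class is $N_{E/F_0}(\lambda)\det_{F_0}(E)$ modulo $N_{F/F_0}(F^\times)$. Because $E$ has a split factor $F_0$, the norm $N_{E/F_0}:E^\times\to F_0^\times$ is surjective. So $\lambda$ can be chosen to give any prescribed determinant class, in particular that of $V$. Local Hermitian spaces are classified by dimension and determinant, exactly as used in Lemma~\ref{lem:biquad}. Hence $V\simeq V_{E,\lambda}$ for this choice. Transporting the norm-one torus $L_E^1$ along this isometry, each $x\in L_E^1$ acts unitarily by multiplication on $V$.

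Then I choose the element. Let $\theta\in F$ satisfy $c(\theta)=-\theta$, pick $m\in E_3\setminus F_0$, and set $z=\theta m$. For small nonzero $\varepsilon\in F_0$, define the Cayley transform
\[
 y=\frac{1+\varepsilon z}{1-\varepsilon z}\in FE_3 .
\]
One checks that $y\,c(y)=1$ and that $F(y)=F(z)=F(m)$. Since $FE_3/F$ has prime degree three and $m\notin F$, this forces $F(y)=FE_3$. Put $u=N_{FE_3/F}(y)^{-1}$. Since the norm commutes with $c$, $u\,c(u)=1$, so $x=(u,y)\in L_E^1$. The determinant of multiplication by $x$ over $F$ is $u\,N_{FE_3/F}(y)=1$, so $x\in\SU(V)(F_0)$. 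The characteristic polynomial of $x$ over $F$ is
\[
 (X-u)\,\mu_y(X),
\]
where $\mu_y$ is the minimal polynomial of $y$ over $F$. This factor is an irreducible cubic whose splitting field is the cyclic extension $FE_3$, so the factorization type is $1+3$ with cyclic cubic factor.

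Regularity is immediate: the roots of $\mu_y$ are the three distinct conjugates of the generator $y$, none of which lies in $F$, so none equals $u\in F$. The only step needing care is matching the determinant class of $V$. This is handled entirely by surjectivity of the norm on the split factor, so I expect no real obstacle beyond verifying the linear-disjointness claim $F\otimes_{F_0}E_3\simeq FE_3$.
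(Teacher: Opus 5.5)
Your proposal is correct and is essentially the paper's construction: $E=F_0\times E_3$, a choice of $\lambda$ made possible by surjectivity of $N_{E/F_0}$ on the split factor, and the element $x=(N_{FE_3/F}(y)^{-1},y)$ of the determinant-one norm torus. Your Cayley-transform choice of $y$ makes explicit the ``generic enough'' norm-one element that the paper only asserts exists. Your observation that $u\in F$ cannot coincide with a conjugate of $y\notin F$ makes regularity automatic, where the paper instead avoids a proper algebraic subset. (The Cayley formula degenerates in characteristic $2$, which is harmless in the characteristic-zero setting of the paper.)
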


\begin{proof}[Construction]
Choose a cyclic cubic extension $E_3/F_0$ and set $E=F_0\times E_3$.  Then
\[
 L_E=F\otimes_{F_0}E\simeq F\times(FE_3).
\]
The norm-one torus of $L_E/E$ has one one-dimensional factor and one three-dimensional Galois orbit.  The admissible embeddings of this abstract unitary torus into four-dimensional Hermitian spaces are obtained by varying the parameter $\lambda\in E^\times$ in the trace form
\[
 h_\lambda(x,y)=\Tr_{L_E/F}(\lambda xy^c).
\]
Its determinant class is multiplied by $N_{E/F_0}(\lambda)$.  Since the split factor of $E$ makes this norm map surjective, one can choose $\lambda$ so that $h_\lambda$ has the prescribed local Hermitian class $V$.

It remains to impose the special-unitary determinant condition.  Choose
\[
 x_3\in(FE_3)^\times,
 \qquad x_3c(x_3)=1,
\]
generic enough that its orbit under $\Gal(FE_3/F)$ has size three, and set
\[
 x_0=N_{FE_3/F}(x_3)^{-1}\in F^\times.
\]
Because $c(x_3)=x_3^{-1}$, one has $x_0c(x_0)=1$, while
\[
 x_0N_{FE_3/F}(x_3)=1.
\]
Thus multiplication by $x=(x_0,x_3)$ lies in the determinant-one norm torus $L_E^1\cap\SU(V_{E,\lambda})$.  Avoiding the proper algebraic subsets on which $x_3$ has smaller Galois orbit or $x_0$ coincides with one of its three conjugates, we obtain a regular semisimple element.  Its characteristic polynomial has one linear factor over $F$ and an irreducible cubic factor whose splitting field is the cyclic cubic extension $FE_3/F$, giving the required $1+3$ factorization.
\end{proof}

By contrast, the biquadratic construction of Lemma~\ref{lem:biquad} supplies an open even-degree stratum.  Hence both parities occur on nonempty analytic open sets after suitable choice of regular torus elements.  This observation is one reason the rank-two problem cannot be solved by the local classification of Hermitian forms alone.

\subsection{Odd local degree as a Diophantine condition}

Corvaja proves rigidity results for Zariski-dense linear group actions under rationality assumptions on fixed points \cite{Corvaja2007}.  They suggest a possible diophantine route to controlling spectral data in a fixed finite extension, although a reduction to Corvaja's hypotheses is not presently available.

The preceding conclusion is local: a counterexample to total spectral descent must have an obstructive place at which every local spectral field has odd degree.  A global escape statement would require an additional diophantine argument.  Fields of totally $S$-adic numbers \cite{BarySorokerFehm2014} provide one natural setting for such a refinement.

\section{The rank-one local problem}\label{sec:rank1}

We turn to the discrete local branch.  When the Hermitian form is non-quasi-split in dimension four, its Witt index is one and the building is a tree.  More importantly, a hyperbolic element carries a distinguished two-dimensional transverse Hermitian factor.  This is the feature that makes the rank-one case accessible.

Let $F/F_0$ be a quadratic extension of non-archimedean local fields and let $V$ be a four-dimensional Hermitian space.  Assume throughout this section that $V$ is \emph{non-quasi-split}, equivalently that its Witt index is one.  Then $G=\SU(V)(F_0)$ has relative rank one and its Bruhat--Tits building is a tree.

\begin{proposition}\label{prop:rotation}
Let $g\in G$ be semisimple with non-relatively-compact cyclic subgroup.  Then there is a Hermitian decomposition
\[
 V=\ell_+(g)\oplus W_g\oplus\ell_-(g),
 \qquad \dim_F\ell_\pm(g)=1,\quad\dim_FW_g=2,
\]
with $W_g$ anisotropic, and an adapted basis in which
\[
 g=\begin{pmatrix}
 \alpha_g&0&0\\
 0&R_g&0\\
 0&0&\bar\alpha_g^{-1}
 \end{pmatrix},
 \qquad R_g\in U(W_g)(F_0).
\]
Hence
\[
 \chi_g(X)=
 (X-\alpha_g)(X-\bar\alpha_g^{-1})q_g(X),
\]
where
\[
 q_g(X)=\det(XI_{W_g}-R_g)\in F[X]
\]
is quadratic.  The local splitting field of $\chi_g$ over $F$ therefore has degree $1$ or $2$.
\end{proposition}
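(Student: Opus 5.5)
The plan is to produce the decomposition from the eigenvalues of $g$ of absolute value different from one, and then use the Witt index one hypothesis to control what is left. Since $g$ is semisimple, after passing to a finite extension $F'$ of $F$ it diagonalizes. Unitarity gives $h(gx,gy)=h(x,y)$, so for eigenvectors $x,y$ with eigenvalues $\mu,\nu$ we get $(\mu\bar\nu-1)h(x,y)=0$. Here $\bar{\ }$ is the conjugation of $F'$ extending that of $F/F_0$, and we use a Galois-stable absolute value. Hence the generalized eigenspaces $V_{<1}$, $V_{=1}$, $V_{>1}$, defined by the valuation of the eigenvalue, satisfy three properties. First, $V_{<1}$ and $V_{>1}$ are totally isotropic. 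Second, each is orthogonal to $V_{=1}$. Third, $V_{<1}$ and $V_{>1}$ are dually paired by $h$. These spaces are Galois-stable, since conjugation preserves absolute values, so they descend to $F$-subspaces of $V$.

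Non-relative-compactness of $\langle g\rangle$ together with semisimplicity forces some eigenvalue to have $|\mu|\neq1$. Otherwise $g$ lies in a compact torus: its eigenvalues are units, and the closure of $\langle g\rangle$ is compact. So $V_{>1}\ne0$. It is totally isotropic over $F$, so its dimension is at most the Witt index, which is one. By duality, $\dim V_{<1}=\dim V_{>1}=1$. Set $\ell_+=V_{>1}$ and $\ell_-=V_{<1}$. Their span is a hyperbolic plane, and $W_g:=V_{=1}=(\ell_+\oplus\ell_-)^\perp$ is a two-dimensional non-degenerate Hermitian space. If $W_g$ contained an isotropic vector, it would split off a second hyperbolic plane, and the Witt index would be two. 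This contradicts non-quasi-splitness, so $W_g$ is anisotropic.

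Each of the three summands is $g$-stable. Since $\ell_\pm$ are lines defined over $F$, $g$ acts on them by scalars $\alpha_g,\beta\in F$. Choosing generators $e_\pm$ with $h(e_+,e_-)=1$, the relation $h(ge_+,ge_-)=1$ gives $\beta=\bar\alpha_g^{-1}$. The restriction $R_g$ of $g$ to $W_g$ is unitary for $h|_{W_g}$; note $\det R_g=1/(\alpha_g\bar\alpha_g^{-1})$, so $R_g$ need not have determinant one, which is why only $U(W_g)$ is claimed. The factorization of $\chi_g$ follows, with $q_g\in F[X]$ of degree two. The splitting field is then $F$ adjoined the roots of $q_g$, which has degree $1$ or $2$.

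The main obstacle is making the first step rigorous. One has to see that $V_{>1}$ is defined over $F$, which follows from Galois stability of the absolute-value filtration. One also needs the claim that a semisimple element with all eigenvalues of absolute value one generates a relatively compact group. For this I would invoke that such an element lies in the compact group of $F_0$-points of an anisotropic-mod-center torus, or argue directly that $g$ preserves a lattice, since its eigenvalues are units in the integral closure.
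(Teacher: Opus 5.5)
Your argument is correct, but it reaches the decomposition by a different route from the paper.

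\textbf{The paper's route.} It works with algebraic groups. It takes the maximal $F_0$-split subtorus $S_g$ of the Zariski closure of (the semisimple part of) $\langle g\rangle$. Non-relative-compactness gives $S_g\neq 1$, and relative rank one makes $S_g$ a maximal split torus. It then reads off $V=V_+\oplus V_0\oplus V_-$ as the $S_g$-weight decomposition. The $1+2+1$ dimensions and the anisotropy of $V_0$ are taken from the known structure of a maximal split torus in a Witt-index-one unitary group. The unit eigenvalues of $R_g$ come from compactness of $U(W_g)(F_0)$.

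\textbf{Your route.} You filter $V$ by the absolute values of the eigenvalues of $g$. You use only the identity $(\mu\bar\nu-1)h(x,y)=0$ and the bound $\dim V_{>1}\le$ Witt index $=1$. This is more elementary and self-contained. The rank-one hypothesis enters exactly once and transparently. The anisotropy of $W_g$ is proved rather than quoted, and $R_g$ has unit eigenvalues by the definition of $V_{=1}$.

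\textbf{What the paper's route buys.} It exhibits $g$ inside the Levi subgroup $Z_G(S_g)$ of a maximal split torus. This is the form in which the proposition is used later, in Corollary~\ref{cor:rank1-parity} and the Bruhat-cocycle lemmas. You can recover this from your decomposition at once. The torus acting by $s$ on $\ell_+$, trivially on $W_g$, and by $s^{-1}$ on $\ell_-$, with $s\in F_0^\times$, lies in $\SU(V)$, is split, and commutes with $g$.

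\textbf{Your flagged obstacles are routine.} First, $V_{>1}$ is the kernel of $\prod_{|\mu|>1}(g-\mu)$. This polynomial has coefficients in $F$, because $\Gal(F'/F)$ preserves the unique extension of the absolute value. Take $F'$ Galois over $F_0$ so that the conjugation of $F/F_0$ extends to it. Second, if all eigenvalues are units, write $g=PDP^{-1}$ over $F'$. Then the powers $g^n$ are bounded, so $\langle g\rangle$ is relatively compact.
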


\begin{proof}
Let $T_g$ be the algebraic closure of the semisimple part of $\langle g\rangle$ and let $S_g$ be its maximal $F_0$-split subtorus.  Since $\langle g\rangle$ is not relatively compact, $S_g$ is nontrivial.  Relative rank one forces $S_g$ to be a maximal split torus.  The standard representation decomposes into its three $S_g$-weight spaces
\[
 V=V_+\oplus V_0\oplus V_-,
 \qquad \dim_FV_+=\dim_FV_-=1,
 \quad \dim_FV_0=2.
\]
The Hermitian form pairs $V_+$ and $V_-$ and restricts anisotropically to $V_0$; set $\ell_\pm(g)=V_\pm$ and $W_g=V_0$.  Because $g$ centralizes $S_g$, it preserves this decomposition.  The unitary relation identifies the two extreme eigenvalues as $\alpha_g$ and $\bar\alpha_g^{-1}$, while the middle block belongs to $U(W_g)(F_0)$.  The unitary group of an anisotropic Hermitian plane is compact, so its two eigenvalues are units.  The displayed factorization follows, and only the quadratic factor may require an extension of $F$.
\end{proof}

\begin{definition}\label{def:rot-splitting}
For $g$ as above, the primary invariant is the splitting status of the transverse quadratic polynomial $q_g$ over $F$.

If $q_g$ is separable, define its rotational discriminant by
\[
 \delta_{\rm rot}(g)=
 [\disc(q_g)]\in F^\times/F^{\times2}.
\]
If $q_g$ has a repeated root, then $\disc(q_g)=0$ and \emph{no element of $F^\times/F^{\times2}$ is assigned to it}.  For uniform bookkeeping we also use the extended symbol
\[
 \widetilde\delta_{\rm rot}(g)=
 \begin{cases}
 0,&\disc(q_g)=0,\\
 [\disc(q_g)],&\disc(q_g)\ne0,
 \end{cases}
 \qquad
 \widetilde\delta_{\rm rot}(g)\in
 \{0\}\sqcup F^\times/F^{\times2}.
\]
In characteristic zero a quadratic polynomial $q_g$ splits over $F$ if and only if
\[
 \widetilde\delta_{\rm rot}(g)\in\{0,1\},
\]
and it is irreducible over $F$ if and only if
\[
 \widetilde\delta_{\rm rot}(g)\in
 F^\times/F^{\times2}\setminus\{1\}.
\]
Thus the square-class discriminant is used only on the separable locus, while the repeated-root split case is recorded by the separate value $0$.
\end{definition}

\begin{corollary}\label{cor:rank1-parity}
Let $\Lambda<G$ be discrete and torsion-free.  Then every nontrivial $g\in\Lambda$ is hyperbolic in the tree and algebraically semisimple.  Moreover
\[
 [F(E(g)):F]=1
 \quad\Longleftrightarrow\quad
 q_g\text{ splits over }F
 \quad\Longleftrightarrow\quad
 \widetilde\delta_{\rm rot}(g)\in\{0,1\},
\]
whereas
\[
 [F(E(g)):F]=2
 \quad\Longleftrightarrow\quad
 q_g\text{ is irreducible over }F.
\]
On the separable locus this reduces to the familiar criterion $[F(E(g)):F]=1$ if and only if $\delta_{\rm rot}(g)=1$.
\end{corollary}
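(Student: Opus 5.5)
The plan has two parts. First I show that every nontrivial $g\in\Lambda$ is semisimple with a non-relatively-compact cyclic subgroup, so that Proposition~\ref{prop:rotation} applies. Then I read off the splitting degree from the factorization given there.

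For the first part, let $g\ne1$ in $\Lambda$. Since $\Lambda$ is discrete and torsion-free, $\langle g\rangle$ is infinite and discrete, hence not relatively compact; in particular $g$ is not elliptic in the Bruhat--Tits tree, because a compact stabilizer meets the discrete group $\Lambda$ in a finite subgroup, which is trivial by torsion-freeness. An automorphism of a tree without fixed point acts either hyperbolically or as an inversion. Inversions have bounded orbits, so their powers fix a point, and the same argument excludes them. Hence $g$ is hyperbolic in the tree.

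Next I show that $g$ is semisimple. Take the Jordan decomposition $g=g_sg_u$ over $F_0$. If $g_u\ne1$, then some power sequence $g^{m_r}$, with $m_r$ a power of the residue characteristic, makes $g_u^{m_r}\to1$ as in Lemma~\ref{lem:unipotent-small-en}. I would combine this with a compactness argument on the component $g_s$. Specifically, the eigenvalues of $g_s$ split into a compact part and a split part. Along a subsequence chosen so that the compact part converges to $1$ (the compact part generates a profinite group), the elements $g^{m_r}h^{-m_r}$ for suitable $h$ in the centralizer give non-identity elements of $\Lambda$ accumulating. The cleaner route is the standard fact that in a rank-one $p$-adic group, an element with a nontrivial unipotent part commuting with a hyperbolic semisimple part is impossible. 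The reason is that $g_u$ lies in the centralizer $Z(S_g)$, which by Proposition~\ref{prop:rotation} is $\ell_+\oplus W_g\oplus\ell_-$ with $W_g$ anisotropic. That centralizer is (split torus)$\times U(W_g)$, where $U(W_g)$ is compact and so contains no nontrivial unipotents, forcing $g_u=1$. If $g_s$ were instead relatively compact, then $g$ would be a compact element times a unipotent. Its powers $g^{m_r}$ would then converge into the closure of the compact group generated by $g_s$, giving infinitely many elements of $\Lambda$ in a compact set, which contradicts discreteness together with $g^{m}\ne g^{m'}$. So $g$ is semisimple with $S_g$ nontrivial.

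For the second part, Proposition~\ref{prop:rotation} gives $\chi_g=(X-\alpha_g)(X-\bar\alpha_g^{-1})q_g$ with $\alpha_g\in F$. So $F(E(g))$ is the splitting field of the quadratic $q_g$, which has degree $1$ exactly when $q_g$ splits and $2$ exactly when it is irreducible. Definition~\ref{def:rot-splitting} translates splitting into $\widetilde\delta_{\rm rot}(g)\in\{0,1\}$, and on the separable locus into $\delta_{\rm rot}(g)=1$.

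The main obstacle is the semisimplicity step, namely excluding nontrivial unipotent parts in a discrete torsion-free group. I would handle it through the centralizer structure above together with the discreteness argument in the case where $g_s$ is relatively compact.
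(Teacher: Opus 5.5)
Your proposal is correct in outline, and its last step is the paper's: you read $[F(E(g)):F]$ off $\chi_g=(X-\alpha_g)(X-\bar\alpha_g^{-1})q_g$. Add the one-line reason that $\alpha_g\in F$: $g$ preserves the $F$-line $\ell_+(g)$, so it acts there by an element of $F$.

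The semisimplicity step follows a genuinely different route from the paper.
\begin{itemize}
\item The paper argues geometrically. A hyperbolic $g$ fixes two opposite ends, so it lies in the intersection of the corresponding opposite minimal parabolics, which is the Levi $L_g=Z_G(S_g)$. The unipotent Jordan factor then lies in $L_g(F_0)$.
\item You argue algebraically. $g_u$ commutes with $g_s$, hence with the Zariski closure of $\langle g_s\rangle$ and its split subtorus $S$, so $g_u\in Z_G(S)(F_0)$. You exclude the case where $\langle g_s\rangle$ is relatively compact separately: $g_u^{p^r}\to1$ would then make $\langle g\rangle$ relatively compact, contradicting discreteness and infinite order. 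Proposition~\ref{prop:rotation} is applied to $g_s$ here, which is legitimate once that case is excluded.
\end{itemize}
Your route avoids identifying the stabilizer of the two ends with the centralizer of the split part of $g$, at the price of the extra case. Similarly, you dispose of inversions by passing to $g^2$ rather than by barycentric subdivision. Both routes end at the same fact: the centralizer of a maximal split torus has no nontrivial $F_0$-rational unipotent elements.

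That fact, however, is justified incorrectly in your write-up. ``$U(W_g)$ is compact and so contains no nontrivial unipotents'' is not a valid inference over a $p$-adic field. $\SL_2(\mathbb Z_p)$ is compact and contains $\begin{pmatrix}1&1\\0&1\end{pmatrix}$. Your own relatively-compact case uses exactly the fact that unipotent elements generate relatively compact subgroups.

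The correct reason, which the paper uses, is anisotropy. If $u\in U(W_g)$ is unipotent and nontrivial, then $\ker(u-1)=\operatorname{im}(u-1)$ is a line. Unitarity gives $\operatorname{im}(u-1)=\ker(u-1)^\perp$, so this line is isotropic, contradicting the anisotropy of $W_g$. On $\ell_\pm(g)$ a unipotent element acts by the scalar $1$.

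Finally, drop the preliminary sketch with $g^{m_r}h^{-m_r}$. Since $h$ is not in $\Lambda$, those elements need not lie in $\Lambda$, and they give no contradiction with discreteness.
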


\begin{proof}
The Bruhat--Tits action is without inversions after the standard barycentric subdivision.  Thus a nontrivial element of $\Lambda$ is either elliptic or hyperbolic.  If it were elliptic, it would fix a point of the tree and hence lie in a compact point stabilizer.  The intersection of a compact subgroup with the discrete group $\Lambda$ is finite, and torsion-freeness would force $g=e$.  Therefore every nontrivial $g$ is hyperbolic.

Such a $g$ fixes two opposite boundary points.  Hence it belongs to the intersection of the corresponding opposite minimal parabolics, namely the Levi subgroup
\[
 L_g=Z_G(S_g).
\]
In relative rank one, $L_g$ is generated by the split torus $S_g$ and an anisotropic reductive factor.  Therefore $L_g(F_0)$ contains no nontrivial $F_0$-rational unipotent element.  In the Jordan decomposition of $g$, the unipotent part also lies in $L_g(F_0)$, so it is trivial; thus $g$ is algebraically semisimple.  Proposition~\ref{prop:rotation} applies.  The two extreme eigenlines are $F$-rational isotropic lines, so their eigenvalues lie in $F$, and the only possible extension needed to split $\chi_g$ is the quadratic transverse factor $q_g$.  The stated degree and discriminant criteria now follow from Definition~\ref{def:rot-splitting}.
\end{proof}

The elementary rival is a Schottky group embedded in the hyperbolic plane factor
\[
 \SU(\mathbb H)(F_0)\hookrightarrow\SU(\mathbb H\oplus W)(F_0),
\]
acting trivially on the anisotropic plane $W$.  Its rotational polynomial is $(X-1)^2$ for every element, but its Zariski closure is proper.  Thus the ``all rotations split'' condition is compatible with discreteness and freeness, but not with Zariski density in this elementary model.  This is exactly why the later bending and holonomy arguments are needed.

\begin{example}\label{ex:rank-one-rival-en}
Let $\Lambda_0<\SU(\mathbb H)(F_0)\simeq\SL_2(F_0)$ be a torsion-free Schottky group.  Embed it by
\[
 \gamma\longmapsto \gamma\oplus I_W.
\]
Every nontrivial element is hyperbolic in the Bruhat--Tits tree, but
\[
 q_g(X)=(X-1)^2,
 \qquad \widetilde\delta_{\rm rot}(g)=0
\]
for all $g\in\Lambda_0$.  In particular every rotational factor is split, although the ordinary square-class discriminant $\delta_{\rm rot}(g)$ is undefined because the discriminant is zero.  Its Zariski closure is contained in
\[
 S\bigl(U(\mathbb H)\times U(W)\bigr),
\]
a proper algebraic subgroup.
\end{example}

The local rigidity question asks whether the condition ``$q_g$ splits over $F$ for every nontrivial word'' can persist after imposing Zariski density.

\section{Bending and rotational splitting}\label{sec:bending}

Before treating general holonomy, it is useful to see the transverse obstruction move in an explicit deformation.  A small bending of the rank-one model produces a quadratic rotational factor whose discriminant can be read to leading order.

Let $F/F_0$ be a quadratic extension of non-archimedean local fields of characteristic zero, let
\[
 H_q=\begin{pmatrix}
 0&0&0&1\\
 0&1&0&0\\
 0&0&q&0\\
 1&0&0&0
 \end{pmatrix},
 \qquad q\in F_0^\times,
\]
and assume that $\langle1,q\rangle$ is anisotropic.  Choose
\[
 a\in F_0^\times,
 \qquad |a|\ne1,
 \qquad
 A=\diag(a,1,1,a^{-1}).
\]
For parameters
\[
 r,s,t,u\in F_0
\]
define
\[
 X=X(r,s,t,u)=
 \begin{pmatrix}
 0&r&s&0\\
 t&0&0&-r\\
 u&0&0&-s/q\\
 0&-t&-qu&0
 \end{pmatrix},
 \qquad
 \Delta_X=qru-st.
\]
A direct calculation gives
\[
 X^*H_q+H_qX=0.
\]
For $\varepsilon\in F_0$ sufficiently small, $I-\varepsilon X$ is invertible and the Cayley transform
\[
 C_\varepsilon=(I+\varepsilon X)(I-\varepsilon X)^{-1}
\]
lies in $\SU(H_q)(F_0)$.  Put
\[
 B_\varepsilon=C_\varepsilon AC_\varepsilon^{-1},
 \qquad
 G_\varepsilon=AB_\varepsilon.
\]
At $\varepsilon=0$,
\[
 \chi_{G_0}(Z)=(Z-a^2)(Z-a^{-2})(Z-1)^2.
\]
Hensel factorization therefore gives, for sufficiently small $\varepsilon$, a unique factorization
\[
 \chi_{G_\varepsilon}(Z)=p_\varepsilon(Z)q_\varepsilon(Z),
 \qquad
 \deg p_\varepsilon=\deg q_\varepsilon=2,
\]
with $q_0=(Z-1)^2$; the second factor is the local rotational factor.

\begin{proposition}\label{prop:bending}
With the notation above,
\[
 \disc(q_\varepsilon)
 =-
 \frac{64(a-1)^2}{q(a+1)^2}
 \Delta_X^2\varepsilon^4(1+O(\varepsilon)).
\]
In particular, if
\[
 -q^{-1}\notin F^{\times2},
 \qquad \Delta_X\ne0,
\]
then every sufficiently small nonzero bending has nonzero transverse discriminant and produces
\[
 \delta_{\rm rot}(G_\varepsilon)=[-q^{-1}]\ne1,
\]
so $q_{G_\varepsilon}$ is irreducible over $F$.
If $\Delta_X=0$, the bending preserves a nonzero line in the anisotropic plane and the generated group remains in a proper algebraic subgroup.
\end{proposition}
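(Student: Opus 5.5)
Our approach is perturbative: expand $G_\varepsilon$ to second order in $\varepsilon$, push the expansion through the Hensel factorization, and read off the leading term of $\disc(q_\varepsilon)$ from a $2\times2$ effective matrix on the middle plane $W=\langle e_2,e_3\rangle$.

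\textbf{Expansion of $G_\varepsilon$.} Write $C_\varepsilon=I+2\varepsilon X+2\varepsilon^2X^2+O(\varepsilon^3)$ and $C_\varepsilon^{-1}=I-2\varepsilon X+2\varepsilon^2X^2+O(\varepsilon^3)$. Then
\[
 B_\varepsilon=A+2\varepsilon[X,A]+O(\varepsilon^2),
 \qquad
 G_\varepsilon=A^2+2\varepsilon A[X,A]+\varepsilon^2 Y_2+O(\varepsilon^3),
\]
where $Y_2$ is an explicit quadratic expression in $X$ and $A$. Order the basis as $(e_1,e_4\,|\,e_2,e_3)$. At $\varepsilon=0$ the matrix $G_0=A^2$ has eigenvalue $1$ on $W$ and eigenvalues $a^{\pm2}$, both $\neq1$ because $|a|\ne1$, on the complementary block. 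This spectral gap justifies the factorization $\chi_{G_\varepsilon}=p_\varepsilon q_\varepsilon$ already stated.

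\textbf{Schur-complement reduction.} Let $M_\varepsilon$ be the effective operator on $W$. It is obtained from the Riesz projector, or equivalently from a Schur complement:
\[
 M_\varepsilon=G_{WW}+G_{WE}\bigl(\mu-G_{EE}\bigr)^{-1}G_{EW},
 \qquad \mu\approx1 .
\]
Then $q_\varepsilon$ agrees, to the order needed, with the characteristic polynomial of $M_\varepsilon$, and
\[
 \disc(q_\varepsilon)=(\tr M)^2-4\det M=(m_{11}-m_{22})^2+4m_{12}m_{21}.
\]
At first order, $X$ has zero $WW$-block and $A$ is the identity on $W$, so $[X,A]_{WW}=0$. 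Hence $M_\varepsilon-I=\varepsilon^2N+O(\varepsilon^3)$, and therefore $\disc(q_\varepsilon)=\varepsilon^4\disc(N)+O(\varepsilon^5)$. The task reduces to computing $N$.

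$N$ receives two contributions:
\begin{enumerate}[label=(\roman*)]
\item the $WW$-block of the second-order term $Y_2$;
\item the product of the off-diagonal first-order blocks $2A[X,A]$ through $(1-A^2_{EE})^{-1}$.
\end{enumerate}
The off-diagonal entries of $[X,A]$ carry factors $(1-a)$ or $(a^{-1}-1)$. The combination $(1-a^{\pm2})^{-1}(1-a^{\pm1})^2$ then simplifies to multiples of $(a-1)/(a+1)$, which explains the expected prefactor $\bigl((a-1)/(a+1)\bigr)^2$.

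\textbf{The matrix $N$ and its discriminant.} Using the entries $r,s,t,u$, I expect $N$ to have the form
\[
 N=\kappa(a)\begin{pmatrix}\ast & r u q^{\pm1}\cdots\\ \cdots & \ast\end{pmatrix}.
\]
Its entries are bilinear in the pairs $(r,s)$ and $(t,u)$, coming from the row and column structure of $X$. It should also be skew-Hermitian-like with respect to $\langle1,q\rangle$, since $M_\varepsilon$ is unitary for the restricted form to this order. A trace-free $\langle1,q\rangle$-skew matrix has the shape
\[
 \begin{pmatrix}\alpha&-q\beta\\ \beta&-\alpha\end{pmatrix}
\]
with discriminant $4(\alpha^2-q\beta^2)$ up to sign conventions. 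Its vanishing means precisely that the combination is degenerate, which is where $\Delta_X=qru-st$ appears. I expect the computation to collapse to
\[
 \disc N=-\frac{64(a-1)^2}{q(a+1)^2}\,\Delta_X^2 .
\]
Carrying out this bookkeeping correctly, including the trace part contributed by $Y_2$ (which drops out of the discriminant), is the main obstacle. I would organize it by computing $N$ symbolically once, or by checking it on a spanning set of $(r,s,t,u)$.

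\textbf{Consequences.} Given the formula, $\disc(q_\varepsilon)\in F_0$ equals $-q^{-1}$ times a nonzero square times $(1+O(\varepsilon))$. For small $\varepsilon$, the factor $1+O(\varepsilon)$ is a square by Hensel's lemma. Hence $\delta_{\rm rot}(G_\varepsilon)=[-q^{-1}]$, which is nontrivial in $F^\times/F^{\times2}$ by hypothesis, and Definition~\ref{def:rot-splitting} gives irreducibility of $q_\varepsilon$.

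For the final assertion, $\Delta_X=0$ means the vectors $(r,s)$ and $(t,qu)$ are proportional. In that case the $W$-components of $Xe_1$ and of $X^*e_4$ lie along a single line $\ell\subset W$. Then $X$, and $A$, preserve $\ell^{\perp_W}$ as a fixed nonzero line. Consequently $C_\varepsilon$, $A$ and $B_\varepsilon$ all fix a common vector, so the generated group lies in the proper stabilizer of that line.
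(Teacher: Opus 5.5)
Your route is the paper's: Cayley expansion, vanishing of the first-order $W\to W$ block, Schur complement at the double eigenvalue $1$, discriminant of the resulting $2\times2$ operator, openness of squares, and the kernel of the $W\to E$ block when $\Delta_X=0$. The gap is that the decisive step, the effective matrix $N$, is only conjectured, and your structural guess about it is wrong: the $W$-block of $Y_2$ is not a trace (scalar) part.

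Write $X=\begin{pmatrix}0&P\\Q&0\end{pmatrix}$ relative to $E\oplus W$, with $P=\begin{pmatrix}r&s\\-t&-qu\end{pmatrix}$, $Q=\begin{pmatrix}t&-r\\u&-s/q\end{pmatrix}$, and $A=\diag(D,I)$, $D=\diag(a,a^{-1})$. Then $(Y_2)_{WW}=4Q(I-D)P$, whose diagonal is $-\frac{4(a-1)^2}{a}\diag(rt,su)$. This cancels exactly the diagonal of the Schur-complement term $4Q(D-I)(I-D^2)^{-1}D(I-D)P$, so discarding $Y_2$ as a trace part gives the wrong $N$. Added together, the two terms give
\[
 N=4Q(I-D)(I+D)^{-1}P=\frac{4(a-1)}{a+1}\,Q\,\diag(-1,1)\,P
 =\frac{4(a-1)\Delta_X}{a+1}\begin{pmatrix}0&1\\-1/q&0\end{pmatrix}.
\]
This is the paper's $K_X$, and it yields the stated discriminant.

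Your skew-Hermitian heuristic is in fact correct and could shorten the square-class argument. Since $X$ has entries in $F_0$, $\langle1,q\rangle$-skewness forces your $\alpha$ to be $0$, so $\disc N=-4q\beta^2$ automatically has class $[-q^{-1}]$ once $N\ne0$. You would still need to justify the skewness, via unitarity of $G_\varepsilon$ on its spectral subspace near $1$. You would also need to show the scalar is a nonzero multiple of $\Delta_X$, and that is exactly the computation above.

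There is also a slip in your last paragraph. The $W$-components of $Xe_1$ and $X^*e_4$ are $(t,u)$ and $-(t,qu)$, which are not proportional in general. What $\Delta_X=0$ actually gives is that the rows $(r,s)$ and $(-t,-qu)$ of $P$ are proportional; equivalently, $(Xe_1)_W$ and $(Xe_4)_W$ are proportional. Hence $P$ has a nonzero kernel $L\subset W$, which is $\ell^{\perp_W}$ for $\ell$ spanned by $(Xe_1)_W$. Since $X_{WW}=0$, $X$ kills $L$, so $C_\varepsilon$, $A$ and $B_\varepsilon$ all fix $L$ pointwise, exactly as in the paper.
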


\begin{proof}
Decompose the space as
\[
 E\oplus W=\langle e_1,e_4\rangle\oplus\langle e_2,e_3\rangle.
\]
The Cayley transform has expansion
\[
 C_\varepsilon=I+2\varepsilon X+2\varepsilon^2X^2+O(\varepsilon^3).
\]
Since $A$ acts trivially on $W$, the first-order contribution to the $W\to W$ block of $G_\varepsilon$ vanishes.  The Schur complement at the double eigenvalue $1$ gives the effective second-order operator
\[
 K_X=
 \frac{4(a-1)\Delta_X}{a+1}
 \begin{pmatrix}0&1\\-1/q&0\end{pmatrix}.
\]
Hence
\[
 \disc(K_X)=-\frac{64(a-1)^2}{q(a+1)^2}\Delta_X^2.
\]
The two roots of $q_\varepsilon$ have expansions
\[
 1+\varepsilon^2\kappa_1+O(\varepsilon^3),
 \qquad
 1+\varepsilon^2\kappa_2+O(\varepsilon^3),
\]
where $\kappa_1,\kappa_2$ are the eigenvalues of $K_X$, and the stated discriminant formula follows.  Since the subgroup of squares is open in $F^\times$, the factor $1+O(\varepsilon)$ is a square for all sufficiently small $\varepsilon$.

Finally, the $W\to E$ block of $X$ is
\[
 \begin{pmatrix}r&s\\-t&-qu\end{pmatrix}
\]
with determinant $-\Delta_X$.  If $\Delta_X=0$, it has a nonzero kernel $L\subset W$; the $W\to W$ block of $X$ vanishes, so $X$ fixes $L$ pointwise.  The same line is fixed by $C_\varepsilon$, $A$, and $B_\varepsilon$.
\end{proof}

\begin{corollary}\label{cor:bending-Qi3-en}
Let
\[
 H_3=\mathbb H\oplus\langle1,3\rangle,
 \qquad A=\diag(3,1,1,1/3).
\]
At the two obstructive primes $p=2,3$ one has
\[
 -1/3\notin\Q_p(i)^{\times2}.
\]
Consequently every direction above with $3ru-st\ne0$ produces, for sufficiently small nonzero $\varepsilon\in\Q_p$, a word $AB_\varepsilon$ with irreducible rotational factor and local spectral degree two.
\end{corollary}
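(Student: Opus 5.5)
The corollary should follow by specializing Proposition~\ref{prop:bending} to $q=3$, $a=3$, $F_0=\Q_p$ and $F=\Q_p(i)$ for $p=2,3$. The hypotheses of that proposition are that $\langle1,3\rangle$ is anisotropic over $\Q_p(i)/\Q_p$, that $|a|_p\ne1$, and that $-q^{-1}=-1/3\notin F^{\times2}$; the conclusion then gives $\delta_{\rm rot}(AB_\varepsilon)=[-1/3]\ne1$ for small nonzero $\varepsilon$ whenever $\Delta_X=3ru-st\ne0$. Corollary~\ref{cor:rank1-parity}, or directly the factorization $\chi=p_\varepsilon q_\varepsilon$ with $p_\varepsilon$ split, then turns irreducibility of $q_\varepsilon$ into local spectral degree exactly two.

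First, anisotropy: $\langle1,3\rangle$ is isotropic iff $-3$ is a norm from $\Q_p(i)$, i.e.\ iff the Hilbert symbol $(-1,-3)_p=1$. At $p=3$ this symbol equals $(-1/3)=-1$, and at $p=2$ the product formula gives $(-1,-3)_2=(-1,-3)_3=-1$ since $(-1,-3)_\infty=1$. This is consistent with $3$ being obstructive exactly at $2,3$. Second, $|3|_3\ne1$, so $a=3$ is fine at $p=3$. At $p=2$, however, $|3|_2=1$. There I would either note that the proposition's expansion only uses $a\ne\pm1$, since the factor $(a-1)^2/(a+1)^2$ is nonzero and $p_\varepsilon,q_\varepsilon$ separate as long as $a^{\pm2}\ne1$ and Hensel applies, or replace $a$ at $p=2$ by a suitable power. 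This is the main point to check: the statement fixes $A=\diag(3,1,1,1/3)$, so I would verify that the Hensel factorization near $(Z-9)(Z-1/9)(Z-1)^2$ only needs the roots $9,1/9$ to be distinct from $1$, which holds $2$-adically.

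Third, the square class. I need $-1/3\notin\Q_p(i)^{\times2}$, equivalently $-3\notin\Q_p(i)^{\times2}$, i.e.\ $\Q_p(i,\sqrt{-3})\ne\Q_p(i)$. Since $\Q_p(i)(\sqrt{-3})=\Q_p(i)(\sqrt3)$, this reduces to checking that $3$ and $-3$ are not squares in $\Q_p$ and differ from $-1$ by a non-square.
\begin{itemize}
\item At $p=3$, $v_3(-3)=1$ while $\Q_3(i)/\Q_3$ is unramified, so valuations of squares remain even and $-3$ is not a square.
\item At $p=2$, the quadratic extensions $\Q_2(i)$ and $\Q_2(\sqrt{-3})$ are distinct, one ramified and one unramified. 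Hence $-3\notin\Q_2^{\times2}\cup(-1)\Q_2^{\times2}$, so $-3$ is not a square in $\Q_2(i)$.
\end{itemize}

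With these checks, Proposition~\ref{prop:bending} yields irreducible $q_{AB_\varepsilon}$ and hence degree two.
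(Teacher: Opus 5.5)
Your square-class verification at $p=2,3$ is essentially the paper's argument; after it, the paper's proof just says that Proposition~\ref{prop:bending} applies. The problem is in your check of that proposition's hypotheses at $p=2$.

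You claim $(-1,-3)_\infty=1$, but both entries are negative, so $(-1,-3)_\infty=-1$. The product formula then gives $(-1,-3)_2=(-1,-3)_3\,(-1,-3)_\infty=+1$. Concretely, $-7\equiv1\pmod 8$ is a square in $\Q_2$ and $2^2+(\sqrt{-7})^2=-3$. So $-3$ is a norm from $\Q_2(i)$, and $\langle1,3\rangle$ is \emph{isotropic} over $\Q_2(i)/\Q_2$. Thus $H_3\otimes\Q_2\simeq\mathbb H\oplus\mathbb H$ is quasi-split.

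You have conflated two different symbols. The obstruction symbol $(-1,3)_p$ is $-1$ at both $2$ and $3$, but the anisotropy symbol $(-1,-3)_p$ is $-1$ only at $3$. Consequently, at $p=2$ both the anisotropy hypothesis and $|a|\ne1$ fail, and your assertion that anisotropy holds there is false. Corollary~\ref{cor:rank1-parity}, which is a rank-one statement, also cannot be used to get degree two at $p=2$.

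The conclusion survives by the same device you already use for $|3|_2=1$: anisotropy plays no role in the proof of Proposition~\ref{prop:bending}. In the decomposition $E\oplus W$ one has $X_{EE}=X_{WW}=0$, and the first-order $W$-block of $G_\varepsilon$ vanishes. The Schur complement at the double eigenvalue $1$ then gives
\[
 K_X=4\,X_{WE}(I-A_E)(I+A_E)^{-1}X_{EW},\qquad A_E=\diag(a,a^{-1}).
\]
This is the paper's matrix, and the computation is valid whenever $a\ne\pm1$. The same condition also yields the Hensel factorization $\chi_{G_\varepsilon}=p_\varepsilon q_\varepsilon$. Anisotropy only serves to identify $q_\varepsilon$ with the rank-one transverse factor of Proposition~\ref{prop:rotation}.

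At $p=2$ you should therefore take ``rotational factor'' to mean the Hensel factor $q_\varepsilon$, as Section~\ref{sec:bending} does, and use your direct route. First, $q_\varepsilon$ is irreducible over $\Q_2(i)$, because its discriminant has square class $[-3]\ne1$. Second, $p_\varepsilon$ splits over $\Q_2(i)$, because its roots are close to the distinct simple roots $9$ and $1/9$ in $\Q_2$. Together these give local spectral degree two.

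Your alternative of replacing $a$ by a power cannot work at $p=2$, since every power of $3$ is a $2$-adic unit.
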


\begin{proof}
For $p=3$, the valuation of $-1/3$ in the unramified quadratic field $\Q_3(i)$ is odd, so it cannot be a square.  For $p=2$, if $-1/3$ were a square in $\Q_2(i)$, then $3$ would also be a square because $-1=i^2$; this would force $\Q_2(\sqrt3)=\Q_2(i)$.  Equality of these quadratic extensions would imply that the ratio of their defining square classes, $-3$, is a square in $\Q_2$, but $-3\equiv5\pmod8$ is not a $2$-adic square.  Proposition~\ref{prop:bending} now applies.
\end{proof}

The same expansion admits genuinely complex directions.  For the Gaussian model set
\[
 X_*=
 \begin{pmatrix}
 0&1&1&0\\
 1&0&0&-1\\
 1+i&0&0&-1/3\\
 0&-1&-3+3i&0
 \end{pmatrix}.
\]
Then
\[
 X_*^*H_3+H_3X_*=0.
\]
If
\[
 C_\varepsilon=(I+\varepsilon X_*)(I-\varepsilon X_*)^{-1},
 \qquad B_\varepsilon=C_\varepsilon AC_\varepsilon^{-1},
\]
the effective matrix is
\[
 K_*=
 \begin{pmatrix}
 0&4-6i\\
 -4/3-2i&-4i
 \end{pmatrix},
 \qquad
 \disc(K_*)=-\frac{256}{3}=-3\left(\frac{16}{3}\right)^2.
\]
Thus, for rational $\varepsilon$ sufficiently small in the corresponding $2$-adic and $3$-adic topologies, the rotational factor of $AB_\varepsilon$ is non-split at both obstructive places.

\begin{proposition}\label{prop:zariski-bending-en}
For the direction $X_*$ above, there is a finite algebraic exceptional set $\mathcal E\subset\overline{\Q}$ such that, for every $\varepsilon\in\Q\setminus\mathcal E$ for which $I-\varepsilon X_*$ is invertible, the Cayley transform lies in $\SU(H_3,\Q(i))$ and
\[
 \Gamma_\varepsilon=\langle A,B_\varepsilon\rangle<\SU(H_3,\Q(i))
\]
is Zariski-dense in the complex form $\SL_4$.  Consequently there are rational parameters arbitrarily small simultaneously in $\Q_2$ and $\Q_3$ for which $\Gamma_\varepsilon$ is Zariski-dense and $AB_\varepsilon$ has local spectral degree two at both obstructive places.
\end{proposition}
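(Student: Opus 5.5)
The plan has three parts: rationality of $C_\varepsilon$, Zariski density for all but finitely many $\varepsilon$, and the local spectral statement.

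\emph{Rationality.} The entries of $X_*$ lie in $\Q(i)$, and $X_*^*H_3+H_3X_*=0$. So for rational $\varepsilon$ with $\det(I-\varepsilon X_*)\ne0$, the Cayley transform $C_\varepsilon$ has entries in $\Q(i)$. It satisfies $C_\varepsilon^*H_3C_\varepsilon=H_3$ by the usual Cayley identity. The determinant of $C_\varepsilon$ equals $\det(I+\varepsilon X_*)/\det(I-\varepsilon X_*)$. Since $H_3^{-1}X_*^*H_3=-X_*$, the characteristic polynomial of $X_*$ satisfies $\overline{\chi(-z)}$-symmetry. Together with $\tr X_*=0$ and a direct check, this gives $\det C_\varepsilon=1$. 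If that check fails, I would instead note that the determinant has modulus one, is a rational function of $\varepsilon$, and equals $1$ at $\varepsilon=0$. In that case I would verify the equality $\det(I+\varepsilon X_*)=\det(I-\varepsilon X_*)$ from the explicit odd coefficients of $\chi_{X_*}$.

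\emph{Zariski density.} I would reuse the Lie-algebra certificate strategy of Proposition~\ref{prop:schottky}. The Zariski closure of $\langle A\rangle$ contains the torus $\diag(t,1,1,t^{-1})$, so its Lie algebra contains $Z_1=\diag(1,0,0,-1)$. The closure of $\langle B_\varepsilon\rangle$ similarly gives $Z_2=\Ad(C_\varepsilon)Z_1$. I would form a fixed finite list of iterated brackets and $\Ad(A),\Ad(B_\varepsilon)$-images, say fifteen words $Z_1,\dots,Z_{15}$. Their coordinates in $\mathfrak{sl}_4$ are rational functions of $\varepsilon$ with denominators powers of $\det(I-\varepsilon X_*)$. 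Let $\Phi(\varepsilon)$ be the $15\times15$ determinant, after clearing denominators, so that $\Phi(\varepsilon)$ is a polynomial with coefficients in $\Q(i)$. If $\Phi$ is not identically zero, its roots together with the roots of $\det(I-\varepsilon X_*)$ form the finite set $\mathcal E\subset\overline\Q$. Outside $\mathcal E$ the Lie algebra of the closure is all of $\mathfrak{sl}_4$. The closure is then connected of full dimension and equals $\SL_4$.

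The main obstacle is showing $\Phi\not\equiv0$. At $\varepsilon=0$ everything collapses to the diagonal torus, so $\Phi(0)=0$ and I cannot evaluate there. I would try the following, in order.
\begin{enumerate}[label=(\roman*)]
\item Evaluate $\Phi$ at one convenient rational point, for instance $\varepsilon=1$ if $I-X_*$ is invertible, after reduction modulo a good prime such as $5$ with $i\mapsto2$, exactly as in Appendix~\ref{app:schottky-certificate}. A single nonzero value proves $\Phi\not\equiv0$.
\item Failing that, compare leading Taylor coefficients in $\varepsilon$. First-order terms give $[X_*,Z_1]$ and related brackets. The condition $\Delta_{X_*}\ne0$ for the generic bending direction (here $3\cdot1\cdot(1+i)-1\cdot1\ne0$) guarantees, by Proposition~\ref{prop:bending}, that no line in $W$ is fixed. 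This should make the first nonvanishing jet of $\Phi$ nonzero.
\end{enumerate}

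\emph{Local spectral degree.} The set $\Q\setminus\mathcal E$ is cofinite. Rationals are dense in $\Q_2\times\Q_3$ by weak approximation, so I can choose $\varepsilon\in\Q\setminus\mathcal E$ that is arbitrarily small at both $2$ and $3$. For such $\varepsilon$, the expansion of Proposition~\ref{prop:bending} applies with effective matrix $K_*$. The transverse discriminant is therefore $\disc(K_*)\varepsilon^4$ times a principal unit, that is, $-3$ times a square. Corollary~\ref{cor:bending-Qi3-en} says $-1/3$ is not a square in $\Q_p(i)$ for $p=2,3$, and $-3$ is in the same square class. So the rotational factor is irreducible at both places. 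The two off-circle eigenvalues are $\Q_p(i)$-rational by Hensel, so the local spectral degree is exactly two at both obstructive places.
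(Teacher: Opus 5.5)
Your proposal is correct and follows the paper's proof. The paper also places $H_0=\diag(1,0,0,-1)$ and $\Ad(C_\varepsilon)H_0$ in the Lie algebra of the closure, and certifies a $15\times15$ bracket determinant as nonzero by exactly your option (i): at $\varepsilon=1$, modulo $5$ with $i\mapsto2$, it gets $2\bmod 5$. It then concludes that the exceptional set is finite and uses weak approximation together with $\disc(K_*)=-256/3$ for the local degree.

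Your added check that $\det C_\varepsilon=1$, which the paper leaves implicit, is cleanest via $\diag(1,-1,-1,1)\,X_*\,\diag(1,-1,-1,1)=-X_*$, which makes $\chi_{X_*}(z)=z^4-4z^2-13/3$ even. The symmetry $\chi(z)=\overline{\chi(-\bar z)}$ together with $\tr X_*=0$ would not suffice by itself. Your fallback (ii) would not work on its own either, since $\Delta\ne0$ does not imply Zariski density, but (i) makes it unnecessary.
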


\begin{proof}
The Zariski closure of $\langle A\rangle$ contains the torus
\[
 z\longmapsto\diag(z,1,1,z^{-1}),
\]
so its Lie algebra contains
\[
 H_0=\diag(1,0,0,-1).
\]
The closure of $\langle B_\varepsilon\rangle$ contains the conjugate $C_\varepsilon H_0C_\varepsilon^{-1}$.  At $\varepsilon=1$, reduce modulo $5$ using $i\mapsto2$.  Put $Z_1=H_0$, $Z_2=C_1H_0C_1^{-1}$ and define recursively
\[
\begin{gathered}
 Z_3=[Z_1,Z_2],\quad Z_4=[Z_1,Z_3],\quad Z_5=[Z_2,Z_3],\\
 Z_6=[Z_1,Z_4],\quad Z_7=[Z_1,Z_5],\quad Z_8=[Z_2,Z_5],\\
 Z_9=[Z_3,Z_4],\quad Z_{10}=[Z_3,Z_5],\quad Z_{11}=[Z_4,Z_5],\\
 Z_{12}=[Z_1,Z_6],\quad Z_{13}=[Z_1,Z_7],\quad
 Z_{14}=[Z_1,Z_8],\quad Z_{15}=[Z_1,Z_9].
\end{gathered}
\]
In the coordinate system consisting of the twelve off-diagonal entries followed by the first three diagonal entries, the $15\times15$ matrix with columns $Z_1,\ldots,Z_{15}$ has determinant
\[
 2\pmod5.
\]
Thus these Lie words span $\mathfrak{sl}_4(\mathbb F_5)$.  The same determinant before reduction is a rational function of $\varepsilon$ that is not identically zero, so it can vanish at only finitely many algebraic values (away from poles).  This proves Zariski density outside a finite exceptional set.  The companion script \texttt{CERTIFICADO\_ZARISKI\_BENDING\_v1.py}, archived in the reproducibility package \cite{SplittingFieldsZenodo2026}, reproduces $C_1$, the fifteen Lie words, and the determinant $2\pmod5$.

Weak approximation supplies infinitely many $\varepsilon\in\Q$ that are simultaneously as small as desired at $2$ and $3$ and avoid the finite exceptional set.  The preceding local discriminant calculation then gives degree two at both places.
\end{proof}

The bending family gives an explicit model in which odd local spectra are destroyed by transverse Zariski-dense perturbations of the rank-one example.

\section{Derived Levi holonomy and asymptotic composition}\label{sec:reset}

The bending calculation points to the invariant behind the phenomenon.  In rank one the relevant rotational information lives in the derived anisotropic Levi factor, while the usual spectral data of large semigroups is obtained only after an abelian projection.  Let $P=MAN^+$ be a minimal parabolic.  The key distinction is that the usual Guivarc'h spectrum is obtained after projection to an abelian quotient such as
\[
 MA/[M,M],
\]
whereas the rotational polynomial lives in the derived anisotropic Levi factor.  Therefore the projected spectrum is intrinsically blind to the split/non-split rotational datum and, on the separable locus, to $\delta_{\rm rot}$; compare \cite{GuivarchRaugi2007,Guivarch2008}.

\begin{proposition}\label{prop:abelian-blind}
Let
\[
 F_0=\Q_3,
 \qquad F=\Q_3(i),
 \qquad W=\langle1,3\rangle.
\]
There are regular elements $m_{\rm sp},m_{\rm ns}\in\SU(W)(F_0)$ with the same image in every quotient that kills the derived anisotropic Levi factor but with
\[
 \delta(m_{\rm sp})=1,
 \qquad
 \delta(m_{\rm ns})=[3]\ne1
 \quad\text{in }F^\times/F^{\times2}.
\]
In particular, neither the split Jordan projection nor the abelianized Levi spectrum determines the rotational splitting status; on the separable locus, neither determines the rotational discriminant square class.
\end{proposition}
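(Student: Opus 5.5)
The proof is an explicit construction inside the anisotropic Hermitian plane. Write $W=\langle1,3\rangle$ over $F=\Q_3(i)$, $F_0=\Q_3$. The first thing to check is that $W$ is anisotropic: $x\bar x+3y\bar y=0$ would make $-3$ a norm from $\Q_3(i)$, which is impossible since $v_3(-3)=1$ is odd and the extension is unramified. Hence $\SU(W)(F_0)$ is compact and is exactly the derived anisotropic Levi factor, embedded in the Levi $L=Z_G(S)$ of the four-dimensional form $\mathbb H\oplus W$ as $m\mapsto I_{\mathbb H}\oplus m$.

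Every quotient of $L$ that kills $[M,M]\supseteq\SU(W)$ therefore sends both elements to the identity, and so does the split Jordan projection. This makes the first assertion automatic for any two elements of $\SU(W)(F_0)$. If one also wants the full elements $a\oplus m$ with a common loxodromic part, multiply both by the same hyperbolic $\diag(a,\bar a^{-1})$ on $\mathbb H$. All the content is then in the choice of the two discriminants.

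The group $\SU(W)$ is the norm-one group of the quaternion algebra $(-1,-3)_{\Q_3}$. Writing $m=\begin{pmatrix}x&-3\bar y\\ y&\bar x\end{pmatrix}$ with $x\bar x+3y\bar y=1$, one gets
\[
 q_m(X)=X^2-(x+\bar x)X+1,\qquad \disc(q_m)=(x+\bar x)^2-4.
\]
So it suffices to find $\tau=x+\bar x\in\Q_3$ realizable in this way with $\tau^2-4$ either a nonzero square in $F$ or in the class of $3$. Realizability means solving $x\bar x+3y\bar y=1$ with prescribed trace $\tau$. Put $x=\tau/2+i s$ with $s\in\Q_3$; then we need $1-\tau^2/4-s^2=3N(y)$. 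Norms from the unramified extension are exactly the elements of even valuation, so the condition is that $v_3(1-\tau^2/4-s^2)$ is odd, or that this quantity is $0$ (take $y=0$).

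For the split element, take $y=0$ and $x=u\in\Q_3(i)^1$, giving the diagonal $\diag(u,\bar u)$. Its discriminant is $(u-\bar u)^2$, which is a nonzero square in $F$ when $u\ne\pm1$; for instance $u=(3+4i)/5$ gives $\delta(m_{\rm sp})=1$, and the element is regular.

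For the non-split element, take $\tau=2+3c$ with $c$ a $3$-adic unit chosen so that $\tau^2-4=3c(4+3c)$ has square class $[3]$ in $F$. Because units of $\Q_3$ are squares in the unramified $F$ (every unit of $\Z_3$ is a norm and the residue field is $\mathbb F_9$), this class is automatically $[3]$, and it is nontrivial since $v_3=1$ is odd. It remains to choose $s$ so that $v_3(1-\tau^2/4-s^2)$ is odd. We have $1-\tau^2/4=-\tfrac{3c(4+3c)}{4}$, of valuation $1$. Taking $s=0$ gives odd valuation $1$, so a solution $y$ exists with $N(y)=-c(4+3c)/4$, a unit, which is a norm. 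The element is regular since its discriminant is nonzero.

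The step I expect to require care is this last realizability check. One must verify the norm conditions in the unramified quadratic extension, and check that $[3]\ne1$ in $F^\times/F^{\times2}$ rather than only in $\Q_3^\times/\Q_3^{\times2}$. Both follow from parity of valuations in $\Q_3(i)$, which is unramified over $\Q_3$. The blindness assertion is then formal, as explained above.
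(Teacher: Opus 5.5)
Your proposal is correct and follows essentially the paper's route: you take a diagonal norm-one element $\diag(u,\bar u)$ for the split case (the paper uses $u=i$), and for the non-split case an element of $\SU(W)(\Q_3)$ whose trace gives a discriminant of $3$-adic valuation one, realized through surjectivity of the norm on units of the unramified extension (the paper's element is the case $x=2$, $y=b$ with $b\bar b=-1$ of your parametrization, with trace $4$ and discriminant $12$); blindness follows in both arguments because the two elements lie in the derived anisotropic factor. The only slip is your parenthetical reason why units of $\mathbb Z_3$ are squares in $F$: the correct reason is $\mathbb F_3^\times\subset(\mathbb F_9^\times)^2$ together with Hensel's lemma, not that these units are norms.
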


\begin{proof}
Take
\[
 m_{\rm sp}=\begin{pmatrix}i&0\\0&-i\end{pmatrix}.
\]
Its characteristic polynomial is $X^2+1$ and its discriminant is $-4=(2i)^2$, a square in $F$.  Since $F/F_0$ is unramified, the norm map on units is surjective; choose $b\in F^\times$ with $b\bar b=-1$ and define
\[
 m_{\rm ns}=\begin{pmatrix}2&-3\bar b\\ b&2\end{pmatrix}.
\]
A direct calculation gives
\[
 m_{\rm ns}^*\begin{pmatrix}1&0\\0&3\end{pmatrix}m_{\rm ns}
 =\begin{pmatrix}1&0\\0&3\end{pmatrix},
 \qquad \det(m_{\rm ns})=1.
\]
Its characteristic polynomial is $X^2-4X+1$, with discriminant $12=4\cdot3$.  The class of $3$ is nontrivial in $F^\times/F^{\times2}$.  Both elements lie entirely in the derived anisotropic factor, hence both map trivially to every quotient that kills that factor, while their rotational square classes differ.
\end{proof}

The correct object is the full Bruhat cocycle before abelianization.  Fix a maximal split torus $S$, let $L=Z_G(S)$, and let $M=L/S$.  In relative rank one, $M$ is anisotropic and $M^{\rm der}$ is isogenous to the special unitary group of the transverse anisotropic plane.

\begin{lemma}\label{lem:levi-rotation}
Let $g$ be loxodromic and assume that its attracting point $x_+(g)$ lies in the chosen big Bruhat cell $N^-x_0$.  Then the Levi component of the Bruhat cocycle at $x_+(g)$ acts on
\[
 x_+(g)^\perp/x_+(g)
\]
with characteristic polynomial equal to the transverse factor $q_g$.  Hence its Levi conjugacy class determines whether $q_g$ is split or irreducible; when $q_g$ is separable, it also determines $\delta_{\rm rot}(g)$.
\end{lemma}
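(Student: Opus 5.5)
Since $x_+(g)$ lies in the big Bruhat cell $N^-x_0$, I would choose $n\in N^-(F_0)$ with $x_+(g)=n\,x_0$. The Bruhat decomposition of $g$ at this point gives the cocycle
\[
 g\,n=n\,\ell\,u,\qquad \ell\in L(F_0)=Z_G(S)(F_0),\quad u\in N^+(F_0).
\]
This holds because $g$ fixes $x_+(g)$, so $n^{-1}gn$ lies in the stabilizer $P=LN^+$ of $x_0$. The \emph{Levi component} is the image of $\ell$ under $P\to L$. Its $L$-conjugacy class depends only on $g$: changing $n$ by an element of $N^-\cap P=\{1\}$ is impossible, and changing the base point of the cell conjugates everything by an element of $L$. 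I would check this independence first, since it makes the statement meaningful.

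Next I would identify the quotient $x_+(g)^\perp/x_+(g)$ with the anisotropic plane at $x_0$. Write $x_0=\ell_+$ for the isotropic line of $S$, and use the Hermitian decomposition $V=\ell_+\oplus W\oplus\ell_-$ of Proposition~\ref{prop:rotation} for the split torus $S$. Then $x_0^\perp/x_0\cong W$, and $L$ acts on $W$ through its anisotropic factor $M$. Conjugating by $n$ gives an isometry $x_+(g)^\perp/x_+(g)\to x_0^\perp/x_0$. Under this isometry, the action of $g$, which preserves $x_+(g)$ and hence $x_+(g)^\perp$, corresponds to the action of $n^{-1}gn$ on $x_0^\perp/x_0$. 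Since $u\in N^+$ acts trivially on the graded pieces of the flag $x_0\subset x_0^\perp\subset V$, this action equals that of $\ell$. So the Levi component acts on the transverse quotient with the same characteristic polynomial as $g$ acting on $x_+(g)^\perp/x_+(g)$.

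It then remains to compute that polynomial. By Proposition~\ref{prop:rotation} and Corollary~\ref{cor:rank1-parity}, $g$ is semisimple with the decomposition $V=\ell_+(g)\oplus W_g\oplus\ell_-(g)$, where $x_+(g)=\ell_+(g)$ because the attracting eigenline is the one with eigenvalue $\alpha_g$ of larger absolute value. Orthogonality gives $\ell_+(g)^\perp=\ell_+(g)\oplus W_g$, so the quotient is $g$-equivariantly isomorphic to $W_g$. On $W_g$ the element $g$ acts by $R_g$, whose characteristic polynomial is $q_g$ by definition. The final assertion follows from Definition~\ref{def:rot-splitting}: conjugate elements of $L$ have equal characteristic polynomials on $W$, so the Levi class determines $q_g$, hence whether it splits and, on the separable locus, $\delta_{\rm rot}(g)$.

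The main obstacle I expect is the bookkeeping in the second step: verifying that the $N^+$-part is genuinely invisible on $x_0^\perp/x_0$ and that $\ell$, rather than its image in $M=L/S$, is what acts there. The $S$-component acts on $W$ trivially or by a scalar, depending on normalization. If it is a scalar, I would use determinant one to pin it down, or pass to $M^{\rm der}$ as the text suggests. The polynomial should then be stated up to that normalization, which is trivial because $S$ acts trivially on the zero-weight space $W$.
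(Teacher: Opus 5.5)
Your proposal is correct and follows the paper's proof essentially step for step: write $x_+(g)=n^-x_0$, note $(n^-)^{-1}gn^-\in P(F_0)$, use that $N^+$ and $S$ act trivially on $x_0^\perp/x_0$ so the induced action factors through the Levi quotient, transport back by $n^-$, and identify $x_+(g)^\perp/x_+(g)$ with $W_g$ through the decomposition of Proposition~\ref{prop:rotation}. The only small slip is citing Corollary~\ref{cor:rank1-parity} for semisimplicity, since its stated hypotheses concern a discrete torsion-free group; its Levi argument, however, applies verbatim to any loxodromic $g$, and that is all you need.
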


\begin{proof}
Write the attracting point uniquely as
\[
 x_+(g)=n^-x_0,
 \qquad n^-\in N^-(F_0).
\]
Since $g$ fixes $x_+(g)$,
\[
 p_g=(n^-)^{-1}gn^-\in P(F_0).
\]
By definition, the projection of $p_g$ to $L/S=M$ is the Bruhat cocycle value at $x_+(g)$.

The unipotent radical of $P$ acts trivially on the transverse quotient $x_0^\perp/x_0$, and the split torus $S$ also acts trivially there.  Thus the induced action of $p_g$ on this quotient factors canonically through $M$.  Transporting by $n^-$ identifies it with the action of $g$ on
\[
 x_+(g)^\perp/x_+(g).
\]
In the rank-one decomposition
\[
 V=\ell_+(g)\oplus W_g\oplus\ell_-(g),
\]
the natural map $W_g\to\ell_+(g)^\perp/\ell_+(g)$ is a Hermitian isomorphism intertwining the transverse block $R_g$.  Therefore the Levi cocycle has transverse characteristic polynomial $q_g$, hence the same split/non-split status and, on the separable locus, the same discriminant square class.
\end{proof}

\begin{lemma}\label{lem:uniform-reset}
Keep the rank-one notation above and let $t$ be loxodromic with attracting endpoint $x_0$ and $t\in Z_G(S)(F_0)$.  Let $s_1,\ldots,s_r\in G(F_0)$ be such that each $s_jx_0$ lies in the big Bruhat cell $N^-x_0$.  Then there is a compact open neighborhood $U$ of $x_0$ in the big cell with the following property.  For every $j$,
\[
 \sup_{y\in U}d(t^ns_jy,x_0)\longrightarrow0
 \qquad(n\to\infty),
\]
and, after choosing analytic coordinates on $U$, there are constants $C_j>0$ and $0<\rho<1$ such that
\[
 \operatorname{Lip}(t^ns_j|_U)\le C_j\rho^n
\]
for all sufficiently large $n$.  In particular, for all sufficiently large $n$,
\[
 t^ns_j(U)\subset U.
\]
\end{lemma}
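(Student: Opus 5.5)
The plan is to work entirely in the big-cell coordinates $N^-x_0\simeq N^-(F_0)$. In these coordinates $t$ acts by conjugation. Since $t\in Z_G(S)(F_0)=L(F_0)$ normalizes $N^-$, we have
\[
 t\cdot(n^-x_0)=(tn^-t^{-1})x_0 .
\]
Because $t$ is loxodromic with attracting endpoint $x_0$, its split component acts on $\mathrm{Lie}(N^-)$ by weights that are strictly contracting. In relative rank one these are the negative roots $-\alpha$ and possibly $-2\alpha$. The anisotropic Levi part, on the other hand, acts through a compact group and is therefore isometric up to a bounded constant. So the first step is to decompose $t=sm$ (after a power if needed, or using the $F_0$-points of the Levi) with $s\in S(F_0)$ and $m$ in a compact subgroup. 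From this we get a constant $0<\rho<1$, namely $\rho=\max|\beta(s)|$ over the negative roots $\beta$, together with a constant $C$ such that
\[
 \|\Ad(t^n)|_{\mathfrak n^-}\|\le C\rho^n .
\]
Transported through the exponential or polynomial coordinates on the unipotent group $N^-$, this shows that $t^n$ maps every bounded subset $B\subset N^-(F_0)$ into a ball of radius $O(\rho^n)$ around the identity. It also shows that the Lipschitz constant of $t^n$ on $B$ is $O(\rho^n)$, since conjugation is polynomial with coefficients scaled by the root characters.

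Next we treat the finitely many $s_j$. Each $s_j$ acts on the flag variety $G/P$ analytically. The hypothesis $s_jx_0\in N^-x_0$ means that $x_0$ lies in the open set $s_j^{-1}(N^-x_0)$, which is open in the big cell. We therefore choose a compact open neighborhood $U$ of $x_0$ (say a small ball in $N^-(F_0)$, compact open by non-archimedean local compactness) satisfying two conditions for every $j$. First, $s_j(U)$ should be contained in the big cell. Second, $s_j|_U$, written in coordinates, should be given by convergent power series, hence Lipschitz with some constant $L_j$. Finiteness of $j$ allows one common $U$. Then $s_j(U)$ is a compact subset $B_j$ of $N^-(F_0)$, hence bounded.

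Composing the two steps, we obtain
\[
 \sup_{y\in U}d(t^ns_jy,x_0)\le C'\rho^n\to0
\]
and
\[
 \operatorname{Lip}(t^ns_j|_U)\le L_j\cdot\operatorname{Lip}(t^n|_{B_j})\le C_j\rho^n .
\]
Since $U$ contains a ball of positive radius around $x_0$, the uniform convergence gives $t^ns_j(U)\subset U$ for all $n$ larger than some $n_0$, chosen uniformly over the finitely many $j$.

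I expect the main obstacle to be the Lipschitz estimate for $t^n$ on a bounded but not small set $B_j$. The group $N^-$ is two-step nilpotent (Heisenberg type for $\SU$), so the coordinate action involves both $\rho_\alpha^n$ on the $-\alpha$ layer and $\rho_\alpha^{2n}$ on the $-2\alpha$ layer. The group law mixes these layers, so the distance on $N^-$ is not the linear one. I would handle this by using the ultrametric norm on the graded coordinates $(v,w)\in\mathfrak g_{-\alpha}\oplus\mathfrak g_{-2\alpha}$, in which $t$ acts linearly and diagonally-by-blocks, with the compact part isometric after choosing an $M$-invariant lattice norm. In these coordinates $t^n$ is a linear map of norm at most $C\rho^n$, which makes the Lipschitz bound immediate. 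The metric $d$ on $G/P$ is only used near $x_0$, where it is bi-Lipschitz to this coordinate norm.
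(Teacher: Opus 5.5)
Your proposal is correct and follows essentially the same route as the paper. Both arguments use the big-cell chart, the contraction of $N^-$ under conjugation by $t$ at rate $\rho^n$, a common compact open $U$ with $s_jU$ inside the big cell, and the chain rule, which combines a bounded Lipschitz constant for $s_j|_U$ with the $O(\rho^n)$ contraction. Your use of logarithmic coordinates on $\mathfrak n^-$, where conjugation by $t$ becomes the linear map $\Ad(t)$, just makes explicit the paper's assertion that the derivative of $\alpha^n$ is $O(\rho^n)$.
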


\begin{proof}
Use the analytic chart
\[
 \phi:N^-(F_0)\longrightarrow N^-x_0,\qquad u\longmapsto ux_0.
\]
Because $x_0$ is the attracting endpoint of $t$, the automorphism
\[
 \alpha(u)=tut^{-1}
\]
of $N^-(F_0)$ is contracting.  Equivalently, on every negative-root space the eigenvalues of $\Ad(t)$ have absolute value $<1$.  Hence there is a compact open subgroup $V\subset N^-(F_0)$ and a compatible analytic ultrametric norm for which
\[
 \alpha(V)\subset V,
 \qquad
 \operatorname{Lip}(\alpha^n|_V)\le C\rho^n
\]
with $0<\rho<1$; moreover $\alpha^n(K)\to\{e\}$ uniformly for every compact $K\subset N^-(F_0)$.

Since each $s_jx_0$ belongs to the big cell, after shrinking $V$ we may assume that $s_j\phi(V)$ is contained in the big cell for every $j$.  Put $U=\phi(V)$ and let
\[
 K_j=\phi^{-1}(s_jU)\subset N^-(F_0).
\]
Each $K_j$ is compact.  For $y=ux_0\in s_jU$ one has
\[
 t^ny=(t^nut^{-n})x_0=\phi(\alpha^n(u)).
\]
Uniform contraction of $\alpha^n$ on $K_j$ therefore gives
\[
 \sup_{y\in U}d(t^ns_jy,x_0)\to0.
\]
On the compact analytic sets involved, the coordinate expression of $s_j$ has bounded derivative, while the derivative of $\alpha^n$ is $O(\rho^n)$ on the negative-root filtration.  The chain rule gives the asserted Lipschitz estimate.  The uniform convergence to $x_0$ finally implies $t^ns_j(U)\subset U$ for all large $n$.
\end{proof}

\begin{lemma}\label{lem:reset}
Let $\Lambda<G$ be discrete and torsion-free and let $t\in\Lambda$ be loxodromic.  Conjugate the local data so that the maximal split torus $S$, the minimal parabolic $P=MAN^+$, and the opposite unipotent group $N^-$ are adapted to the axis of $t$: the attracting endpoint is the base point $x_0\in G/P$ and $t\in Z_G(S)(F_0)$.  For elements $s_1,\ldots,s_r\in\Lambda$ whose Bruhat cocycle values
\[
 c(s_j)\in M(F_0)
\]
are defined at $x_0$, there exist sequences
\[
 n_1(k),\ldots,n_r(k)\to+\infty
\]
such that
\[
 w_k=t^{n_r(k)}s_r\cdots t^{n_1(k)}s_1
\]
is loxodromic for large $k$ and its compact Levi holonomy satisfies
\[
 [m_{w_k}]\longrightarrow[c(s_r)\cdots c(s_1)]
\]
in the conjugacy-class quotient of $M$.
\end{lemma}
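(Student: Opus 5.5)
The idea is to treat $w_k$ as a near-composition of contractions fixing the base point, using Lemma~\ref{lem:uniform-reset} for the dynamics and Lemma~\ref{lem:levi-rotation} to read off the Levi holonomy.

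\textbf{Step 1 (contracting neighborhood).} Since each $c(s_j)$ is defined at $x_0$, every $s_jx_0$ lies in the big cell $N^-x_0$. Lemma~\ref{lem:uniform-reset} then gives a compact open neighborhood $U$ of $x_0$ on which each $t^ns_j$ maps $U$ into $U$ with Lipschitz constant $\le C_j\rho^n$, and with $t^ns_jU$ shrinking uniformly to $x_0$. Take all $n_j(k)\ge k$. Then $w_k(U)\subset U$, and $w_k|_U$ is a strict contraction with constant $O(\rho^{k})$. By the ultrametric contraction principle it has a unique fixed point $y_k\in U$, and $y_k\to x_0$. Running the same argument for $w_k^{-1}=s_1^{-1}t^{-n_1}\cdots$ near the repelling endpoint of $t$ gives a second fixed point. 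Alternatively, $w_k$ has unbounded powers contracting $U$, so $w_k$ is not elliptic; since $\Lambda$ is torsion-free, Corollary~\ref{cor:rank1-parity} shows that $w_k$ is hyperbolic. In either case $w_k$ is loxodromic with attracting point $x_+(w_k)=y_k$, which lies in the big cell.

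\textbf{Step 2 (holonomy by conjugation).} Write $y_k=n_kx_0$ with $n_k\in N^-(F_0)$ and $n_k\to e$. By Lemma~\ref{lem:levi-rotation}, the holonomy class $[m_{w_k}]$ is the $M$-projection of $n_k^{-1}w_kn_k\in P(F_0)$. Decompose the word step by step. Set $y^{(0)}=y_k$ and $y^{(j)}=t^{n_j}s_jy^{(j-1)}$, so that every intermediate point lies in $U$ and tends to $x_0$. Use the cocycle identity
\[
c(gh,y)=c(g,hy)\,c(h,y),
\]
taken with the $M$-component of $u(gy)^{-1}g\,u(y)$, where $u(y)\in N^-$ is the chart coordinate. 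This factors the holonomy as the product of the terms $c(t^{n_j},s_jy^{(j-1)})$ and $c(s_j,y^{(j-1)})$. Because $t\in Z_G(S)$ normalizes $N^-$, one has $c(t^{n},z)=c(t^n,x_0)=m_t^{\,n}$, the $M$-part of $t^n$, independently of $z$ in the big cell. By continuity of the cocycle on the big cell, $c(s_j,y^{(j-1)})\to c(s_j)$. Hence
\[
[m_{w_k}]=[m_t^{n_r}c_r^{(k)}\cdots m_t^{n_1}c_1^{(k)}]
\quad\text{with}\quad c_j^{(k)}\to c(s_j).
\]

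\textbf{Step 3 (killing the powers of $m_t$).} This is the main obstacle: the factors $m_t^{n_j}$ do not go to $1$. The plan is to use compactness of $M(F_0)$, which holds because $M$ is anisotropic. The closure of $\{m_t^n\}$ is a compact abelian group, and there is a subsequence $n\to\infty$ along which $m_t^{n}\to e$; indeed, in a profinite-like compact group the identity is an accumulation point of the powers. Choose each $n_j(k)$ from this subsequence, while also requiring $n_j(k)\ge k$. Then every $m_t^{n_j(k)}\to e$, and $[m_{w_k}]\to[c(s_r)\cdots c(s_1)]$, because conjugacy classes vary continuously in the quotient of the compact group $M$.

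Two points need care. First, the normalization of $c(s_j)$ should be checked to agree with the cocycle convention of Lemma~\ref{lem:levi-rotation}. Second, the $S$-components should be verified to drop out of the $M$-projection.
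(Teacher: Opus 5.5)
Your proposal is correct and follows the paper's proof in all essentials. Both arguments use a contracting neighborhood $U$ from Lemma~\ref{lem:uniform-reset} and a unique fixed point $x_k\to x_0$. Both get loxodromicity from discreteness plus torsion-freeness, where only $w_k\ne e$ is needed and the strict contraction supplies it. Your first alternative, a second fixed point of $w_k^{-1}$ near the repelling endpoint, is not supported by the hypotheses and should be dropped. Both then conclude with the cocycle factorization using the exact identity $\bar\gamma(t^n,z)=\bar t^{\,n}$, choosing the $n_j(k)$ among return times with $\bar t^{\,n}\to e$; that you pick these return times last rather than first, as the paper does, is immaterial.
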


\begin{proof}
Let $\bar t$ be the image of $t$ in the compact anisotropic quotient $M(F_0)$.  Compactness gives an unbounded sequence of return times $N_\nu$ such that
\[
 \bar t^{N_\nu}\longrightarrow e.
\]
Apply Lemma~\ref{lem:uniform-reset} to $s_1,\ldots,s_r$ and choose the corresponding compact open set $U$.  For each $k$ choose, successively and independently, return times
\[
 n_j(k)\in\{N_\nu\}_{\nu\ge1}
\]
so large that
\[
 \bar t^{n_j(k)}\to e,
 \qquad
 t^{n_j(k)}s_j(U)\subset U,
\]
and
\[
 \operatorname{Lip}(t^{n_j(k)}s_j|_U)<1/k,
 \qquad
 \sup_{y\in U}d(t^{n_j(k)}s_jy,x_0)<1/k.
\]
Set
\[
 B_{j,k}=t^{n_j(k)}s_j,
 \qquad
 w_k=B_{r,k}\cdots B_{1,k}.
\]
Then $w_k$ maps the complete compact ultrametric space $U$ into itself and is a strict contraction.  It therefore has a unique fixed point $x_k\in U$.  Since the last block resets all of $U$ into the $1/k$-ball about $x_0$,
\[
 d(x_k,x_0)=d(w_kx_k,x_0)<1/k,
\]
so $x_k\to x_0$.

The element $w_k$ is nontrivial for large $k$: the identity map has Lipschitz constant $1$ on the non-singleton set $U$, whereas $w_k$ has Lipschitz constant $<1$.  An element of a rank-one Bruhat--Tits tree is elliptic or loxodromic.  If $w_k$ were elliptic, it would lie in a compact vertex stabilizer; the intersection of such a stabilizer with the discrete group $\Lambda$ is finite, and torsion-freeness would force $w_k=e$.  Hence $w_k$ is loxodromic, with attracting endpoint $x_k$.

For the cocycle calculation put
\[
 y_{0,k}=x_k,
 \qquad
 z_{j,k}=s_jy_{j-1,k},
 \qquad
 y_{j,k}=t^{n_j(k)}z_{j,k}
 \quad(1\le j\le r).
\]
All $y_{j,k}$ lie in $U$, and Lemma~\ref{lem:uniform-reset} gives, uniformly in the preceding point,
\[
 y_{j,k}\longrightarrow x_0
 \qquad(0\le j\le r).
\]
Hence continuity of the Bruhat cocycle yields
\[
 \bar\gamma(s_j,y_{j-1,k})\longrightarrow c(s_j).
\]
If $z=n^-x_0$ lies in the big cell, then $t\in Z_G(S)$ normalizes $N^-$ and fixes $x_0$, so uniqueness of Bruhat coordinates gives the exact identity
\[
 \bar\gamma(t^n,z)=\bar t^{\,n}.
\]
Therefore
\[
 \bar\gamma(t^{n_j(k)},z_{j,k})
 =\bar t^{n_j(k)}\longrightarrow e.
\]
Applying the cocycle identity successively to
\[
 w_k=t^{n_r(k)}s_r\cdots t^{n_1(k)}s_1
\]
gives
\[
 \bar\gamma(w_k,x_k)
 \longrightarrow
 c(s_r)\cdots c(s_1).
\]
By Lemma~\ref{lem:levi-rotation}, the conjugacy class of this cocycle value is exactly the compact rotational holonomy of $w_k$, proving the desired limit.
\end{proof}

The one-block precursor $t^ns$, after projection to $MA/[M,M]$, already appears in Guivarc'h--Raugi.  Retaining the non-abelian anisotropic quotient allows the construction to be iterated over finitely many blocks.

The reset lemma solves the realization problem for any prescribed finite product of cocycle values.  The next step is algebraic: show that these values generate enough of the Levi factor, and then convert Zariski density into local openness.

\begin{proposition}\label{prop:large-holonomy}
Assume in addition that $\Lambda$ is Zariski-dense.  Let $C_t(\Lambda)$ be the closed subgroup of $M(F_0)$ generated by the cocycle values $c(s)$.  Then $C_t(\Lambda)$ is Zariski-dense in $M$, and
\[
 D_t(\Lambda)=\overline{[C_t(\Lambda),C_t(\Lambda)]}
\]
is Zariski-dense in $M^{\rm der}$.  Moreover the conjugacy classes of elements of $C_t(\Lambda)$ lie in the closure of rotational holonomies realized by actual loxodromic words of $\Lambda$.
\end{proposition}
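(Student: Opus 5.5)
The plan has three parts: identify $C_t(\Lambda)$ with the Levi projection of a Zariski-dense subgroup of $P$, deduce density in $M^{\rm der}$ from structure theory, and close using Lemma~\ref{lem:reset}.

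\textbf{Step 1: reduce to a statement about $\Lambda\cap$ Bruhat-cell elements.} The Bruhat cocycle satisfies $c(s)=\pi_M(p_s)$, where
\[
 s=n^-_s\,p_s\,\bar n^-_s
\]
is the Bruhat factorization of $s$ relative to $x_0$. I would consider the set
\[
 \Lambda^\circ=\{s\in\Lambda:\ s x_0\in N^-x_0\}.
\]
This set is the intersection of $\Lambda$ with a nonempty Zariski-open subset $\Omega$ of $G$, the big cell translated by $P$. By Zariski density of $\Lambda$, the set $\Lambda^\circ$ is Zariski-dense in $G$.

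\textbf{Step 2: pass to the Levi quotient.} The cocycle map $\Omega\to M$, $s\mapsto c(s)$, is a dominant morphism: it is a regular map and it restricts to the projection on $L\subset\Omega$. Since $\Lambda^\circ$ is Zariski-dense in $\Omega$, its image is Zariski-dense in the image of $\Omega$, and that image is all of $M$. Hence the subgroup generated by the $c(s)$ is Zariski-dense in $M$, and so is its closure $C_t(\Lambda)$.

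\textbf{Step 3: density of the commutator subgroup.} For a Zariski-dense subgroup $C$ of a connected reductive group $M$, the Zariski closure of $[C,C]$ is the derived group $M^{\rm der}$. This holds because the commutator map is a morphism whose image generates $[M,M]$, and Zariski closure commutes with forming commutators of dense subsets. I expect this step to be the delicate point, since $M=L/S$ may be disconnected over $F_0$. I would handle it by first passing to a finite-index subgroup of $\Lambda$ whose cocycle image lies in $M^\circ$.

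\textbf{Step 4: closure statement.} Given $m\in C_t(\Lambda)$, approximate $m$ by a finite product $c(s_r)^{\pm1}\cdots c(s_1)^{\pm1}$. To avoid inverses, use the cocycle relation together with compactness of $M(F_0)$: $c(s)^{-1}$ is a limit of the positive powers $c(s)^{k}$, and $c(s^k)$ approximates $c(s)^k$ after reset by powers of $t$. Then Lemma~\ref{lem:reset} produces loxodromic words $w_k\in\Lambda$ whose Levi holonomy classes converge to the class of the product. Lemma~\ref{lem:levi-rotation} identifies these classes with rotational holonomies. A diagonal argument then places $[m]$ in the closure of the realized rotational holonomies.

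The main obstacle is Step 2. The issue is making precise that the cocycle values over $\Lambda^\circ$ are Zariski-dense in $M$, rather than merely that $\Lambda\cap P$ is dense, since the latter may fail because $\Lambda\cap P$ can be cyclic. The first thing I would try is to verify dominance directly in Bruhat coordinates, using the identity $\Omega\simeq N^-\times L\times N^+$.
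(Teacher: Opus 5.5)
Your proposal is correct and follows the paper's proof essentially step by step. Zariski density of $\Lambda\cap N^-LN^+$ in the open cell, together with dominance of the projection $N^-LN^+\to M$, gives density of the cocycle values, and density in $M$ gives the commutator statement. Compactness of $M(F_0)$ then turns inverses into positive products; you do this through $c(s)^k\to c(s)^{-1}$, while the paper observes that the closed positive semigroup is already a group. After that, Lemma~\ref{lem:reset} handles the closure statement.

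Two small corrections. The Step~3 precaution is unnecessary: $L=Z_G(S)$ is the centralizer of a torus in a connected group and hence connected, so $M=L/S$ is connected. A finite-index subgroup of $\Lambda$ would not control the non-homomorphic map $s\mapsto c(s)$ anyway. In Step~4 you never need $c(s^k)$, because Lemma~\ref{lem:reset} applied to the repeated sequence $s,\ldots,s$ realizes $c(s)^k$ directly.
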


\begin{proof}
The intersection of $\Lambda$ with the big Bruhat cell $N^-LN^+$ is Zariski-dense in that open cell.  Projection
\[
 N^-LN^+\longrightarrow M,
 \qquad n^-\ell n^+\longmapsto \ell\bmod S,
\]
is a dominant algebraic map.  Hence the cocycle values $c(s)$ are Zariski-dense in $M$, and so is the closed subgroup $C_t(\Lambda)$ that they generate.  The Zariski closure of its commutator subgroup is therefore $M^{\rm der}$.

Because $M(F_0)$ is compact, the closure $P$ of the positive semigroup generated by the $c(s)$ is a group.  Indeed, fix $x\in P$.  The closure of the positive powers of $x$ is compact.  Given any identity neighborhood $U$, choose a smaller symmetric neighborhood $V$ with $V^{-1}V\subset U$ and cover the compact cyclic closure by finitely many left translates of $V$.  Two sufficiently far positive powers $x^a,x^b$ with $a>b$ lie in the same translate, whence $x^{a-b}\in V^{-1}V\subset U$.  Applying this to a decreasing neighborhood basis at $e$ yields positive return times $n_j\to\infty$ such that
\[
 x^{n_j}\longrightarrow e.
\]
Consequently
\[
 x^{n_j-1}\longrightarrow x^{-1},
\]
so $x^{-1}\in P$.  Thus $P$ contains inverses and is a closed subgroup.  Since it contains all generators $c(s)$, it equals $C_t(\Lambda)$.  Finite positive products are therefore dense in $C_t(\Lambda)$.  Lemma~\ref{lem:reset} realizes every such product as a limit of holonomies of loxodromic words.  Passing to conjugacy classes and then to closure proves the last assertion.
\end{proof}

Thus finite products of cocycle values are not merely abstract Levi data: they are limits of rotational holonomies of actual loxodromic words.  It remains to show that the derived part of this holonomy is locally large.

\begin{lemma}\label{lem:derived-open}
If $F_0=\Q_p$, then $D_t(\Lambda)$ is open in $M^{\rm der}(\Q_p)$.
\end{lemma}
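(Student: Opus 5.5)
The plan is to copy the proof of Lemma~\ref{lem:closure-dichotomy}, working inside the compact $p$-adic Lie group $M^{\rm der}(\Q_p)$.

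\textbf{Step 1 (Lie subgroup).} By Cartan's closed-subgroup theorem for $p$-adic Lie groups, the closed subgroup $D_t(\Lambda)<M^{\rm der}(\Q_p)$ is an analytic subgroup. Let $\mathfrak d\subset\mathfrak m^{\rm der}$ be its $\Q_p$-Lie algebra. Since $[C_t(\Lambda),C_t(\Lambda)]$ is normal in $C_t(\Lambda)$, its closure $D_t(\Lambda)$ is normalized by $C_t(\Lambda)$. Hence $\mathfrak d$ is $\Ad(C_t(\Lambda))$-stable.

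\textbf{Step 2 (transfer of invariance).} By Proposition~\ref{prop:large-holonomy}, $C_t(\Lambda)$ is Zariski-dense in $M$. Stability of a fixed subspace under $\Ad$ is a Zariski-closed condition, so $\mathfrak d$ is $\Ad(M)$-stable. Therefore $\mathfrak d$ is an ideal of $\mathfrak m^{\rm der}$ defined over $\Q_p$.

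\textbf{Step 3 (identify the ideal).} In the rank-one setting, $M^{\rm der}$ is isogenous to $\SU$ of the anisotropic transverse Hermitian plane. This is an $F_0$-form of $\SL_2$, i.e.\ the norm-one group of a quaternion division algebra over $\Q_p$. It is absolutely almost simple, so $\mathfrak m^{\rm der}$ is simple and $\mathfrak d\in\{0,\mathfrak m^{\rm der}\}$. If $\mathfrak d=\mathfrak m^{\rm der}$, then $D_t(\Lambda)$ contains an open neighborhood of the identity, hence is open, which is the claim.

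\textbf{Step 4 (excluding $\mathfrak d=0$).} This is the main obstacle. If $\mathfrak d=0$, then $D_t(\Lambda)$ is discrete in a compact group, hence finite. A finite group has Zariski closure equal to itself. This contradicts the Zariski density of $D_t(\Lambda)$ in $M^{\rm der}$ from Proposition~\ref{prop:large-holonomy}, because $M^{\rm der}$ is positive-dimensional and connected. So the real content is that Zariski density of $D_t(\Lambda)$ in $M^{\rm der}$ is genuinely available.

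I would check that density carefully. The commutator subgroup of a Zariski-dense subgroup of $M$ is Zariski-dense in $[M,M]=M^{\rm der}$, because the commutator map is dominant onto the derived group. The derived group is connected and generated by commutators, so this yields density rather than mere containment.

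One subtlety: $M$ may be disconnected over $\Q_p$ if $L/S$ is only reductive. In that case I would first pass to $C_t(\Lambda)\cap M^\circ(\Q_p)$, which has finite index and is still Zariski-dense in $M^\circ$, before taking commutators.
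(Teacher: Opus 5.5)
Your proposal is correct and follows essentially the paper's route. You take the Lie algebra $\mathfrak d$ of the closed subgroup $D_t(\Lambda)$, use Zariski density to upgrade adjoint invariance so that $\mathfrak d$ becomes an ideal of the simple algebra $\operatorname{Lie}(M^{\rm der})$, and exclude $\mathfrak d=0$ because a discrete subgroup of the compact group $M^{\rm der}(\Q_p)$ is finite and cannot be Zariski-dense. The differences are cosmetic: you use $\Ad(C_t(\Lambda))$-invariance with density of $C_t(\Lambda)$ in $M$, where the paper uses $\Ad(D_t(\Lambda))$-invariance with density of $D_t(\Lambda)$ in $M^{\rm der}$, and your remark about passing to $M^\circ$ is unnecessary since $L=Z_G(S)$, being the centralizer of a torus in a connected group, is connected.
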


\begin{proof}
The subgroup $D_t(\Lambda)$ is closed and $p$-adic analytic.  Let
\[
 \mathfrak d=\operatorname{Lie}(D_t(\Lambda)),
 \qquad
 \mathfrak m=\operatorname{Lie}(M^{\rm der})(\Q_p).
\]
The Lie algebra $\mathfrak m$ is simple.  If $\mathfrak d=0$, then $D_t(\Lambda)$ is discrete; compactness of $M^{\rm der}(\Q_p)$ would make it finite, contradicting its Zariski density.  Hence $\mathfrak d\ne0$.

The space $\mathfrak d$ is stable under $\Ad(D_t(\Lambda))$.  Its stabilizer in the adjoint representation is algebraic over the same base field $\Q_p$.  Zariski density forces that stabilizer to be all of $M^{\rm der}$, so $\mathfrak d$ is a nonzero ideal of the simple algebra $\mathfrak m$.  Therefore $\mathfrak d=\mathfrak m$.  A closed subgroup with full Lie algebra is open.  This is the precise special case needed here of the broader compact-subgroup openness technology developed by Pink \cite{Pink1998}.
\end{proof}

\begin{lemma}\label{lem:derived-transverse}
Let $W_0=x_0^\perp/x_0$ be the transverse anisotropic Hermitian plane at the base point.  The natural transverse action induces an $F_0$-isogeny
\[
 \tau:M^{\rm der}\longrightarrow \SU(W_0).
\]
If $D_t(\Lambda)$ is open in $M^{\rm der}(F_0)$, then $\tau(D_t(\Lambda))$ contains an identity neighborhood in $\SU(W_0)(F_0)$.
\end{lemma}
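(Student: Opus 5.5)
The plan has two parts: build $\tau$ explicitly, and then use the fact that an isogeny in characteristic zero is a local analytic isomorphism near the identity.

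\emph{Construction of $\tau$.} The Levi subgroup $L=Z_G(S)$ preserves the isotropic line $x_0$, its orthogonal $x_0^\perp$, and hence the transverse quotient $W_0=x_0^\perp/x_0$. The Hermitian form descends to $W_0$ as the anisotropic plane. As in the proof of Lemma~\ref{lem:levi-rotation}, $S$ acts trivially on $W_0$, so this action factors through $M=L/S$ and lands in $U(W_0)$. This gives an $F_0$-morphism $M\to U(W_0)$.

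Restricting to the derived group, which is connected semisimple, the image lies in the derived group of $U(W_0)$, namely $\SU(W_0)$. The result is an $F_0$-homomorphism $\tau:M^{\rm der}\to\SU(W_0)$. To see that $\tau$ is an isogeny, compare Lie algebras after base change to $\bar F_0$. There $L$ is the block-diagonal group $\mathrm{GL}_1\times \mathrm{GL}(W_0)\times\mathrm{GL}_1$, intersected with $\SL_4$. Its derived group $\SL(W_0)\simeq\SL_2$ acts faithfully on $W_0$. Therefore $d\tau$ is an isomorphism $\mathfrak m\to\mathfrak{su}(W_0)$, both of dimension three, and $\tau$ has finite central kernel, so it is an isogeny. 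This is consistent with the remark before Lemma~\ref{lem:levi-rotation} that $M^{\rm der}$ is isogenous to $\SU$ of the transverse plane.

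\emph{Openness.} Since $d\tau$ is bijective and we are in characteristic zero, the $p$-adic inverse function theorem says that $\tau$ restricts to an analytic isomorphism from an identity neighborhood $\Omega\subset M^{\rm der}(F_0)$ onto an identity neighborhood $\tau(\Omega)\subset\SU(W_0)(F_0)$; in particular $\tau$ is an open map near $e$. By hypothesis $D_t(\Lambda)$ is open, so it contains an identity neighborhood $\Omega'\subset\Omega$. Then $\tau(\Omega')\subset\tau(D_t(\Lambda))$ is open in $\SU(W_0)(F_0)$ and contains $e$, which is the claim.

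The main obstacle is the bookkeeping in identifying $M^{\rm der}$ and checking that $d\tau$ is injective. This matters because in non-split or ramified situations one might worry that $M$ is a Weil restriction or that $\tau$ has inseparable kernel; characteristic zero rules out the latter. I would do the injectivity check at the Lie algebra level over $\bar F_0$ via the explicit block form, which avoids any case analysis. Note that we only need the image to contain a neighborhood, not surjectivity of $\tau$ on $F_0$-points, which can fail because of Galois cohomology of the finite kernel.
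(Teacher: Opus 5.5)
Your proposal is correct and takes essentially the same route as the paper. The transverse action on $W_0$ defines $\tau$, its differential is a Lie algebra isomorphism in characteristic zero, and the $F_0$-analytic inverse function theorem then makes $\tau$ open near the identity; your explicit block-diagonal check over $\bar F_0$ simply makes precise the paper's brief ``both groups are of type $A_1$'' justification, and in fact shows that $\tau$ has trivial kernel.
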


\begin{proof}
The derived Levi acts nontrivially on the two-dimensional transverse quotient, and its image is the derived group of the corresponding unitary action, namely $\SU(W_0)$.  Both connected groups are of type $A_1$, so the induced morphism is a central isogeny.  In characteristic zero its differential is an isomorphism of Lie algebras.  The $F_0$-analytic inverse-function theorem therefore makes $\tau$ an open map on a sufficiently small identity neighborhood.  Since $D_t(\Lambda)$ is open, its image contains an identity neighborhood in $\SU(W_0)(F_0)$.
\end{proof}

\begin{lemma}\label{lem:accumulate}
Let $W$ be an anisotropic Hermitian plane over $F/F_0$.  In $\SU(W)(F_0)$ there are regular elements $u_j\to e$ whose characteristic-polynomial discriminants are non-squares in $F$.
\end{lemma}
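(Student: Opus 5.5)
The plan is to build the elements $u_j$ inside an explicit maximal torus of $\SU(W)$, using the trace-form construction already employed in Lemma~\ref{lem:biquad} and Proposition~\ref{prop:local-C3-en}, now in dimension two. Since $F_0$ is a non-archimedean local field of characteristic zero, $F_0^\times/F_0^{\times2}$ has order at least four. So there is a quadratic field $E/F_0$ with $E\ne F$. The compositum $L=FE$ is then biquadratic over $F_0$. Let $E'$ be its third quadratic subfield. The involution $c$ of $L$ fixing $E$ and inducing the nontrivial automorphism of $F/F_0$ acts nontrivially on $E'$, and $c|_{E'}$ is the Galois conjugation of $E'/F_0$.

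\textbf{Step 1: embed the torus in $W$.} Regard $L=F\otimes_{F_0}E$ as a two-dimensional $F$-space with Hermitian form
\[
 h_\lambda(x,y)=\Tr_{L/F}(\lambda xy^c),\qquad \lambda\in E^\times.
\]
As in Proposition~\ref{prop:blindness}, its determinant class in $F_0^\times/N_{F/F_0}(F^\times)$ is $N_{E/F_0}(\lambda)$ times a fixed class. By local class field theory, $N_{E/F_0}(E^\times)$ and $N_{F/F_0}(F^\times)$ are distinct index-two subgroups of $F_0^\times$, because $E\ne F$. Hence $N_{E/F_0}(E^\times)$ surjects onto $F_0^\times/N_{F/F_0}(F^\times)$. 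We may therefore choose $\lambda$ so that $h_\lambda$ has the determinant class of $W$. By the local classification (dimension plus determinant), $(L,h_\lambda)\cong W$. I expect this matching of determinant classes to be the only point needing care. The fallback is to identify $\SU(W)(F_0)$ with $\SL_1(D)$ for the quaternion division algebra $D$ and use that every quadratic extension of $F_0$ embeds in $D$.

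\textbf{Step 2: construct the elements.} Pick $z\in E'$ with $c(z)=-z$ and $z\ne0$, and set $x_\varepsilon=(1+\varepsilon z)/(1-\varepsilon z)$ for small nonzero $\varepsilon\in F_0$. Then
\[
 x_\varepsilon x_\varepsilon^c=N_{E'/F_0}(x_\varepsilon)=1,
\]
so multiplication by $x_\varepsilon$ preserves $h_\lambda$. Its determinant is $N_{L/F}(x_\varepsilon)=N_{E'/F_0}(x_\varepsilon)=1$, so $u_\varepsilon=m_{x_\varepsilon}\in\SU(W)(F_0)$. Since $x_\varepsilon\to1$, we get $u_\varepsilon\to e$.

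\textbf{Step 3: the discriminant.} Because $x_\varepsilon\notin F_0$, it does not lie in $F$, so $F(x_\varepsilon)=L$. The characteristic polynomial of $u_\varepsilon$ over $F$ is the minimal polynomial of $x_\varepsilon$, namely $X^2-\Tr(x_\varepsilon)X+1$. Its discriminant is $(x_\varepsilon-x_\varepsilon^{-1})^2$, which is nonzero because $x_\varepsilon\ne\pm1$; hence $u_\varepsilon$ is regular. This discriminant is a square in $F$ exactly when $x_\varepsilon\in F$, which does not happen because $[L:F]=2$. Taking $\varepsilon=\varepsilon_j\to0$ gives the required sequence $u_j\to e$.
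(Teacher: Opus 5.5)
Your proof is correct, but it takes a different route from the paper's.

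\textbf{The paper's route.} It realizes $\SU(W)(F_0)$ as the reduced-norm-one group $\SL_1(D)$ of the quaternion division algebra over $F_0$. It embeds the norm-one torus $R^1_{E/F_0}\mathbf G_m$ of a quadratic $E\ne F$ via the local embedding criterion and fixes one regular element $u$ of infinite order in it. Compactness of $\SU(W)(F_0)$ then gives powers $u^{n_j}\to e$. Each power stays non-split over $F$: its eigenvalues $\lambda^{\pm n}$ lie in $EF$ and are exchanged by $\Gal(EF/F)$, so splitting would force $\lambda^{2n}=1$.

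\textbf{Your route.} You build the torus inside $W$ itself with the trace form $h_\lambda$. This is the two-dimensional case of the construction in Lemma~\ref{lem:biquad}, with the split-factor surjectivity of the norm replaced by the class-field-theoretic fact $N_{E/F_0}(E^\times)\ne N_{F/F_0}(F^\times)$. You then write down an explicit Cayley family $x_\varepsilon$ in the norm-one group of the third quadratic subfield $E'$. Non-squareness of the discriminant is immediate: $x_\varepsilon\ne\pm1$ for $\varepsilon\ne0$ and $E'\cap F=F_0$ give $x_\varepsilon\notin F$.

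\textbf{What each buys.} Your version is explicit and stays inside the paper's own trace-form machinery. It produces a continuous curve through $e$ rather than a subsequence. It avoids both the quaternion identification, which the paper quotes without proof, and the compactness/recurrence step. It also never uses that $W$ is anisotropic, so it works equally for the hyperbolic plane. The paper's argument is shorter once the structure of anisotropic inner forms of $\SL_2$ is taken as known. Its recurrence mechanism is the same one used for return times in Lemma~\ref{lem:reset}.
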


\begin{proof}
The anisotropic group $\SU(W)$ is an inner form of $\SL_2$ and may be realized as the reduced-norm-one group of a quaternion division algebra $D/F_0$.  Choose a quadratic extension $E/F_0$ with $E\ne F$.  Since the Brauer invariant of $D$ is $1/2$, every quadratic extension splits $D$, and the local embedding criterion realizes $E$ as a maximal subfield.  Thus the norm-one torus
\[
 T=R^1_{E/F_0}\mathbf G_m
\]
embeds in $\SU(W)$.

Choose a regular infinite-order element $u\in T(F_0)$.  Compactness gives powers $u^{n_j}\to e$.  Its two eigenvalues over $F$ lie in $EF$ and are interchanged by the nontrivial automorphism of $EF/F$.  If a positive power had both eigenvalues in $F$, that automorphism would give $\lambda^n=\lambda^{-n}$ and hence $\lambda^{2n}=1$, contradicting infinite order.  Thus every $u^{n_j}$ has nonsplit quadratic characteristic polynomial over $F$, while tending to the identity.
\end{proof}

The two ingredients now meet.  Derived holonomy contains an identity neighborhood in the transverse anisotropic unitary group, and nonsplit regular elements accumulate at the identity there.  The realization result transfers one of these elements back to an actual word of $\Lambda$.

\begin{theorem}\label{thm:local-rank1}
Let $F/\Q_p$ be quadratic and let $V$ be a four-dimensional non-quasi-split Hermitian space.  There is no subgroup
\[
 \Lambda<\SU(V)(\Q_p)
\]
which is simultaneously discrete, torsion-free, Zariski-dense, and for which the transverse factor $q_g$ splits over $F$ for every $1\ne g\in\Lambda$.  Equivalently, every such subgroup contains a loxodromic element $g$ for which $q_g$ is irreducible over $F$, and hence whose local spectral field over $F$ has degree two.
\end{theorem}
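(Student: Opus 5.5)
We argue by contradiction, assembling the rank-one machinery of Section~\ref{sec:reset}.  Suppose $\Lambda<\SU(V)(\Q_p)$ is discrete, torsion-free, Zariski-dense, and that $q_g$ splits over $F$ for every $1\ne g\in\Lambda$.  By Corollary~\ref{cor:rank1-parity} every nontrivial element is loxodromic and semisimple.  Fix one such $t\in\Lambda$, and conjugate so that its attracting endpoint is the base point $x_0$ and $t\in Z_G(S)(\Q_p)$, as in Lemma~\ref{lem:reset}.  Proposition~\ref{prop:large-holonomy} then shows that the cocycle group $C_t(\Lambda)$ is Zariski-dense in $M$, and hence that $D_t(\Lambda)$ is Zariski-dense in $M^{\rm der}$.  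Lemma~\ref{lem:derived-open} upgrades this to openness of $D_t(\Lambda)$ in $M^{\rm der}(\Q_p)$.  Lemma~\ref{lem:derived-transverse} then gives an identity neighborhood $\Omega\subset\SU(W_0)(\Q_p)$ contained in $\tau(D_t(\Lambda))$.

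Next we produce a non-split target.  By Lemma~\ref{lem:accumulate} there is a regular $u\in\Omega$ whose characteristic polynomial on $W_0$ has non-square discriminant in $F$.  Choose $m\in D_t(\Lambda)\subset C_t(\Lambda)$ with $\tau(m)=u$.  The transverse action of $m$ is then $u$ itself, up to the central kernel of $\tau$, which is a scalar $\pm1$; multiplying by $-1$ does not change whether the discriminant is a square.  By Proposition~\ref{prop:large-holonomy}, the conjugacy class $[m]$ is a limit of rotational holonomies $[m_{w_k}]$ of actual loxodromic words $w_k\in\Lambda$.  Concretely, approximate $m$ by a finite positive product $c(s_r)\cdots c(s_1)$ and apply Lemma~\ref{lem:reset}.

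Finally we transfer the non-splitting back to $\Lambda$.  By Lemma~\ref{lem:levi-rotation}, the transverse characteristic polynomial of $m_{w_k}$ is $q_{w_k}$.  The characteristic polynomial is continuous on the compact group $M(\Q_p)$ and is a class function.  Hence the coefficients of $q_{w_k}$ converge to those of the separable quadratic $\chi_u$, whose discriminant $\delta$ is a non-square.  Since $F^{\times2}$ is open in $F^\times$, the discriminant $\disc(q_{w_k})$ lies in $\delta F^{\times2}$ for large $k$.  So $q_{w_k}$ is irreducible over $F$, contradicting the hypothesis.  Corollary~\ref{cor:rank1-parity} then converts this into the equivalent statement that $[F(E(w_k)):F]=2$.

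The main obstacle is the first step: Zariski density of the cocycle values in $M$.  The argument of Proposition~\ref{prop:large-holonomy} needs $\Lambda\cap N^-LN^+$ to be Zariski-dense, and it needs $c(s)$ to be defined at $x_0$, meaning $s x_0$ lies in the big cell.  Both hold because $\Lambda$ is Zariski-dense and the big cell is Zariski-open.  Some care is also needed to identify the transverse action of $m$ with $u$ through the isogeny $\tau$, and to make sure that openness inside $M^{\rm der}$, rather than inside the full $M$, suffices.  It does, because discriminant classes only depend on the image in $\SU(W_0)$ modulo scalars.
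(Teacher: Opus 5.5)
Your proposal is correct and follows the paper's proof essentially step for step: a reset element $t$, openness of the derived holonomy via Lemmas~\ref{lem:derived-open} and~\ref{lem:derived-transverse}, a non-split target from Lemma~\ref{lem:accumulate}, and realization by actual loxodromic words via Proposition~\ref{prop:large-holonomy} and Lemma~\ref{lem:reset}. The only difference is cosmetic: you transfer non-splitting by continuity of the transverse characteristic polynomial together with openness of $F^{\times2}$, whereas the paper uses the open conjugation-saturated set $\mathcal N$ and the open conjugacy-class quotient map. Your aside about $\ker\tau$ is harmless but unnecessary, since $\tau$ is by definition the transverse action and so the transverse action of $m$ is exactly $u$.
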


\begin{proof}
Choose a reset element $t$.  By Proposition~\ref{prop:large-holonomy} and Lemma~\ref{lem:derived-open}, the closed derived-holonomy group $D_t(\Lambda)$ is an open subgroup of $M^{\rm der}(\Q_p)$.  Lemma~\ref{lem:derived-transverse} shows that its transverse image contains an identity neighborhood in the anisotropic group $\SU(W_0)(\Q_p)$.  By Lemma~\ref{lem:accumulate}, that neighborhood contains a regular element with irreducible quadratic characteristic polynomial over $F$.  Choose $d\in D_t(\Lambda)\subset C_t(\Lambda)$ whose transverse image has this property.

Proposition~\ref{prop:large-holonomy} places the conjugacy class of $d$ in the closure of rotational holonomies of actual loxodromic words of $\Lambda$.  Let
\[
 \mathcal N\subset M(\Q_p)
\]
be the set of elements whose transverse characteristic polynomial is regular and irreducible over $F$.  This set is open: regularity is the nonvanishing of the discriminant, and in a non-archimedean local field the subgroup $F^{\times2}\subset F^\times$ is open, so the square class of a nonzero discriminant is locally constant.  The set $\mathcal N$ is also invariant under conjugation.

Let
\[
 q_M:M(\Q_p)\longrightarrow M(\Q_p)/\!\operatorname{Conj}
\]
be the topological conjugacy-class quotient.  Conjugation acts by homeomorphisms, so $q_M$ is an open quotient map; since $\mathcal N$ is conjugacy-saturated, $q_M(\mathcal N)$ is an open neighborhood of $[d]$.  Because $[d]$ lies in the closure of the realized rotational classes, one of those classes lies in $q_M(\mathcal N)$.  The corresponding loxodromic word therefore has irreducible transverse factor.  Corollary~\ref{cor:rank1-parity} then gives local spectral degree two.
\end{proof}

\section{Local--global descent in dimension four}\label{sec:assembly}

The local analysis has a simple global consequence.  Once one can force an even spectral extension at each obstructive place, the Hermitian norm class disappears after passing to a finite compositum.

\begin{lemma}\label{lem:compositum}
Let $K/\Q$ be imaginary quadratic and let $\Delta\in\Q_{>0}$.  For each $p\in S_\Delta$, suppose there is a finite Galois extension $L_p/K$, stable under complex conjugation, such that every completion of $L_p$ over $p$ has even degree over $K_p$.  Let $L$ be the compositum of the $L_p$ and let $L_0=L^{\bar{\ }}$.  Then
\[
 \Delta\in N_{L/L_0}(L^\times).
\]
\end{lemma}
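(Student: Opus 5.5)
The plan is to use the Hasse norm theorem for the quadratic extension $L/L_0$. Then local computations at each place $v$ of $L_0$ suffice, and we split into three kinds of places. First we record the structural facts. $L$ is a compositum of finitely many Galois extensions of $K$, each stable under complex conjugation, so $L/K$ is Galois and $L$ is conjugation-stable. Complex conjugation restricts to $K$ nontrivially, hence acts nontrivially on $L$. Therefore $L_0=L^{\bar{\ }}$ satisfies $[L:L_0]=2$, $L=KL_0$ and $K\cap L_0=\Q$. The quadratic extension $L/L_0$ is cyclic, so the Hasse norm theorem says that $\Delta\in N_{L/L_0}(L^\times)$ if and only if $\Delta$ is a local norm at every place $v$ of $L_0$. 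At places where $L/L_0$ splits, the local norm map is surjective, and nothing needs to be checked.

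Archimedean places come next. $L_0$ contains no element of $K\setminus\Q$, but it may have complex places, where $L/L_0$ splits. At a real place $v$ of $L_0$ that ramifies in $L$, the local norm group is $\R_{>0}$, and $\Delta>0$ lies in it. Next, take finite places $v$ over a prime $p\notin S_\Delta$. By definition $\Delta\in N_{K_p/\Q_p}(K_p^\times)$, using the product over places if $p$ splits in $K$. If $L_w/L_{0,v}$ is non-split, then $L_w=K_{\mathfrak p}L_{0,v}$ for the prime $\mathfrak p$ of $K$ below $w$. Transitivity of norms (the norm of $x\in K_{\mathfrak p}$ viewed in the compositum) expresses $\Delta=N_{K_{\mathfrak p}/\Q_p}(y)$ as a norm from $L_w$ to $L_{0,v}$. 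Here I would take care with the case where $K_{\mathfrak p}\cap L_{0,v}\ne\Q_p$. In that case $K_{\mathfrak p}\subset L_{0,v}$, so $L_w=L_{0,v}$ and the place is split. In the other case the restriction map identifies $\Gal(L_w/L_{0,v})$ with $\Gal(K_{\mathfrak p}/\Q_p)$, and the norm compatibility $N_{L_w/L_{0,v}}(y)=N_{K_{\mathfrak p}/\Q_p}(y)$ holds for $y\in K_{\mathfrak p}$.

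The key case is $p\in S_\Delta$, and I would follow the degree argument of Proposition~\ref{prop:even-degree} and Theorem~\ref{thm:open-annihilation}. Fix a place $w$ of $L$ over $p$, let $v$ be the place of $L_0$ below $w$, and assume $L_w/L_{0,v}$ is non-split. Since $p\in S_\Delta$, $K_p$ is a quadratic field over $\Q_p$ (if $p$ split in $K$, every element would be a norm). The place $w$ lies above a place of $L_p$ whose completion has even degree over $K_p$, and local degrees multiply along the tower $K_p\subset(L_p)_{w'}\subset L_w$, so $[L_w:K_p]$ is even. As before, $K_p\cap L_{0,v}=\Q_p$ and $L_w=K_pL_{0,v}$. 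Comparing degrees gives $[L_{0,v}:\Q_p]=[L_w:K_p]$, which is even. The class of $\Delta$ in $\Q_p^\times/N(K_p^\times)$ corresponds to the quaternion algebra $(\Delta,\disc K)_p$ with invariant $1/2$. Restricting to $L_{0,v}$ multiplies the invariant by the even degree, so the invariant vanishes. Hence $\Delta\in N_{L_w/L_{0,v}}(L_w^\times)$, using the identification of $L_{0,v}^\times/N(L_w^\times)$ with the $2$-torsion of $\operatorname{Br}(L_{0,v})$ via the algebra $(\Delta,\disc K)\otimes L_{0,v}$.

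I expect the main obstacle to be bookkeeping in this last step. One must check that the quadratic extension $L_w/L_{0,v}$ really is the base change of $K_p/\Q_p$, so that the cyclic algebra $(L_w/L_{0,v},\Delta)$ equals the restriction of $(K_p/\Q_p,\Delta)$. One must also confirm that "every completion of $L_p$ over $p$ has even degree" transfers to every $w$ of $L$. The second point is immediate from the tower, and the first follows from the equality $L_w=K_pL_{0,v}$ established above. Once all local conditions hold, Hasse's theorem gives $\Delta\in N_{L/L_0}(L^\times)$.
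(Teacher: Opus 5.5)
Your proposal is correct and follows essentially the same route as the paper: you apply the Hasse norm theorem to the quadratic extension $L/L_0$, and at non-split places over $S_\Delta$ you use $L_w=K_pL_{0,v}$ and the degree comparison $[L_{0,v}:\Q_p]=[L_w:K_p]$ (even) to kill the local Brauer invariant $1/2$. The only difference is that you give an explicit norm-compatibility argument for places outside $S_\Delta$ and a fuller treatment of the archimedean places, where the paper disposes of each in a single sentence.
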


\begin{proof}
Every $p\in S_\Delta$ is non-split in $K$, hence $K$ has a unique place above $p$.  Since $L_p/K$ is Galois, all local degrees above that place are equal and even.  The compositum $L/K$ remains Galois and stable under complex conjugation.  Put $L_0=L^{\bar{\ }}$, so $L/L_0$ is quadratic.

Let $v$ be a place of $L_0$ over $p\in S_\Delta$.  If $L/L_0$ splits at $v$, the local norm is surjective.  Otherwise let $w$ be the unique place of $L$ over $v$.  The completion $L_w$ contains a completion of $L_p$, so
\[
 [L_w:K_p]
\]
is even.  Conjugation acts nontrivially on $K_p$ and fixes $L_{0,v}$, hence
\[
 K_p\cap L_{0,v}=\Q_p.
\]
Since both $K_p/\Q_p$ and $L_w/L_{0,v}$ have degree two and $K_p\subset L_w$, it follows that
\[
 L_w=K_pL_{0,v}.
\]
Comparing the two towers therefore gives
\[
 [L_{0,v}:\Q_p]=[L_w:K_p],
\]
which is even.  Restriction of the local Brauer class multiplies the invariant $1/2$ by this even degree, so the class vanishes over $L_{0,v}$.  Equivalently, the corresponding cyclic algebra for the quadratic extension $L_w/L_{0,v}$ splits, and $\Delta$ is a local norm from $L_w/L_{0,v}$.

At places outside $S_\Delta$ the class was already trivial.  At the archimedean places, either the extension splits or the positivity of $\Delta$ makes it a norm from $\C^\times$ to $\R_{>0}$.  The Hasse norm theorem for the cyclic quadratic extension $L/L_0$ now gives
\[
 \Delta\in N_{L/L_0}(L^\times).
\]
\end{proof}

The preceding results give the following theorem.

\begin{theorem}\label{thm:partial-total}
Let $K/\Q$ be imaginary quadratic,
\[
 H_K=\mathbb H\oplus\langle1,q\rangle,
 \qquad q\in\Q_{>0},
\]
and let $\Gamma<\SU(H_K,K)$ be finitely generated and Zariski-dense.  Assume that for every obstructive prime $p\in S_q$, at least one of the following holds:
\begin{enumerate}[label=(\roman*)]
\item the closure of $\Gamma$ in $\SU(H_K)(\Q_p)$ is non-discrete;
\item the local Hermitian space $H_K\otimes\Q_p$ is non-quasi-split and the local closure is discrete.
\end{enumerate}
Then there exists a finite conjugation-stable extension
\[
 L\subset F_{\rm eig}(\Gamma)
\]
such that
\[
 H_K\sim_LJ.
\]
In particular $F_{\rm eig}(\Gamma)$ is a unitary field of definition.
\end{theorem}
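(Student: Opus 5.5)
The plan is to reduce the statement to Lemma~\ref{lem:compositum} by producing, for each obstructive prime $p\in S_q$, a single element $\gamma_p\in\Gamma$ whose splitting field $L_p=K(E(\gamma_p))$ has even local degree over $K_p$. Each such $L_p$ is Galois over $K$ (splitting field of $\chi_{\gamma_p}\in K[X]$) and stable under complex conjugation, because unitarity makes the spectrum stable under $\lambda\mapsto\bar\lambda^{-1}$, as in Proposition~\ref{prop:even-degree}. Since $L_p/K$ is Galois and $p$ has a unique place in $K$, evenness at one completion gives evenness at all completions above $p$. The compositum $L$ of the finitely many $L_p$ then lies in $F_{\rm eig}(\Gamma)$. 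Lemma~\ref{lem:compositum}, applied with $\Delta=q$, gives $q\in N_{L/L_0}(L^\times)$. With $H_K=\mathbb H\oplus\langle1,q\rangle$, rescaling the last basis vector by a norm preimage of $q$ turns $H_K$ into $J$ over $L$; equivalently Theorem~\ref{thm:normcriterion}/Landherr gives $H_K\sim_LJ$. Similarity persists to $F_{\rm eig}(\Gamma)$, and Proposition~\ref{prop:exact-unitary-en} converts it into a unitary realization.

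In case (i), the closure of $\Gamma$ at $p$ is non-discrete, so Lemma~\ref{lem:closure-dichotomy} makes it open in $\mathbf G(\Q_p)$. Lemma~\ref{lem:biquad} (with $\Delta=q$) gives a regular $t_p$ and an open $U_p$ on which every element has local splitting field containing a quadratic extension of $K_p$. Density of $\Gamma$ in its open closure yields $\gamma_p\in\Gamma\cap U_p$, and then $[K_p(E(\gamma_p)):K_p]$ is even. Only the single place $p$ is used here, so no simultaneous openness (and hence no appeal to Pink's results) is required.

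In case (ii), the local closure $\overline\Gamma^{\,p}$ is discrete in $\SU(V)(\Q_p)$, with $V=H_K\otimes\Q_p$ non-quasi-split. I would pass to a torsion-free finite-index subgroup $\Gamma'$ via Selberg's lemma, since $\Gamma$ is finitely generated. $\Gamma'$ is still Zariski-dense because $\SU$ is connected, its image in $\SU(V)(\Q_p)$ is discrete, and it is contained in $\Gamma$. Theorem~\ref{thm:local-rank1} then supplies $g\in\Gamma'$ with irreducible transverse factor $q_g$ over $F=K_p$. By Corollary~\ref{cor:rank1-parity}, $[K_p(E(g)):K_p]=2$, and we set $\gamma_p=g$.

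The main obstacle I expect is the interface in case (ii): checking that Theorem~\ref{thm:local-rank1} applies to the image of $\Gamma'$ itself rather than to its closure. Discreteness of the closure means $\Gamma'$ is already discrete there, and torsion-freeness is inherited from the global group. One also has to check that Zariski density over $\Q$ gives Zariski density of $\SU(V)$ over $\Q_p$, which is immediate because the algebraic group is the same after base change. A second point to verify is that the local spectral field of $g$ over $K_p$ is a completion of the global $K(E(g))$; this holds because the roots of $\chi_g$ generate both fields.
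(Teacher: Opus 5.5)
Your proposal is correct and follows the paper's proof essentially step for step. You reduce to Lemma~\ref{lem:compositum} by producing one element $\gamma_p$ of even local spectral degree at each $p\in S_q$: via Lemma~\ref{lem:closure-dichotomy}, Lemma~\ref{lem:biquad} and density in case (i), and via a torsion-free finite-index subgroup and Theorem~\ref{thm:local-rank1} in case (ii). The differences are cosmetic. The paper passes to the torsion-free subgroup $\Gamma'$ at the outset, so in case (i) it works with the closure of $\Gamma'$, which is harmless since that closure has finite index in the closure of $\Gamma$; you do so only in case (ii). You also make explicit the final step from $q\in N_{L/L_0}(L^\times)$ to $H_K\sim_L J$, by rescaling the basis vector whose coefficient is $q$.
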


\begin{proof}
If $S_q=\varnothing$, the finite obstruction is already trivial and there is nothing to prove.  Otherwise pass to a torsion-free finite-index subgroup $\Gamma'\subset\Gamma$; it remains Zariski-dense, and $F_{\rm eig}(\Gamma')\subset F_{\rm eig}(\Gamma)$.

Fix $p\in S_q$.  In case (i), let
\[
 H_p=\overline{\Gamma'}\subset\SU(H_K)(\Q_p).
\]
Lemma~\ref{lem:closure-dichotomy} makes $H_p$ open.  Choose an identity neighborhood $\Omega_p\subset H_p$ and apply Lemma~\ref{lem:biquad} inside $\Omega_p$ to obtain an open set $U_p\subset H_p$ on which every characteristic polynomial has local splitting field containing a quadratic extension of $K_p$.  Since $\Gamma'$ is dense in $H_p$, choose
\[
 \gamma_p\in\Gamma'\cap U_p.
\]
Then $K_p(E(\gamma_p))/K_p$ has even degree.  In case (ii), Theorem~\ref{thm:local-rank1} produces $\gamma_p\in\Gamma'$ with local splitting degree two.

In either case put
\[
 L_p=K(E(\gamma_p)).
\]
This is the splitting field over $K$ of the characteristic polynomial of $\gamma_p$, hence $L_p/K$ is finite Galois.  Unitarity makes the set of roots stable under $\lambda\mapsto\bar\lambda^{-1}$, so $L_p$ is stable under complex conjugation.  For one completion of $L_p$ above the unique place of $K$ over $p$, the local field is exactly the local splitting field $K_p(E(\gamma_p))$ and therefore has even degree over $K_p$.  Because $L_p/K$ is Galois, all completions above that place have the same local degree.  Thus $L_p$ satisfies the hypotheses of Lemma~\ref{lem:compositum}.

Taking the compositum over the finite set $S_q$ and applying that lemma kills the norm class $q$.  The resulting compositum lies in $F_{\rm eig}(\Gamma')\subset F_{\rm eig}(\Gamma)$, so it is a unitary field of definition for the original group $\Gamma$ as well.
\end{proof}

It remains to treat obstructive primes for which the local closure is discrete and the local Hermitian form is quasi-split.  Such places can occur because the obstructive class is defined relative to the chosen global standard form and need not coincide with the local non-quasi-split class.

\subsection{The quasi-split branch}

In the quasi-split four-dimensional case the local group has relative rank two.  The building is two-dimensional, not a tree.  There is no canonical decomposition
\[
 1+2+1
\]
with a unique anisotropic Hermitian plane, no unique quadratic rotational factor, and no single square-class discriminant that detects all even spectral extensions.  Consequently the proof of Theorem~\ref{thm:local-rank1} cannot simply be repeated.

A higher-rank solution should provide a replacement for at least one of the following rank-one mechanisms:
\begin{enumerate}[label=(\alph*)]
\item a canonical spectral factor whose non-splitting forces even local degree;
\item a multi-directional Levi holonomy whose open derived image meets an even spectral stratum;
\item an asymptotic composition theorem for words $g^nh^m$ or for several reset directions, strong enough to realize a prescribed higher-rank holonomy by actual group words;
\item a direct torus/generic-element argument showing that a discrete Zariski-dense subgroup cannot remain in the odd local spectral strata.
\end{enumerate}

Conjecture~\ref{conj:rank2} asks for such a higher-rank replacement.  A proof would upgrade Theorem~\ref{thm:partial-total} to Conjecture~\ref{conj:total-main}.

\section{Complex hyperbolic lattice examples}\label{sec:controls-en}

The preceding criteria become more transparent in familiar lattice examples.  Picard modular groups and their sisters are arithmetic and therefore fall under Theorem~\ref{thm:arith}.  Hybrid and totally loxodromic constructions show that Zariski-dense thin behavior is also abundant \cite{FalbelParker2006,FalbelFrancsicsParker2011,FalbelFrancsicsLaxParker2011,XieWangJiang2013,XieWangJiang2015,PaupertWells2020,FalbelPasquinelli2023,ParkerPlatis2008,GongopadhyayParsad2018}.  Non-arithmetic rank-four examples, in turn, show that the determinant obstruction and the total spectral field need not track arithmeticity itself.

Two such examples are especially informative.

\begin{proposition}\label{prop:G29-en}
Let $L=\Q(\zeta_{12})$ and $L_0=\Q(\sqrt3)$.  For the Hermitian form $H_{29}$ used by Deraux for the non-arithmetic lattice $\mathcal C(G_{29},3)$ \cite[(5.11), Proposition~5.4]{Deraux2020} one has
\[
 \det H_{29}=-\frac{2+\sqrt3}{9}.
\]
Since $\det J=-1$,
\[
 \frac{\det H_{29}}{\det J}
 =\frac{2+\sqrt3}{9}
 =N_{L/L_0}\left(\frac{1+\zeta_{12}}3\right).
\]
Hence the finite discriminant class relevant to the present unitary descent is already trivial over $L$.
\end{proposition}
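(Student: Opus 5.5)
The statement has three parts: the determinant value, the norm identity, and the triviality of the discriminant class over $L$. I would check them in that order.

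For the determinant, I would take Deraux's explicit Hermitian matrix $H_{29}$ from \cite[(5.11)]{Deraux2020}. Its entries lie in $L=\Q(\zeta_{12})$, and I would expand the determinant directly in the $\Q$-basis $1,\zeta_{12},\zeta_{12}^2,\zeta_{12}^3$, reducing with $\zeta_{12}^4=\zeta_{12}^2-1$. Since $H_{29}$ is Hermitian, the determinant is fixed by complex conjugation and so lies in $L_0=\Q(\sqrt3)$. This means only two rational coordinates, in the basis $1,\sqrt3$, have to be matched against $-(2+\sqrt3)/9$. As a check, the sign must be negative because the signature is $(3,1)$. Deraux's Proposition~5.4 fixes the signature, and I would use it only for that purpose. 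Because $\det J=-1$, the ratio of determinants is $(2+\sqrt3)/9$.

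For the norm identity, I would write $\zeta=\zeta_{12}$, so that $\bar\zeta=\zeta^{-1}$ and $\zeta+\zeta^{-1}=2\cos(\pi/6)=\sqrt3$. Then
\[
N_{L/L_0}(1+\zeta)=(1+\zeta)(1+\zeta^{-1})=2+\sqrt3 .
\]
Dividing by $N_{L/L_0}(3)=9$ gives the displayed equality. This step is short and needs no further work.

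For the final assertion, I would argue as follows. The relevant finite obstruction in Theorem~\ref{thm:landherr} is the class of $\det H_{29}/\det J$ in $L_0^\times/N_{L/L_0}(L^\times)$. We have just shown that this class is a global norm, so it is trivial. At the two real places of $L_0$, I would note only that the signatures are governed separately. The statement claims triviality of the finite discriminant class alone, so no archimedean argument is needed beyond that remark.

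The main obstacle is the first step. It requires reproducing Deraux's matrix exactly, with his normalization and choice of root of unity, and carrying out the $4\times4$ cyclotomic determinant without error. I would guard against a normalization slip by also checking that the Galois conjugate $\sqrt3\mapsto-\sqrt3$ gives $-(2-\sqrt3)/9$, which is consistent with a definite conjugate form up to sign conventions.
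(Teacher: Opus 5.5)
Your proposal is correct and follows the paper's route. You substitute into Deraux's matrix, note that the determinant of a Hermitian matrix lies in $\Q(\sqrt3)$, and compute $N_{L/L_0}(1+\zeta_{12})=(1+\zeta_{12})(1+\bar\zeta_{12})=2+\sqrt3$. For the determinant, the paper takes $\alpha=1/(z-1)$, $\beta=(1+i)/(z(z-1))$ and $z=e^{2\pi i/3}$. Expanding along the first row gives $\det H_{29}=1-3|\alpha|^2+|\alpha|^4-|\beta|^2+2\operatorname{Re}(\alpha^2\bar\beta)$. Since $|\alpha|^2=1/3$, $|\beta|^2=2/3$ and $2\operatorname{Re}(\alpha^2\bar\beta)=(3-\sqrt3)/9$, this is quicker than bookkeeping in the power basis of $\zeta_{12}$.

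One correction concerns your proposed safeguard, which points the wrong way. The Galois conjugate $-(2-\sqrt3)/9$ is negative. A definite $4\times4$ Hermitian form has positive determinant whichever sign convention you use, since $(-1)^4=1$. So the conjugate form is \emph{indefinite}, not definite. That is exactly what should happen: $\mathcal C(G_{29},3)$ is non-arithmetic, and as the paper remarks, this non-arithmeticity comes from the archimedean conjugate. Used as you state it, your check would wrongly flag the correct value as a normalization slip. It does not affect the proof of the statement, which only concerns the finite class and never uses the second real place.
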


\begin{proof}
In the coordinates of \cite[(5.11)]{Deraux2020},
\[
 \alpha=\frac1{z-1},
 \qquad
 \beta=\frac{1+i}{z(z-1)},
 \qquad z=e^{2\pi i/3},
\]
and
\[
 H_{29}=\begin{pmatrix}
 1&\alpha&0&0\\
 \bar\alpha&1&\alpha&\beta\\
 0&\bar\alpha&1&\alpha\\
 0&\bar\beta&\bar\alpha&1
 \end{pmatrix}.
\]
Substitution of $z=(-1+i\sqrt3)/2$ gives the displayed determinant.  Moreover
\[
 N_{L/L_0}(1+\zeta_{12})
 =(1+\zeta_{12})(1+\bar\zeta_{12})=2+\sqrt3.
\]
\end{proof}

Conceptually, this example separates two phenomena.  Its non-arithmeticity is caused by archimedean conjugates, whereas the finite norm class studied here is already trivial.

\begin{proposition}\label{prop:DM-en}
Let $H_{\rm DM}$ be the Hermitian form used in \cite[(6.6)]{Deraux2020} for the group $\mathcal C(B_4,3,4)$, identified there with the Deligne--Mostow lattice of weights $(3,3,3,3,5,7)/12$; see \cite[Proposition~6.1]{Deraux2020}.  Thus
\[
 H_{\rm DM}=\begin{pmatrix}
 0&0&0&1\\
 0&2&-1-i&0\\
 0&-1+i&2&0\\
 1&0&0&0
 \end{pmatrix}.
\]
Then
\[
 \det H_{\rm DM}=-2,
 \qquad
 \frac{\det H_{\rm DM}}{\det J}=2=N_{\Q(i)/\Q}(1+i).
\]
In fact, for
\[
 P=\begin{pmatrix}
 1&0&0&0\\
 0&(1+i)^{-1}&(1+i)/2&0\\
 0&0&1&0\\
 0&0&0&1
 \end{pmatrix}
\]
one has
\[
 P^*H_{\rm DM}P=J.
\]
\end{proposition}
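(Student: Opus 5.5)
The proof is a direct computation with the explicit matrices $H_{\rm DM}$ and $P$. No structural result is needed beyond the norm criterion already recorded in Theorem~\ref{thm:normcriterion}. I will check the two determinant facts first, and then the stronger explicit similarity $P^*H_{\rm DM}P=J$.

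\emph{Determinant.} $H_{\rm DM}$ is block-structured: the outer coordinates $e_1,e_4$ carry the hyperbolic plane $\mathbb H$, and the middle $2\times2$ block is
\[
 B=\begin{pmatrix}2&-1-i\\-1+i&2\end{pmatrix}.
\]
Hence $\det H_{\rm DM}=\det\mathbb H\cdot\det B=(-1)\cdot\bigl(4-(-1-i)(-1+i)\bigr)=-(4-2)=-2$. Since $\det J=-1$, the ratio is $2$, and $2=(1+i)(1-i)=N_{\Q(i)/\Q}(1+i)$. By Theorem~\ref{thm:normcriterion}, this already shows that the finite discriminant class is trivial over $\Q(i)$.

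\emph{Explicit intertwiner.} $P$ acts as the identity on $e_1,e_4$ and only mixes $e_2,e_3$. It therefore suffices to verify $Q^*BQ=I_2$ for
\[
 Q=\begin{pmatrix}(1+i)^{-1}&(1+i)/2\\0&1\end{pmatrix},
\]
because the hyperbolic block is untouched and $J=\mathbb H\oplus I_2$ in the Siegel ordering. I will check the three independent entries of the Hermitian matrix $Q^*BQ$. Write $c=(1+i)^{-1}=(1-i)/2$, so $|c|^2=1/2$, and write the second column of $Q$ as $v=((1+i)/2,1)^T$.

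The $(1,1)$ entry is $2|c|^2=1$.

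The $(1,2)$ entry is $\bar c\,(Bv)_1$, where $(Bv)_1=2\cdot\frac{1+i}{2}+(-1-i)\cdot1=0$, so this entry vanishes.

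The $(2,2)$ entry is $v^*Bv=\overline{v_1}(Bv)_1+\overline{v_2}(Bv)_2=(Bv)_2$. Here $(Bv)_2=(-1+i)\frac{1+i}{2}+2=\frac{-2}{2}+2=1$.

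This gives $Q^*BQ=I_2$, and hence $P^*H_{\rm DM}P=J$.

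\emph{Obstacle.} There is no real obstacle; the only care needed is the Hermitian convention. Conjugate-transposing $Q$ turns $c$ into $\bar c$, and the off-diagonal entry of $B$ is $-1+i$ in the lower-left position. A sign slip here would produce a spurious nonzero off-diagonal entry. The determinant computation serves as an independent consistency check, since $|\det P|^2=|c|^2=1/2$ must equal $\det J/\det H_{\rm DM}=1/2$, and it does.
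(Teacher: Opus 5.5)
Your proof is correct and follows the paper's argument. The paper likewise isolates the central block $C$, notes $\det C=2$, and checks that $u=\bigl((1+i)^{-1},0\bigr)^T$ and $w=\bigl((1+i)/2,1\bigr)^T$ satisfy $u^*Cu=w^*Cw=1$ and $u^*Cw=0$; this is exactly your verification of $Q^*BQ=I_2$. (Your appeal to Theorem~\ref{thm:normcriterion} is not needed for the statement, and applying it would first require diagonalizing $H_{\rm DM}$ to $\mathbb H\oplus\langle1,2\rangle$; the explicit $P$ makes this step unnecessary.)
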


\begin{proof}
The central Hermitian block
\[
 C=\begin{pmatrix}2&-1-i\\-1+i&2\end{pmatrix}
\]
has determinant $2$.  The vectors
\[
 u=\binom{(1+i)^{-1}}0,
 \qquad
 w=\binom{(1+i)/2}1
\]
satisfy $u^*Cu=w^*Cw=1$ and $u^*Cw=0$.  Replacing the central basis by $u,w$ gives the displayed matrix $P$ and the identity $P^*H_{\rm DM}P=J$.
\end{proof}

These non-arithmetic rank-four examples have trivial finite Hermitian obstruction of the type considered here, illustrating that non-arithmeticity and failure of unitary descent over a prescribed field are distinct phenomena.

\section{Traces, spectra, and fields of definition}\label{sec:trace-vs-eigen}

At this point the spectral field can be compared back with the trace field.  The distinction that organized the paper reappears in a particularly clean form: characteristic polynomials are controlled by traces, while their roots may still carry the arithmetic information needed for unitary descent.

The splitting-field problem therefore separates into two levels.  The first is exact: characteristic polynomials of group elements are determined by traces of words.  The second is genuinely arithmetic: can the Hermitian obstruction be removed without adjoining all of the roots of those polynomials?

For $g\in\SU(3,1)$ one has
\[
 \chi_g(X)=X^4-\tr(g)X^3
 +\frac{\tr(g)^2-\tr(g^2)}2X^2
 -\tr(g^{-1})X+1.
\]
Since $g^2,g^{-1}\in\Gamma$, all coefficients belong to the trace field
\[
 k_{\rm tr}=\Q(\tr\Gamma).
\]

\begin{proposition}\label{prop:trace-splitting-equivalence}
Let $F_{\rm spl}$ be the compositum over $k_{\rm tr}$ of the splitting fields of $\chi_g$ for all $g\in\Gamma$.  Then
\[
 F_{\rm spl}=\Q\bigl(E(g):g\in\Gamma\bigr).
\]
Consequently, after adjoining a chosen linear field of definition $K$,
\[
 KF_{\rm spl}=K\bigl(E(g):g\in\Gamma\bigr)=F_{\rm eig}(\Gamma).
\]
\end{proposition}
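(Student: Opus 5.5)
We prove the two inclusions between $F_{\rm spl}$ and $\Q(E(g):g\in\Gamma)$ directly from the definitions, and then adjoin $K$ to both sides. Throughout, the splitting field of $\chi_g$ over $k_{\rm tr}$ means the subfield $k_{\rm tr}(E(g))\subset\C$ generated by the roots of $\chi_g$. This is legitimate because $\chi_g\in k_{\rm tr}[X]$ by the displayed trace formula: its coefficients are $\tr(g)$, $\tr(g^{-1})$ and $(\tr(g)^2-\tr(g^2))/2$, and $g^{\pm1},g^2\in\Gamma$.

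For the inclusion $F_{\rm spl}\subset\Q(E(g):g\in\Gamma)$, the compositum is generated by $k_{\rm tr}$ together with all roots of all $\chi_g$. The roots lie in the right-hand side by definition. Every trace satisfies $\tr(g)=\sum_{\lambda\in E(g)}\lambda$, counted with multiplicity, so $k_{\rm tr}$ lies in the right-hand side as well.

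For the reverse inclusion, each $E(g)$ is the root set of $\chi_g$, hence lies in the splitting field $k_{\rm tr}(E(g))\subset F_{\rm spl}$. Taking the compositum over all $g$ gives the equality.

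Adjoining $K$ to both sides of this equality gives
\[
 KF_{\rm spl}=K\bigl(E(g):g\in\Gamma\bigr).
\]
The right-hand side is $F_{\rm eig}(\Gamma)$ by its definition in Section~\ref{sec:total}. In this form the inclusion $k_{\rm tr}\subset K$ is not even needed, since adjoining $K$ to a field that already contains $k_{\rm tr}$ changes nothing extra. The only point to watch is that "splitting field" must be taken inside the fixed ambient $\C$, so that the compositum is well defined. With that convention there is no real obstacle: the statement is essentially bookkeeping, and the only substantive input is that characteristic polynomials have coefficients in the trace field. Elements $g$ with repeated eigenvalues are harmless, since $E(g)$ is still the root set of $\chi_g$.
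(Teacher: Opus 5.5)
Your proof is correct and follows the paper's argument. The paper also proves the two inclusions directly: each splitting field is generated by eigenvalues, and each eigenvalue is a root of its characteristic polynomial. It then adjoins $K$. Your explicit remark that $k_{\rm tr}\subset\Q(E(g):g\in\Gamma)$, because every trace is a sum of eigenvalues, fills in the one step the paper's short proof leaves tacit.
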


\begin{proof}
Each splitting field is generated by the eigenvalues of the corresponding element, so $F_{\rm spl}$ is contained in the field generated by all eigenvalues.  Conversely every eigenvalue of every $g$ is a root of $\chi_g$, hence belongs to its splitting field.  The final identity follows by adjoining $K$.
\end{proof}

Thus the splitting-field formulation is an intrinsic trace-theoretic presentation of the same total spectral extension once the linear field $K$ has been fixed.  The stronger trace-only form of the question remains open.

\begin{problem}\label{prob:trace-only}
Under what additional hypotheses does the trace field $k_{\rm tr}$ itself, or a finite extension determined functorially from trace data but smaller than the full spectral compositum, already kill the Hermitian similarity obstruction?  In particular, can one characterize when the auxiliary eigenvalue field of a distinguished regular loxodromic element is unnecessary?
\end{problem}

The results of this paper show why this question is subtle.  Traces determine the characteristic polynomials, but the Hermitian embedding class of a fixed torus may remain invisible to its abstract spectrum.  On the other hand, the arithmetic theorem and the local even-degree criterion show that spectra of suitable additional words do recover precisely the norm classes that are missing from the one-element field.

\section{The remaining quasi-split case}

The whole argument leaves one local configuration untouched: a quasi-split obstructive place of relative rank two.  This is not a technical variant of the rank-one case, because the distinguished transverse anisotropic plane disappears.  The preceding results therefore reduce the unresolved part of the $\SU(3,1)$ problem to this local question.  In relative rank one the transverse anisotropic plane gives a distinguished quadratic factor, and the full Bruhat cocycle recovers enough derived Levi holonomy to force a non-split rotation.  Neither feature has a direct analogue in relative rank two.

Two approaches seem natural.  One is to retain several Levi components simultaneously and prove an asymptotic composition law for words
\[
 g^{n_r}h_r\cdots g^{n_1}h_1,
\]
or for two-parameter words $g^nh^m$, with convergence in an appropriate higher-rank compact or derived quotient.  The other is to use generic maximal tori and show that a discrete Zariski-dense subgroup cannot have odd local splitting degree for every element at a fixed obstructive place.

Thus the field-of-definition problem has been reduced to a sharply localized higher-rank realization question.  The rank-one argument succeeds because a canonical quadratic transverse factor converts Levi holonomy into even local spectral degree; a solution in rank two must replace that factor by genuinely higher-rank spectral data.  Either of the two approaches above would supply the missing local input in Conjecture~\ref{conj:rank2} and complete the present strategy toward Conjecture~\ref{conj:total-main}.

\appendix
\section{A finite-field certificate for the bending computation}\label{app:bending-certificate}
For Proposition~\ref{prop:zariski-bending-en}, reduction at $\varepsilon=1$ modulo $5$ with $i\mapsto2$ gives
\[
 C_1=
 \begin{pmatrix}
 1&3&4&4\\
 1&2&1&0\\
 4&0&2&4\\
 1&2&3&3
 \end{pmatrix}
 \in\SL_4(\mathbb F_5).
\]
Starting from $Z_1=H_0$ and $Z_2=C_1H_0C_1^{-1}$ and using the recursive Lie words displayed in the proof of Proposition~\ref{prop:zariski-bending-en}, form the $15\times15$ coordinate matrix whose rows are the twelve off-diagonal coordinates followed by the first three diagonal coordinates.  Direct elimination over $\mathbb F_5$ gives
\[
 \det(Z_1,\ldots,Z_{15})=2\ne0\pmod5.
\]
This finite calculation is sufficient to certify the non-vanishing of the Lie-algebra minor used in the proof.  The companion script \texttt{CERTIFICADO\_ZARISKI\_BENDING\_v1.py}, archived with a permanent DOI in \cite{SplittingFieldsZenodo2026}, performs the same computation using only integer arithmetic modulo $5$.

\section{A finite-field certificate for the Schottky computation}\label{app:schottky-certificate}
For Proposition~\ref{prop:schottky}, reduce modulo $5$ with $i\mapsto2$.  Then
\[
 \bar A_s=\diag(3,2,3,2),
 \qquad
 \bar C_s=
 \begin{pmatrix}
 1&0&3&2\\
 3&2&0&4\\
 3&3&1&2\\
 1&1&0&4
 \end{pmatrix},
\]
and $\bar B_s=\bar C_s\bar A_s\bar C_s^{-1}$.  Starting with
\[
 \bar X=\diag(1,0,0,-1),
 \qquad
 \bar Y=\bar C_s\bar X\bar C_s^{-1},
\]
form the fifteen matrices $Z_1,\ldots,Z_{15}$ listed in the proof of Proposition~\ref{prop:schottky}.  In the same $15$ coordinates on $\mathfrak{sl}_4$ used above, the coordinate matrix with these columns satisfies
\[
 \det(Z_1,\ldots,Z_{15})=1\ne0\pmod5.
\]
Thus the Lie algebra generated by the indicated adjoint and commutator operations is all of $\mathfrak{sl}_4(\mathbb F_5)$, and the characteristic-zero determinant is nonzero.  The pure-Python script \path{CERTIFICADO_ZARISKI_SCHOTTKY_v1.py}, archived in the Zenodo reproducibility package \cite{SplittingFieldsZenodo2026}, performs this calculation exactly and terminates with \texttt{CERTIFICATE: PASS}.

\subsection*{Computational reproducibility}
Version 1.1 of the public Zenodo reproducibility package \cite{SplittingFieldsZenodo2026} archives both exact finite-field certificates used in this paper: \path{CERTIFICADO_ZARISKI_BENDING_v1.py} for Proposition~\ref{prop:zariski-bending-en} and \path{CERTIFICADO_ZARISKI_SCHOTTKY_v1.py} for Proposition~\ref{prop:schottky}.  The archive also contains execution instructions and a SHA-256 manifest.  Both certificates use only exact integer arithmetic modulo $5$; their successful outputs are respectively
\[
 \det=2\pmod5
 \qquad\text{and}\qquad
 \det=1\pmod5.
\]
No numerical approximation is used in either certificate.

\end{document}